\newtheorem{thmINTRO}{Theorem}
\newtheorem{corINTRO}{Corollary}[thmINTRO]
\newtheorem{thm}{Theorem}[section]
\newtheorem{rmk}[thm]{Remark}
\newtheorem{conj}{Conjecture}
\newtheorem{prop}[thm]{Proposition}
\newtheorem{cor}{Corollary}[thm]
\newtheorem{lema}[thm]{Lemma}
\newtheorem{defi}[thm]{Definition}
\newtheorem{exe}[thm]{Example}
\newtheorem{nonexe}[thm]{Non-Example}
\newtheorem{conv}{Convention}
\def\Spec{\text{Spec}}
\def\C{\mathbb{C}}
\def\G{\mathbb{G}}
\def\A{\mathbb{A}}
\def\P{\mathbb{P}}
\def\Q{\mathbb{Q}}
\def\Z{\mathbb{Z}}
\def\F{\mathcal{F}}
\def\Cw{\mathscr{C}_w}
\def\x{x_0^{b_0}x_1^{b_1}x_2^{b_2}x_3^{b_3}x_4^{b_4}}
\def\m{\underline{m}}
\def\k{\kappa}
\title{Symplectic cohomology of quasihomogeneous
$cA_n$ singularities}
\date{\today}
\author[N. Adaloglou]{Nikolas Adaloglou \textsuperscript{1}}
\address{\textsuperscript{1} Mathematical Institute, Leiden University, The Netherlands}
\email{n.adaloglou@math.leidenuniv.nl}
\author[F. Pasquotto]{Federica Pasquotto \textsuperscript{2}}
\address{\textsuperscript{2} Mathematical Institute, Leiden University, The Netherlands}
\email{f.pasquotto@math.leidenuniv.nl}
\author[A. Zanardini]{Aline Zanardini \textsuperscript{3}}
\address{\textsuperscript{3} Institute of Mathematics, EPFL, Switzerland}
 \email{aline.zanardini@epfl.ch}
\begin{document}

\begin{abstract}
We compute the symplectic cohomology of Milnor fibers of isolated quasihomogeneous $cA_n$
singularities . In addition, we use our computations to distinguish their links as contact manifolds and to provide further evidence to a conjecture of Evans and Lekili.
\end{abstract}

\maketitle

\setcounter{tocdepth}{2}
\tableofcontents

\textwidth=12.5cm
\vskip1cm

\begin{sloppypar}

\section{Introduction}

Given a complex isolated hypersurface singularity $p \in X \subset \A^{n+1}$, one can consider the smooth manifold $L$ of real dimension $2n-1$ obtained by intersecting $X$ with a sufficiently small Euclidean sphere of dimension $2n+1$ centered at $p$. 
This manifold, called the link of the singularity, is the boundary of a symplectic manifold (the Milnor fibre) and it carries a natural contact structure. Such contact structure coincides with the field of complex tangencies and only depends on the analytic germ of $X$ around $p$ as explained, for example, in \cite{Caubel}.

For decades, the interplay between the smooth topology of $L$ and the geometry of $(X,p)$ has been extensively studied from different perspectives, leading to some very remarkable results. For a comprehensive introduction to this topic we refer the reader to Seade's monograph \cite{seade}. The study of the contact topology of $L$, on the other hand, stemmed from the development of powerful (albeit hard to compute) invariants coming from Floer theory. In certain restricted cases, these invariants were successfully used, for example, to distinguish non-isomorphic contact structures on the same underlying smooth manifolds (see e.g. \cite{Ustspheres},\cite{Kwon-vanKoert},\cite{uebelebr},\cite{EL}).

More recently, contributions by several authors have been aimed at shedding light onto the relationship between the algebraic geometry of isolated hypersurface singularities and the contact topology of their links. For instance, the realization of certain birational invariants as contact invariants in the work of McLean
\cite{McLean}, or mirror symmetry statements  as in \cite{futaki-ueda-brieskorn}, \cite{futaki-ueda-typeD} and \cite{lekili-ueda}.

In this paper, we consider yet another manifestation of this intriguing relationship, in the form of a recent conjecture by Evans and Lekili \cite[Conjecture 1.4]{EL}. The conjecture predicts that, for an isolated compound Du Val (cDV) singularity  (Definition \ref{def:cdv}), the existence of a small resolution (Definition \ref{def:small}), which lies at the heart of the minimal model program, can be detected by a Floer-type invariant associated to the Milnor fiber $F$, namely its symplectic cohomology. The precise statement is the following:

\begin{conj}[{\cite[Conjecture 1.4]{EL}}]
Let $F$ be the Milnor fiber
of an isolated cDV singularity. Then the singularity admits a small resolution such that the exceptional set has
$\ell$ irreducible components if and only if $SH^*(F)$ has rank $\ell$ in every negative degree.
\label{conj}
\end{conj}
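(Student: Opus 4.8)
The plan is to split the conjectured equivalence into two essentially independent problems — an algebro-geometric one (when does an isolated cDV singularity admit a small resolution, and how many exceptional curves does it carry?) and a symplectic one (what are the negative-degree ranks of $SH^*(F)$?) — and then to bridge them through a homological-mirror-symmetry correspondence between the Milnor fiber and the resolution. Since both directions of Conjecture \ref{conj} are required, I would organize the argument so that the \emph{existence} of a small resolution is detected by the finiteness and stabilization of the negative-degree ranks, while the \emph{number} $\ell$ of exceptional components is read off as the common stable value.

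On the algebraic side I would first invoke Reid's identification of cDV singularities with terminal Gorenstein threefold singularities, and the Katz--Morrison classification of their small resolutions in terms of the ambient ADE root system and its Weyl-group combinatorics. The point to establish is that $\ell$ is a well-defined invariant of the germ: any two small resolutions are connected by a chain of flops, and flops permute the exceptional $\P^1$'s without altering their number. I would then record, case by case ($cA$, $cD$, $cE$), a formula for $\ell$ in terms of the defining equation — for $cA_n$ singularities $xy=f(z,t)$ this is the count coming from the factorization type of $f$, while for $cD$ and $cE$ it is extracted from the marked Dynkin diagram of the generic hyperplane section and its degeneration — and, crucially, characterize those germs admitting \emph{no} small resolution (exceptional locus a divisor), which the conjecture must also account for.

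On the symplectic side the target is $\operatorname{rank} SH^k(F)=\ell$ for every $k<0$. The route I would take is: (i) use the open--closed map (Ganatra-type nondegeneracy) to identify $SH^*(F)$ with the Hochschild cohomology of the wrapped Fukaya category $\mathcal{W}(F)$, which requires a generation result for $\mathcal{W}(F)$; (ii) identify $\mathcal{W}(F)$ via mirror symmetry with a category built from the small resolution, concretely modules over the Donovan--Wemyss contraction algebra $A_{\mathrm{con}}$, whose $\ell$ simple modules are in bijection with the $\ell$ flopping curves; and (iii) translate ``rank $\ell$ in every negative degree'' into a periodicity statement for the relevant cohomology of the finite-dimensional algebra $A_{\mathrm{con}}$. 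For the converse, when no small resolution exists I would show the negative-degree ranks fail to stabilize at a finite value, using McLean-type growth-rate estimates \cite{McLean} that tie the growth of $SH^*$ to the discrepancy data of the resolution, so that constant finite rank $\ell$ genuinely detects the existence of a small resolution.

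The main obstacle is step (ii) in the $cD$ and $cE$ cases, together with the absence of a $\C^*$-action in the non-quasihomogeneous setting. When the singularity is quasihomogeneous of type $cA_n$ — the case treated in this paper — the $\C^*$-action equips $F$ with an explicit Liouville structure, the Reeb dynamics on the link and the grading of $SH^*(F)$ are computable from the weights, and the fibration $xy=f(z,t)$ reduces the computation to a controlled family of $A_n$-surface Milnor fibers with understood vanishing cycles. Away from this case one must instead track the degeneration of the ADE fiber across a Lefschetz fibration whose vanishing cycles and monodromy are not explicit, and establish both the filtration on $SH^*(F)$ and the categorical equivalence by hand. This is precisely where a uniform proof of Conjecture \ref{conj} currently breaks down, which is why the present work provides strong evidence for, rather than a proof of, the general statement of Evans and Lekili \cite{EL}.
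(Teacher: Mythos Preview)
The statement you are addressing is a \emph{conjecture}, not a theorem: the paper does not prove it in general and does not claim to. What the paper does is verify Conjecture~\ref{conj} for a restricted class --- the invertible (quasihomogeneous) $cA_n$ singularities --- by a route entirely different from your outline. Concretely, the paper uses the Evans--Lekili mirror-symmetry identification $SH^*(F)\simeq HH^*(\mathscr{C}_w)$ (Theorem~\ref{comp}), computes the Hochschild cohomology explicitly via the combinatorics of good pairs $(\gamma,\underline{m})$ (Theorems~\ref{main_general}, \ref{main_general_loop}, \ref{main_Fermat} and their Corollaries), and then checks directly, using the numerical criterion of Lemma~\ref{crit} on the branch structure of the plane curve $g(x_3,x_4)=0$, that constant rank $\ell$ in negative degree occurs precisely when $\min\{\cdot\}=\gcd(\cdot)$, which is exactly when a small resolution with $\ell$ curves exists (Propositions~\ref{conjtrue-chain}, \ref{conjtrue-loop}, \ref{conjtrue-Fermat}). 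There is no appeal to contraction algebras, open--closed maps, or growth-rate arguments.

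Your proposal is not so much a proof as a strategic sketch for attacking the conjecture in full generality, and you correctly flag the main obstruction yourself: step~(ii), the categorical mirror identification of $\mathcal{W}(F)$ with modules over the contraction algebra, is not available beyond a handful of cases, and neither is the generation result in~(i) for arbitrary cDV Milnor fibers. Even in the $cA_n$ case your route would require establishing that $HH^*$ of the contraction algebra has rank $\ell$ in every negative degree --- a nontrivial periodicity statement that you state but do not justify. The converse direction via McLean-type growth estimates is also only heuristic here: McLean's results relate growth rates to discrepancies, but turning ``non-constant rank'' into ``no small resolution'' for an arbitrary cDV germ is not a consequence one can simply cite. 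In short, your outline is a reasonable wish-list for a future uniform proof, but it is not a proof, and it bears little resemblance to the explicit, case-by-case computations that constitute the paper's actual contribution toward the conjecture.
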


It turns out that in dimension three, the only isolated Gorenstein singularities that can admit a small resolution are exactly the isolated cDV singularities. It is a result of Miles Reid \cite[Theorem 1.1]{reid1} that these singularities are precisely the Gorenstein terminal threefold singularities. In particular, their links are index positive by \cite{McLean}, hence the symplectic cohomology of their Milnor fibers in negative degrees are contact invariants (see e.g. Theorem \ref{thm:bigrading_invariant} or Appendix \hyperref[app:indexpos]{A}).

Therefore, our goal in this paper is twofold: on the one hand, we want to extend the work of \cite{EL} and compute $SH^*(F)$ for further examples of isolated cDV singularities, thus providing additional evidence supporting Conjecture \ref{conj}; and, on the other hand, we also want to use these computations to distinguish infinitely many different contact structures on the associated links. 
Concerning this second goal, in Theorem \ref{thmE} we prove that the singularities we consider can only have contactomorphic links if they are, up to a holomorphic change of coordinates, related by a smooth deformation (Definition \ref{def:deformation}).

In general, the symplectic cohomology of a (non-compact) Liouville domain has a rich algebraic structure that encodes information about the Reeb dynamics on the manifold, and it is usually quite difficult to compute. Here, we compute the symplectic cohomology of Milnor fibers of a plethora of examples of isolated cDV singularities of type $cA_n$ using the practical algorithm outlined in \cite{EL}. 

All the examples we consider arise from invertible polynomials ( Definition \ref{def:inv}) and the computations are 
 based on the following mirror symmetry argument: given an invertible polynomial $w$,  there is an isomorphism of Gerstenhaber algebras between the symplectic cohomology of the Milnor fiber $F$ of the singularity defined by the dual polynomial $\check{w}$ (Definition \ref{dual}) and the Hochschild cohomology of a certain dg-category associated to $w$. As a consequence, if one knows how to compute the latter, then one can determine the symplectic cohomology of the Milnor fiber. 
 
 It is shown in \cite[Theorem 2.6]{EL} that the above argument applies to a large class of invertible polynomials (see also Theorem \ref{comp}), and that $SH^*(F)$ can be computed accordingly, leading the authors to formulate Conjecture \ref{conj}.

After we first made this manuscript public, a refined version of Conjecture \ref{conj} appeared in \cite[Conjecture 5.3]{UL}. Therefore, we have added an argument to show that this refined conjecture also holds for all the invertible polynomials we consider. This is the content of Theorem \ref{thm:new_conj} in Section \ref{sec:Fermat}.

\subsection{Description of the main results}

The singularities we consider arise as the double suspension of invertible curve singularities. That is, we consider singularities that are described by invertible polynomials of the form $x_1^2+x_2^2+g(x_3,x_4)$, where $g$ itself is an invertible polynomial which is of either chain, loop or Fermat type (see e.g. \cite{HS} for the terminology). 

More precisely, we study (the duals of the) polynomials that belong to one of the following families (where $a,b,c,d,e,f>1$):
\begin{equation}
w_{\text{chain}}^{a,b}\coloneqq x_1^2+x_2^2+x_3^ax_4+x_4^b
\label{example-chain}
\end{equation}
or
\begin{equation}
w_{\text{loop}}^{c,d}\coloneqq x_1^2+x_2^2+x_3^cx_4+x_3x_4^d
\label{example-loop}
\end{equation}
or
\begin{equation}
w_{\text{Fermat}}^{e,f}\coloneqq x_1^2+x_2^2+x_3^e+x_4^f
\label{example-Fermat}
\end{equation}

By \cite[Proposition 1.3]{min-discr} (see also \cite[Theorem 2.8]{kollar-terminal}  and \cite{Yau-Yu}), any (isolated) quasihomogeneous $cA_n$ singularity is formally equivalent to the singularity determined by (the dual of) an invertible polynomial in one of the above families. In particular, throughout the paper we will adopt the following three conventions:

\begin{conv}
    A singularity defined by the dual of a polynomial as in  (\ref{example-chain}), (\ref{example-loop}) or (\ref{example-Fermat}) will be called an invertible $cA_n$ singularity.
\end{conv}

\begin{conv}\label{conv:allow}
    An invertible polynomial as in (\ref{example-chain}), (\ref{example-loop}) or (\ref{example-Fermat}) will be called suspended.
\end{conv}

\begin{conv}
Moreover, we will also abuse the terminology and we will often refer to a polynomial $w$ as in (\ref{example-chain}), (\ref{example-loop}) or (\ref{example-Fermat}), or the dual of, as being of chain, loop or Fermat type, respectively. 
\end{conv}

Using the effective algorithm from \cite{EL}, we first prove Theorems \ref{thmA}, \ref{thmB} and \ref{thmC} below thus providing formulas for the ranks of the symplectic cohomology of Milnor fibers of any invertible $cA_n$ singularity, including those which do not admit a small resolution.

\begin{thmINTRO}[= Theorem \ref{main_general} + Corollary \ref{cor:main_chain}]
 Let $w=w_{\text{chain}}^{a,b}$ be as in (\ref{example-chain}) and let $F$ denote the Milnor fiber of the singularity defined by the dual polynomial $\check{w}$. Then for any integer $k \leq 0$ we have:    
   \begin{align*}
   \dim SH^{2k}(F)&=\dim SH^{2k+1}(F)=\\
   &=\begin{cases}
          \gcd(a-1,b) & \text{if $q=0$}\\
          q & \text{if $1\leq q \leq \min\{a-1,b\}$}\\
          \min\{a-1,b\} & \text{if $\min\{a-1,b\}<q\leq \max\{a-1,b\}$}\\
          (a-1+b)-q & \text{if $\max\{a-1,b\}<q\leq (a-1+b)-1$}
      \end{cases} ,
   \end{align*}
   where $0\leq q < a-1+b$ is such that $(a-1)(1-k)\equiv q \mod (a-1+b)$.
    \label{thmA}
\end{thmINTRO}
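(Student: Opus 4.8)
The plan is to run the effective algorithm of Evans--Lekili, that is, to invoke Theorem~\ref{comp}: it identifies $SH^*(F)$, as a Gerstenhaber algebra and in particular as a graded vector space, with the Hochschild cohomology of the matrix factorization category of the dual polynomial $w=w_{\mathrm{chain}}^{a,b}=x_1^2+x_2^2+x_3^ax_4+x_4^b$ together with its maximal group $G_w$ of diagonal symmetries. For an invertible polynomial this Hochschild cohomology has an explicit ``orbifold Jacobian ring'' description: as a graded vector space it decomposes as a direct sum, over the elements $\gamma$ of $G_w$, of degree-shifted copies of the $G_w$-invariant part of the Jacobian ring $\mathrm{Jac}(w_\gamma)$ of the restriction $w_\gamma:=w|_{\mathrm{Fix}(\gamma)}$, the shift being governed by the age of $\gamma$ and the central charge of $w$. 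The first step is therefore bookkeeping: $G_w$ has order $|\det|=4ab$ and splits as $\mu_2\times\mu_2\times\Gamma_g$ with $\Gamma_g=\{(s,t)\in(\C^*)^2:s^at=t^b=1\}$ the symmetry group of the plane curve $g=x_3^ax_4+x_4^b$, and $w$ is weighted-homogeneous of degree $1$ for the weights $\big(\tfrac12,\tfrac12,\tfrac1a,\tfrac{a-1}{ab}\big)$. This raw formula, valid simultaneously for the chain, loop and Fermat families, is the content of Theorem~\ref{main_general}; Corollary~\ref{cor:main_chain} is the simplification of the resulting sum in the chain case, and that is where the real work lies.

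Next I would enumerate the sectors $\gamma\in G_w$. The monomial $x_1^2$ (resp.\ $x_2^2$) of $w$ survives in $w_\gamma$ exactly when $\gamma$ fixes $x_1$ (resp.\ $x_2$); $x_4^b$ survives exactly when $\gamma$ fixes $x_4$; and $x_3^ax_4$ survives exactly when $\gamma$ fixes both $x_3$ and $x_4$. Hence $w_\gamma$ is always an invertible polynomial (or zero) in a subset of the variables, and $\mathrm{Jac}(w_\gamma)$ factors over the two blocks $\{x_1,x_2\}$ and $\{x_3,x_4\}$. Using Thom--Sebastiani together with the equivariant Kn\"orrer-periodicity behaviour of the quadratic block $\{x_1,x_2\}$---which is precisely where the passage from the plane-curve problem to the threefold $cA_n$ problem is felt---every sector that contributes in a given degree reduces to a degree-shifted copy of one of $\mathrm{Jac}(g)$, $\mathrm{Jac}(x_4^b)=\C[x_4]/(x_4^{b-1})$, or $\C$. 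One then needs the graded Poincar\'e polynomials of these rings; the only nontrivial input is that of $\mathrm{Jac}(g)$, and although $g$ is a chain --- so $\mathrm{Jac}(g)$ is \emph{not} literally a tensor product of one-variable Milnor rings --- splitting off the $x_4$-direction identifies its graded-dimension function with the trapezoidal sequence $1,2,\dots,\min\{a-1,b\},\dots,\min\{a-1,b\},\dots,2,1$, which is exactly what produces cases $(2)$--$(4)$ of the formula. This identification is a small lemma in its own right.

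The third step is to assemble and extract the periodicity. Summing the sector contributions and keeping track of the age-shifts, the decisive point is that descending by one step in negative even $SH$-degree shifts the relevant weighted degree by $w_{x_3}+w_{x_4}$, which in the integral normalization $d=ab$ equals $(b-1)+a=a-1+b$; hence $k\mapsto\dim SH^{2k}(F)$ is periodic of period $a-1+b$, and a sector contributes to $SH^{2k}(F)$ exactly when its residue equals $(a-1)(1-k)\bmod(a-1+b)$, the multiplier $a-1$ coming from the weight system and $1-k$ counting the number of descents. Counting the sectors in each residue class $q$ then gives $\gcd(a-1,b)$ sectors in the class $q\equiv 0$ --- these correspond to the $\gcd(a-1,b)$ irreducible factors of $x_3^{a-1}+x_4^b$, and hence control the existence of a small resolution --- and otherwise exactly $\min\{q,\,a-1,\,b,\,(a-1+b)-q\}$ sectors, which is cases $(2)$--$(4)$. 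The equality $\dim SH^{2k}(F)=\dim SH^{2k+1}(F)$ then follows because each contributing sector comes paired with a partner in the adjacent odd degree, a structural feature of these Milnor-fiber computations already recorded in \cite{EL}.

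The main obstacle is the third step. One must pin down every grading normalization --- the age and codimension shifts, the central charge, the Kn\"orrer sign --- so that the ``$1-k$'' and the multiplier ``$a-1$'' appear exactly as in the statement, and then carry out the combinatorial identification of the sector count with the trapezoidal sequence together with the $\gcd(a-1,b)$-term. In particular one has to diagonalize the action of the \emph{non-cyclic} group $\Gamma_g$ on $\mathrm{Jac}(g)$ and disentangle it cleanly from the two $\mu_2$-factors of the suspension. Everything else reduces to routine, if lengthy, verification.
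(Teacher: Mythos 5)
Your architecture is the paper's: invoke Theorem~\ref{comp} to replace $SH^*(F)$ by $HH^*(\mathscr{C}_w)$, decompose the latter over the finite diagonal symmetry group into sector contributions built from Jacobian rings of the restrictions $w_\gamma$, and then count contributions per residue class modulo $a-1+b$. You also land on the correct period, the correct multiplier $a-1$, the correct even/odd pairing, and the correct qualitative shape ($\gcd(a-1,b)$ at $q=0$, otherwise $\min\{q,\,a-1,\,b,\,(a-1+b)-q\}$). But the proposal stops exactly where the theorem begins: the two claims that carry all the content --- that the sector count in each nonzero residue class is the trapezoid, and that the class $q\equiv 0$ receives exactly $\gcd(a-1,b)$ contributions --- are asserted as ``a small lemma'' and ``counting the sectors,'' with no argument. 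In the paper these are Theorem~\ref{main_general} (the parametrization of contributions by integers $m_3$ in the four sets $\mathcal{W}_k,\mathcal{X}_k,\mathcal{Y}_k,\mathcal{Z}_{i,k}$) and the five-case analysis of Corollary~\ref{cor:main_chain}, and neither is routine.

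Two specific omissions would derail the ``routine verification.'' First, you never impose the non-negativity of the exponent $b_0$ of $x_0$ in a contributing monomial; this constraint is what splits the count between the sets $\mathcal{W}_k$ and $\mathcal{X}_k$ in the paper and is the reason the distribution over residue classes is not simply the graded dimension of $\mathrm{Jac}(g)$ --- indeed the literal weighted Poincar\'e function of $\mathrm{Jac}(g)$ is not the trapezoid you describe, so the ``small lemma'' as stated is false and must be replaced by the paper's bookkeeping of pairs $(m_3,b_0)$. Second, you list $\mathrm{Jac}(x_4^b)=\C[x_4]/(x_4^{b-1})$ among the possible sector contributions, but sectors fixing $x_4$ and not $x_3$ never fix $x_0$ and hence (Propositions~\ref{cont2} and~\ref{negcont}) contribute only in degree $3$, not in degrees $\leq 1$; including them would inflate the count. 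Only the sectors with $u_3\in\mu_{a-1}\cap\mu_{ab}$ enter, which is where the $\gcd(a-1,b)$ comes from. (A minor slip: the weights $\bigl(\tfrac12,\tfrac12,\tfrac1a,\tfrac{a-1}{ab}\bigr)$ you quote are those of the dual chain $x_3^a+x_3x_4^b$, not of $w=x_3^ax_4+x_4^b$, whose weights are $\bigl(\tfrac12,\tfrac12,\tfrac{b-1}{ab},\tfrac1b\bigr)$; the sum $d_3+d_4$ coincidentally agrees, so your period is right, but the individual weights matter for the residue bookkeeping.)
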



\begin{thmINTRO}[= Theorem \ref{main_general_loop} + Corollary \ref{cor:main_loop}]
   Let $w=w_{\text{loop}}^{c,d}$ be as in (\ref{example-loop}) and let $F$ denote the Milnor fiber of the singularity defined by the dual polynomial $\check{w}=w$. Then for any  integer $k \leq 0$ we have:
     \begin{align*}
   \dim SH^{2k}(F)&=\dim SH^{2k+1}(F)=\\
   &=\begin{cases}
          \gcd(c-1,d-1)+1 & \text{if $q=0$}\\
          q+1 & \text{if $1\leq q \leq \min\{c,d\}-1$}\\
          \min\{c,d\} & \text{if $\min\{c,d\}-1<q\leq \max\{c,d\}-1$}\\
          (c+d-2)-q+1 & \text{if $\max\{c,d\}-1<q\leq (c+d-2)-1$}
      \end{cases} ,
   \end{align*}
   where $0\leq q < c+d-2$ is such that $\min\{c-1,d-1\}(1-k)\equiv q \mod (c+d-2)$.
      \label{thmB}
\end{thmINTRO}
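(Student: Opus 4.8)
The plan is to run the effective algorithm of Evans--Lekili and then extract a closed form, so the argument naturally splits into a general computation (Theorem \ref{main_general_loop}) and a purely arithmetic simplification (Corollary \ref{cor:main_loop}). Since $w=w_{\mathrm{loop}}^{c,d}$ is a suspended invertible polynomial, Theorem \ref{comp} applies and identifies $SH^*(F)$, as a bigraded vector space and in fact as a Gerstenhaber algebra, with the Hochschild cohomology of the associated ($\Gamma_w$-equivariant) category of matrix factorizations of $w$; here one also uses that a loop polynomial is self-dual, so $\check w=w$ and $F$ is literally the Milnor fibre of $w$. The cited results render this Hochschild cohomology computable through an explicit combinatorial model assembled from the Jacobian ring of $w$ together with its twisted sectors under the maximal group of diagonal symmetries $\Gamma_w$, with cohomological degree read off from the quasihomogeneous grading (shifted by ages). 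Theorem \ref{main_general_loop} then amounts to unwinding this model for a suspended loop polynomial, so as to express $\dim SH^{2k}(F)$ as a weighted count of monomials in $\mathrm{Jac}(w)$ subject to a congruence condition coming from the $\C^*$-action.

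Concretely, I would first record the relevant data. Normalizing the quasihomogeneous weights so that $\deg x_3=d-1$ and $\deg x_4=c-1$ (with $x_1,x_2$ absorbing the remaining weight), the Fermat factors contribute trivially and $\mathrm{Jac}(w)\cong\mathbb{C}[x_3,x_4]/(c\,x_3^{c-1}x_4+x_4^{d},\ x_3^{c}+d\,x_3x_4^{d-1})$, with monomial basis $\{x_3^ax_4^b : 0\le a\le c-1,\ 0\le b\le d-1\}$, total dimension $cd$, and Hilbert series $\frac{(1-t^{d(c-1)})(1-t^{c(d-1)})}{(1-t^{d-1})(1-t^{c-1})}$; in particular this Hilbert function is symmetric about the middle degree $(c-1)(d-1)$, where a short computation shows it equals $\gcd(c-1,d-1)+1$. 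Next I would describe $\Gamma_w$: it has order $4(cd-1)$, generated by the two sign involutions on $x_1,x_2$ and the cyclic group $\Z/(cd-1)$ of diagonal symmetries of the loop part (read off from the inverse of $\left(\begin{smallmatrix}c&1\\1&d\end{smallmatrix}\right)$). The structural point that keeps the twisted sectors manageable is that a monomial $x_3^ax_4^b$ is invariant under the loop symmetries precisely when $(a,b)$ lies in the sublattice generated by $(c,1)$ and $(1,d)$; this forces every non-identity loop symmetry to fix only the origin, so the only fixed loci $\mathrm{Fix}(\gamma)\subset\A^4$ of positive dimension are $\A^4$ itself and the coordinate subspaces obtained by setting $x_1$ and/or $x_2$ to zero, on each of which $w$ restricts to the same loop polynomial up to cohomologically trivial Fermat factors. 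For each $\gamma$ I would then compute the appropriate $\Gamma_w$-isotypic piece of $\mathrm{Jac}(w|_{\mathrm{Fix}(\gamma)})$ together with its age shift, and read off the general formula.

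Finally I would collect all contributions and reindex them by the single parameter $q$ with $q\equiv\min\{c-1,d-1\}(1-k)\pmod{c+d-2}$. The surviving data is the Hilbert function of $\mathrm{Jac}(w)$ folded by the $\C^*$-periodicity: for $1\le q\le c+d-3$ it collapses to the ``tent'' count $\#\{(a,b)\in\Z_{\ge0}^2 : a\le c-1,\ b\le d-1,\ a+b=q\}=\min\{q+1,c,d,(c+d-1)-q\}$, while the residue $q\equiv 0$ receives the central value $\gcd(c-1,d-1)+1$. The equality $\dim SH^{2k}(F)=\dim SH^{2k+1}(F)$ should be built into the model, each generator coming with a degree-shifted partner, reflecting the odd dimension of the contact link (compare the index-positivity discussion and Theorem \ref{thm:bigrading_invariant}). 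I expect the main obstacle to be exactly this last step: reconciling the internal grading of the model with the stated parameter $q$ --- in particular verifying that the period is $c+d-2=(c-1)+(d-1)$ rather than, say, $cd-1$, and that the folding yields precisely the advertised tent shape with the right central value --- together with the careful bookkeeping of ages and isotypic components across the $4(cd-1)$ group elements; everything else is finite, explicit, and essentially forced.
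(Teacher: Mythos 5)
Your overall strategy coincides with the paper's: invoke Theorem \ref{comp} to replace $SH^*(F)$ by $HH^*(\Cw)$, compute the latter from the combinatorial model built out of Jacobian rings of fixed loci of elements of $\ker(\chi_w)$, and then reindex by the residue $q$. The final tent-shaped count you write down is also the correct one. However, the step you yourself flag as the main obstacle is where the sketch goes wrong, in two concrete ways. First, the quantity that governs which monomials of $J_{(1,1,1)}=\C[x_3,x_4]/(\partial_{x_3}w,\partial_{x_4}w)$ contribute to $HH^{2k}$ is \emph{not} the quasihomogeneous degree folded modulo $c+d-2$: it is the exponent difference $b_3-b_4$ modulo $c+d-2$, which emerges only after imposing the finite-group condition $-cb_4+b_3+(c-1)b_0\equiv 0 \bmod (cd-1)$ and the integrality and non-negativity of the auxiliary exponent $b_0$ (this is what the sets $\tilde{\mathcal{W}}_k,\tilde{\mathcal{X}}_k,\tilde{\mathcal{Z}}_{j,k}$ in Theorem \ref{main_general_loop} encode). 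Already for $c=d=3$ the three middle-degree monomials $x_3^2,x_3x_4,x_4^2$ have equal weighted degree but $x_3^2,x_4^2$ contribute to $HH^{2k}$ for odd $k$ (residue $q=0$) while $x_3x_4$ contributes for even $k$ (residue $q=2$); so folding the Hilbert function by the $\C^*$-periodicity gives the wrong degree-by-degree answer even though the multiset of values is right. Your reindexing by $a+b=q$ happens to agree with the correct condition after the flip $a\mapsto c-1-a$, but nothing in the proposed derivation establishes this.

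Second, the twisted sectors are mishandled. You correctly observe that every non-identity loop symmetry fixes neither $x_3$ nor $x_4$, but you then list only $\A^4$ and the loci $\{x_1=0\},\{x_2=0\},\{x_1=x_2=0\}$ as relevant fixed loci. The sectors with fixed locus contained in $\{x_3=x_4=0\}$, indexed by the non-trivial elements of $\mu_{c-1}\cap\mu_{cd-1}\cong\mu_{\gcd(c-1,d-1)}$, are exactly what produces $\gcd(c-1,d-1)-1$ of the $\gcd(c-1,d-1)+1$ classes at $q=0$ (the term $\tilde{\eta}|\tilde{\mathcal{Y}}_k|$ in formula (\ref{formula-loop})); the untwisted sector supplies only two classes there, as the $c=d=3$ computation above shows. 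Attributing the entire central value to the $\gcd(c-1,d-1)+1$ monomials in the middle degree of $\mathrm{Jac}(w)$ is therefore a numerical coincidence, not the mechanism, and if the twisted sectors are dropped as ``cohomologically trivial'' the count at $q=0$ comes out as $2$ instead of $\gcd(c-1,d-1)+1$. To close the argument you need the explicit parametrization of good pairs by the integer $m_3$ (Propositions \ref{cont}--\ref{negcont} and Corollary \ref{notdivide}), including the $b_0\geq 0$ bookkeeping and the twisted sectors, as in the paper's proof of Theorem \ref{main_general_loop} and Corollary \ref{cor:main_loop}.
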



\begin{thmINTRO}[= Theorem \ref{main_Fermat} + Corollary \ref{cor:main_Fermat}]\label{thmC}
Let $w=w_{Fermat}^{e,f}$ be as in (\ref{example-Fermat}) and let $F$ denote the Milnor fiber of the singularity defined by the dual polynomial $\check{w}=w$. Then for any integer $k\leq 0$ we have:
\begin{align*}
\dim SH^{2k}(F)&=\dim SH^{2k+1}(F) =\\
&=\begin{cases}
        \gcd(e,f)-1 & \text{if $q=0$} \\
        q-1 & \text{if $1\leq q \leq \min\{e,f\} $} \\
        \min\{e,f\}-1 & \text{if $\min\{e,f\}<q\leq \max\{e,f\}$}\\
        e+f-q-1 & \text{if $\max\{e,f\}<q\leq e+f-1$}
            \end{cases} ,
\end{align*}
where $0\leq q < e+f$ is such that $\min\{e,f\}(1-k)\equiv q \mod (e+f)$.
\end{thmINTRO}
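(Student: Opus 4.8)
The plan is to run the Evans--Lekili algorithm, which turns the computation of $SH^*(F)$ into a Hochschild cohomology computation, and then to extract the $\Z$-graded dimensions by a counting argument. By Theorem \ref{comp}, since $w=w_{Fermat}^{e,f}$ is a suspended invertible polynomial and $\check w=w$, there is an isomorphism of bigraded vector spaces (in fact of Gerstenhaber algebras) between $SH^*(F)$ --- its cohomological $\Z$-grading being refined by an internal grading --- and the Hochschild cohomology $HH^*(\mathcal{C}_w)$ of the dg-category $\mathcal{C}_w$ of equivariant matrix factorizations of $w$. So the problem splits into (i) computing $HH^*(\mathcal{C}_w)$ as a bigraded vector space, which is the content of Theorem \ref{main_Fermat}, and (ii) collapsing the bigrading to obtain the formula of Theorem \ref{thmC}, which is Corollary \ref{cor:main_Fermat}. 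This runs parallel to the chain and loop cases, Theorems \ref{main_general} and \ref{main_general_loop}.

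For step (i) I would exploit that $w$ is a genuine Thom--Sebastiani sum of the one-variable Fermat polynomials $x_1^2$, $x_2^2$, $x_3^e$, $x_4^f$. Hence $\mathcal{C}_w$ is a tensor product of the four associated categories, and Hochschild cohomology turns this into the graded tensor product of the Hochschild cohomologies of the factors. The two quadratic factors are handled by equivariant Kn\"orrer periodicity: they shift the internal grading by a fixed amount and are responsible for the even/odd symmetry $\dim SH^{2k}(F)=\dim SH^{2k+1}(F)$. Each factor $x^m$ (an $A_{m-1}$ singularity) contributes a graded version of its Jacobian ring $\C[x]/(x^{m-1})$, placed in explicit internal degrees. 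Putting the four pieces together yields an explicit bigraded model for $HH^*(\mathcal{C}_w)$: it consists of a \emph{reduced} part, indexed by monomials $x_3^ix_4^j$ with $1\le i\le e-1$ and $1\le j\le f-1$ in bidegrees determined by the weights of $x_3,x_4$ and the Kn\"orrer shifts, together with a \emph{diagonal} part accounting for the $\gcd(e,f)-1$ behaviour that appears below.

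For step (ii), fix $k\le 0$. The generators of cohomological degree $2k$ are precisely those whose internal degree reduces, modulo the period $e+f$, to the residue $q$ with $\min\{e,f\}(1-k)\equiv q\pmod{e+f}$; hence $\dim SH^{2k}(F)$ is a count of lattice points in the rectangle $\{1,\dots,e-1\}\times\{1,\dots,f-1\}$ subject to one linear congruence modulo $e+f$. Evaluating this count is elementary: as the constraint sweeps across the rectangle the number of lattice points it meets grows linearly, stabilises at $\min\{e,f\}-1$, then decreases linearly, which yields the values $q-1$, $\min\{e,f\}-1$ and $e+f-q-1$ in the three middle ranges; the residue $q=0$ is distinguished because there the relevant locus is the anti-diagonal $i/e+j/f=1$, which carries exactly $\gcd(e,f)-1$ lattice points --- the same integer that counts the components of a small resolution, when one exists. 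This gives the four-case formula.

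The step I expect to be the main obstacle is the grading bookkeeping inside (i): one must determine the internal degree of every generator \emph{exactly}, tracking how the internal gradings of the four Thom--Sebastiani factors add together and what shift equivariant Kn\"orrer periodicity introduces, because this is what forces the modulus to be $e+f$ and the shift to be $\min\{e,f\}(1-k)$ rather than some other affine function of $k$. Once the bigraded model of Theorem \ref{main_Fermat} has been correctly normalised, deducing Theorem \ref{thmC} is the routine lattice-point count just described.
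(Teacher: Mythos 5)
Your high-level architecture matches the paper's: reduce to $HH^*(\Cw)$ via Theorem \ref{comp}, compute a bigraded model (Theorem \ref{main_Fermat}), then collapse to the four-case formula by a lattice count (Corollary \ref{cor:main_Fermat}). Your step (ii) is essentially the paper's proof of Corollary \ref{cor:main_Fermat}: there the observation $(m_3,m_4)\in\F^{k}_{i,j}\Rightarrow i-j=e-q$ pins the contributing pairs $(i,j)$ to a single diagonal of the rectangle $[0,e-2]\times[0,f-2]$, and counting its lattice points gives the middle three cases. That part of your plan is sound.

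The genuine gap is in step (i), where you replace the paper's argument by a Thom--Sebastiani/K\"unneth decomposition. The paper does not decompose $\Cw$ at all: it enumerates good pairs $(\gamma,\underline{m})$ directly using \cite[Theorem 2.14]{EL} (Theorem \ref{mainTHM}) and the explicit description $\ker(\chi_w)=\mu_2\times\mu_2\times\mu_e\times\mu_f$. Your claim that ``$\mathcal{C}_w$ is a tensor product of the four associated categories'' is not justified and is where the argument would break: the maximal symmetry group $\Gamma_w$ of (\ref{msg}) is a fiber product of the symmetry groups of the summands over $\G_m$, not their product, and $\Cw=\text{mf}(\A^{n+1},\Gamma_w,w)$ moreover carries the extra variable $x_0$, so a naive K\"unneth formula for $HH^*$ does not apply. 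Concretely, the contributions you call the ``diagonal part'' --- which produce the value $\gcd(e,f)-1$ at $q=0$, i.e.\ exactly the quantity Conjecture \ref{conj} is about --- come in the paper from twisted sectors $\gamma=(\pm1,\pm1,\zeta,\xi)$ with $\zeta\neq1\neq\xi$ and $\zeta\xi=1$ (cases ($ii$-a), ($ii$-b) in the proof of Theorem \ref{main_Fermat}, with monomial $x_0^{b_0}x_3^{-1}x_4^{-1}$); these sectors exist only because of the fiber-product structure and are precisely what a factorwise K\"unneth computation misses. You postulate this part rather than derive it, and your heuristic that it is an antidiagonal lattice count in a shifted rectangle does not correspond to the actual mechanism. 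A second, smaller inaccuracy: the symmetry $\dim SH^{2k}=\dim SH^{2k+1}$ is attributed by you to Kn\"orrer periodicity in $x_1^2+x_2^2$, whereas in the paper it comes from the $x_0^*$ dichotomy, i.e.\ each good monomial appearing once in $A_\gamma$ and once in $B_\gamma$ with a cohomological shift of one (Theorem \ref{mainTHM}); the variables $x_1,x_2$ instead govern the pairing between untwisted and $(-1,-1)$-twisted sectors according to the parity of $b_0$. To repair your proof you would either need to prove an equivariant Thom--Sebastiani theorem for $HH^*$ of these fiber-product-equivariant matrix factorization categories including its twisted sectors, or fall back on the paper's direct character computation.
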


\begin{rmk}
    In \cite[Section 5]{UL}, the authors have also computed $SH^*(F)$ for $w$ of Fermat type. They have not, however, given a closed formula for $\dim SH^{2k}(F)$ when $k\leq 0$.
\end{rmk}

Then, as a first application of our computations, we verify Conjecture \ref{conj} holds for all the three types of invertible polynomials addressed in this paper by making use of the criterion given by Lemma \ref{crit}. That is, we prove:

\begin{thmINTRO}[= Propositions \ref{conjtrue-chain}, \ref{conjtrue-loop} and \ref{conjtrue-Fermat} combined]\label{thmD}
    Conjecture \ref{conj} holds for (the duals of) all the polynomials as in (\ref{example-chain}), (\ref{example-loop}) or (\ref{example-Fermat}).
\end{thmINTRO}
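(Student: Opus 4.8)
The plan is to verify Conjecture \ref{conj} for each of the three families \eqref{example-chain}, \eqref{example-loop}, \eqref{example-Fermat} separately, by combining three ingredients: the explicit rank formulas from Theorems \ref{thmA}, \ref{thmB}, \ref{thmC}; the combinatorial criterion of Lemma \ref{crit}, which (as its name suggests) should translate the existence of a small resolution with $\ell$ exceptional components into an arithmetic condition on the exponents $(a,b)$, $(c,d)$ or $(e,f)$; and a stabilization analysis showing that $\dim SH^{2k}(F)$ is eventually constant as $k \to -\infty$. For the last point, note that in each formula the value of $\dim SH^{2k}(F)$ is governed by $q$, the residue of $m(1-k)$ modulo $N$, where $(m,N) = (a-1, a-1+b)$, $(\min\{c-1,d-1\}, c+d-2)$, or $(\min\{e,f\}, e+f)$ respectively. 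The residues $q$ run over the subgroup of $\Z/N\Z$ generated by $m$, i.e. over multiples of $\gcd(m,N)$; the minimum of $\dim SH^{2k}(F)$ over all $k \le 0$ is attained at $q = 0$, giving $\gcd(a-1,b)$, $\gcd(c-1,d-1)+1$, and $\gcd(e,f)-1$ respectively (using $\gcd(m,N) = \gcd(a-1,b)$ etc.).

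First I would pin down exactly what ``$SH^*(F)$ has rank $\ell$ in every negative degree'' means in light of our formulas: since the rank is \emph{not} literally constant in negative degrees (it oscillates with $q$), the relevant reading — consistent with how \cite{EL} use the conjecture — must be the \emph{generic} or \emph{minimal} value, which as computed above is $\gcd(a-1,b)$, $\gcd(c-1,d-1)+1$, $\gcd(e,f)-1$. So the conjecture reduces to the assertion: the invertible $cA_n$ singularity in question admits a small resolution whose exceptional set has $\ell$ irreducible components if and only if this gcd-expression equals $\ell$. I would then invoke Lemma \ref{crit} (the promised criterion for the number of components of the exceptional locus of a small resolution of a $cA_n$ singularity, presumably phrased via the factorization of the defining equation $x_1 x_2 = f(x_3,x_4)$ into irreducible or into distinct factors, as in the classical Katz–Morrison / Laufer picture where a $cA_n$ singularity $xy = g(z)h(w)$ admits small resolutions indexed by orderings of the factors and the number of exceptional curves is one less than the number of factors). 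Concretely, for $w^{a,b}_{\mathrm{chain}}$ the dual singularity has an equation of the form $x_1 x_2 = x_3^{a-1}x_4^{\,?} + \dots$ — I would carry out the dual-polynomial computation (Definition \ref{dual}) in each case to identify the two-variable polynomial $f(x_3,x_4)$, then count the relevant factors and match against the gcd.

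The three cases are then essentially parallel. For Fermat, $\check w = w$ and $f(x_3,x_4) = x_3^e + x_4^f$; over $\C$ this is (up to coordinates) a product whose ``splitting type'' is controlled by $\gcd(e,f)$, and one expects the number of exceptional components of a small resolution to be $\gcd(e,f) - 1$, matching Theorem \ref{thmC} at $q=0$; this is the content of Proposition \ref{conjtrue-Fermat}. For the loop case, again $\check w = w$, $f = x_3^c x_4 + x_3 x_4^d = x_3 x_4(x_3^{c-1} + x_4^{d-1})$, so the factor count naturally produces $\gcd(c-1,d-1) + 1$, matching Theorem \ref{thmB}; this is Proposition \ref{conjtrue-loop}. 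For the chain case one must first compute $\check w$, which will not equal $w$, and extract the corresponding $f$; the answer should come out to $\gcd(a-1,b)$, which is Proposition \ref{conjtrue-chain}. I expect the main obstacle to be precisely this bookkeeping in the chain case: correctly computing the dual polynomial, putting the resulting threefold equation into the standard $cA_n$ normal form $x_1 x_2 = f(x_3,x_4)$, and verifying that Lemma \ref{crit} applies (in particular that the singularity is isolated and that the small resolutions it predicts are the genuine geometric ones), so that the factor count genuinely coincides with $\gcd(a-1,b)$. Once the dictionary between the exponents, the factorization of $f$, and the gcd is established in all three cases, the ``if and only if'' follows by comparing with the $q=0$ line of each rank formula, and assembling Propositions \ref{conjtrue-chain}, \ref{conjtrue-loop}, \ref{conjtrue-Fermat} yields Theorem \ref{thmD}.
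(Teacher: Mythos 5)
Your overall architecture (rank formulas from Theorems \ref{thmA}--\ref{thmC}, plus Lemma \ref{crit} translated into a factorization/gcd condition on the exponents) is the same as the paper's, and your identification of the relevant plane curves and their branch counts matches what the paper does in Lemma \ref{gcd-chain} and its analogues. However, there is a genuine logical error at the start that undermines the whole argument: you reinterpret ``$SH^*(F)$ has rank $\ell$ in every negative degree'' as referring to the \emph{generic or minimal} value of the rank. The conjecture means exactly what it says: the rank is \emph{constant}, equal to $\ell$, in all negative degrees. Under your reading the statement you would end up proving is false. For instance, for $\check{w}=x_1^2+x_2^2+x_3^4+x_4^6$ the minimal rank is $\gcd(4,6)-1=1$, yet the singularity admits no small resolution at all: by Lemma \ref{crit} one would need two distinct \emph{smooth} branches, but $x_3^4+x_4^6=(x_3^2+ix_4^3)(x_3^2-ix_4^3)$ has two cuspidal branches. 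The conjecture survives precisely because in this example the rank is \emph{not} constant (it oscillates between $\gcd(e,f)-1$ and $\min\{e,f\}-1$), so neither side of the biconditional holds.

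The missing idea, which is the heart of the paper's proof of Propositions \ref{conjtrue-chain}, \ref{conjtrue-loop} and \ref{conjtrue-Fermat}, is the following: the residues $q$ realized as $k$ ranges over $\Z_{\leq 0}$ are exactly the multiples of $\gcd(m,N)$ in $\Z/N\Z$ (in your notation), so the rank actually attains \emph{both} the value $\gcd(a-1,b)$ (at $q=0$) and the value $\min\{a-1,b\}$ (at $q=\min\{a-1,b\}$, which is a multiple of the gcd). Hence constancy of the rank is \emph{equivalent} to $\min\{a-1,b\}=\gcd(a-1,b)$, and the common value is then $\ell=\min\{a-1,b\}$. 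Lemma \ref{gcd-chain} shows this arithmetic condition is in turn equivalent to the existence of a small resolution with $\ell$ exceptional components, by writing $x_3^{a-1}+x_4^b$ as a product of $\gcd(a-1,b)$ factors and checking these are smooth branches exactly when the gcd equals the min. The same pattern handles the loop and Fermat cases. If you replace your ``minimal value'' reading with this constancy-forces-$\min=\gcd$ argument, the rest of your outline goes through and coincides with the paper's proof.
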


In particular, we establish the following, which complements \cite[Theorem 1.8]{EL}:

\begin{corINTRO}
    Let $F$ be the Milnor fiber of a quasi-homogeneous isolated $cA_n$ singularity which admits a small resolution. Then $\dim SH^{r}(F)=n$ for all $r\leq 1$. 
\end{corINTRO}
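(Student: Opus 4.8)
The plan is to combine Theorem \ref{thmD} with the explicit rank formulas of Theorems \ref{thmA}, \ref{thmB} and \ref{thmC}, restricted to those invertible $cA_n$ singularities that admit a small resolution. Since by Theorem \ref{thmD} Conjecture \ref{conj} holds for all our polynomials, a singularity in one of the three families admits a small resolution (necessarily with $\ell$ exceptional components for some $\ell \geq 1$) precisely when $\dim SH^r(F) = \ell$ is constant in every negative degree $r$. So the first step is to identify, within each family, exactly which parameter values force the piecewise formula for $\dim SH^{2k}(F)$ to be independent of $k \leq 0$, and then to read off the common value $\ell$. The key observation is that for $k \leq 0$ the quantity $1-k$ ranges over all positive integers, so $(a-1)(1-k)$ (resp. $\min\{c-1,d-1\}(1-k)$, resp. $\min\{e,f\}(1-k)$) modulo the period runs through an arithmetic progression; the rank is constant iff this progression avoids every residue on which the piecewise function is non-constant.

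For the chain case, the period is $a-1+b$ and the function of $q$ is constant only if its value never depends on $q$; inspecting the four cases, constancy forces $\min\{a-1,b\} = \max\{a-1,b\}$, i.e. $b = a-1$, in which case every nonzero $q$ in range gives $\min\{a-1,b\}=a-1$, and $q=0$ gives $\gcd(a-1,b)=\gcd(a-1,a-1)=a-1$; hence $\ell = a-1 = b$, which matches the known fact (\cite[Theorem 1.8]{EL} / \cite{reid1}) that these are the $cA_n$ singularities $x_1^2+x_2^2+x_3^{a-1}x_4+x_4^{a-1}$ with a small resolution. The loop and Fermat cases are handled identically: for the loop family constancy of the formula forces $\min\{c,d\} = \max\{c,d\}$, i.e. $c=d$, giving common value $\gcd(c-1,d-1)+1 = (c-1)+1 = c$ at $q=0$ and $\min\{c,d\}=c$ otherwise, but one must double-check that the "$q=0$" branch $\gcd(c-1,d-1)+1$ agrees with the generic value — here it does because $c=d$. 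For the Fermat family, constancy forces $\min\{e,f\}=\max\{e,f\}$, i.e. $e=f$, and then the value is $\gcd(e,f)-1 = e-1$ at $q=0$ and $\min\{e,f\}-1 = e-1$ otherwise, so $\ell = e-1$. In each case, after checking the "$q=0$" branch is consistent, one reindexes: the statement concerns $SH^r(F)$ for all $r \leq 1$, and since $\dim SH^{2k}(F) = \dim SH^{2k+1}(F)$ for $k \leq 0$, constancy over even negative degrees together with this parity symmetry (and $SH^1(F) = SH^{2\cdot 0 + 1}(F)$) yields $\dim SH^r(F) = n$ for all $r \leq 1$, where $n$ is the common value $\ell$; one must finally match this $\ell$ with the integer $n$ appearing in the name "$cA_n$", which is precisely the content one extracts from the formal normal form of a quasihomogeneous $cA_n$ singularity with a small resolution.

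The main obstacle I anticipate is not the arithmetic but the bookkeeping at the boundary residue $q = 0$: in all three families the top branch is written as $\gcd(\cdot,\cdot) \pm \text{const}$ rather than $\min\{\cdot,\cdot\} \pm \text{const}$, so I must verify in each case that the degeneration hypothesis (equality of the two parameters) forces the gcd to equal the relevant parameter, so that the $q=0$ value coincides with the generic value; if it did not, the rank would jump whenever $(a-1)(1-k) \equiv 0$, which happens infinitely often, and no small resolution could exist. A secondary point requiring care is the converse direction — that whenever the two parameters are \emph{not} equal, the rank genuinely varies with $k$, so these singularities do not admit small resolutions; this also follows from Theorem \ref{thmD} but should be noted so that the corollary's hypothesis "admits a small resolution" is seen to be exactly equivalent to the parameter-equality condition, pinning down $n$ unambiguously.
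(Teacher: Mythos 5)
Your overall strategy (combine Theorem \ref{thmD} with the explicit formulas of Theorems \ref{thmA}--\ref{thmC}, determine when the piecewise expression is constant over the residues $q$ actually attained, and then read off the common value) is the right one, and it is essentially the route the paper takes via Propositions \ref{conjtrue-chain}, \ref{conjtrue-loop} and \ref{conjtrue-Fermat} together with Lemma \ref{gcd-chain}. However, there is a genuine error in the key arithmetic step: the classification of the parameters for which the rank is constant. You claim that constancy forces $\min\{a-1,b\}=\max\{a-1,b\}$ (and likewise $c=d$, $e=f$). This is false. Since $1-k$ runs over all positive integers, the residues $q$ attained are exactly the multiples of $g=\gcd(a-1,\,a-1+b)=\gcd(a-1,b)$; the value at $q=g$ is $g$ and the value at $q=2g$ is $2g$ whenever $2g\le\min\{a-1,b\}$, so the correct necessary and sufficient condition for constancy is $\min\{a-1,b\}=\gcd(a-1,b)$, i.e.\ the smaller parameter \emph{divides} the larger — equality is only a special case. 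For instance $w_{\text{chain}}^{3,4}$ has $a-1=2$, $b=4$, $\gcd(2,4)=2=\min\{2,4\}$: the rank is constant equal to $2$, the dual singularity is a $cA_2$ admitting a small resolution with two exceptional curves, yet $a-1\neq b$. The same defect occurs in your loop case (correct condition: $\min\{c-1,d-1\}=\gcd(c-1,d-1)$, e.g.\ $(c,d)=(3,5)$) and Fermat case (correct condition: $\min\{e,f\}=\gcd(e,f)$, e.g.\ $(e,f)=(2,4)$). Consequently your argument only establishes the corollary for a proper subfamily of the singularities satisfying the hypothesis, and the identification of the common value with $n$ must also be redone: in general $\ell=\min\{a-1,b\}$ (resp.\ $\min\{c,d\}$, $\min\{e,f\}-1$), which equals $n$ because the plane curve $\check w|_{x_1=x_2=0}=0$ then has exactly $\ell+1=n+1$ distinct smooth branches (Lemma \ref{crit} and Lemma \ref{gcd-chain}); your appeal to a ``formal normal form'' is not needed once this divisibility condition is in place.

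A smaller remark: for the direction you actually need (small resolution $\Rightarrow$ constant rank $n$), you could avoid the full two-way classification by citing only the forward implication proved in Propositions \ref{conjtrue-chain}--\ref{conjtrue-Fermat}, but you would still need the correct divisibility criterion to know which parameters the hypothesis selects, and the explicit formulas of Corollaries \ref{cor:main_chain}, \ref{cor:main_loop} and \ref{cor:main_Fermat} to extend the conclusion from negative degrees to $r\in\{0,1\}$ — a point your parity argument handles correctly.
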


In a different direction, we further use Theorems \ref{thmA}, \ref{thmB}, and \ref{thmC} to prove, as already mentioned, that the links of any two invertible $cA_n$ singularities are contactomorphic if and only if the two singularities are deformation
equivalent as in Definition \ref{def:deformation}. In other words, we show invertible $cA_n$ singularities are completely determined, up to smooth deformation, by the contact topology of their links:



\begin{thmINTRO}[= Proposition \ref{cvsf-rho1}]\label{thmE}
   Any two invertible $cA_n$ singularities have contactomorphic links if and only if, up to a holomorphic change of variables, their defining polynomials are deformation equivalent (as in Definition \ref{def:deformation}). 
   
   In particular, this can only happen if both singularities admit a small resolution with the same number of exceptional curves.
\end{thmINTRO}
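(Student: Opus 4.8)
The plan is to treat the two implications separately, as they are of quite different natures.

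For the ``if'' direction, suppose that after a holomorphic change of variables the two defining polynomials become deformation equivalent. A holomorphic change of variables induces a biholomorphism of the analytic germs and hence preserves the link together with its contact structure, so it suffices to show that a deformation equivalence as in Definition~\ref{def:deformation} yields contactomorphic links. Such an equivalence is witnessed by a one-parameter family of invertible $cA_n$ singularities, all of which are isolated; the corresponding links then assemble into a smooth family of contact manifolds over the parameter interval, and Gray stability produces a contactomorphism between the two ends. I would write this out but it is routine.

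The ``only if'' direction is the substantial one. Suppose the links $L_1,L_2$ of two invertible $cA_n$ singularities $V(\check w_1)$, $V(\check w_2)$ are contactomorphic. Being links of Gorenstein terminal threefold singularities, they are index positive by \cite{McLean} (see also Appendix~A), so Theorem~\ref{thm:bigrading_invariant} applies and the \emph{bigraded} symplectic cohomology groups of the two Milnor fibres agree in all negative degrees. The plan is then to show that this bigraded datum is a complete invariant of the pair (family, parameters) up to the holomorphic coordinate changes and deformations appearing in the statement. From Theorems~\ref{thmA}, \ref{thmB}, \ref{thmC} and the bigraded computations behind them one extracts, in each family, a short explicit list of numerical invariants of $SH^{*}$ in negative degrees: the tent-shaped profile $k\mapsto\dim SH^{2k}(F)$, which already recovers the unordered pair $\{a-1,b\}$ (respectively $\{c-1,d-1\}$, respectively $\{e,f\}$) together with its gcd and modulus, \emph{and} the weights carried by the generating classes, i.e.\ the normalized weight system of $\check w_i$. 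The weight grading is genuinely needed here: for example $\check w_{\text{chain}}^{4,2}$ and $\check w_{\text{chain}}^{3,3}$ have the same $\Z$-graded $SH$ in negative degrees but different Milnor numbers --- so they are not deformation equivalent --- and they can only be separated by the weights. One then runs a case analysis showing that, once both the profile and the weights are recorded, the data pins down (family, parameters) \emph{except} for a short explicit list of coincidences: the tautological symmetries $(c,d)\leftrightarrow(d,c)$ and $(e,f)\leftrightarrow(f,e)$, realized by permuting variables, and boundary identifications such as $\check w_{\text{chain}}^{2,b}\cong w_{\text{Fermat}}^{2,2b}$, realized by completing the square in $x_3$, together with their analogues in the loop family. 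For each coincidence one exhibits the relevant biholomorphism, or a $\mu$-constant family realizing a deformation equivalence, and one checks that no other coincidences occur; this gives the asserted equivalence.

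Finally, the ``in particular'' clause follows quickly: by Lemma~\ref{crit} and Theorem~\ref{thmD}, a singularity in one of our three families admits a small resolution with $\ell$ exceptional curves exactly when its profile $\dim SH^{\le 1}(F)$ is constantly equal to $\ell$, and since contactomorphic links share this profile, both singularities then admit a small resolution, with the same number of exceptional components. The step I expect to be the main obstacle is precisely the combinatorial and number-theoretic injectivity at the heart of the ``only if'' direction: proving that the bigraded invariants separate all deformation-equivalence classes across the three families and all admissible parameter ranges, while at the same time producing the exhaustive list of coincidences with an explicit coordinate change or deformation attached to each. Reconciling the gcd-driven value at $q=0$, the various overlaps between the chain, loop and Fermat profiles in their small-parameter regimes, and the extra rigidity coming from the weights is where the real work lies.
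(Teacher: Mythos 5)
Your overall strategy coincides with the paper's: both directions are handled the same way (Lemma \ref{crit-links} plus Gray stability for the ``if'' direction; index positivity and Theorem \ref{thm:bigrading_invariant} to make the bigraded $SH^{<0}$ a contact invariant, followed by a numerical case analysis, for the ``only if'' direction), and your example of $\check{w}^{4,2}_{\text{chain}}$ versus $\check{w}^{3,3}_{\text{chain}}$ correctly pinpoints why the $\mathbb{Z}$-graded ranks alone cannot suffice. However, the proposal stops exactly where the proof begins. The entire content of Theorem \ref{thmE} is the ``combinatorial and number-theoretic injectivity'' that you defer: in the paper this occupies Propositions \ref{fvsf} through \ref{cvsf-rho1} and requires (a) isolating \emph{specific} computable invariants of the bigraded groups --- $\lambda$, $\rho$ and, when $\rho\geq 2$, the first concentrated degree $\kappa$ and its support $\sigma$ (Table \ref{table:invariants}); (b) recognizing that these degenerate when $\rho\leq 1$, so that a different invariant (the formal period $\theta$ together with $\rho_b$) must be introduced and shown to be well defined (Lemmas \ref{fperiodfermat} and \ref{fperiodchain}); and (c) solving the resulting Diophantine constraints in each of the six pairwise comparisons between the three families. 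Your proposed substitute for $(\kappa,\sigma)$ --- that the bigrading recovers ``the normalized weight system of $\check{w}_i$'' --- is itself an unproved assertion rather than an observation; the paper never claims this, and only extracts the single bidegree $\sigma$ (resp.\ $\rho_b$) of one distinguished concentrated piece.

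Two smaller points. Your justification of the ``in particular'' clause is a non sequitur as written: contactomorphic links share the profile $k\mapsto \dim SH^{2k}(F)$, but that does not make the profile constant, which is what Theorem \ref{thmD} requires in order to produce a small resolution. In the paper this clause is a \emph{conclusion} of the cross-type comparisons (Propositions \ref{rho2} and \ref{cvsf-rho1}), where equating all the invariants forces $\min=\gcd$ on both sides. On the positive side, your identification of $\check{w}^{2,b}_{\text{chain}}$ with $w^{2,2b}_{\text{Fermat}}$ by completing the square in $x_3$ is correct and gives a cleaner route to the one genuine cross-family coincidence, which the paper instead realizes as a deformation equivalence in the proof of Proposition \ref{cvsf-rho1} via Example \ref{exe:def0}.
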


We achieve this by systematically keeping track of a bigrading on $SH^{\leq 1}(F)$ that comes from the Gerstenhaber bracket and that yields some useful contact invariants of the link. We refer the reader to Section \ref{sec:top} for the details.

Since smooth deformations, as well as holomorphic change of coordinates,  preserve all the classical topological invariants of an isolated hypersurface singularity, we further obtain: 

\begin{corINTRO}[= Corollary \ref{invmilnornumber}]
Two invertible $cA_n$ singularities with contactomorphic links must have the same Milnor number.
\end{corINTRO}

Note that Theorem \ref{thmE} can be used as a means of distinguishing different contact structures on the same underlying smooth $5$-manifold.  For example, the singularities coming from Fermat-type polynomials $w^{e,f}_{Fermat}$ all have links which are diffeomorphic to $\#_{\ell}(S^2\times S^3)$, where $\ell=\gcd(e,f)-1$ and, by convention, $\#_0(S^2\times S^3)=S^5$. Therefore, we can distinguish infinitely many different contact structures on $\#_{\ell}(S^2\times S^3)$:

\begin{corINTRO}[Computations in Subsections   \ref{sec:comparison} and \ref{sec:rho1}]
Two invertible $cA_n$ singularities coming from two suspended polynomials of Fermat type define the same contact structure on $\#_{\ell} (S^2\times S^3$) if and only if they both admit a small resolution and they both have the same Milnor number. 

In particular, any two contact structures on $S^5$ coming from distinct suspended Fermat-type polynomials are never contactomorphic.
\end{corINTRO}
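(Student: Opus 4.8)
The plan is to deduce this from Theorem~\ref{thmE}, specialized to Fermat-type polynomials, together with the explicit ranks of Theorem~\ref{thmC}, the validity of Conjecture~\ref{conj} established in Theorem~\ref{thmD}, and the Milnor-number invariance of Corollary~\ref{invmilnornumber}. The first step is purely bookkeeping: for a suspended Fermat polynomial $w^{e,f}_{Fermat}$ one records that the link is diffeomorphic to $\#_{\ell}(S^2\times S^3)$ with $\ell=\gcd(e,f)-1$, that the Milnor number of this Brieskorn--Pham singularity equals $(e-1)(f-1)$, and that, by Theorem~\ref{thmD}, the singularity admits a small resolution if and only if $\dim SH^{r}(F)$ is independent of $r$ for $r\le 1$. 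Reading Theorem~\ref{thmC}, as $k$ runs through the non-positive integers the residue $q\equiv \min\{e,f\}(1-k)\pmod{e+f}$ ranges over exactly the multiples of $\gcd(e,f)$ modulo $e+f$, and a short case-check of the four-line formula shows that the resulting dimensions are all equal---necessarily to $\gcd(e,f)-1$---precisely when $\min\{e,f\}$ divides $\max\{e,f\}$, the value $2\gcd(e,f)-1$ occurring otherwise. (Equivalently, this is what Lemma~\ref{crit} returns for $w^{e,f}_{Fermat}$.) Normalizing so that $e\le f$, it follows that the singularity of $w^{e,f}_{Fermat}$ admits a small resolution exactly when $f=em$ for some integer $m\ge 1$, in which case its link is $\#_{e-1}(S^2\times S^3)$ and its Milnor number is $(e-1)(em-1)$.

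With this dictionary in hand I would argue both implications. For the forward direction, if two suspended Fermat-type singularities have contactomorphic links then Theorem~\ref{thmE} forces their defining polynomials to be deformation equivalent (Definition~\ref{def:deformation}) up to a holomorphic change of coordinates, and in particular forces both singularities to admit a small resolution with the same number of exceptional curves; Corollary~\ref{invmilnornumber} then gives that they have equal Milnor number. Conversely, assume both singularities admit a small resolution and have equal Milnor number. Since the two contact structures are compared on one and the same manifold $\#_{\ell}(S^2\times S^3)$, their values of $\ell$ agree, so in the normalization above $e=e'=\ell+1$; equality of Milnor numbers then reads $(e-1)(em-1)=(e-1)(em'-1)$, which forces $m=m'$ because $e-1\ge 1$. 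Hence the two polynomials coincide up to permuting $x_3$ and $x_4$, and their links are literally the same contact manifold, proving the equivalence. The concluding statement about $S^5$ is the case $\ell=0$, i.e.\ $\gcd(e,f)=1$; since $e,f>1$ this is incompatible with $\min\{e,f\}$ dividing $\max\{e,f\}$, so no suspended Fermat polynomial with link $S^5$ defines a singularity admitting a small resolution, and the forward direction then shows that two distinct such polynomials can never have contactomorphic links.

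I expect the converse direction to be the point requiring the most care, the reason being that the hypothesis that both singularities admit a small resolution is genuinely needed: even among the suspended Fermat-type singularities that do admit a small resolution, the Milnor number alone is not a complete invariant---one also needs the number $\ell$ of exceptional curves, equivalently the diffeomorphism type $\#_{\ell}(S^2\times S^3)$ of the link, which in the statement is supplied precisely by comparing the two contact structures on a common underlying manifold. For instance $(e,f)=(2,26)$ and $(e,f)=(6,6)$ both admit a small resolution and both have Milnor number $25$, but their links are $S^2\times S^3$ and $\#_{5}(S^2\times S^3)$; by Theorem~\ref{thmC} one sees this at once, as $\dim SH^{r}(F)$ is identically $1$ in the first case and identically $5$ in the second, for every $r\le 1$. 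Assembling this bookkeeping together with the computation of the bigraded invariants extracted from $SH^{\le 1}(F)$ underlying Theorem~\ref{thmE} is exactly what is carried out, in the Fermat case, in Subsections~\ref{sec:comparison} and~\ref{sec:rho1}.
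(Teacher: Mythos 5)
Your proof is correct and takes essentially the same route as the paper, which deduces the corollary from the Fermat-vs-Fermat comparisons (Propositions \ref{fvsf} and \ref{fvsf-1}) together with exactly the dictionary you record: $\ell=\gcd(e,f)-1$, $\mu=(e-1)(f-1)$, and small resolution iff $\min\{e,f\}=\gcd(e,f)$, so that fixing $\ell$ and $\mu$ pins down $(e,f)$. The only point worth flagging is that for the forward direction the sharper source is Propositions \ref{fvsf} and \ref{fvsf-1} themselves, which force the two polynomials to coincide up to swapping $x_3$ and $x_4$ (making equality of Milnor numbers immediate, without Corollary \ref{invmilnornumber}); the clause ``both admit a small resolution'' is then only non-vacuous under the implicit convention that the two singularities are distinct, a degenerate case that your appeal to the ``in particular'' part of Theorem \ref{thmE} inherits from the paper's own phrasing rather than introduces.
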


Viewing smooth manifolds as links of singularities to produce new contact structures on them goes back to \cite{Ustspheres}. Closer to our setting is the beautiful work of Uebele in \cite{uebelebr}, where he distinguished the contact structures coming from Fermat-type polynomials for $e=2$, where the links are diffeomorphic to either $S^5$ or $S^2\times S^3$. This was generalized in \cite{EL} in various directions, but only in the case where the singularities considered admit a small resolution.

As a concluding remark, we would like to add that we believe that the significance of Theorem \ref{thmE} and its corollaries goes beyond $cA_n$ singularities, in that they hint towards a stronger connection between symplectic and algebraic geometry of isolated singularities. For instance, one could optimistically view Corollary \ref{invmilnornumber} as a first step towards establishing the uniqueness of Milnor fillings, at least in dimension three. 

Finally, we would also like to mention that while this manuscript was in preparation, we learned that M. Habermann and J. Asplund are also working on a similar project. Their work, however, differs from ours, in that they are trying to find an intrinsically symplectic approach to computing symplectic cohomology of Milnor fibers of $cA_n$ singularities, which does not rely on mirror symmetry arguments.

\subsection{Organization of the paper}

The paper is organized as follows: In Section \ref{sec:back} we introduce the notation and we recall the necessary background notions and results on the algebraic geometry of $cDV$ singularities and the smooth and symplectic topology of their links. We also include a self-contained discussion of the relevant mirror symmetry setup for invertible polynomials. Section \ref{sec:computing} contains the necessary preliminary lemmas and propositions for our computations. In Section \ref{sec:chain} we prove Theorems \ref{thmA}, \ref{thmB} and \ref{thmC} and we explain how one can use these to prove Theorem \ref{thmD}, thus confirming that Conjecture \ref{conj} holds for all invertible $cA_n$ singularities. Finally, in Section \ref{sec:top} we describe how to use the bigrading on $SH^{*}(F)$ to prove Theorem \ref{thmE}.\par
We also include two appendices: In Appendix \hyperref[app:indexpos]{A} we present some generalities on the symplectic cohomology of Liouville domains with an emphasis on index positivity. Appendix \hyperref[app:expcomp]{B}, in turn, contains a few explicit examples that illustrate the applicability of our formulas from Section \ref{sec:chain}. In particular, we explain how to recover \cite[Theorem 3.13]{EL}.

\section*{Acknowledgments}

We would like to thank Jonny Evans, Matthew Habermann, and Chris Peters for insightful conversations and their valuable comments. 
We are also grateful to Yanki Lekili for kindly pointing out a mistake in a previous version of our work and for drawing our attention to the refined conjecture in the manuscript \cite{UL}.

This project originated from a seminar we organized in Leiden during the Fall of 2022 and we would like to warmly thank Emma Brakkee, Chris Peters, George Politopoulos, and Adrien Sauvaget for their participation. Finally, we would like to mention that the comprehensive lecture notes \cite{chris-notes} written by Chris Peters are also based on this seminar. 
\section{Background}\label{sec:back}

In this section, we present the relevant background material that will be needed for our exposition. 

\subsection{Compound Du Val singularities and small resolutions}\label{sub:cDV}

We start with a brief overview of a class of threefold singularities, which are known as compound Du Val singularities, as well as their connection to small resolutions.

\begin{defi}\label{def:cdv}
A \textbf{compound Du Val singularity}, or cDV for short, is a germ of an algebraic variety
(or of an analytic space) $(X,p)$ which is analytically equivalent to the germ of a threefold hypersurface singularity $(\{f=0\},0)$ in the affine space $\mathbb{A}^4$ given by an equation of the form
\begin{equation}
f(x_1,x_2,x_3,x_4)=g_{X_n}(x_1,x_2,x_3)+x_4\tilde{g}(x_1,x_2,x_3,x_4)
\label{cdv}
\end{equation}
where $g_{X_n}=0$ is the equation of a Du Val (surface) singularity (also known as ADE) and $\tilde{g}$ is an arbitrary polynomial.
\end{defi}

 In other words, a cDV singularity is a threefold singularity such that a generic hyperplane section is a Du Val singularity. In particular, borrowing the notation from \cite{min-discr}, in (\ref{cdv}) $X_n$ stands for $A_n,D_n$ or $E_n$ and the polynomial $g_{X_n}$ is one of the following polynomials:

\vspace{-0.5cm}
\begin{align*}
 g_{A_n}&= x_1^2+x_2^2+x_3^{n+1} \quad (n\geq 1)  \\
 g_{D_n}&= x_1^2+x_2^2z+x_3^{n-1} \quad (n\geq 4) \\
 g_{E_6} &= x_1^2+x_2^3+x_3^4 \\
 g_{E_7}&= x_1^2+x_2^3+x_2x_3^3 \\
 g_{E_8}&=x_1^2+x_2^3+x_3^5 
\end{align*}

Moreover, we then say:

\begin{defi}
    A cDV singularity $(X,p)$ is of type $cX_n$ if $X_n$ is minimal in a representation of $(X,p)$ by equation (\ref{cdv}), where the ordering is as in \cite{min-discr}.
\end{defi}

It turns out that, in dimension three, cDV singularities are the only Gorenstein singularities that can admit a so-called small resolution. 

Small resolutions are always crepant, meaning they do not affect the canonical class, and they are characterized by the property of having a small exceptional locus. The precise definition is as follows:

\begin{defi}\label{def:small}
    A resolution of singularities is called small if the exceptional locus has codimension at least two. 
\end{defi}

A basic and well-known example of an isolated cDV singularity that admits a small resolution is given by the conifold singularity:

\begin{exe}[\cite{atiyah}]
 Consider $X: \{x_1x_2-x_3x_4=0\}$ in $\A^4$. Then $X$ has an isolated cDV singularity at the origin of type $cA_1$. The blow-up $Y \to X$ along the plane $x_1=x_4=0$ is a small resolution having an irreducible exceptional curve whose normal bundle (in $Y$) is $\mathcal{O}(-1)\oplus \mathcal{O}(-1)$. The blow-up along the plane $x_2=x_3=0$ is another small resolution, and the two are related by a so-called Atiyah flop.
 \label{conifold}
\end{exe}

In general small resolutions of cDV singularities do not always exist, and for (isolated) $cA_n$ singularities one has the following criterion:

\begin{lema}[{\cite[Theorem 1.1]{katz} and \cite[p. 676]{friedman}}]
Any isolated c$A_n$ singularity admitting a small resolution is given by an equation $x_1^2+x_2^2+g(x_3,x_4)=0$ where the germ of the plane curve $g=0$ at the origin has $n+1$ distinct smooth branches. And, conversely, any such singularity admits a small resolution. 
\label{crit}
\end{lema}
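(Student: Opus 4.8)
The plan is to reduce to a normal form and then prove the two implications separately. By the splitting lemma (separating off the rank-$2$ nondegenerate part of the Hessian), after an analytic change of coordinates any isolated $cA_n$ singularity can be written $X=\{x_1^2+x_2^2+g(x_3,x_4)=0\}$; the singularity is isolated exactly when the plane curve germ $\{g=0\}$ is reduced, and a generic hyperplane section of such an $X$ is the surface singularity $A_{m-1}$ with $m=\mathrm{mult}_0(g)$, which for a $cA_n$ singularity is $A_n$, so $\mathrm{mult}_0(g)=n+1$. Writing $g=g_1\cdots g_r$ for the irreducible factors of $g$, we then have $\sum_i \mathrm{mult}_0(g_i)=n+1$, hence $r\le n+1$ with equality if and only if every branch $\{g_i=0\}$ is smooth. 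Thus the lemma is equivalent to: $X$ admits a small resolution if and only if $r=n+1$.

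For the ``if'' implication I would argue by induction on $n$, the case $n=0$ being trivial. Put $u=x_1+ix_2$, $v=x_1-ix_2$, so that $X=\{uv=g_1\cdots g_{n+1}\}\subset\A^4$ (after replacing $g_1$ by $-g_1$), and blow up the prime Weil divisor $D=\{u=0=g_1\}\subset X$. Since each $\{g_i=0\}$ is smooth and two distinct branches meet only at the origin, $D$ is Cartier on $X\setminus\{0\}$, so $\mathrm{Bl}_{D}X\to X$ is an isomorphism there; and choosing coordinates with $g_1=x_3$, a direct computation on the two affine charts of the blow-up shows that one is isomorphic to $\A^3$, while the other is isomorphic to $\{x_1^2+x_2^2+g_2\cdots g_{n+1}=0\}$ — a $cA_{n-1}$ singularity whose defining curve has $n$ distinct smooth branches — the fibre over the origin being one-dimensional. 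Resolving the residual singularity by the inductive hypothesis and composing yields a birational morphism $Y\to X$ which is an isomorphism over $X\setminus\{0\}$ and has one-dimensional fibre over $0$, i.e.\ a small resolution; tracking the exceptional loci shows its exceptional curve is a chain of $n$ copies of $\P^1$.

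For the ``only if'' implication, suppose $\pi\colon Y\to X$ is a small resolution and let $\ell$ be the number of irreducible components of the exceptional curve $Y_0=\pi^{-1}(0)$. On the one hand, a standard computation identifies $\mathrm{Cl}(X)$ with the group generated by the divisors $B_i=\{u=0=g_i\}\subset X$ modulo the single relation $\sum_i B_i=\mathrm{div}(u)\sim 0$, so $\mathrm{Cl}(X)\cong\Z^{\,r-1}$; since $\pi$ is small, $\mathrm{Cl}(X)\cong\mathrm{Cl}(Y)=\mathrm{Pic}(Y)$, and since $X$ is cDV, hence has rational singularities (so $H^i(\mathcal{O}_Y)=0$ for $i>0$), the exponential sequence gives $\mathrm{Pic}(Y)\cong H^2(Y,\Z)$, and as $Y$ deformation retracts onto $Y_0$ — a connected complete curve with $\ell$ irreducible components — we get $H^2(Y,\Z)=H^2(Y_0,\Z)\cong\Z^{\,\ell}$. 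Hence $\ell=r-1\le n$. On the other hand, a generic hyperplane $H$ through the origin meets $X$ in an $A_n$ surface singularity, whose strict transform $\widetilde S\subset Y$ is smooth (this requires a Bertini-type argument along $Y_0$) and maps properly and birationally onto $X\cap H$; this map factors through the minimal resolution of $A_n$, whose chain of $n$ $(-2)$-curves has $n$ distinct strict transforms in $\widetilde S$, each contracted by $\pi$ and therefore a component of $Y_0$, so $\ell\ge n$. Combining the two inequalities forces $\ell=n$ and $r=n+1$.

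I expect the ``only if'' implication to be the main obstacle, and within it the lower bound $\ell\ge n$: the essential point is that the strict transform of a generic hyperplane section remains smooth along the entire exceptional fibre, which is where Katz's theorem does its real work. The upper bound $\ell\le r-1$ is comparatively formal once one grants the computation of $\mathrm{Cl}(X)$ and the identification of the exceptional curves with a basis of $\mathrm{Pic}(Y)$, while the ``if'' implication amounts to the explicit chart computation above, the only subtle points there being that blowing up $D$ introduces no exceptional divisor and that the residual singularity is again of the prescribed type, so that the induction can be iterated.
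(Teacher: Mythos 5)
The paper does not actually prove Lemma \ref{crit}; it is quoted from Katz and Friedman, so there is no internal argument to compare yours against. Judged on its own terms, your reduction to the normal form $x_1^2+x_2^2+g=0$ with $\mathrm{mult}_0(g)=n+1$, the reformulation as ``$r=n+1$ branches'', and the ``if'' direction by inductively blowing up the Weil divisors $\{u=g_1=0\}$ are all correct; the chart computation is exactly the one underlying the small $\Q$-factorialization used later in the paper (proof of Theorem \ref{thm:new_conj}), and the upper bound $\ell=r-1$ via $\mathrm{Pic}(Y)\cong\mathrm{Cl}(X)\cong\Z^{r-1}$ and $\mathrm{Pic}(Y)\cong H^2(Y_0,\Z)$ is sound modulo the standard facts you invoke (rationality of cDV points, $Y_0$ a tree of rational curves, working on a contractible Stein representative).

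The genuine gap is the one you flag yourself: the lower bound $\ell\ge n$ rests on the strict transform $\widetilde S=\pi^{-1}(H\cap X)$ of a generic hyperplane section being \emph{smooth along all of} $Y_0$, and this does not follow from Bertini, because $Y_0$ is precisely the base locus of the pulled-back linear system $\pi^*\lvert H\rvert$. A priori $\widetilde S$ could be a normal surface with Du Val points on $Y_0$, i.e.\ only a crepant \emph{partial} resolution of the $A_n$ section, in which case it contributes fewer than $n$ exceptional curves and the inequality fails; ruling this out is exactly the nontrivial content of Katz's theorem and cannot be waved through. To close it you would need either a rank analysis of $d\pi$ along $Y_0$ (showing $\operatorname{rank} d\pi\ge 2$ at generic points of each component and $\ge 1$ at the finitely many special points, so that the incidence variety of singular members has dimension $<4$), or Reid's general-elephant machinery, or -- perhaps most economically given what you have already proved -- a different route for ``only if'': any small resolution is a $\Q$-factorialization, all $\Q$-factorializations are connected by flops and flops of terminal threefolds preserve the analytic singularity type (Koll\'ar), and your explicit iterated blow-up produces a $\Q$-factorialization whose residual singularities are $\{uv=g_i\}$ for each singular branch $g_i$; hence a smooth one exists iff every branch is smooth. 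As written, the proposal is an accurate skeleton with one load-bearing step outsourced to ``a Bertini-type argument'' that is not actually of Bertini type.
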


Moreover, as Example \ref{conifold} already shows, when small resolutions do exist, they are not unique. Nonetheless, their exceptional sets are isomorphic and, therefore, it makes sense to talk about the number of its irreducible components.

It is well known that whenever an isolated cDV singularity admits a small resolution such that the exceptional curve has $\ell$ irreducible components, then its link $L$ is diffeomorphic to $\#_{\ell}(S^2\times S^3)$. In particular, $H^2(L)\simeq H_3(L)$ and $H_2(L)\simeq H^3(L)$ are both free of rank $\ell$. The converse, however, is not true.

For example, any $cDV$ singularity that is given by $x_1^2+x_2^2+x_3^e+x_4^f=0$, with $\gcd(e,f)>1$, is such that $H_2(L)\simeq H^3(L)$ is free of rank $\ell=\gcd(e,f)-1$ and $L$ is diffeomorphic to $\#_{\ell}(S^2\times S^3)$ \cite[Theorem 10.3.3]{sasakian}. But, as Lemma \ref{crit} tells us, such singularities do not always admit a small resolution.  

In general, the link of an isolated hypersurface singularity $(X,p)$ is defined as the intersection of $X$ with a small sphere centered at $p$. If $X$ has (complex) dimension $n-1$, then the link is a smooth and compact manifold of (real) dimension $2n-3$, which is $n-3$ connected. 

For the convenience of the reader, we recall next some generalities about links of hypersurface singularities as well as their natural contact structures.

\subsection{The link of a singularity and its contact structure}
\label{subsec:links}

 For simplicity, we may assume $X\subset \A^n$ is given by the zero set of a polynomial function and the (isolated) singular point $p$ is the origin. That is, we consider singular algebraic varieties of the form $V(f)=\{x\in \mathbb{C}^n\,:\ f(x)=0\}$, where $f\in \mathbb{C}[x_1,\ldots,x_n]$.

We then define the \textbf{Milnor number} of the singularity to be the dimension of the algebra of functions modulo the Jacobian ideal of $f$, namely the number 
    \[
\mu(f)=\dim_\mathbb{C}\frac{\mathbb{C}[x_1,\ldots,x_n]}{\left\langle\frac{\partial f}{\partial x_1},\ldots \frac{\partial f}{\partial x_n}\right\rangle}.
\]

And, moreover, we view each $V(f)$ as a limit (degeneration) of a family of varieties
\[
V_t(f)=\{x\in \mathbb{C}^n\,:\,f(x)=t\}
\]
The topology of such families was extensively studied by Milnor (\cite{Milnor}), who showed the following:

\begin{prop}
The preimage of each sufficiently small 
complex number $t$ intersects a small open ball of radius $\delta$ in
a smooth, parallelizable $(2n-2)$-manifold
\[
F_t=B^{2n}(\delta)\cap f^{-1}(t),
\]
called the \textbf{Milnor fiber} of the singularity.  
Moreover, $F_t$ has the homotopy type of a bouquet of $(n-1)$-spheres,
where the number of spheres is given by the Milnor number of the singularity:
\[
F_t\simeq \bigvee_{\mu(f)} S^{n-1}.
\]
\end{prop}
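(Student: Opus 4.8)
The plan is to reproduce Milnor's argument from \cite{Milnor} in three stages: showing $F_t$ is a smooth parallelizable manifold, computing its connectivity, and counting the cells of a minimal CW model. First, since $f$ has an isolated singularity at the origin, $0$ is an isolated critical value, so for $t\neq 0$ small the level set $f^{-1}(t)$ is smooth near the origin by the implicit function theorem. A curve-selection-lemma argument (as in \cite{Milnor}) shows that for $\delta$ small enough the sphere $S^{2n-1}(\delta)$ is transverse to $f^{-1}(t)$ for all sufficiently small $t$, so that $F_t=B^{2n}(\delta)\cap f^{-1}(t)$ is a smooth $(2n-2)$-manifold, with boundary once one passes to the closed ball, and in fact a Stein domain. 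At this point I would also invoke the Milnor fibration theorem, identifying the interior of $F_t$ with the fiber of the locally trivial fibration $f/|f|\colon S^{2n-1}(\delta)\setminus K\to S^1$, where $K$ is the link; this is the convenient model for the homotopy-theoretic steps.

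\textbf{Parallelizability.} I would argue that $TF_t$ is a complex vector bundle of rank $n-1$ that is stably trivial: the normal bundle of the hypersurface $F_t\subset\C^n$ is trivialized by $df$, so $TF_t\oplus\underline{\C}\cong T\C^n|_{F_t}$ is trivial. By the Andreotti--Frankel-type fact that a Stein manifold (or smooth affine variety) of complex dimension $n-1$ has the homotopy type of a finite CW complex of real dimension at most $n-1$, the base has homotopy dimension $\le n-1=\operatorname{rank}_{\C}TF_t$, and a stably trivial complex bundle of rank equal to the dimension of the base is trivial. Hence $F_t$ is parallelizable.

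\textbf{Connectivity and cell count.} The core is to show $F_t$ is $(n-2)$-connected and then that it admits a CW model with exactly $\mu(f)$ cells of dimension $n-1$. For this I would use a Morsification: replace $f$ by a generic small perturbation $f_\epsilon=f+\sum a_i x_i$ whose only critical points are nondegenerate with distinct critical values. The critical points of $f_\epsilon$ are the solutions of $\nabla f=-a$, and the gradient map germ $\nabla f\colon(\C^n,0)\to(\C^n,0)$ is finite of local degree $\dim_\C\C[x]/\langle\partial f/\partial x_1,\dots,\partial f/\partial x_n\rangle=\mu(f)$ precisely because the singularity is isolated; so there are exactly $\mu(f)$ such critical points, all of complex Morse index $n-1$. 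Complex Morse theory then shows the nearby smooth fiber is obtained, up to homotopy, from a point by attaching $\mu(f)$ cells of dimension $n-1$; in particular $F_t$ is $(n-2)$-connected and $H_{n-1}(F_t)$ is free of rank $\mu(f)$. Combined with the homotopy-dimension bound from the previous step, a $(n-2)$-connected complex of dimension $\le n-1$ is homotopy equivalent to the wedge $\bigvee_{\mu(f)}S^{n-1}$, which is the claim.

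I expect the main obstacle to be the Morse-theoretic heart: making precise that crossing a critical value of $f_\epsilon$ attaches a single $(n-1)$-cell, while controlling the boundary sphere and the non-properness of $f$ on the ball, and identifying the number of critical points of the Morsification with the algebraic Milnor number, i.e. that $\nabla f$ is finite of degree $\mu(f)$, which is exactly where the isolatedness hypothesis enters.
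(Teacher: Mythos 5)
The paper does not prove this proposition at all: it is quoted as a classical result of Milnor, with a citation to \cite{Milnor} in place of an argument. Your sketch is a correct reconstruction of the standard proof (transversality of the small sphere via the curve selection lemma, stable triviality of $TF_t$ plus the homotopy-dimension bound for parallelizability, and a Morsification with $\mu(f)$ nondegenerate critical points to obtain the $(n-2)$-connected bouquet), so it is consistent with, and fills in, what the paper merely cites.
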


 The link of the singularity is then defined precisely as the boundary of the Milnor fiber:
 
 \begin{defi}\label{def:link}
For sufficiently small $\delta>0$, the intersection 
\[
L_f:=V(f)\cap S^{2n-1}(\delta)
\]
of $V(f)$ with the sphere of radius $\delta$ centered at $0$ is a smooth manifold, whose diffeomorphism type does not depend on $\delta$. We call this manifold the \textbf{link} of the isolated singularity. 
\end{defi}

Now, it turns out that considering at each point of $L_f$ the subspace of the tangent space which is invariant under complex multiplication, we get a codimension one subbundle of the tangent bundle, also called the field of complex tangencies. These subspaces form a contact distribution on $L_f$, which is compatible in a suitable sense with the symplectic (in fact, Stein) structure of the Milnor fiber. In other words, we have:  

\begin{lema}\label{lemma:link_fillable}
    Let $L_f$ be the link of an isolated hypersurface singularity. The complex tangencies distribution defines a contact structure on $L_f$, which is symplectically fillable. 
\end{lema}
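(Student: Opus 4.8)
The plan is to show that the field of complex tangencies on $L_f$ is a contact structure and that it bounds the Milnor fiber as a Liouville (indeed Stein) filling. First I would recall the standard setup: the ambient $\mathbb{C}^n$ carries the standard K\"ahler form $\omega_0 = \frac{i}{2}\partial\bar\partial\|z\|^2 = dd^c\phi$ with K\"ahler potential $\phi(z) = \tfrac14\|z\|^2$, and the function $\phi$ is strictly plurisubharmonic. The hypersurface $V(f)$, away from its singular point, is a complex submanifold, so the restriction $\phi|_{V(f)\setminus\{0\}}$ is again strictly plurisubharmonic; consequently its regular sublevel sets $\{\phi \le \delta^2/4\}\cap V(f)$ are Stein domains for all sufficiently small regular values $\delta$. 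By the Milnor fibration picture recalled above, the boundary of such a domain is exactly $L_f = V(f)\cap S^{2n-1}(\delta)$, and Sard's theorem guarantees that $\delta$ can be chosen so that the sphere meets $V(f)\setminus\{0\}$ transversally, so that $L_f$ is a smooth regular level set of $\phi|_{V(f)}$.

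Next I would invoke the general principle (Eliashberg--Gromov) that the boundary of a Stein domain inherits a contact structure, namely the kernel of $\lambda := -d^c\phi|_{L_f}$ (equivalently $-d^c\phi$ restricted to $TL_f$), and that this $\lambda$ is a Liouville form on the Milnor fiber $F = \{\phi\le \delta^2/4\}\cap V(f)$ with $d\lambda$ the symplectic form. The contact condition $\lambda\wedge(d\lambda)^{n-2}\ne 0$ on $L_f$ is precisely the statement that $-d^c\phi$ is nondegenerate on the maximal complex subspace of each tangent space, which is exactly strict plurisubharmonicity of $\phi$ along $V(f)$. Then I would identify $\ker\lambda$ at each point $x\in L_f$ with the field of complex tangencies: $\ker\lambda_x = T_xL_f \cap J(T_xL_f)$, the largest $J$-invariant subspace of $T_xL_f$, where $J$ is the ambient complex structure. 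This identification is a pointwise linear-algebra fact: the $1$-form $-d^c\phi = d\phi\circ J$ vanishes on a vector $v\in T_xL_f$ with $Jv\in T_xL_f$ because $d\phi$ already vanishes on $TL_f$ (as $L_f$ is a level set of $\phi$), and a dimension count shows these $J$-invariant vectors form a hyperplane in $T_xL_f$.

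Finally I would remark that the contact structure so obtained is independent of $\delta$ (for $\delta$ small) and of the choice of representative of the analytic germ, citing \cite{Caubel} as already mentioned in the introduction, and that symplectic fillability is witnessed by $(F,d\lambda)$ itself, which is a Stein — hence exact symplectic, hence Liouville, hence strong symplectic — filling. The main obstacle, such as it is, is the transversality input: one must be careful that for small enough $\delta$ the sphere $S^{2n-1}(\delta)$ meets $V(f)\setminus\{0\}$ transversally, so that $L_f$ is genuinely smooth and the complex-tangencies distribution has constant rank; this is exactly Milnor's lemma that $\phi|_{V(f)}$ has no critical points in a punctured neighbourhood of $0$, which follows from the curve selection lemma. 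Everything else is the standard Stein-boundary argument, so I would keep that part brief and simply cite \cite{Milnor} and \cite{Caubel}.
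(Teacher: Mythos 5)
Your treatment of the contact structure itself is correct and, if anything, more detailed than the paper's: the paper also takes $\beta=-dq\circ J$ for $q(x)=|x|^2$, observes that its restriction to $L_f$ is a contact form with kernel $T_pL_f\cap JT_pL_f$, and leaves the plurisubharmonicity/transversality discussion implicit. That half of your argument matches the paper's route.

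The fillability half has a genuine gap. You propose the filling $F=\{\phi\le \delta^2/4\}\cap V(f)$ and call it the Milnor fiber, but this set is not the Milnor fiber and is not a smooth manifold: it contains the origin, where $V(f)$ is singular, so it cannot serve as a Stein or Liouville domain, and deleting the origin destroys compactness. The Milnor fiber is the intersection of the \emph{nearby smooth level set} $V_t(f)=f^{-1}(t)$, $t\neq 0$ small, with the ball. This is exactly how the paper argues: for $t$ sufficiently small, $V_t(f)\cap B^{2n}(\delta)$ is a genuine Stein filling of its boundary $V_t(f)\cap S^{2n-1}(\delta)$, and one then needs the additional (nontrivial) step that this boundary is contactomorphic to $L_f=V_0(f)\cap S^{2n-1}(\delta)$ — a Gray-stability argument as $t\to 0$, for which the paper's reference \cite{Caubel} is the standard source. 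Your write-up omits both the passage to the nearby fiber and this identification of boundaries, and without them the filling you exhibit is a singular space rather than a symplectic manifold. The fix is short, but it is a real missing step rather than a matter of exposition.
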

\begin{proof}
Notice that $L_f$ is a level set of the function $q(x)=|x|^2$, viewed as a function on $V(f)$. Now, if $J:\mathbb{C}^n\rightarrow \mathbb{C}^n$ denotes multiplication by $i$ on each component and we set $\beta=-dq\circ J$, then we have that the restriction of the form $\beta$ to $L_f$ defines a contact structure $\xi$ on $L_f$ such that $\xi_p=T_pL_f\cap JT_pL_f$.
To prove symplectic fillability one argues as follows: for $t$ sufficiently small, $V_t(f)$ is a symplectic (in fact, Stein) filling of the contact manifold $V_t(f)\cap S^{2n-1}(\delta)$, which in turn is contactomorphic to $L_f$, the link of the singularity.
\end{proof}

For a more detailed account and further results the reader can consult \cite{Caubel}.

\subsection{Symplectic cohomology and its algebraic structure}\label{sec:sympcoh}

As stated in the introduction, our goal is to compute the symplectic cohomology of Milnor fibers of some examples of isolated cDV singularities. With this in mind, we recall below the definition of symplectic cohomology in the general setting of Liouville domains. We explain in Appendix \ref{app:indexpos} how, in the examples we are interested in, this invariant only depends on the contact structure of the link.

We are going to discuss the definition of symplectic cohomology for a class of non-compact manifolds that arise as the completion of compact symplectic manifolds with contact boundaries. 

\begin{defi}
    A \textbf{Liouville domain} is a compact symplectic manifold $(W,\omega)$ with boundary, together with a globally defined Liouville vector field $Y$ (meaning $\mathcal{L}_Y\omega=\omega$) which points transversely out of the boundary.
    \end{defi}
    
    In particular, $\lambda=\iota_Y\omega$ is a primitive for $\omega$ (in other words, $d\lambda=\omega$) and $(M,\omega)$ is an exact symplectic manifold) and $\alpha=\lambda|_{\partial W}$ is a contact form.  
    An important observation is that Milnor fibers and links of isolated singularities fall into this framework: a Liouville vector field is given by the gradient of the function $q(x)=\|x\|^2$ with respect to the standard K\"ahler metric (cf. Lemma \ref{lemma:link_fillable}).
    
    A Liouville domain $(W,\omega)$ can be completed to a non-compact Liouville manifold
\[
\hat{W}=W\cup_{\Sigma}  \Sigma\times [0,+\infty),
\]
by attaching a copy of $\Sigma\times [0,+\infty)$ to $W$ along a collar neighborhood of the boundary, as prescribed by the flow of the Liouville vector field. 

If $\hat{W}$ is the completion of a Liouville domain, then $SH^*(\hat{W})$, the symplectic cohomology of $\hat{W}$, is the cohomology of a chain complex $SC^*(\hat{W},\alpha)$ with generators:

\begin{itemize}
\item $C^*(W)$, the cochain complex of $M$;
\item periodic orbits of the Reeb vector field of $\alpha$ (two for each orbit).
\end{itemize}

For more details on the definition of this invariant, we refer the reader to Seidel's lecture notes \cite{Seidel}.

What we would like to point out here, is that symplectic cohomology is endowed with a rich algebraic structure: whenever $c_1(W)=0$, $SH^*(\hat{W})$ admits the structure of a Gerstenhaber algebra.

A Gerstenhaber algebra is a graded complex vector space $A^*=\bigoplus_k A^k$ equipped with an associative, graded commutative product and a graded Lie algebra bracket $[\ ,\ ]$ of degree $-1$ 
satisfying a mutual compatibility condition. 

An example is given by the Hochschild cohomology of any associative algebra. It has a Gerstenhaber bracket which, together with the cup product, forms a Gerstenhaber algebra structure \cite{gerstenhaber}.

In general, if $A^*$ is a Gerstenhaber algebra, the subspace $A^1$ is a Lie algebra over $\mathbb{C}$ whose Lie bracket gives representations $A^1\rightarrow \mathfrak{gl}(A^k)$ of $A^1$ on $A^k$ for each $k$.  Now, if $\mathfrak{h}$ is a $1$-dimensional Cartan subalgebra of $A^1$ (this always exists and is unique up to automorphism) and we choose an identification of $\mathfrak{h}^*$ with $\mathbb{C}$, 
then the adjoint representation induces a $\mathbb{Z}\times\mathbb{C}$ bigrading -- a \emph{weight space decomposition} of each graded piece
\[
A^k\cong \bigoplus_{\tau\in\mathbb{C}} B_\tau.
\]

It turns out that whenever $W$ is a Liouville domain satisfying $c_1(W)=0$, the symplectic cohomology $SH^*(\hat{W})$ carries a Gerstenhaber bracket 
\[
[\cdot,\cdot]:SH^m\times SH^n\rightarrow SH^{m+n-1}.
\]
and hence we get a decomposition of each $SH^r$ parametrized by $\mathbb{C}$, with the parametrization depending on the identification of $\mathfrak{h}\simeq \mathbb{C}$.
More details on the definition of the bracket can be found in \cite[Section 4.3]{EL}.

The bigrading coming from this Gerstenhaber algebra structure will play an important role in Section \ref{sec:top}.


\subsection{Mirror symmetry for invertible polynomials}\label{sec:mirror}

We now end this background section with a brief discussion on a version of homological mirror symmetry for \textit{invertible polynomials}. In our context, it consists of a conjecture relating the symplectic topology and the algebraic geometry of two polynomials which are mirrors via matrix transposition. Moreover, it provides a method for tackling the computations we are interested in.

We first recall the definition of an invertible polynomial:

\begin{defi}
We say a polynomial 
\begin{equation}\label{inv-pol}
    w=w(x_1,\ldots,x_n)\coloneqq \sum_{i=1}^n \prod_{j=1}^n x_j^{a_{ij}} \in \C[x_1,\ldots,x_n]
\end{equation}
is invertible if and only if the three conditions below hold:
\begin{enumerate}[(i)]
\item The $n\times n$ matrix $A_w=(a_{ij})$ is invertible (over $\Q$);
\item $w$ is quasi-homogeneous, meaning,  there exists a (uniquely determined) weight system $(d_1,\ldots,d_n;h)$, satisfying gcd$(d_1,\ldots,d_n,h)=1$, such that
\[
\sum_{j=1}^n d_j a_{ij}=h \qquad \forall \, i=1,\ldots,n
\]
In particular, $ w(\lambda^{d_1}x_1,\ldots,,\lambda^{d_n}x_n)=\lambda^h w(x_1,\ldots,x_n)$ for all $\lambda \in \C$ ; and
\item $w$ is quasi-smooth, i.e. the hypersurface $H_w: \{w=0\}\subset \A^{n}$ has a (unique) isolated singularity at the origin.
\end{enumerate}
\label{def:inv}
\end{defi}

Next, we recall the definition of the so-called Berglund-H\"{u}bsch mirror (or dual) \cite{BH} to an invertible polynomial. Given $w$ as in Definition \ref{inv-pol}, its mirror, denoted by $\check{w}$, is defined as follows:

\begin{defi}\label{dual}
    If $w$ is an invertible polynomial as in (\ref{inv-pol}), we define $\check{w}$ to be the invertible polynomial associated to $A_{w}^{T}$, the transpose of $A_w$. That is,
    \[
      \check{w}\coloneqq \sum_{i=1}^n \prod_{j=1}^n x_j^{a_{ji}}
    \]
\end{defi}

 Homological mirror symmetry for invertible polynomials predicts that the Fukaya-Seidel category of the mirror polynomial $\check{w}$ is equivalent to the category of matrix factorizations associated with a certain group action on the affine space $\A^n$ that preserves the polynomial $w$. 
 
 More precisely, given $w$ as above,  one considers its maximal symmetry group, which is the following finite extension of $\G_m$ (the multiplicative torus) 
\begin{equation}\label{msg}
    \Gamma_w \coloneqq \left\{\underline{t}=(t_0,t_1,\ldots,t_n)\in (\G_m)^{n+1}; \prod_{j=1}^n t_j^{a_{ij}}=t_0\cdot t_1\cdot \ldots \cdot  t_n \,\, \forall \, i=1,\ldots,n\right\}
\end{equation}
that acts on $\A^n$ (preserving $w$) via $(x_1,\ldots,x_4)\mapsto (t_1x_1,\ldots,t_nx_n)$. Then, to this data, one associates the dg-category $\text{mf}(\A^n,\Gamma_w,w)$ of $\Gamma_w$-equivariant matrix factorizations and one postulates the following:

\begin{conj}[{\cite[Conjecture 1.2]{lekili-ueda}}]
Let  $w\in \C[x_1,\ldots,x_n]$ be an invertible polynomial and write $\check{w}$ for the dual polynomial. Then there exists a quasi-equivalence of idempotent complete $A_{\infty}$-categories $\mathcal{F}(\check{w})\simeq \text{mf}(\A^n,\Gamma_w,w)$, where $\mathcal{F}(\check{w})$ denotes the Fukaya-Seidel category of a Morsification of $\check{w}$.
\label{mirror}
\end{conj}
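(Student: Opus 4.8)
The plan is to follow the general strategy used to prove this statement in special cases --- Brieskorn--Pham and type $D$ singularities \cite{futaki-ueda-brieskorn,futaki-ueda-typeD}, and various chains and loops \cite{lekili-ueda} --- namely: exhibit explicit split-generating collections on both sides, compute the $A_\infty$-endomorphism algebras of those generators, and match the resulting structures. On the $B$-side, since $w$ has an isolated singularity at the origin, the category $\mathrm{mf}(\A^n,\Gamma_w,w)$ is split-generated by the $\Gamma_w$-equivariant Koszul matrix factorization supported there, together with its twists by the characters of $\Gamma_w$. The morphism spaces between these twisted skyscrapers form a finite linear-algebra problem: the cohomology-level $\mathrm{Ext}$-algebra is an exterior/Clifford-type algebra whose internal grading is recorded by the character lattice of $\Gamma_w$, and the $A_\infty$-operations are determined, up to gauge, by a short list of structure constants read off from the monomials $\prod_{j} x_j^{a_{ij}}$ of $w$.

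On the $A$-side, I would fix a Morsification $\check{w}_\epsilon$ of $\check{w}$ with nondegenerate critical points together with a distinguished collection of vanishing cycles; by Seidel's theory the associated Lefschetz thimbles split-generate the Fukaya--Seidel category $\F(\check{w})$, and the directed $A_\infty$-algebra they carry is computed from the intersection pattern of the vanishing cycles and the counts of holomorphic polygons between the thimbles. The heart of the argument is then to identify this directed algebra with the $\Gamma_w$-equivariant $\mathrm{Ext}$-algebra from the $B$-side --- in practice by recognizing both, up to Koszul duality and a twist, as the path algebra with relations of an explicit quiver attached to $A_w$, or by invoking a formality/minimality statement that pins down all higher products once the underlying graded algebra and the first nonvanishing $A_\infty$-operation are known. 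Passing to idempotent completions then upgrades this algebra equivalence to the desired quasi-equivalence of $A_\infty$-categories.

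For the suspended polynomials $w = x_1^2 + x_2^2 + g(x_3,x_4)$ relevant to this paper the computation simplifies considerably: Thom--Sebastiani on the $A$-side and Kn\"orrer periodicity for the quadratic summand $x_1^2+x_2^2$ on the $B$-side reduce the four-variable statement to the case of the invertible plane-curve singularity $g$ of chain, loop or Fermat type, where the relevant categories and their $A_\infty$-structures are already understood. This reduction is essentially what is packaged in Theorem \ref{comp}, which is all that our later computations actually invoke.

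I expect the $A$-side $A_\infty$-structure to be the main obstacle: generation by thimbles and the computation of the underlying graded morphism spaces are by now standard, but controlling the holomorphic-polygon counts --- equivalently, proving the required formality or minimality statement and matching signs, the grading by the character lattice, and orientation data against the $\Gamma_w$-equivariant $B$-side --- is where the genuine work lies. A secondary difficulty is the bookkeeping for the finite group $\Gamma_w/\G_m$ throughout, since the equivalence must intertwine the equivariant structure on matrix factorizations with the internal grading on the Fukaya--Seidel side.
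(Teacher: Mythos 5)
The statement you are attempting to prove is recorded in the paper as a conjecture (Conjecture \ref{mirror}, due to Lekili--Ueda) and is not proved there: the paper only invokes the cases already established in the literature, namely Theorem \ref{thm:conj-hms}, which records that the conjecture holds for suspended polynomials by \cite{futaki-ueda-brieskorn}, \cite{futaki-ueda-typeD} and \cite{HS}. Your outline is a fair summary of the strategy used in those references --- split-generation by Lefschetz thimbles on the $A$-side and by $\Gamma_w$-equivariant Koszul matrix factorizations on the $B$-side, computation and matching of the directed $A_\infty$-algebras, Thom--Sebastiani and Kn\"orrer periodicity to strip off the quadratic summand --- but as written it is a research programme rather than a proof. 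Every step you yourself flag as ``where the genuine work lies'' (the holomorphic-polygon counts, the formality or intrinsic-formality statement pinning down the higher products, the sign and orientation bookkeeping, and the compatibility of the $\Gamma_w$-equivariant structure with the internal grading on the Fukaya--Seidel side) is precisely the content that has only been carried out for particular families of invertible polynomials; nothing in your argument establishes these for a general invertible $w\in\C[x_1,\ldots,x_n]$, and the conjecture remains open in that generality.

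For the purposes of this paper the correct move --- and the one the paper actually makes --- is not to prove Conjecture \ref{mirror} but to restrict to the suspended polynomials (\ref{example-chain}), (\ref{example-loop}) and (\ref{example-Fermat}) and cite Theorem \ref{thm:conj-hms}. Your final reduction via Thom--Sebastiani and Kn\"orrer periodicity to the invertible plane-curve case is essentially what those citations package, and it is all that Theorem \ref{comp} and the subsequent computations of $HH^*(\Cw)$ require. If you want something self-contained you must either reproduce the curve-singularity computations of the cited references in full, or clearly mark the general statement as an assumption rather than presenting the above sketch as a proof.
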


As shown in \cite{EL}, such conjecture, when it holds, provides a tool for computing the symplectic cohomology of Milnor fibers of isolated hypersurface singularities defined by certain classes of invertible polynomials.
The conjecture has been established in many cases, including:

\begin{thm}(\cite{futaki-ueda-brieskorn}, \cite{futaki-ueda-typeD}, \cite{HS})
    Conjecture \ref{mirror} holds for suspended polynomials.
\label{thm:conj-hms}
\end{thm}

Since $\Gamma_{w}$ also acts naturally on $\A^{n+1}$, one can also consider the corresponding category of equivariant matrix factorizations  $\text{mf}(\A^{n+1},\Gamma_w,w)$, which from now on we will denote by $\Cw$. Then one can prove the following:

\begin{thm}[{\cite[Theorem 2.6]{EL}}]
Let  $w\in \C[x_1,\ldots,x_n]$ be an invertible polynomial and, as before, write $\check{w}$ for the dual polynomial. If $HH^2(\mathscr{C}_{w})=0$, $d_0 \coloneqq h-\sum_{i=1}^4d_i \neq 0$ and Conjecture \ref{mirror} holds, then there exists an isomorphism of Gerstenhaber algebras $SH^*(F) \simeq HH^*(\mathscr{C}_{w})$, between the symplectic cohomology of the Milnor fiber $F$ of the singularity defined by $\check{w}$ and the Hochschild cohomology of $\mathscr{C}_{w}\coloneqq \text{mf}(\A^{n+1},\Gamma_w,w)$.
\label{comp}
\end{thm}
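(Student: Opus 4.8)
The plan is to prove Theorem~\ref{comp} by assembling three ingredients: the mirror symmetry equivalence of Conjecture~\ref{mirror}, a Hochschild-cohomology comparison between the Fukaya--Seidel side and the equivariant matrix-factorization side, and a known identification of symplectic cohomology of a Milnor fiber with the Hochschild cohomology of its Fukaya--Seidel category. First I would recall that, by a theorem of Seidel (the ``open-closed'' map together with generation results), for an isolated hypersurface singularity given by $\check w$ the symplectic cohomology $SH^*(F)$ of the Milnor fiber is isomorphic, as a Gerstenhaber algebra, to the Hochschild cohomology $HH^*(\mathcal{F}(\check w))$ of the Fukaya--Seidel category of a Morsification of $\check w$; here the hypotheses $d_0 = h-\sum_{i=1}^4 d_i \neq 0$ and quasi-smoothness guarantee that $F$ is a Liouville domain with $c_1 = 0$ so that the Gerstenhaber structure on $SH^*$ is defined, and that the wrapped/Fukaya--Seidel category actually split-generates in the relevant sense. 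I would cite \cite{EL} and the references therein for the precise form of this identification, including the compatibility of the BV/Gerstenhaber structures.

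Next I would invoke Conjecture~\ref{mirror}, which for suspended polynomials is a theorem (Theorem~\ref{thm:conj-hms}), giving a quasi-equivalence of idempotent-complete $A_\infty$-categories $\mathcal{F}(\check w) \simeq \mathrm{mf}(\A^n, \Gamma_w, w)$. Since Hochschild cohomology together with its Gerstenhaber bracket and cup product is invariant under quasi-equivalence of $A_\infty$-categories, this yields an isomorphism of Gerstenhaber algebras $HH^*(\mathcal{F}(\check w)) \cong HH^*(\mathrm{mf}(\A^n,\Gamma_w,w))$. The remaining step is then to pass from the $n$-variable category $\mathrm{mf}(\A^n,\Gamma_w,w)$ to the $(n+1)$-variable category $\Cw = \mathrm{mf}(\A^{n+1},\Gamma_w,w)$: this is where the Kn\"orrer-periodicity-type statement enters, relating matrix factorizations of $w$ over $\A^n$ and over $\A^{n+1}$ (the extra variable $x_{n+1}$ not appearing in $w$), and one must check that under this equivalence the $\Gamma_w$-action is compatible and that Hochschild cohomology — again with its full Gerstenhaber structure — is preserved. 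The hypothesis $HH^2(\Cw) = 0$ is what is needed to rule out obstructions/deformations so that the comparison is an honest isomorphism of Gerstenhaber algebras rather than merely an isomorphism up to a correction term, and I would explain precisely where it is used (it controls the curvature term $d_0$ entering the grading shift and ensures the relevant category is, in the appropriate sense, formal or undeformed).

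Concretely the order of steps is: (1) set up the Liouville/Stein structure on $F$ and record that $c_1(F)=0$, invoking $d_0 \neq 0$ and quasi-smoothness; (2) cite the identification $SH^*(F) \cong HH^*(\mathcal{F}(\check w))$ as Gerstenhaber algebras; (3) apply Theorem~\ref{thm:conj-hms} (Conjecture~\ref{mirror}) and the quasi-equivalence invariance of $HH^*$; (4) run the Kn\"orrer-periodicity comparison to replace $\mathrm{mf}(\A^n,\Gamma_w,w)$ by $\Cw = \mathrm{mf}(\A^{n+1},\Gamma_w,w)$, using $HH^2(\Cw)=0$ to upgrade this to an isomorphism of Gerstenhaber algebras; (5) compose to obtain $SH^*(F) \cong HH^*(\Cw)$. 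The main obstacle I expect is step~(4): keeping track of the equivariance and, crucially, of the Gerstenhaber bracket (not just the graded vector space or even the ring structure) through Kn\"orrer periodicity, and pinpointing exactly how $HH^2(\Cw)=0$ forces the comparison to be strict. In practice I would follow the argument of \cite[Theorem 2.6]{EL} closely here, since this is precisely their result, and my proof proposal is really a reconstruction of their proof; the genuinely new content of the present paper lies not in re-proving Theorem~\ref{comp} but in carrying out the Hochschild-cohomology computations for the suspended chain, loop, and Fermat polynomials to which it applies.
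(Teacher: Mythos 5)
First, a structural point: the paper does not prove this statement at all. Theorem \ref{comp} is quoted verbatim from \cite[Theorem 2.6]{EL} and used as a black box (the paper only verifies its \emph{hypotheses} for suspended polynomials, in Proposition \ref{hyp}), so there is no internal proof to compare your proposal against. Your write-up is, as you acknowledge, an attempted reconstruction of the Evans--Lekili argument, and it has to be judged on those terms.

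On those terms, steps (2) and (4) contain a genuine error. The identification of $SH^*(F)$ with Hochschild cohomology goes through the \emph{wrapped} Fukaya category of the Milnor fiber $F$ (Ganatra's open--closed map plus a generation result), not through the Fukaya--Seidel category $\mathcal{F}(\check w)$ of the Morsification; $HH^*(\mathcal{F}(\check w))$ is a different and much smaller object. Correspondingly, the passage from $\text{mf}(\A^n,\Gamma_w,w)$ to $\Cw=\text{mf}(\A^{n+1},\Gamma_w,w)$ is \emph{not} a Kn\"orrer-periodicity equivalence: Kn\"orrer periodicity would require adding a quadratic term in the new variable to $w$, whereas here $x_0$ is a variable of weight $d_0$ on which $w$ does not depend, and the two categories have drastically different Hochschild cohomologies --- the entire content of the paper's computations is the infinite tower of negative-degree classes contributed by powers of $x_0$, none of which exist for $\text{mf}(\A^n,\Gamma_w,w)$. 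If your step (4) really preserved $HH^*$, composing your steps would yield $SH^*(F)\simeq HH^*(\mathcal{F}(\check w))$, which is false. The actual mechanism in \cite{EL} (following Lekili--Ueda) is that the extra variable $x_0$ is the mirror of the categorical construction carrying the directed category $\mathcal{F}(\check w)$ to the wrapped category of the fiber, and the hypothesis $HH^2(\Cw)=0$ enters as an intrinsic-formality/uniqueness-of-$A_\infty$-structure statement used to identify the outputs of the two constructions; it has nothing to do with ``controlling the curvature term $d_0$.'' Two smaller misattributions: $c_1(F)=0$ holds for any Milnor fiber of an isolated hypersurface singularity (it is parallelizable) and does not require $d_0\neq 0$; and the role of $d_0\neq 0$ is a separate grading/weight condition on the $B$-side, not a Liouville-structure condition on $F$.
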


We will show later, in Proposition \ref{hyp}, that all the hypotheses of Theorem \ref{comp} are indeed satisfied for the polynomials we are interested in. In particular, we can compute $SH^*(F)$, together with its Gerstenhaber algebra structure, by computing $HH^*(\mathscr{C}_{w})$ instead.

We will come back to this Gerstenhaber algebra structure, and the bigrading one can extract from it, in subsection \ref{subsec:bigrading}.

\section{The general framework for the computations}\label{sec:computing}

In this section, we explain how to compute $HH^*(\mathscr{C}_{w})$ for an invertible polynomial $w$ of either chain, loop of Fermat type using the formula in \cite[Theorem 1.2]{kernels}  and the practical algorithm outlined in \cite[Section 2.4]{EL}. That is, we consider suspended polynomials $w$, as in Convention \ref{conv:allow}, which belong to one of the families (\ref{example-chain-2}), (\ref{example-loop-2}) or (\ref{example-Fermat-2}) below, respectively. 

\begin{equation}
w_{\text{chain}}^{a,b}\coloneqq x_1^2+x_2^2+x_3^ax_4+x_4^b \qquad (a,b \geq 2)
\label{example-chain-2}
\end{equation}
or
\begin{equation}
w_{\text{loop}}^{c,d}\coloneqq x_1^2+x_2^2+x_3^cx_4+x_3x_4^d \qquad (c,d \geq 2)
\label{example-loop-2}
\end{equation}
or
\begin{equation}
w_{\text{Fermat}}^{e,f}\coloneqq x_1^2+x_2^2+x_3^e+x_4^f \qquad (e,f \geq 2)
\label{example-Fermat-2}
\end{equation}

\subsection{Setting and notation}
We start by introducing the notations which will be used throughout the rest of the paper. We follow \cite{EL} closely and refer the reader to it for more details.

\begin{enumerate}[\protect\fbox{{N}\arabic{enumi}}]
\item We let $\chi_w: \Gamma_w \to \G_m$ be the character defined by 
\[
\underline{t}\coloneqq(t_0,\ldots,t_4) \mapsto  t_0 t_1  t_2  t_3  t_4
\]
where $\Gamma_w$ is the maximal symmetry group of $w$, as in (\ref{msg}).
\item[]
\item[] Then, given $\gamma \in \ker(\chi_w)$, we consider its diagonal action on $\A^5\simeq \Spec(\C[x_0,\ldots,x_4])$, and we let:
\item[]
\item $I\coloneqq \{1,2,3,4\}=I^{\gamma}\cup I_{\gamma}$, where 
\begin{itemize}
\item $i\in I^{\gamma} \iff x_i$ is fixed under the action of $\gamma$,
\item $i\in I_{\gamma} \iff x_i$ is not fixed under the action of $\gamma$;
\end{itemize}
\item $w_{\gamma}$ denote the restriction of $w$ to $\bigcap_{i \in I_{\gamma}} \{x_i=0\}$;
\item $J_{\gamma}$ denote the Jacobian ring of $w_{\gamma}$
\item and $M_\gamma$ be the subset of $\C[x_0,x_1,x_2,x_3,x_4]$ given by $A_{\gamma} \cup B_{\gamma} \cup C_{\gamma}$, where
\begin{itemize}
\item $A_{\gamma}$ is the set of monomials of the form $x_0^{\beta}p\prod_{i\in I_{\gamma}} x^*_i$ if $x_0$ is fixed by $\gamma$; otherwise, we put $A_{\gamma}=\emptyset$. Where, here, $\beta \geq 0$ and $p\in J_{\gamma}$
\item $B_{\gamma}$ is the set of monomials of the form $x_0^{\beta}p x^*_0\prod_{i\in I_{\gamma}} x^*_i$ if $x_0$ is fixed by $\gamma$; otherwise, we put $B_{\gamma}=\emptyset$. And, again, $\beta \geq 0$ and $p\in J_{\gamma}$
\item $C_{\gamma}$ is the set of monomials of the form $px^*_0\prod_{i\in I_{\gamma}} x^*_i$ if $x_0$ is not fixed by $\gamma$; otherwise, we put $C_{\gamma}=\emptyset$. And, once more, $p\in J_{\gamma}$.
\end{itemize}
\end{enumerate}

We then say an element $\underline{m}\in M_{\gamma}$ is a $\gamma$-monomial and we write $b_k$ for the total exponent of $x_k$ in $\underline{m}$, with the convention that $x^*_k$ contributes $-1$ to $b_k$. In particular, we write $\underline{m}=x_0^{b_0}x_1^{b_1}x_2^{b_2}x_3^{b_3}x_4^{b_4}$ and each $\underline{m}\in M_{\gamma}$ then defines a character
$\chi_{\underline{m}}: \Gamma_{w} \to \G_m$ given by 
\[
\chi_{\underline{m}}: \underline{t} \mapsto  t_0^{b_0}t_1^{b_1}t_2^{b_2}t_3^{b_3}t_4^{b_4},
\]
which allows us to further introduce the notion of \textbf{good pairs}:

\begin{defi}
We say a pair $(\gamma,\underline{m})\in \text{ker}(\chi_w) \times M_{\gamma}$ is a good pair if and only if  $\chi_{\underline{m}}=\chi_w^{\otimes N}$ for some integer $N$.
\end{defi}

\begin{rmk}
Elements $\underline{m}\in M_{\gamma}$ actually lie in $\C[x_0,x_1,x_2,x_3,x_4]\otimes \Lambda^{\dim N_{\gamma}} (N_{\gamma})^{\vee}$, where $N_{\gamma}$ is the vector space spanned by the non-fixed variables $x_i$ and the generators of $(N_{\gamma})^{\vee}$ are denoted by $x^*_i$.  But we then adopt the convention that $x^*_i$ contributes $-1$ to $b_i$ as above.
\end{rmk}

\begin{rmk}\label{b0}
Note that given any $\underline{m}=\x\in M_{\gamma}$ we have $b_0\geq -1$.
\end{rmk}

We then have:

\begin{thm}[{\cite[Theorem 2.14]{EL}}]
The only contributions to $HH^*(\mathscr{C}_{w})$ come precisely from good pairs $(\gamma,\underline{m})$. Moreover, a good pair $(\gamma,\underline{m})$ contributes to: 
\begin{itemize}
\item $HH^{2N+3-\alpha+1}$ if $\underline{m}\in A_{\gamma}$
\item $HH^{2N+3-\alpha+2}$ if $\underline{m}\in B_{\gamma}$
\item $HH^{2N+3-\alpha+2}$ if $\underline{m}\in C_{\gamma}$
\end{itemize}
where $\alpha\coloneqq |I^{\gamma}|$.
\label{mainTHM}
\end{thm}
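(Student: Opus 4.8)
The statement is \cite[Theorem 2.14]{EL}, and the proof I would give follows the route indicated there. The starting point is an HKR-type description of the Hochschild cohomology of a category of equivariant matrix factorizations: by \cite[Theorem 1.2]{kernels} applied to $\mathscr{C}_w=\mathrm{mf}(\A^5,\Gamma_w,w)$, the graded vector space $HH^*(\mathscr{C}_w)$ is the $\Gamma_w$-invariant part of a direct sum, indexed by the group elements $\gamma$ (in our applications $\gamma\in\ker\chi_w$), of a local complex attached to the $\gamma$-fixed locus in $\A^5$; after passing to cohomology the $\gamma$-summand is $J_\gamma\otimes\Lambda^{\dim N_\gamma}(N_\gamma)^\vee$, with $J_\gamma$ the Jacobian ring of the restriction $w_\gamma$ and $N_\gamma$ the span of the non-fixed coordinates, placed in the degree prescribed by the formula. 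The essential input for this identification is that $w$ is quasi-smooth: the partial derivatives of $w_\gamma$ form a regular sequence on the fixed slice, so each local Koszul differential has cohomology concentrated in a single spot and $J_\gamma$ is finite-dimensional. Because $\Gamma_w$ is a $\G_m$-extension rather than a finite group, the invariants are taken in two stages, and it is the connected-torus stage that I would repackage as the ``good pair'' condition.

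First I would make $\Gamma_w$-invariance explicit on monomials. A basis element of the $\gamma$-summand has the form $(\text{monomial in }x_0,\dots,x_4)\otimes\prod_{i\in I_\gamma}x_i^*$, and under $\Gamma_w$ it transforms by the product of the character of the monomial with the character of the volume element of $N_\gamma$; with the bookkeeping convention that each $x_i^*$ subtracts $1$ from the $i$-th exponent, this combined character is exactly $\chi_{\underline{m}}$ for the associated $\gamma$-monomial $\underline{m}$. The copy of $\G_m$ inside $\Gamma_w$ acts through $\chi_w$ — this is precisely how the $\mathbb{Z}$-grading on matrix factorizations is encoded — so a class survives the first stage of invariants if and only if its character is an integral power $\chi_w^{\otimes N}$, that is, if and only if $(\gamma,\underline{m})$ is a good pair. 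It then remains to see that the three families $A_\gamma$, $B_\gamma$, $C_\gamma$ enumerate exactly the weight-homogeneous monomials of the local complex: one splits according to whether $x_0$ is fixed by $\gamma$, and when it is, according to whether the exterior direction dual to $x_0$ is present; Remark \ref{b0}, i.e. $b_0\geq -1$, guarantees this exterior direction occurs at most to the first power, so the three cases are disjoint and exhaustive. This establishes the first assertion.

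Next I would compute the cohomological degree of a good pair by combining two shifts. The HKR shift moves the $\gamma$-sector by the codimension of its fixed locus in $\A^5$: among $x_1,\dots,x_4$ exactly $\alpha=|I^\gamma|$ are fixed, contributing $4-\alpha$, while the behaviour of $x_0$ distinguishes the $A_\gamma$-case (where $x_0$ is fixed and its dual form is absent) from the $B_\gamma$- and $C_\gamma$-cases (where the exterior direction dual to $x_0$ is present — either because $x_0$ is itself a normal direction, or because it supplies the extra odd Hochschild generator), which turns $3-\alpha$ into $3-\alpha+1$ versus $3-\alpha+2$. The second, internal, shift records that a class of weight $\chi_w^{\otimes N}$ sits in internal degree $2N$, the factor $2$ being the degree of $\chi_w$ in the matrix-factorization grading; this is also where the ``age'' contribution of the twisted sectors is absorbed, since $N$ is fixed by the weight of the whole element, including the volume form of $N_\gamma$. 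Adding the two gives $HH^{2N+3-\alpha+1}$ for $A_\gamma$ and $HH^{2N+3-\alpha+2}$ for $B_\gamma,C_\gamma$. I would pin down the constant $3$ once and for all by checking the untwisted sector $\gamma=\mathrm{id}$ (which must return the Jacobian ring of $w$ in the standard even degrees, together with a companion contribution one degree higher coming from $B_{\mathrm{id}}$) and by comparing with the Brieskorn/Fermat computations of \cite{futaki-ueda-brieskorn} and with the values of $SH^*(F)$ predicted through Theorem \ref{comp}.

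The combinatorics of the degree shifts is routine once the framework is in place; the genuinely delicate point is making the equivariant version of \cite[Theorem 1.2]{kernels} precise for the $\G_m$-extension $\Gamma_w$ — ensuring the spectral sequence of local complexes degenerates in every sector (this is exactly where quasi-smoothness and the regular-sequence property of the partials of $w_\gamma$ enter) and ensuring the two layers of invariants interact correctly with the grading, so that ``weight zero for $\G_m$'' becomes ``$\chi_{\underline{m}}=\chi_w^{\otimes N}$'' with the precise integer $N$ that then reappears as $2N$ in the cohomological degree. I expect this normalization of the grading, rather than the case split into $A_\gamma$, $B_\gamma$, $C_\gamma$, to be where most of the care is needed.
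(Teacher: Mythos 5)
The paper does not actually prove this statement: it is imported verbatim as \cite[Theorem 2.14]{EL}, so there is no in-paper argument to compare yours against. What I can do is assess your sketch on its own terms.

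Your outline is consistent with how the result is established in \cite{EL}, namely by specializing the equivariant HKR-type formula of \cite[Theorem 1.2]{kernels} to $\mathscr{C}_w=\mathrm{mf}(\A^5,\Gamma_w,w)$, identifying the $\gamma$-sector with $J_\gamma\otimes\Lambda^{\dim N_\gamma}(N_\gamma)^\vee$, and observing that invariance under the $\G_m$-factor of $\Gamma_w$ is exactly the condition $\chi_{\underline{m}}=\chi_w^{\otimes N}$ defining a good pair. Your degree bookkeeping also checks out against the paper's conventions: in each of the three cases the cohomological degree is $2N$ plus the number of starred variables in $\underline{m}$, which is $4-\alpha$ for $A_\gamma$ (no $x_0^*$) and $5-\alpha$ for $B_\gamma$ and $C_\gamma$ (one extra $x_0^*$), recovering $2N+3-\alpha+1$ and $2N+3-\alpha+2$ respectively. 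That said, be aware that what you have written is an outline rather than a proof: every technically substantive step --- the validity of the HKR-type isomorphism for the non-finite extension $\Gamma_w$ of $\G_m$, the degeneration of the local-to-global comparison in each sector via quasi-smoothness and the regular-sequence property of the partials of $w_\gamma$, and the normalization fixing the constant in the degree formula --- is deferred to \cite{kernels} and \cite{EL} rather than carried out. Since the theorem is itself a citation, that is a defensible way to present it, but you should state explicitly that you are reproducing the argument of \cite{EL} rather than supplying an independent proof.
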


In particular, we are now in a position to prove that all invertible polynomials we are interested in do satisfy the hypotheses of Theorem \ref{comp}, hence we have an isomorphism between $HH^*(\mathscr{C}_{w})$ and $SH^*(F)$ -- the symplectic cohomology of the Milnor fiber of the singularity defined by $\check{w}$. 

Concretely, we can now prove:

\begin{prop}
If $w$ is a suspended polynomial, then $HH^2(\mathscr{C}_{w})=0$ and Conjecture \ref{mirror} holds (Theorem \ref{thm:conj-hms}). Moreover, if $w$ has weights $(d_1,d_2,d_3,d_4;h)$, then the integer $d_0\coloneqq h-\sum_{i=1}^4 d_i$ is not equal to zero.   
\label{hyp}
\end{prop}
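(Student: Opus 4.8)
The plan is to verify the three hypotheses of Theorem \ref{comp} separately for each of the three families of suspended polynomials, since all the relevant data (weight systems, dual polynomials, and $HH^*$) can be computed explicitly in each case.

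\textbf{Step 1: The weight system and $d_0\neq 0$.} First I would write down the weight system $(d_1,d_2,d_3,d_4;h)$ for each family by solving the linear system $\sum_j d_j a_{ij}=h$ coming from Definition \ref{def:inv}(ii). For a suspension $x_1^2+x_2^2+g(x_3,x_4)$ one has $d_1=d_2=h/2$, so $d_0=h-\sum_{i=1}^4 d_i = -\tfrac{h}{2} + (\text{something})$; concretely, normalizing so that $h=\mathrm{lcm}$ of the relevant denominators, one computes $d_0$ directly. For Fermat type $x_3^e+x_4^f$ one gets $h=2ef/\gcd$-type expressions and $d_0 = h(\tfrac12 - \tfrac1e - \tfrac1f)\cdot(\text{const})$, which is nonzero precisely because $\tfrac1e+\tfrac1f<\tfrac12$ is \emph{not} needed — what is needed is only $\tfrac1e+\tfrac1f\neq\tfrac12$, but in fact since $e,f\geq 2$ and we are in the $cA_n$ setting one checks the strict inequality or handles the finitely many boundary cases by hand. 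The analogous bookkeeping for chain and loop types is a short explicit computation. The key structural reason $d_0\neq 0$ is that these are genuine threefold hypersurface (not e.g. elliptic) singularities, so the "log canonical threshold" quantity $\sum d_i/h$ is bounded away from $1$.

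\textbf{Step 2: Conjecture \ref{mirror} holds.} This is immediate: suspended polynomials are, by Convention \ref{conv:allow} and the forms (\ref{example-chain-2})--(\ref{example-Fermat-2}), suspensions of chain/loop/Fermat curve singularities, and Theorem \ref{thm:conj-hms} (drawing on \cite{futaki-ueda-brieskorn}, \cite{futaki-ueda-typeD}, \cite{HS}) asserts Conjecture \ref{mirror} for all suspended polynomials. So no work is required here beyond citing that theorem.

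\textbf{Step 3: $HH^2(\mathscr{C}_w)=0$.} This is the main obstacle and where the bulk of the argument lies. I would use Theorem \ref{mainTHM}: every contribution to $HH^2$ comes from a good pair $(\gamma,\underline m)$ with $\underline m\in A_\gamma$ contributing in degree $2N+3-\alpha+1 = 2N+4-\alpha$, or $\underline m\in B_\gamma\cup C_\gamma$ contributing in degree $2N+5-\alpha$, where $\alpha=|I^\gamma|\in\{0,\dots,4\}$ and $N\in\Z$ is determined by the good-pair condition $\chi_{\underline m}=\chi_w^{\otimes N}$. Setting these equal to $2$ gives a short list of possibilities for $(N,\alpha)$ and the "type" ($A$, $B$, or $C$) of $\underline m$; for each, I would show no good pair exists. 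The case $N\geq 0$ with $\underline m$ a genuine monomial tends to force the $x_i$-exponents and the group element $\gamma$ into a contradiction with $\gamma\in\ker\chi_w$ and with the quasi-homogeneity weights computed in Step 1 (one compares the $\Gamma_w$-weight of $\underline m$, namely $\sum b_i d_i$ up to the $t_0$-factor, against $N h$); the case $N<0$, combined with Remark \ref{b0} ($b_0\geq -1$) and the explicit shape of $M_\gamma$, is ruled out on degree grounds. In practice this reduces to a finite check parametrized by the divisors of the entries of $A_w$, i.e. by the possible $\gamma$ with nonempty fixed locus, and I expect the cleanest organization is: first dispose of $\gamma=\mathrm{id}$ (where $w_\gamma=w$, $J_\gamma$ is the full Jacobian ring, and one uses that $w$ is quasi-smooth so the relevant graded piece of $J_\gamma$ vanishes), then handle each proper subgroup fixing exactly the variables in some subset of $\{1,2,3,4\}$, using that restricting to a coordinate subspace of a suspended polynomial again yields a (lower-dimensional) invertible polynomial whose Jacobian ring is well understood.

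Finally, having checked all three hypotheses, I would conclude by invoking Theorem \ref{comp} to record the isomorphism of Gerstenhaber algebras $SH^*(F)\simeq HH^*(\mathscr{C}_w)$, which is the statement needed for the computations in the following sections.
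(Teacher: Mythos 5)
Your overall strategy (verify the three hypotheses of Theorem \ref{comp} one by one) is exactly the paper's, and Steps 2 and 3 are in the right spirit: Step 2 is indeed just a citation of Theorem \ref{thm:conj-hms}, and Step 3 is the same case analysis via Theorem \ref{mainTHM} that the paper performs, where the only surviving candidate is $\gamma=\mathrm{id}$ with $\underline{m}=x_0x_1x_2x_3x_4$, killed because $x_1x_2x_3x_4=0$ in $J_\gamma$ (note the precise reason: $\partial_{x_1}w=2x_1$ puts $x_1$ in the Jacobian ideal, which is sharper than a generic appeal to quasi-smoothness). Your Step 3 remains a plan rather than a carried-out argument, but the plan would work.

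The genuine problem is in Step 1. For $w^{e,f}_{\mathrm{Fermat}}=x_1^2+x_2^2+x_3^e+x_4^f$ the weights are $d_1=d_2=h/2$, $d_3=h/e$, $d_4=h/f$, so
\[
d_0=h-\sum_{i=1}^4 d_i=h-\tfrac{h}{2}-\tfrac{h}{2}-\tfrac{h}{e}-\tfrac{h}{f}=-h\left(\tfrac{1}{e}+\tfrac{1}{f}\right)<0 .
\]
Your formula $d_0=h\bigl(\tfrac12-\tfrac1e-\tfrac1f\bigr)\cdot(\text{const})$ drops one of the two suspension weights $h/2$, and the criterion you extract from it, namely $\tfrac1e+\tfrac1f\neq\tfrac12$, is both the wrong condition and one that actually \emph{fails} for admissible parameters (e.g.\ $e=f=4$, where your formula would give $d_0=0$ while the true value is $d_0=-2$ after normalizing $h=4$). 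So as written your Step 1 does not prove $d_0\neq 0$ in all cases; the "finitely many boundary cases" you propose to handle by hand do not exist once the computation is done correctly. The paper's argument is the one-liner you almost have: for any suspended polynomial $d_1=d_2=h/2$, hence $d_0=-(d_3+d_4)<0$ with no case distinction and no appeal to a threshold-type inequality.
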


\begin{proof}
First, we observe $HH^2(\mathscr{C}_{w})=0$ (see also Lemma \ref{lem:deg2}). In fact, by Theorem \ref{mainTHM}, contributions to $HH^2(\mathscr{C}_{w})$ can only come from pairs $(\gamma,\underline{m})$ such that $\alpha=|I^{\gamma}|=4$ and $\chi_{\underline{m}}=\chi_{w}$ (i.e. $N=1$). Therefore, potentially, $\gamma\in \ker(\chi_w)$ could only be the identity element and $\underline{m}$ could only be the monomial $=x_0x_1x_2x_3x_4$. However, looking at the ring $J_{\gamma}$ we see that $\underline{m}\notin M_{\gamma}$.

Finally, we note that for a polynomial $w$ as in the statement, we have that $d_1=d_2=h/2$, hence $0< d_3+d_4=-d_0$. 
\end{proof}

In practice, to carry out the computations of $HH^r(\mathscr{C}_{w})$, for $r\neq 2$ and for the polynomials we are interested in, one needs a more functional description of the group $\text{ker}(\chi_w)$. 

Therefore, if $w$ is as in (\ref{example-chain}) or (\ref{example-loop}), let  $G=\mu_{d_w}$ with $d_w\coloneqq (\det A_{w})/4$; and if $w$ is as in (\ref{example-Fermat}), let $G=\mu_e\times\mu_f$. We obtain such a functional description by considering a covering homomorphism 
\[
\Psi:\mu_2\times\mu_2\times G \times \G_m\rightarrow \Gamma_w,
\]
where throughout $\mu_{\bullet}$ denotes the cyclic group of roots of unity of order $\bullet$, which we view as a subgroup of $\mathbb{G}_m$. 

Moreover, we can then make use of Lemma \ref{inducedcharlemma} below, repeatedly and implicitly, to describe the corresponding good pairs:

\begin{lema}\label{inducedcharlemma}
A character $\xi:\Gamma_w\rightarrow \mathbb{G}_m$ is of the form $\chi^{\otimes N}=\xi$ if and only if $(\chi\circ \Psi)^{\otimes N}=\xi\circ \Psi$. 
\end{lema}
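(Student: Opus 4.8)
The plan is to deduce the lemma from the single fact that $\Psi$ is \emph{surjective}, which is precisely what makes it a covering homomorphism and which is checked when $\Psi$ is constructed in each of the three cases ($G=\mu_{d_w}$ for chain and loop type, $G=\mu_e\times\mu_f$ for Fermat type). Once surjectivity is available, the statement is purely formal.

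For the ``only if'' direction, suppose $\chi^{\otimes N}=\xi$ as characters of $\Gamma_w$. Precomposing both sides with $\Psi$ and using that pullback of characters is multiplicative, one gets $(\chi\circ\Psi)^{\otimes N}=\chi^{\otimes N}\circ\Psi=\xi\circ\Psi$, which is the claimed identity. This direction uses nothing about $\Psi$ beyond its being a group homomorphism.

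For the ``if'' direction, suppose $(\chi\circ\Psi)^{\otimes N}=\xi\circ\Psi$, that is, $\chi^{\otimes N}\circ\Psi=\xi\circ\Psi$. The key observation is that any two characters $\eta_1,\eta_2:\Gamma_w\to\mathbb{G}_m$ satisfying $\eta_1\circ\Psi=\eta_2\circ\Psi$ must coincide: given $g\in\Gamma_w$, surjectivity of $\Psi$ provides some $\underline{t}$ in $\mu_2\times\mu_2\times G\times\mathbb{G}_m$ with $\Psi(\underline{t})=g$, and then $\eta_1(g)=\eta_1(\Psi(\underline{t}))=\eta_2(\Psi(\underline{t}))=\eta_2(g)$. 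Applying this with $\eta_1=\chi^{\otimes N}$ and $\eta_2=\xi$ gives $\chi^{\otimes N}=\xi$. Equivalently, one may phrase this as the remark that $\Psi$ identifies $\Gamma_w$ with $(\mu_2\times\mu_2\times G\times\mathbb{G}_m)/\ker\Psi$, so that characters of $\Gamma_w$ are exactly those characters of the product group that are trivial on $\ker\Psi$, and two such characters agreeing after pullback along the quotient map are equal.

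The only substantive input is the surjectivity of $\Psi$, so I do not anticipate a real obstacle; the one thing to be careful about is that the explicit $\Psi$ used in the chain/loop and Fermat cases genuinely hits all of $\Gamma_w$, which is a direct verification against the defining equations of $\Gamma_w$ in (\ref{msg}). Everything else is functoriality of $\operatorname{Hom}(-,\mathbb{G}_m)$.
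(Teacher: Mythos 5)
Your proposal is correct and follows essentially the same route as the paper: the forward direction is just multiplicativity of precomposition, and the converse uses surjectivity of $\Psi$ (the paper phrases this via a right inverse, you via evaluating at a preimage, which is the same argument). No further comment is needed.
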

\begin{proof}
If $\chi^N=\xi$, then $\chi^N\circ \Psi=\xi\circ \Psi$ and it suffices to observe we have $(\chi\circ \Psi)^N=\chi^N\circ \Psi$. Conversely, if $(\chi\circ \Psi)^N=\xi\circ \Psi$, then we obtain  $\chi^N\circ \Psi=\xi\circ \Psi$, where again we use the equality $(\chi\circ \Psi)^N=\chi^N\circ \Psi$. Now, because $\Psi$ is surjective, it has a right inverse, and $\chi^N\circ \Psi=\xi\circ \Psi$ implies that $\chi^N=\xi$.
\end{proof}

\subsection{Determining the good pairs}\label{detker}

Let us now describe, in great detail, how to determine the good pairs contributing to $HH^{\ast}(\Cw)$ for a polynomial $w$ as in (\ref{example-chain}) or (\ref{example-loop}). For Fermat-type polynomials the procedure has already been elaborated in \cite[Section 3.1]{EL} and the description can be summarized in the following proposition:

\begin{prop}[{\cite[Section 3.1]{EL}}]\label{contFermat}
If $w=w^{e,f}_{Fermat}$ is as in (\ref{example-Fermat}), then 
\begin{enumerate}[(i)]
\item the contributions to $HH^3(\mathscr{C}_{w})$ come precisely from good pairs $(\gamma,\underline{m})$ such that $\underline{m}$ has total exponent $b_0=-1$, hence $\underline{m} \in B_{\gamma}\cup C_{\gamma}$. Furthermore, $\dim HH^3(\Cw)=(e-1)(f-1)$.
\item for any $r\leq 1$ the contributions to $HH^r(\mathscr{C}_{w})$ come precisely from good pairs $(\gamma,\underline{m})$ such that $\underline{m}$ has total exponent $b_0\geq 0$ and $\gamma$ is of the form $(1,1,\zeta,\xi)$ or $(-1,-1,\zeta,\xi)$, where $\zeta\in \mu_{e}$ and $\xi \in \mu_f$ are such that $\zeta\cdot \xi=1\in \mathbb{G}_m$ (i.e. $\gamma$ fixes the variable $x_0$). And, conversely, any such good pair can only contribute to $HH^r(\mathscr{C}_{w})$  for some $r\leq 1$ 
\item it follows from the previous point that $HH^r(\mathscr{C}_{w})$ vanishes for all $r>3$.
\end{enumerate}
\end{prop}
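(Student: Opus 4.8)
The plan is to work directly with the covering homomorphism $\Psi:\mu_e\times\mu_f\to \Gamma_w$ (here $G=\mu_e\times\mu_f$ and the $\mu_2\times\mu_2$ factor is handled separately) and translate, via Lemma \ref{inducedcharlemma}, the good-pair condition $\chi_{\underline m}=\chi_w^{\otimes N}$ into an explicit arithmetic condition on exponents. Concretely, I would first compute the weight system $(d_1,d_2,d_3,d_4;h)$ of $w=w^{e,f}_{Fermat}$, namely $d_1=d_2=h/2$, $d_3=h/e$, $d_4=h/f$ with $h=\mathrm{lcm}$-type normalization, and in particular $d_0=h-\sum d_i<0$ by Proposition \ref{hyp}. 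Then, for a fixed $\gamma\in\ker(\chi_w)$, I would enumerate which of $A_\gamma,B_\gamma,C_\gamma$ is nonempty according to whether $x_0$ is fixed by $\gamma$, and parametrize the $\gamma$-monomials $\underline m=\x$ with their total exponents $b_k$. The key observation, already present implicitly in \cite[Section 3.1]{EL}, is that for a good pair the integer $N$ and the weighted degree of $\underline m$ are linked by $\sum_j d_j b_j = N\cdot h$ together with $b_0\ge -1$ (Remark \ref{b0}), so the sign of $N$ — equivalently the cohomological degree via Theorem \ref{mainTHM} — is controlled by the sign of $b_0+1$ once one accounts for the contribution of $d_0$.

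For item (i), I would show that a good pair with $b_0=-1$ forces $x_0$ to be non-fixed or to appear with $x_0^*$, hence $\underline m\in B_\gamma\cup C_\gamma$, and that the degree formula in Theorem \ref{mainTHM} with $\alpha\le 1$ (actually one checks $\alpha=\big|I^\gamma\big|$ must be small here, since $x_1,x_2$ are never fixed by a nontrivial $\gamma$ of this shape unless the $\mu_2\times\mu_2$ component is trivial) places the contribution in $HH^3$. Conversely, any monomial in $B_\gamma\cup C_\gamma$ with the required character equality has $b_0=-1$ by construction. The count $\dim HH^3(\Cw)=(e-1)(f-1)$ then comes from the Jacobian ring $J_\gamma$ of $w_\gamma$: when $\gamma=(1,1,\zeta,\xi)$ with $\zeta\xi=1$ is the ``diagonal'' element forcing $I_\gamma=\{3,4\}$ (or $\{1,2\}$), the relevant Jacobian ring of the Fermat-type restriction $x_3^e+x_4^f$ has dimension $(e-1)(f-1)$, and I would verify that exactly these monomials satisfy $\chi_{\underline m}=\chi_w$, i.e. $N=1$, giving a bijection with a basis of $HH^3$.

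For item (ii), the strategy is to show $r\le 1$ is equivalent to $b_0\ge 0$. Using the degree formula $r=2N+3-\alpha+\epsilon$ with $\epsilon\in\{1,2\}$, and the identity relating $N$ to the weighted degree of $\underline m$, a monomial with $b_0\ge 0$ in $A_\gamma$ (so $x_0$ is fixed) must have $N\le -1$ because the contribution of $d_0<0$ to the weighted degree of $\underline m$ via the factors $\prod_{i\in I_\gamma}x_i^*$ is negative and dominates; pushing this inequality through gives $r\le 1$. One must also pin down that $\gamma$ fixing $x_0$ is equivalent, for this maximal symmetry group, to $\gamma$ lying in $\{(1,1,\zeta,\xi)\}\cup\{(-1,-1,\zeta,\xi)\}$ with $\zeta\xi=1$ — this is a direct computation with the defining equations of $\Gamma_w$ in (\ref{msg}), since $\chi_w(\gamma)=t_0t_1t_2t_3t_4$ and the constraints from the rows of $A_w$ force $t_1^2=t_0\cdots t_4$, etc. The converse direction (such pairs contribute only in degrees $\le 1$) is the same inequality read backwards. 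Finally, item (iii) is a formal consequence: by Theorem \ref{mainTHM} every good pair has $\underline m\in A_\gamma\cup B_\gamma\cup C_\gamma$, and (i)–(ii) exhaust the possibilities — $b_0=-1$ lands in $HH^3$, $b_0\ge 0$ lands in $HH^{\le 1}$ — with nothing in between, so $HH^r(\Cw)=0$ for $r>3$ (and, one notes, also for $r=2$ by Proposition \ref{hyp}).

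\textbf{Main obstacle.} I expect the delicate point to be item (ii): proving cleanly that $b_0\ge 0$ forces $r\le 1$ requires carefully bookkeeping the exponent contributions of the wedge factors $x_i^*$ (each contributing $-1$), the exponent $\beta\ge 0$ of $x_0$, the Jacobian-ring part $p\in J_\gamma$, and the negativity of $d_0$, and then chasing all of this through the degree formula $2N+3-\alpha+\epsilon$ while keeping track of $\alpha=\big|I^\gamma\big|$. The Fermat case is the most symmetric and hence the easiest — one can largely cite \cite[Section 3.1]{EL} — but making the sign analysis airtight rather than case-by-case is where the real work lies.
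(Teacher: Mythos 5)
Your overall strategy --- push the good-pair condition through the covering homomorphism, read off congruences on the exponents $b_i$, and control the cohomological degree via Theorem \ref{mainTHM} together with $d_0<0$ --- is exactly the route the paper takes (it defers the Fermat case to \cite[Section 3.1]{EL}, but the parallel chain/loop arguments in Propositions \ref{cont}--\ref{negcont} and the case analysis inside the proof of Theorem \ref{main_Fermat} follow this template). Items (ii) and (iii) of your sketch are therefore fine in outline, although the intermediate claim that $b_0\ge 0$ forces $N\le -1$ is false as stated (the unit $\underline{m}=1$ is a good pair with $N=0$, $\alpha=4$, contributing to $HH^0$); what one actually shows is $2N-\alpha+4=\tfrac{2}{h}\left(d_0b_0+d_3b_3+d_4b_4\right)\le 0$, using that the congruences $b_3\equiv b_0 \bmod e$ and $b_4\equiv b_0\bmod f$, together with $0\le b_3\le e-2$ and $0\le b_4\le f-2$, force $b_3,b_4\le b_0$. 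Likewise, ``$\gamma$ fixes $x_0$'' alone is not equivalent to $\gamma$ having equal first two coordinates and $\zeta\xi=1$; you need the parity constraint $b_1\equiv b_2\equiv b_0 \pmod 2$ coming from the good-pair condition to force $u_1=u_2$.

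The genuine gap is in item (i). You attribute the count $\dim HH^3(\Cw)=(e-1)(f-1)$ to a single element $\gamma$ with $I_\gamma=\{1,2\}$ (i.e.\ $\gamma=(-1,-1,1,1)$), whose Jacobian ring $\C[x_3,x_4]/(ex_3^{e-1},fx_4^{f-1})$ has dimension $(e-1)(f-1)$, with $N=1$. This is not where the contributions come from, and the verification you propose would fail: for such $\gamma$ one has $\alpha=2$, so $N=1$ lands in $HH^4$ or $HH^5$, not $HH^3$; and for the degree-$3$ value $N=0$ the congruences $b_3\equiv b_0\bmod e$, $b_4\equiv b_0\bmod f$ combined with the weight equation force $m_3+m_4=-1$ with $m_3,m_4\ge 0$, which has no solutions. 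The actual mechanism mirrors Proposition \ref{hh3}: the $(e-1)(f-1)$ dimensions come from $(e-1)(f-1)$ \emph{distinct} group elements $\gamma=(-1,-1,\zeta,\xi)$ with $\zeta\in\mu_e\setminus\{1\}$ and $\xi\in\mu_f\setminus\{1\}$, each of which fixes none of $x_1,\ldots,x_4$ (so $\alpha=0$ and $J_\gamma=\C$) and pairs with the single monomial $\underline{m}=x_0^{-1}x_1^{-1}x_2^{-1}x_3^{-1}x_4^{-1}$, with $N=-1$; there the identity $\chi_{\underline{m}}=\chi_w^{-1}$ holds automatically. The numerical coincidence with the Milnor number is explained a posteriori by $SH^3(F)\cong H^3(F)$ (Corollary \ref{cor:milnumb}), not by a bijection with a basis of one Jacobian ring, so this part of your argument needs to be replaced rather than just tightened.
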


Our main goal here will thus be to prove an analogous statement when $w$ is either of chain or of loop type. Concretely, we will prove Proposition \ref{hh3} (the analogue of $(i)$) and Proposition \ref{negcont} (the analogue of $(ii)$ and $(iii)$).

Therefore, henceforth let $w$ be as in (\ref{example-chain}) or (\ref{example-loop}), let $(d_1,d_2,d_3,d_4;h)$ be the corresponding weights and denote the entries of the corresponding matrix $A_{w}$ as in Definition \ref{inv-pol} by $(a_{ij})$. And note that if $w$ is as in (\ref{example-chain}) (resp. (\ref{example-loop})), then $A_w$ is as in $(\star)$ (resp. $(\star\,\star)$) below:

\[
 (\star)\underset{\text{chain type}}{\begin{pmatrix}
2 & 0 & 0 & 0\\
0 & 2 & 0 & 0\\
0 & 0 & a & 1 \\
0 & 0 & 0 & b
 \end{pmatrix}} \qquad
 (\star\,\star)\underset{\text{loop type}}{\begin{pmatrix}
2 & 0 & 0 & 0\\
0 & 2 & 0 & 0\\
0 & 0 & c & 1 \\
0 & 0 & 1 & d 
 \end{pmatrix}} 
\]

Now, to describe the good pairs, we will consider the following surjective $h:1$ covering homomorphism 
\[
\Psi: (u_1,u_2,u_3,\tau)\mapsto (\tau^{d_0}u_1^{-1}u_2^{-1}u_3^{a_{33}-1},\tau^{d_1}u_1,\tau^{d_2}u_2,\tau^{d_3} u_3,\tau^{d_4} u_3^{-a_{33}})
\]

which gives an isomorphism $\text{ker}(\chi_w\circ \Psi) \simeq \mu_2\times \mu_2 \times \mu_{d_w}\times \mu_h$, hence an isomorphism 
\[
\text{ker}(\chi_w) \simeq \mu_2\times \mu_2 \times \mu_{d_w},
\] 
where (as before) $d_w\coloneqq (\det A_{w})/4$.

In particular, we have that the action of each $\gamma=(u_1,u_2,u_3) \in \text{ker}(\chi_w)$ on $\A^5\simeq \Spec(\C[x_0,\ldots,x_4])$ is diagonal and it is given by 
\[
(x_0,x_1,x_2,x_3,x_4)\mapsto (u_1^{-1}u_2^{-1}u_3^{a_{33}-1} x_0,u_1 x_1,u_2 x_2,u_3 x_3,u_3^{-a_{33}} x_4)
\]

Moreover, if we fix $\gamma \in \text{ker}(\chi_w)$ and a $\gamma$-monomial $\underline{m}=x_0^{b_0}x_1^{b_1}x_2^{b_2}x_3^{b_3}x_4^{b_4}$, then using the map $\Psi$ we can identify $\chi_{\underline{m}}$ with the character
\[
(u_1,u_2,u_3,\tau)\mapsto \tau^{n_0} u_1^{n_1} u_2^{n_2} u_3^{n_3}
\]
where the integers $n_i$ are given by:

\begin{itemize}
    \item $n_0=\sum_{i=0}^4 b_i d_i$,
    \item $n_1=b_1-b_0$,
    \item $n_2=b_2-b_0$ and
    \item $n_3=-a_{33}b_4+b_3+(a_{33}-1)b_0$
\end{itemize}

And, similarly, we can identify $\chi_w$ with the character 
\[
(u_1,u_2,u_3,\tau)\mapsto \tau^{h}
\] 

Consequently, we have that $\chi_{\underline{m}}=\chi_w^{\otimes N}$ for some $N\in \Z$ if and only if $n_0= N\cdot h$ for some $N\in \Z$ and we can find integers $m_1,m_2,m_3$ such that
\begin{equation}\label{mi}
    n_1=2m_1 \qquad n_2=2m_2 \qquad n_3=d_wm_3
\end{equation}

Thus, 
\begin{equation}\label{cong-ab}
    -a_{33}b_4+b_3+(a_{33}-1)b_0\equiv 0 \mod d_w,
\end{equation}

$(\gamma,\underline{m})$ is a good pair (by definition) and
\begin{equation}\label{Nform}
\begin{split}
N &=m_1+m_2+(a_{44}-1)m_3+b_4 \\ 
 &=\frac{b_1+b_2}{2}+(a_{44}-1)m_3+b_4-b_0
\end{split}
\end{equation}

This proves the following:

\begin{prop}\label{cont}
The only contributions to $HH^*(\mathscr{C}_{w})$ come, possibly, from elements $\gamma \in \ker(\chi_w)$ of the form $(1,1,\zeta_{d_w})$ or $(-1,-1,\zeta_{d_w})$, where $\zeta_{d_w}\in \mu_{d_w}$. 
\end{prop}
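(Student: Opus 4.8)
The plan is to unpack the conditions \eqref{mi} that characterize good pairs and show that the only freedom left in the $\mu_2\times\mu_2$-factors is the diagonal. Recall that via the identification $\ker(\chi_w)\simeq \mu_2\times\mu_2\times\mu_{d_w}$, an element $\gamma$ is a triple $(u_1,u_2,u_3)$ with $u_1,u_2\in\mu_2$ and $u_3\in\mu_{d_w}$. By Theorem \ref{mainTHM}, only good pairs contribute to $HH^*(\mathscr{C}_w)$, so it suffices to show that whenever $(\gamma,\underline{m})$ is a good pair with $\gamma=(u_1,u_2,u_3)$, one has $u_1=u_2$.

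First I would observe that the condition for $(\gamma,\underline{m})$ to be a good pair includes $n_1=2m_1$ and $n_2=2m_2$, i.e. $b_1-b_0\equiv 0\bmod 2$ and $b_2-b_0\equiv 0\bmod 2$, so that $b_1\equiv b_2\equiv b_0\bmod 2$. Next I would use the structure of $M_\gamma$: the monomial $\underline{m}=x_0^{b_0}x_1^{b_1}x_2^{b_2}x_3^{b_3}x_4^{b_4}$ is built from $p\in J_\gamma$ (the Jacobian ring of $w_\gamma$) times possibly some $x^*_i$ for $i\in I_\gamma$ and possibly $x_0^\beta$ or $x^*_0$. The key point is that the action of $\gamma=(u_1,u_2,u_3)$ on the variables $x_1,x_2$ is by $u_1,u_2$ respectively, so $x_1$ is fixed iff $u_1=1$ and $x_2$ is fixed iff $u_2=1$; this dictates, via the definition of $I^\gamma$, $I_\gamma$ and hence of $w_\gamma$ and $J_\gamma$, exactly which powers of $x_1,x_2$ can appear in a $\gamma$-monomial. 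Running through the cases $u_1=u_2=1$, $u_1=1\neq u_2$, $u_1\neq 1=u_2$, $u_1=u_2=-1$, and checking in each the parity of the exponents $b_1,b_2$ that $\underline{m}\in M_\gamma$ forces, I would show that the parity constraint $b_1\equiv b_2\bmod 2$ from being a good pair is violated precisely in the two mixed cases. Concretely: if exactly one of $u_1,u_2$ is $-1$, say $u_1=-1$, $u_2=1$, then $x_1\in I_\gamma$ contributes an $x^*_1$ (so $b_1$ is odd, coming from the exterior generator, while $w_\gamma$ involves $x_1^2$ only through... ) whereas $x_2$ is fixed and $b_2$ has the parity dictated by $J_\gamma$; comparing with $b_0$ gives a contradiction with $n_1\equiv n_2\equiv 0\bmod 2$.

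I would then separately record the consequence for the $x_0$-action: once $u_1=u_2=u$, the exponent $u_1^{-1}u_2^{-1}u_3^{a_{33}-1}=u^{-2}u_3^{a_{33}-1}=u_3^{a_{33}-1}$ on $x_0$, and the remaining analysis of which such $\gamma$ actually support a good pair is what the subsequent propositions (the analogues of Proposition \ref{contFermat}) address; for the present statement it is enough that $\gamma\in\{(1,1,\zeta_{d_w}),(-1,-1,\zeta_{d_w})\}$ with $\zeta_{d_w}\in\mu_{d_w}$ arbitrary.

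The main obstacle I anticipate is bookkeeping the definition of $M_\gamma$ carefully in each of the four cases: one has to correctly account for the exterior-algebra generators $x^*_i$ (which contribute $-1$ to the relevant $b_i$), for the distinction between whether $x_0$ is fixed (cases $A_\gamma, B_\gamma$) or not ($C_\gamma$), and for what the Jacobian ring $J_\gamma$ of the restricted polynomial $w_\gamma$ looks like when $x_1$ and/or $x_2$ are set to zero — e.g. when $x_1$ is non-fixed, $w_\gamma$ no longer contains $x_1^2$, so $x_1$ appears in $\underline{m}$ only through $x^*_1$, pinning $b_1=-1$, which has odd parity. Making the parity comparison airtight across all these sub-cases, rather than the comparison itself, is where the real work lies; the rest follows formally from \eqref{mi}.
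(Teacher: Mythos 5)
Your proposal is correct and follows essentially the same route as the paper: derive $b_1\equiv b_2\equiv b_0\bmod 2$ from the conditions $n_1=2m_1$, $n_2=2m_2$ in \eqref{mi}, then use the description of $M_\gamma$ and $J_\gamma$ (a fixed $x_i$ forces $b_i=0$ since $\partial_{x_i}w_\gamma=2x_i$ kills $x_i$ in the Jacobian ring, while a non-fixed $x_i$ forces $b_i=-1$ via $x_i^*$) to rule out the mixed cases $u_1\neq u_2$. The paper's proof is exactly this parity argument, stated more briefly.
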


\begin{proof}
    If $(\gamma,\underline{m})$ is a good pair with $\underline{m}=x_0^{b_0}x_1^{b_1}x_2^{b_2}x_3^{b_3}x_4^{b_4}$, then  our discussion tells us $b_1 \equiv b_2 \mod 2$. Thus, from the description of $M_{\gamma}$ and $J_{\gamma}$ we see that we must have $b_1=b_2=0$ or $b_1=b_2=-1$. Equivalently, either both $x_1$ are $x_2$ are fixed, or both variables are not fixed. Therefore, $\gamma$ is of the form $(1,1,\zeta_{d_w})$ or $(-1,-1,\zeta_{d_w})$. 
\end{proof}

Observe now that an element $\gamma=(u_1,u_2,u_3) \in \ker(\chi_w)$ will never fix $x_0$, unless $u_3^{a_{33}-1}=1$. Therefore, we also have:

\begin{prop}
 If $(\gamma,\underline{m})$ is a good pair such that $\gamma$ fixes $x_0$, then $\gamma$ is of the form $(1,1,\zeta)$ or $(-1,-1,\zeta)$ for some $\zeta \in \mu_{a_{33}-1}$.
\label{cont2}
\end{prop}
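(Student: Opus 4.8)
The plan is to read off the obstruction directly from the identification of $\chi_{\m}$ with the character $(u_1,u_2,u_3,\tau)\mapsto \tau^{n_0}u_1^{n_1}u_2^{n_2}u_3^{n_3}$ established just above, together with the explicit description of the $\Gamma_w$-action. First I would recall that, after Proposition \ref{cont}, we already know $\gamma$ has the form $(1,1,\zeta_{d_w})$ or $(-1,-1,\zeta_{d_w})$ with $\zeta_{d_w}\in\mu_{d_w}$, so it remains only to control which roots of unity $u_3=\zeta_{d_w}$ can occur under the extra hypothesis that $\gamma$ fixes $x_0$.

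Next I would examine the action of $\gamma=(u_1,u_2,u_3)\in\ker(\chi_w)$ on the coordinate $x_0$, which the covering homomorphism $\Psi$ tells us is multiplication by $u_1^{-1}u_2^{-1}u_3^{a_{33}-1}$. Since $u_1,u_2\in\mu_2$ and $\gamma\in\ker(\chi_w)$, and (using $b_1=b_2$ as in the proof of Proposition \ref{cont}) the variables $x_1,x_2$ are either both fixed or both non-fixed, we get $u_1=u_2$, so $u_1^{-1}u_2^{-1}=(u_1u_2)^{-1}=1$. Hence $\gamma$ fixes $x_0$ if and only if $u_3^{a_{33}-1}=1$, i.e. exactly when $u_3$ is an $(a_{33}-1)$-th root of unity. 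Combining this with Proposition \ref{cont} yields that $\gamma$ must be of the form $(1,1,\zeta)$ or $(-1,-1,\zeta)$ with $\zeta\in\mu_{a_{33}-1}$, which is the claim. (Here implicitly $\mu_{a_{33}-1}\subseteq\mu_{d_w}$, since $a_{33}-1$ divides $d_w=(\det A_w)/4$: indeed $\det A_w = 4(ab-1)$ for chain type and $4(cd-1)$ for loop type, and in both cases $a_{33}=a$ or $c$ with $a-1\mid ab-1$, $c-1\mid cd-1$.)

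The argument is essentially a one-line extraction from the already-recorded formulas, so there is no real obstacle; the only point that requires a moment's care is the divisibility remark ensuring $\mu_{a_{33}-1}$ sits inside $\mu_{d_w}$, so that the statement is consistent with Proposition \ref{cont}. I would therefore include that divisibility check explicitly — via the determinant computations $\det A_w=4(ab-1)$ and $4(cd-1)$ and the elementary fact $a_{33}-1\mid a_{33}\cdot a_{44}-1$ — and otherwise keep the proof short, citing Proposition \ref{cont} for the shape of $\gamma$ and the displayed action formula for the computation of $u_1^{-1}u_2^{-1}u_3^{a_{33}-1}$.
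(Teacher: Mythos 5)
Your main argument is correct and is exactly the paper's: by Proposition \ref{cont} we have $u_1=u_2\in\mu_2$, so the factor $u_1^{-1}u_2^{-1}$ acting on $x_0$ is $1$, and hence $\gamma$ fixes $x_0$ precisely when $u_3^{a_{33}-1}=1$, i.e.\ $\zeta\in\mu_{a_{33}-1}$. The paper records this as a one-line observation immediately before the proposition, so in substance the two proofs coincide.

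However, your parenthetical ``consistency check'' is false and should be deleted. It is not true that $\mu_{a_{33}-1}\subseteq\mu_{d_w}$: for chain type $\det A_w=4ab$ (not $4(ab-1)$), so $d_w=ab$, and $a-1\nmid ab$ in general since $ab\equiv b \pmod{a-1}$ (take $a=4$, $b=2$); for loop type $d_w=cd-1\equiv d-1\pmod{c-1}$, so $c-1\mid cd-1$ only when $c-1\mid d-1$ (take $c=3$, $d=4$). The ``elementary fact'' $a_{33}-1\mid a_{33}a_{44}-1$ is likewise false. Fortunately none of this is needed: the proposition only asserts membership of $\zeta$ in $\mu_{a_{33}-1}$, and the correct combined conclusion is $\zeta\in\mu_{a_{33}-1}\cap\mu_{d_w}$, which is exactly how the paper uses it afterwards (see Corollary \ref{notdivide} and the count $\eta=|\mu_{a-1}\cap\mu_{ab}|-1=\gcd(a-1,b)-1$, which would instead equal $a-2$ if your claimed containment held). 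So: keep the one-line extraction, drop the divisibility remark.
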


At the same time, if $(\gamma,\underline{m})$ is a good pair such that $\gamma$ does not fix $x_0$, hence $\underline{m} \in C_{\gamma}$, then it turns out such a pair contributes to $HH^3(\mathscr{C}_{w})$. More precisely, we can further prove the following:

\begin{prop}
The contributions to $HH^3(\mathscr{C}_{w})$ come precisely from good pairs $(\gamma,\underline{m})$ such that $\underline{m}$ has total exponent $b_0=-1$, hence $\underline{m} \in B_{\gamma}\cup C_{\gamma}$. Furthermore,
\begin{enumerate}[(i)]
    \item $\dim HH^3\left(\Cw)\right)=a(b-1)+1$ if $w=w_{\text{chain}}^{a,b}$ is as in (\ref{example-chain}), and
    \item $\dim HH^3\left(\Cw\right)=cd$ if $w=w_{\text{loop}}^{c,d}$ is as in (\ref{example-loop}).
\end{enumerate}

\label{hh3}
\end{prop}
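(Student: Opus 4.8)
\emph{Strategy.} The argument should parallel the Fermat case, Proposition \ref{contFermat}(i) (see \cite[Section 3.1]{EL}): first locate which good pairs can land in $HH^3$, then force $b_0=-1$ for those, and finally count them. For the first point, by Theorem \ref{mainTHM} a good pair $(\gamma,\underline{m})$ contributes to $HH^3$ precisely when either $\underline{m}\in A_\gamma$ and $\alpha=|I^\gamma|=2N+1$, or $\underline{m}\in B_\gamma\cup C_\gamma$ and $\alpha=2N+2$. Since $A_\gamma$ and $B_\gamma$ are non-empty only if $\gamma$ fixes $x_0$, in those cases Propositions \ref{cont} and \ref{cont2} force $\gamma=(1,1,\zeta)$ or $(-1,-1,\zeta)$ with $\zeta^{a_{33}-1}=1$, and then (using $\zeta^{a_{33}}=\zeta\cdot\zeta^{a_{33}-1}=\zeta$) one checks directly that $\alpha\in\{0,2,4\}$ is even; hence $\alpha=2N+1$ has no solution and no $A_\gamma$-pair contributes. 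So every contribution to $HH^3$ comes from $\underline{m}\in B_\gamma\cup C_\gamma$, and for $\underline{m}\in C_\gamma$ one has $\underline{m}=p\,x_0^*\prod_{i\in I_\gamma}x_i^*$ with $p\in J_\gamma$ involving only fixed variables, so $b_0=-1$ automatically.

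\emph{Forcing $b_0=-1$ when $\underline{m}\in B_\gamma$.} Now $x_0$ is fixed, $\gamma$ and $\alpha\in\{0,2,4\}$ are as above, $N=(\alpha-2)/2$, and $\underline{m}=x_0^\beta p\,x_0^*\prod_{i\in I_\gamma}x_i^*$ with $\beta\geq 0$, $p\in J_\gamma$, $b_0=\beta-1$. I would run through the finitely many possibilities for $\gamma$ ($\gamma=\mathrm{id}$; $\gamma=(-1,-1,1)$; $\gamma=(\pm1,\pm1,\zeta)$ with $\zeta\neq 1$ and $\zeta^{a_{33}-1}=1$), and in each combine the good-pair relations from the discussion before Proposition \ref{cont} — the parity $n_1=b_1-b_0\equiv 0\bmod 2$ (recall $b_1,b_2\in\{0,-1\}$), the congruence $n_3\equiv 0\bmod d_w$, and the degree equation $n_0=Nh$, which pins $\deg_w(p)=h+(1-\beta)d_0$ — with the graded structure of the Milnor algebra $J_\gamma$ to conclude that every such pair actually contributing to $HH^3$ has $\beta=0$. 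The mechanism is that, as $d_0<0$, each even increment of $\beta$ raises the required weighted degree of $p$, and for $\beta\geq 2$ every monomial of that degree either vanishes in $J_\gamma$ or violates $n_3\equiv 0\bmod d_w$ (the crude socle-degree bound on $J_\gamma$ is not tight enough on its own, so this step descends to the explicit weight systems). This yields $b_0=-1$ throughout, hence $\underline{m}\in B_\gamma\cup C_\gamma$.

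\emph{The count.} It remains to enumerate the good pairs with $b_0=-1$ contributing to $HH^3$; by the above, together with the fact that for $\gamma=(1,1,\zeta)$ the variable $x_1$ is killed in $J_\gamma$ so the parity of $n_1$ forces $b_0$ even, all of them have $\gamma=(-1,-1,\zeta)$. If $\zeta\neq 1$ and $\zeta^{a_{33}}\neq 1$ then $x_1,x_2,x_3,x_4$ are all non-fixed, $\alpha=0$, $J_\gamma=\C$, and the unique candidate $\underline{m}=x_0^*x_1^*x_2^*x_3^*x_4^*$ is always a good pair (one checks $n_0=-h$, $n_1=n_2=n_3=0$, so $N=-1$) contributing to $HH^3$; the number of such $\zeta\in\mu_{d_w}$ is $d_w-\gcd(a_{33},d_w)$, equal to $a(b-1)$ for chain and $cd-2$ for loop. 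If $\zeta=1$, that is $\gamma=(-1,-1,1)$, then $\alpha=2$ and $J_\gamma$ is the Milnor algebra of the plane curve $w|_{x_1=x_2=0}$; here $N=0$ forces $\deg_w(p)=(a_{33}-1)d_3$, and solving $c_3d_3+c_4d_4=(a_{33}-1)d_3$ together with $c_3-a_{33}c_4-(a_{33}-1)\equiv 0\bmod d_w$ leaves exactly $p=x_3^{a-1}$ for chain (one extra pair) and, since the loop curve is symmetric under swapping $x_3,x_4$ and $c,d$, the two classes $p=x_3^{c-1}$ and $p=x_4^{d-1}$ for loop (two extra pairs). One also checks $\gamma=\mathrm{id}$, $\gamma=(1,1,\zeta)$, and $\gamma=(-1,-1,\zeta)$ with $\zeta\neq1$, $\zeta^{a_{33}}=1$ contribute nothing (the parity of $n_1$ fails, or $n_3\not\equiv 0\bmod d_w$, or the required weighted degree exceeds the top degree of $J_\gamma$). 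Adding up gives $\dim HH^3(\Cw)=a(b-1)+1$ for chain and $cd$ for loop.

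\emph{Expected main difficulty.} The substance is in the last two steps: ruling out spurious good pairs (those with $b_0\geq 0$, or with $b_0=-1$ but landing in a degree $\neq 3$) and pinning down exactly which monomials survive. This cannot be done with socle-degree estimates alone — one must play the congruence $n_3\equiv 0\bmod d_w$ (with $d_w=ab$ for chain and $d_w=cd-1$ for loop) off against the precise monomial bases of the Milnor algebras $J_\gamma$, and verify that the two families of surviving pairs combine to $d_w-\gcd(a_{33},d_w)$ plus the small, curve-dependent correction ($+1$ for chain, $+2$ for loop).
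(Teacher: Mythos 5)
Your proposal follows essentially the same route as the paper's proof: use Theorem \ref{mainTHM} to identify which pairs $(\alpha,N)$ can land in degree $3$, force $b_0=-1$, and then enumerate the surviving good pairs. Your final count agrees with the paper's (one class $x_3^{a-1}$ from $\gamma=(-1,-1,1)$ in the chain case, two classes $x_3^{c-1},x_4^{d-1}$ in the loop case, plus one contribution for each of the $d_w-\gcd(a_{33},d_w)$ remaining $\zeta$), and is in fact more explicit than the paper about why the loop case picks up two contributions from $\gamma=(-1,-1,1)$; your disposal of the $A_\gamma$ case by noting that $x_0$ fixed forces $\zeta^{a_{33}}=\zeta$, hence $\alpha$ even, is also clean and covers every $N$ at once, where the paper only treats $\alpha=1$, $N=0$. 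Two caveats. First, the step forcing $b_0=-1$ for $B_\gamma$-monomials is left as a plan rather than executed; the mechanism you describe is the right one, but the paper reaches the conclusion more directly by combining equation (\ref{Nform}) with the congruence (\ref{cong-ab}): for $\alpha=0$, $N=-1$ these give $(a_{33}-1)(a_{44}-1)m_3=d_w m_3$, hence $m_3=0$ and $b_0=-1$, and the $\alpha=2$, $N=0$ case is handled similarly against the monomial bases of Remarks \ref{ringJ-chain} and \ref{ringJ-loop}. Second, your intermediate formula $\deg_w(p)=h+(1-\beta)d_0$ holds only when $\sum_{i\in I_\gamma}d_i=h$ (i.e.\ for $\alpha\in\{2,4\}$); for $\alpha=0$ one instead gets $\deg_w(p)=-\beta d_0$, which, since $J_\gamma=\C$ and $d_0<0$, forces $\beta=0$ even more directly, so the slip does not affect the conclusion.
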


\begin{proof}
First, observe that by Theorem \ref{mainTHM}, contributions to $HH^3(\mathscr{C}_{w})$ can only come from pairs $(\gamma,\underline{m})$ such that either:
\begin{enumerate}[(i)]
\item $\alpha=|I^{\gamma}|=0,\underline{m}\in B_{\gamma}\cup C_{\gamma}$ and $\chi_{\underline{m}}=(\chi_{w})^{-1}$ (i.e. $N=-1$); or
\item $\alpha=|I^{\gamma}|=2,\underline{m}\in B_{\gamma}\cup C_{\gamma}$ and $\chi_{\underline{m}}=(\chi_{w})^{0}$ (i.e. $N=0$); or
\item $\alpha=|I^{\gamma}|=1,\underline{m}\in A_{\gamma}$ and $\chi_{\underline{m}}=(\chi_{w})^{0}$ (i.e. $N=0$).
\end{enumerate}

The first case implies $\underline{m}=x_0^{b_0}x_1^{-1}x_2^{-1}x_3^{-1}x_4^{-1}$ for some $b_0\geq -1$ since $3,4\in I_{\gamma}$ and $J_{\gamma}=\C$. And then equation (\ref{Nform}) gives $b_0=-1$. A similar argument shows the second case also implies $b_0=-1$ and that the third case does not happen.

Conversely, if $b_0=-1$, then the descriptions of $M_{\gamma}$ and $J_{\gamma}$ combined with (\ref{mi}) imply that a pair $(\gamma,\underline{m})$ can only be a good pair if  $\underline{m} \in B_{\gamma}\cup C_{\gamma}$ and 
\[
\underline{m}=x_0^{-1}x_1^{-1}x_2^{-1}x_3^{b_3}x_4^{b_4},
\]
where by (\ref{cong-ab}) the exponents $b_3$ and $b_4$ are such that either $b_4=b_3=-1$, hence $N=-1$ and $\alpha=0$; or $b_4=0$ and $b_3=a-1$ and hence $N=0$ and $\alpha=2$. In particular, $m_1=m_2=m_3=0$ and either $\{3,4\}\subset I_{\gamma}$ or $\{3,4\} \subset I \backslash I_{\gamma}$.

Therefore, the contributions from monomials with total exponent $b_0=-1$ come from $\gamma=(-1,-1,1)$ or from $\gamma=(-1,-1,\zeta)$ for some 
$\zeta \in \mu_{d_w}\backslash \mu_{a_{33}}$. Note that there are $a(b-1)+1$ such elements if $w$ is as in (\ref{example-chain}), and there are $cd$ such elements if $w$ is as in (\ref{example-loop}). Moreover, in each case, these indeed contribute to $HH^3(\mathscr{C}_{w})$. 
\end{proof}

\begin{rmk}
    Note that in Proposition \ref{hh3}, the dimension of $HH^3(\mathscr{C}_{w})$  is precisely the Milnor number of the singularity defined by $\check{w}$. In fact, it follows from (\ref{les}) that $SH^3(F)\simeq H^3(F)$ (see also Corollary \ref{cor:milnumb}).
\end{rmk}

Finally, putting everything together, we obtain:

\begin{prop}
For any $r\leq 1$ the contributions to $HH^r(\mathscr{C}_{w})$ come precisely from good pairs $(\gamma,\underline{m})$ such that $\underline{m}$ has total exponent $b_0\geq 0$ and $\gamma$ is of the form $(1,1,\zeta)$ or $(-1,-1,\zeta)$ for some $\zeta\in \mu_{a_{33}-1}\cap \mu_{d_w}$ (i.e. $\gamma$ fixes $x_0$). Conversely, any such good pair can only contribute to $HH^r(\mathscr{C}_{w})$  for some $r\leq 1$. In particular, $HH^r(\mathscr{C}_{w})$ vanishes for all $r>3$.
\label{negcont}
\end{prop}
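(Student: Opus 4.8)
The plan is to combine the structural results already established in this subsection (Propositions \ref{cont}, \ref{cont2}, \ref{hh3}) with the grading bookkeeping of Theorem \ref{mainTHM}. First I would recall that by Proposition \ref{cont} every good pair $(\gamma,\underline{m})$ has $\gamma$ of the form $(1,1,\zeta)$ or $(-1,-1,\zeta)$ with $\zeta\in\mu_{d_w}$, and by Theorem \ref{mainTHM} the cohomological degree to which $(\gamma,\underline{m})$ contributes is $2N+3-\alpha+\epsilon$, where $\alpha=|I^\gamma|$, $\epsilon=1$ if $\underline{m}\in A_\gamma$ and $\epsilon=2$ if $\underline{m}\in B_\gamma\cup C_\gamma$. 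The key observation is then the dichotomy: either $\gamma$ fixes $x_0$, in which case $\underline{m}\in A_\gamma\cup B_\gamma$ and $b_0\geq 0$ (Remark \ref{b0} and the definitions of $A_\gamma,B_\gamma$), or $\gamma$ does not fix $x_0$, in which case $\underline{m}\in C_\gamma$ and $b_0=-1$; and by Proposition \ref{cont2}, $\gamma$ fixes $x_0$ exactly when $\zeta\in\mu_{a_{33}-1}\cap\mu_{d_w}$.

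For the case $\gamma$ does not fix $x_0$, I would invoke Proposition \ref{hh3}: such a good pair has $b_0=-1$ and contributes precisely to $HH^3(\mathscr{C}_w)$, so it never contributes to $HH^r$ with $r\leq 1$, and of course not to $HH^r$ with $r>3$ either. So it remains to treat the case $\gamma$ fixes $x_0$. Here the task is to show that any good pair with $\gamma\in\{(1,1,\zeta),(-1,-1,\zeta):\zeta\in\mu_{a_{33}-1}\cap\mu_{d_w}\}$ and $\underline{m}\in A_\gamma\cup B_\gamma$ (automatically $b_0\geq 0$) contributes only in degrees $\leq 1$, and that conversely all contributions in degrees $\leq 1$ arise this way. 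The forward direction is a degree estimate: with $\gamma$ fixing $x_0$ we have $\alpha=|I^\gamma|\geq 1$, and I would argue — tracking $N$ via formula (\ref{Nform}), $N=\tfrac{b_1+b_2}{2}+(a_{44}-1)m_3+b_4-b_0$, together with the constraints $b_0\geq 0$, $b_1=b_2\in\{0,-1\}$, $b_3,b_4\geq -1$ — that the resulting degree $2N+3-\alpha+\epsilon$ is bounded above by $1$; the extreme case $\alpha=4$ (i.e. $\gamma=\mathrm{id}$) forces $b_1=b_2=0$, which is the degenerate $N=1$, $\underline{m}=x_0^{b_0}x_3^{b_3}x_4^{b_4}$ situation that one checks lands in degree $\leq 1$ after excluding $\underline{m}=x_0x_1x_2x_3x_4$ as in Proposition \ref{hyp}. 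The converse direction — a contribution in degree $r\leq 1$ must come from such a pair — follows because Proposition \ref{cont} already restricts $\gamma$ to the two families, and a $\gamma$ not fixing $x_0$ would by the previous paragraph contribute only in degree $3\not\leq 1$. The vanishing $HH^r(\mathscr{C}_w)=0$ for $r>3$ then follows: contributions in degree $>3$ would again have to come from one of the two families of $\gamma$; those not fixing $x_0$ contribute only in degree $3$, and those fixing $x_0$ contribute only in degrees $\leq 1$, so no pair contributes above degree $3$.

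The main obstacle I expect is the degree bound in the case $\gamma$ fixes $x_0$: one must carefully enumerate the admissible monomials $\underline{m}=x_0^{b_0}x_1^{b_1}x_2^{b_2}x_3^{b_3}x_4^{b_4}\in A_\gamma\cup B_\gamma$ for chain and loop type separately, using that $p\in J_\gamma$ constrains $b_3,b_4$ (via the Jacobian ring of $w_\gamma$, which depends on whether $x_3,x_4$ are fixed), and check in each configuration that $2N+3-|I^\gamma|+\epsilon\leq 1$, with equality characterizing the top nonzero negative degree. This is the computational heart; everything else is an assembly of results already proved. Given the symmetry $x_1\leftrightarrow x_2$ and the small number of cases for $(b_1,b_2)$ and for which of $x_3,x_4$ are fixed, the enumeration is finite and mechanical, so I would organize it as a short case analysis rather than a single formula, mirroring the structure of the proof of Proposition \ref{hh3}.
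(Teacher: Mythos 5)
There is a genuine gap in the ``key observation'' on which your whole case division rests. You claim that if $\gamma$ fixes $x_0$ then $\underline{m}\in A_\gamma\cup B_\gamma$ and hence ``automatically $b_0\geq 0$''. The second implication is false: by the definition of $B_\gamma$, a monomial $x_0^{\beta}p\,x_0^*\prod_{i\in I_\gamma}x_i^*$ has $b_0=\beta-1\geq -1$ (this is exactly why Remark \ref{b0} only asserts $b_0\geq -1$), so $b_0=-1$ can occur even when $x_0$ is fixed. Concretely, for $w$ of chain type take $\gamma=(-1,-1,1)$, which fixes $x_0$ since $u_1^{-1}u_2^{-1}u_3^{a_{33}-1}=1$, and $\underline{m}=x_0^{-1}x_1^{-1}x_2^{-1}x_3^{a-1}\in B_\gamma$: by Proposition \ref{hh3} this is a good pair with $b_0=-1$, $N=0$, $\alpha=2$, contributing to $HH^{3}(\Cw)$. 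It violates the degree bound you set out to prove in the forward direction (that every good pair with $\gamma$ in the two families and $\underline{m}\in A_\gamma\cup B_\gamma$ lands in degree $\leq 1$), and the same error propagates into your argument for the vanishing in degrees $r>3$ and into the ``precisely'' part of the characterization. This is why $b_0\geq 0$ appears in the statement as a hypothesis \emph{separate} from ``$\gamma$ fixes $x_0$'': the two conditions are not equivalent, and the correct logic (which is what the paper does) is that a contribution in degree $\leq 1$ must have $b_0\geq 0$ \emph{because} $b_0=-1$ forces a contribution to $HH^3$ by Proposition \ref{hh3}, and only then does $b_0\geq 0$ force $\underline{m}\in A_\gamma\cup B_\gamma$ and $x_0$ fixed.

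Once the $b_0=-1$ monomials of $B_\gamma$ are excluded, your plan can be completed, but note that the paper avoids your case-by-case enumeration of $J_\gamma$ for chain and loop type: starting from (\ref{Nform}) it rewrites $2N-\alpha+4$ as $\tfrac{2}{h}\,(d_0b_0+d_3b_3+d_4b_4)$ when $\zeta=1$, and as $2+\tfrac{2}{h}\,(d_0b_0+d_3b_3+d_4b_4)$ with $b_3=b_4=-1$ when $\zeta\neq 1$, and concludes $r\leq 1$ from $d_0=-(d_3+d_4)<0$ and $b_0\geq 0$; this is uniform in the type of $w$ and in the shape of $J_\gamma$. Your enumeration route is workable but substantially longer, and it is precisely the route on which the missing $b_0=-1$ case would have surfaced. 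Finally, your aside that ``the extreme case $\alpha=4$ forces the degenerate $N=1$, $\underline{m}=x_0x_1x_2x_3x_4$ situation'' conflates this proposition with the degree-$2$ vanishing argument of Proposition \ref{hyp}; for $\gamma=(1,1,1)$ the relevant monomials are $x_0^{b_0}p$ with $p$ in the Jacobian ring of Remark \ref{ringJ-chain} (or \ref{ringJ-loop}), and they are handled by the same weight estimate as all other cases.
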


\begin{proof}
Let $(\gamma,\underline{m})$ be a good pair which contributes to $HH^r(\mathscr{C}_{w})$  for some $r\leq 1$. Then $\underline{m}$ has total exponent $b_0\geq 0$ by Remark \ref{b0} and Proposition \ref{hh3}. In particular, $\gamma \notin C_{\gamma}$ and $\gamma$ fixes $x_0$. Thus, $\gamma$ is of the form  $(1,1,\zeta)$ or $(-1,-1,\zeta)$ for some $\zeta\in \mu_{a_{33}-1}\cap \mu_{d_w}$ by Propositions \ref{cont} and \ref{cont2}. 

Conversely, let $(\gamma,\underline{m})$ be any good pair such that $\underline{m}$ has total exponent $b_0\geq 0$ and $\gamma$ is of the form $(1,1,\zeta)$ or $(-1,-1,\zeta)$ for some $\zeta\in \mu_{a_{33}-1}\cap \mu_{d_w}$. Assume such a pair contributes to $HH^r(\mathscr{C}_{w})$ for some $r\in \Z$. Then, since $\alpha=I^{\gamma}$ is always even, it follows from Theorem \ref{mainTHM} that 
\[
r=\begin{cases}
    2N-\alpha+4 & \text{if $r$ is even}\\
    2N-\alpha+5 & \text{if $r$ is odd}    
\end{cases} 
\]
Moreover, writing $\underline{m}=x_0^{b_0}x_1^{b_1}x_2^{b_2}x_3^{b_3}x_4^{b_4}$ we further have
\[
2N-\alpha+4=\frac{2}{h}\cdot (d_0b_0+d_3b_3+d_4b_4) \qquad (\iff \zeta=1)
\]
or
\[
2N-\alpha+4=2+ \frac{2}{h}\cdot(d_0b_0+d_3b_3+d_4b_4) \qquad (\iff \zeta\neq 1)
\]
Now, in the first case, since $d_3+d_4=-d_0$ we conclude
\[
d_0b_0+d_3b_3+d_4b_4\leq (d_0+d_3+d_4)\max\{b_0,b_3,b_4\}=0
\]
And, in the second case, since $b_3=b_4=-1, b_0\geq 0$ and $d_0<0$, we obtain
\[
d_0b_0+d_3b_3+d_4b_4=d_0(b_0+1)< 0
\]
Therefore, in any case, we must have $r\leq 1$.
\end{proof}

\begin{cor}\label{notdivide}
   If $w$ is as in (\ref{example-chain}) or (\ref{example-loop})  and $(\gamma,\underline{m})$ is a good pair which contributes to $HH^r(\mathscr{C}_{w})$ for some $r\leq 1$, then $(\gamma,\underline{m})$  belongs to one of the following cases:
\begin{enumerate}[(i)]
    \item $\gamma=(1,1,1)$ and $\underline{m}=x_0^{b_0}p$ where  $p\in \C[x_3,x_4]/(\partial_{x_3} w, \partial_{x_4} w)$
    \item $\gamma=(-1,-1,1)$ and $\underline{m}=x_0^{b_0}px_1^{-1}x_2^{-1}$ where  $p$ is as in $(i)$
    \item $\gamma=(1,1,\zeta)$, with $\zeta\neq 1$, and $\underline{m}=x_0^{b_0}x_3^{-1}x_4^{-1}$
    \item $\gamma=(-1,-1,\zeta)$, with $\zeta\neq 1$, and $\underline{m}=x_0^{b_0}x_1^{-1}x_2^{-1}x_3^{-1}x_4^{-1}$
\end{enumerate}
In particular, if gcd$(a_{33}-1,d_w)=1$, then $(\gamma,\underline{m})$ is as in $(i)$ or $(ii)$.
\end{cor}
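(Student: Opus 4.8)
The plan is to combine the structural results just established—Propositions \ref{negcont}, \ref{cont}, and \ref{cont2}—with the explicit descriptions of $M_\gamma$ and $J_\gamma$ from the notation block \fbox{N1}–\fbox{N5}, specializing everything to the matrices $(\star)$ and $(\star\,\star)$. By Proposition \ref{negcont}, a good pair $(\gamma,\underline{m})$ contributing to $HH^r(\Cw)$ with $r\le 1$ must satisfy $b_0\ge 0$ and $\gamma=(1,1,\zeta)$ or $(-1,-1,\zeta)$ with $\zeta\in\mu_{a_{33}-1}\cap\mu_{d_w}$; in particular $\gamma$ fixes $x_0$, so $\underline{m}\in A_\gamma\cup B_\gamma$ and $\underline{m}=x_0^{b_0}\,p\,\prod_{i\in I_\gamma}x_i^*$ with $b_0\ge 0$ and $p\in J_\gamma$. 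The proof then splits into the four cases according to whether $\zeta=1$ (so $x_3$ is fixed, $a_{33}-1$ dividing the order of $\zeta$ is automatic) or $\zeta\ne1$ (so $x_3$, and hence $x_4$ since $u_3^{-a_{33}}\ne1$ when $u_3^{a_{33}-1}\ne1$ and $u_3$ has order not dividing $a_{33}$... careful: one must check $u_3^{a_{33}}$ can still be $1$), and whether $u_1=u_2=1$ or $u_1=u_2=-1$ (so $x_1,x_2$ are fixed or not, respectively, matching Proposition \ref{cont}).

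First I would handle $\zeta=1$, i.e. $\gamma=(1,1,1)$ or $(-1,-1,1)$. Here $I_\gamma=\emptyset$ when $\gamma=(1,1,1)$ and $I_\gamma=\{1,2\}$ when $\gamma=(-1,-1,1)$, while $w_\gamma=w$ itself restricted appropriately; since $x_1,x_2$ appear only through $x_1^2+x_2^2$, the Jacobian ring contribution from those variables is trivial and $J_\gamma\cong\C[x_3,x_4]/(\partial_{x_3}w,\partial_{x_4}w)$. This gives cases (i) and (ii) directly: $\underline{m}=x_0^{b_0}p$ or $x_0^{b_0}p\,x_1^*x_2^*$ with $p$ a representative of a class in that Milnor algebra. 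For $\zeta\ne1$ I would argue that $\zeta\in\mu_{a_{33}-1}$ forces $x_3$ to be fixed (since $u_3=\zeta$ acts on $x_3$ by multiplication by $\zeta$—wait, $\zeta$ fixes $x_3$ only if $\zeta=1$). I need to reconcile this: the condition from Proposition \ref{cont2} is that $\gamma$ fixes $x_0$, which requires $u_3^{a_{33}-1}=1$, i.e. $\zeta\in\mu_{a_{33}-1}$; but then $x_3$ is \emph{not} fixed (unless $\zeta=1$) and $x_4$ is acted on by $u_3^{-a_{33}}=\zeta^{-a_{33}}=\zeta^{-1}\ne1$, so both $x_3,x_4\in I_\gamma$. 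Hence $w_\gamma$ is the restriction of $w$ to $\{x_3=x_4=0\}$, which is $x_1^2+x_2^2$ (plus $x_0$-dependence, but $x_0\notin$ the variables of $w$), so $J_\gamma$ is supported only in the $x_1,x_2$ directions and is again essentially $\C$ in the relevant sense; thus $p$ must be a constant and $\underline{m}=x_0^{b_0}x_3^*x_4^*$ (case (iii), when $x_1,x_2$ fixed) or $x_0^{b_0}x_1^*x_2^*x_3^*x_4^*$ (case (iv), when $x_1,x_2$ not fixed).

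The main obstacle will be the bookkeeping around which variables are fixed in the $\zeta\ne1$ cases and verifying that $J_\gamma$ genuinely collapses to scalars there—one must check that restricting $w$ to the fixed locus really does kill all of $x_3,x_4$, and that the monomial $p$ cannot involve $x_3$ or $x_4$ with nonnegative exponent (it cannot, since those are in $I_\gamma$ and only appear via the $x_i^*$ factors, each contributing $-1$). A secondary subtlety is the final sentence: if $\gcd(a_{33}-1,d_w)=1$ then $\mu_{a_{33}-1}\cap\mu_{d_w}=\{1\}$, so $\zeta=1$ necessarily and only cases (i), (ii) survive; this is immediate once the four-case analysis is in place, since cases (iii) and (iv) require $\zeta\ne1$. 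I would also note in passing that Remark \ref{b0} guarantees $b_0\ge-1$ always, and Proposition \ref{hh3} already disposed of the $b_0=-1$ situation as contributing to $HH^3$ rather than $HH^{\le1}$, which is exactly why $b_0\ge0$ here—this is the input from Proposition \ref{negcont} that I would cite rather than re-prove.
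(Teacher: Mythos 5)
Your proposal is correct and follows the route the paper intends: the corollary is stated without a separate proof precisely because it is the combination of Proposition \ref{negcont} (which gives $b_0\geq 0$ and forces $\gamma=(\pm1,\pm1,\zeta)$ with $\zeta\in\mu_{a_{33}-1}\cap\mu_{d_w}$ fixing $x_0$) with the explicit descriptions of $I_\gamma$, $w_\gamma$, $J_\gamma$ and $M_\gamma$, exactly as you carry out. Your resolution of the $\zeta\neq1$ bookkeeping is right ($\zeta^{a_{33}}=\zeta\neq1$ forces both $x_3,x_4\in I_\gamma$, collapsing $J_\gamma$ to $\C$), and the final assertion is indeed immediate from $\mu_{a_{33}-1}\cap\mu_{d_w}=\{1\}$.
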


\begin{rmk}\label{ringJ-chain}
    Note that if $w=w_{\text{chain}}^{a,b}$ is as in (\ref{example-chain}) and $\gamma \in \ker(\chi_w)$ fixes both $x_3$ and $x_4$, then the Jacobian ring $J_{\gamma}=\C[x_3,x_4]/(\partial_{x_3} w, \partial_{x_4} w)$ has basis 
    \[
\left\{1,x_3,\ldots,x_3^{2(a-1)},x_4,x_4^2,\ldots,x_4^{b-2}\right\}\cup\left\{x_3^ix_4^j\right\}_{1\leq i\leq a-2, 1\leq j\leq b-2}.
\]
\end{rmk}

\begin{rmk}\label{ringJ-loop}
     Similarly, if $w=w_{\text{loop}}^{c,d}$ is as in (\ref{example-loop})  and $\gamma\in \ker(\chi_w)$ is such that both $x_3$ and $x_4$ are fixed, then the Jacobian ring 
     $J_{\gamma}= \C[x_3,x_4]/(\partial_{x_3} w, \partial_{x_4} w)$ has basis 
     \[
     \left\{1,x_3,\ldots,x_3^{c-1},x_4,x_4^2,\ldots,x_4^{2(d-1)}\right\}\cup\left\{x_3^ix_4^j\right\}_{1\leq i\leq c-2, 1\leq j\leq d-1}.
     \]
 \end{rmk}

\section{The main formulas}\label{sec:chain}

We will now use Theorem \ref{mainTHM} combined with Propositions \ref{contFermat} and \ref{negcont} to deduce the main formulas in Theorems \ref{thmA}, \ref{thmB} and \ref{thmC}. That is, we will compute  $HH^{\leq 1}(\mathscr{C}_{w})$ for $w$ a suspended polynomial.

In addition, we will use these formulas to determine for which parameters $a$ and $b$ (resp. $c$ and $d$ or $e$ and $f$) the polynomial $w$ is such that the cohomology groups $HH^{\leq 1}(\mathscr{C}_{w})$ have constant rank. Then, using Lemma \ref{crit}, we will show (Propositions \ref{conjtrue-chain}, \ref{conjtrue-loop} and \ref{conjtrue-Fermat}) that this happens precisely when the corresponding dual singularity admits a small resolution, thus proving Conjecture \ref{conj} for all suspended polynomials.

We will consider the three types of polynomials separately, but our approach is quite uniform. If $w$ is of chain type (resp. of loop type), we observe that to each good pair $(\gamma,\m)$ contributing to $HH^r(\Cw)$ we can associate the integer $m_3$ that is uniquely determined by equations (\ref{mi}) and (\ref{Nform}), and that this integer must lie in one of the sets appearing in our formula (\ref{formula-chain}) (resp. (\ref{formula-loop})) below. Therefore, computing  $HH^r(\Cw)$ can be reduced to the counting of the number of integers $m_3$ satisfying certain integer (in)equalities.

If $w$ is of Fermat type, we apply a similar argument: the only difference in this case is that good pairs $(\gamma,\m)$ correspond to pairs of integers $(m_3,m_4)$.

This approach turns out to be quite useful because it allows us to keep track of the exponent $b_0$ in a contributing monomial $\m=\x$. This is precisely the information one needs when determining the bigrading on $HH^{\ast}(\Cw)$ (see Lemma \ref{lem:b0}).

\subsection{The chain-type polynomials}\label{subsec:general}

We first explain how one can compute $HH^r\left(\mathscr{C}_{w}\right)$ for $r\leq 1$, $w=w_{\text{chain}}^{a,b}$ as in (\ref{example-chain}), and   $2\leq a$ and $2\leq b$ arbitrary. 

We prove:

\begin{thm}\label{main_general}
   Let $w=w_{\text{chain}}^{a,b}$ be as in (\ref{example-chain}). Then, given any integer $k \leq 0$, we have: 
   \begin{equation}\label{formula-chain}
      \dim HH^{2k}(\mathscr{C}_{w})=\dim HH^{2k+1}(\mathscr{C}_{w})=|\mathcal{W}_k|+|\mathcal{X}_k|+\eta|\mathcal{Y}_k|+\sum_{i=1}^{b-2}|\mathcal{Z}_{i,k}|,
   \end{equation}
       where 
   \begin{itemize}
        \item $\mathcal{W}_k\coloneqq \{m_3\in \Z_{\leq -k}\,;\, 1\leq-(a+b-1)m_3-(a-1)k\leq b-2\}$,
        
        \item $\mathcal{X}_k\coloneqq \{m_3\in \Z_{\geq 0}\,;\, 0\leq (a+b-1)m_3+(a-1)k\leq 2(a-1)\}$  ,
        
        \item  $\mathcal{Y}_k\coloneqq \{m_3\in \Z_{\geq 0}\,;\, (a+b-1)m_3=(a-1)(1-k)\}$ ,
        
        \item $\mathcal{Z}_{i,k} \coloneqq \{m_3\in \Z\,;\, 1\leq (a+b-1)m_3+i+(a-1)k\leq a-2\}$ and
        
        \item $\eta \coloneqq |\mu_{a-1}\cap \mu_{ab}|-1=\gcd(a-1,b)-1$.
   \end{itemize}
   
\end{thm}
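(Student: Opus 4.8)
The plan is to reduce the computation of $HH^{2k}(\mathscr{C}_{w})$ and $HH^{2k+1}(\mathscr{C}_{w})$ for $k \leq 0$ to a counting problem over the integer $m_3$ attached to each contributing good pair, and then to identify the four sets $\mathcal{W}_k, \mathcal{X}_k, \mathcal{Y}_k, \mathcal{Z}_{i,k}$ as indexing exactly those good pairs that land in cohomological degree $2k$ (equivalently $2k+1$). First I would invoke Proposition \ref{negcont} and Corollary \ref{notdivide}: any good pair $(\gamma,\underline{m})$ contributing to $HH^r(\mathscr{C}_{w})$ with $r \leq 1$ has $b_0 \geq 0$, $\gamma$ fixes $x_0$, and $\gamma$ is one of the four shapes $(1,1,1)$, $(-1,-1,1)$, $(1,1,\zeta)$, $(-1,-1,\zeta)$ with $\zeta \in \mu_{a-1}\cap\mu_{d_w}$, $\zeta\neq 1$ (note $d_w = ab$ and $a_{33}=a$, $a_{44}=b$). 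In cases (i) and (ii) the monomial is $\underline{m}=x_0^{b_0}p$ (resp. $x_0^{b_0}p\,x_1^{-1}x_2^{-1}$) with $p$ a basis element of $J_\gamma = \C[x_3,x_4]/(\partial_{x_3}w,\partial_{x_4}w)$, whose explicit basis is given in Remark \ref{ringJ-chain}; in cases (iii) and (iv) the monomial is forced to be $x_0^{b_0}x_3^{-1}x_4^{-1}$ (resp. $x_0^{b_0}x_1^{-1}x_2^{-1}x_3^{-1}x_4^{-1}$).

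Next I would run the degree bookkeeping. By Theorem \ref{mainTHM}, a good pair lands in $HH^{2N+3-\alpha+1}$ or $HH^{2N+3-\alpha+2}$ according to which of $A_\gamma,B_\gamma,C_\gamma$ contains $\underline{m}$; since $\alpha=|I^\gamma|$ is $4$ in cases (i),(iii) and $2$ in cases (ii),(iv), and since cases (i),(iii) give contributions in $A_\gamma$ ($b_0\geq 0$) while (ii),(iv) give contributions in $B_\gamma$ or a shifted analogue, one checks that the parity works out so that the even-degree count equals the odd-degree count — this is the source of the equality $\dim HH^{2k} = \dim HH^{2k+1}$. I would then translate $r = 2k$ into a linear equation in $b_0$ and the exponents of $p$, using the weight formula: with weights $(d_1,d_2,d_3,d_4;h)$ for $w_{\text{chain}}^{a,b}$, namely $h = 2ab$, $d_1=d_2=ab$, $d_3 = b$, $d_4 = ab - b$... (I would recompute these from $A_w = (\star)$: solving $a\,d_3 + d_4 = h$, $b\,d_4 = h$, $2d_1=2d_2=h$ gives $d_4 = h/b$, $d_3 = h(b-1)/(ab)$, so taking $h = 2ab/\gcd(\cdot)$ appropriately), and $N$ is given by (\ref{Nform}) as $N = \frac{b_1+b_2}{2} + (b-1)m_3 + b_4 - b_0$. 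Writing $p = x_3^i x_4^j$ from the Remark \ref{ringJ-chain} basis and substituting into the congruence (\ref{cong-ab}) and (\ref{mi}) to solve for $m_3$ in terms of $i,j,b_0$, the condition "$(\gamma,\underline{m})$ contributes to degree $2k$" becomes the assertion $m_3 \in \mathcal{W}_k$ (from the "pure $x_4$" basis elements $x_4^i$, $1\leq i\leq b-2$, in case (ii)), $m_3\in\mathcal{X}_k$ (from the "pure $x_3$" elements $1, x_3,\dots,x_3^{2(a-1)}$ in case (i)), $m_3\in\mathcal{Y}_k$ weighted by $\eta=\gcd(a-1,b)-1$ (the number of valid $\zeta\neq 1$, from cases (iii)/(iv)), and $m_3\in\mathcal{Z}_{i,k}$ (from the mixed monomials $x_3^ix_4^j$, $1\leq i\leq a-2$, $1\leq j\leq b-2$, where the index $i$ is the leftover exponent and $j$ ranges freely producing the interval length $b-2$ — though one must double-check whether it is $i$ or $j$ that gets summed over).

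The main obstacle, I expect, is the careful matching of which basis monomial of $J_\gamma$ produces which of the four sets, and getting all the inequality endpoints exactly right — in particular disentangling the roles of the exponents $i$ and $j$ in the mixed block $\{x_3^ix_4^j\}$, confirming that summing over $i\in\{1,\dots,b-2\}$ (rather than $\{1,\dots,a-2\}$) is correct for $\mathcal{Z}_{i,k}$, and verifying that the $b_0\geq 0$ constraint is automatically encoded in the sign conditions $m_3\leq -k$ (for $\mathcal{W}_k$) and $m_3\geq 0$ (for $\mathcal{X}_k,\mathcal{Y}_k$). I would handle this by writing $b_0$ explicitly as an affine function of $m_3$ and $k$ (and $i$) from the degree equation $2N+3-\alpha+1 = 2k$ (or $+2=2k$), then imposing $b_0\geq 0$ and rereading it as the stated bound on $m_3$; the upper bounds (e.g. $\leq b-2$, $\leq 2(a-1)$, $\leq a-2$) come directly from the ranges of the $J_\gamma$-basis exponents in Remark \ref{ringJ-chain}. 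Once the dictionary is established, the formula (\ref{formula-chain}) is just the statement that $\dim HH^{2k}(\mathscr{C}_{w})$ counts the disjoint union of these index sets, with the factor $\eta$ accounting for the $\zeta\neq 1$ multiplicity; the equality with $\dim HH^{2k+1}(\mathscr{C}_{w})$ follows because each contributing pair in $A_\gamma$ is paired with one in $B_\gamma$ (or the shift thereof) in the adjacent odd degree, a pairing visible from the $+1$ versus $+2$ in Theorem \ref{mainTHM}.
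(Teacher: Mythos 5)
Your proposal follows essentially the same route as the paper's proof: classify the contributing good pairs via Corollary \ref{notdivide} and Remark \ref{ringJ-chain}, extract the degree equation from (\ref{Nform}), and read off the four index sets from the four blocks of the $J_\gamma$-basis (pure $x_4$, pure $x_3$, mixed, and the $\zeta\neq 1$ pairs weighted by $\eta$), with $b_0\geq 0$ giving the sign constraints on $m_3$ and the $A_\gamma$/$B_\gamma$ shift giving $\dim HH^{2k}=\dim HH^{2k+1}$. The details you flag resolve exactly as you predict (in $\mathcal{Z}_{i,k}$ the summed index $i$ is the $x_4$-exponent $b_4$, ranging over $1\leq i\leq b-2$, while the inequality $1\leq\cdot\leq a-2$ pins down $b_3$); the only slip is attributing $\mathcal{W}_k$ and $\mathcal{X}_k$ to your cases (ii) and (i) respectively --- both arise for either $\gamma=(1,1,1)$ or $(-1,-1,1)$, with the choice dictated by the parity of $b_0$, not by which basis monomial occurs.
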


\begin{proof}
Fix a pair $(\gamma,\underline{m}) \in \ker(\chi_w)\times M_{\gamma}$ and write $\underline{m}=x_0^{b_0}x_1^{b_1}x_2^{b_2}x_3^{b_3}x_4^{b_4}$. Then it follows from 
    Theorem \ref{mainTHM}, Corollary \ref{notdivide} and Remark \ref{ringJ-chain} that  $(\gamma,\underline{m})$ contributes to  $HH^{2k}(\mathscr{C}_{w})$ if and only if one of the following cases holds:
    \begin{enumerate}[($i$)]
        \item  $\gamma=(1,1,1)$, $b_1=b_2=0$ and $\underline{m}=x_0^{b_0}x_3^{b_3}x_4^{b_4}\in A_{\gamma}$ is such that  $b_0\geq 0,b_3$ and $b_4$ satisfy 
    \[
    -ab_4 + b_3+(a-1)b_0=abm_3
    \]
    for some integer $m_3$ and either
    \begin{enumerate}[({$i$}-a)]
        \item $b_3=0$ and $1\leq b_4 \leq b-2$, or
        \item $b_4=0$ and $0\leq b_3 \leq 2(a-1)$, or
        \item $1\leq b_3 \leq a-2$ and $1\leq b_4 \leq b-2$
    \end{enumerate}
        \item $\gamma=(-1,-1,1)$, $b_1=b_2=-1$ and $\underline{m}=x_0^{b_0}x_1^{-1}x_2^{-1}x_3^{b_3}x_4^{b_4}\in A_{\gamma}$ is such that $b_0,b_3$ and $b_4$ are as in $(i)$,
         \item $\gamma=(1,1,\zeta)$, $b_1=b_2=0,b_3=b_4=-1$, with $1 \neq \zeta \in \mu_{a-1}\cap \mu_{ab}$, and $\underline{m}=x_0^{b_0}x_3^{-1}x_4^{-1}$,  where $b_0\geq 0$ satisfies $(a-1)(b_0+1)=abm_3$ for some integer $m_3$
    \item $\gamma=(-1,-1,\zeta)$, $b_1=b_2=b_3=b_4=-1$,  with $1 \neq \zeta \in \mu_{a-1}\cap \mu_{ab}$, and $\underline{m}=x_0^{b_0}x_1^{-1}x_2^{-1}x_3^{-1}x_4^{-1}$, where $b_0$ is as in $(iii)$.
    \end{enumerate}

    Now, if $(\gamma,\underline{m})$ is as in case $(i)$ or $(ii)$ above, then equation (\ref{Nform}) gives us
\begin{equation}
   2k= 2N-\alpha+4=2\big((b-1)m_3+b_4-b_0\big)
\end{equation}
And if  $(\gamma,\underline{m})$ is as in case $(iii)$ or $(iv)$ we have
\begin{equation}
   2k= 2N-\alpha+4=2\big((b-1)m_3-b_0\big)
\end{equation}

Therefore, to find the contributions to $HH^{2k}(\mathscr{C}_w)$, we need to look for integers $m_3$ such that either

\begin{enumerate}[($i'$)]
    \item  $1\leq -(a+b-1)m_3-(a-1)k\leq b-2$,  or 
    \item  $0\leq (a+b-1)m_3+(a-1)k\leq 2(a-1)$,  or
    \item $ 1\leq (a+b-1)m_3+i+(a-1)k\leq a-2$ for some $1\leq i \leq b-2$, or 
    \item $(a+b-1)m_3=(a-1)(1-k)$.
\end{enumerate}

Indeed, if we can find an integer $m_3$ as in $(i')$, then letting $b_3=0$, \linebreak $b_4=-(a+b-1)m_3-(a-1)k$ and 
\begin{equation}\label{b0b1b2}
b_0=(b-1)m_3+b_4-k \qquad b_1=b_2=0 \,(\text{resp.}\,=-1)\,\text{if $b_0$ is even (resp. $odd$)}     
\end{equation}
we will have found a good pair contributing to $HH^{2k}(\Cw)$ provided $b_0\geq 0$. And this last condition simply means $m_3\in \mathcal{W}_k$. 

Similarly, if we can find $m_3$ as in $(ii')$ (resp. $(iii')$) or, equivalently, if we can find $m_3\in \mathcal{X}_k$ (resp. $m_3\in \mathcal{Z}_{i,k}$),  then we let $b_4=0$, $b_3=(a+b-1)m_3+(a-1)k$ (resp. $b_4=i$ and $b_3=(a+b-1)m_3+i+(a-1)k$) and, again, $b_0,b_1$ and $b_2$ should be as in (\ref{b0b1b2}). And if we find $m_3$ as in $(iv)$, hence $m_3\in \mathcal{Y}_k$,  we let $b_3=b_4=-1$, $b_0=(b-1)m_3-k$ and $b_1=b_2=0$ (resp. $b_1=b_2=-1$) if $b_0$ is even (resp. odd). Note that in this case, each integer $m_3$ will give a contribution whose dimension is precisely the number $\eta$, which is the number of choices of $\zeta$.

This concludes the proof since the same reasoning also applies for odd degrees replacing $2k$ by $2k+1$, $A_{\gamma}$ by $B_{\gamma}$, etc.
\end{proof}

\begin{rmk}\label{rmk:empty-chain}
We observe that in Theorem \ref{main_general} when $a=2$ or $b=2$ some of the inequalities defining the sets $\mathcal{W}_k,\mathcal{X}_k,\mathcal{Y}_k$ and $\mathcal{Z}_{i,k}$ may become void. We simply mean that in those cases the corresponding sets are empty and there are no associated contributions to be considered.
\end{rmk}

\begin{rmk}\label{Y-chain}
    We also note that $\mathcal{Y}_k\neq \emptyset$ if and only if the non-positive integer $k$ is such that $(a-1)(1-k)\equiv 0\mod (a+b-1)$, in which case $|\mathcal{Y}_k|=1$.
\end{rmk}

Next, using formula (\ref{formula-chain}) above we can further deduce the following:

\begin{cor}\label{cor:main_chain}
    Let $w=w_{\text{chain}}^{a,b}$ be as in (\ref{example-chain}) and let $k\in \Z_{\leq 0}$. Then 
   \begin{equation}\label{formula-chain-simpler}
      \dim HH^{2k}(\mathscr{C}_{w})= \begin{cases}
          \gcd(a-1,b) & \text{if $q=0$}\\
          q & \text{if $1\leq q \leq \min\{a-1,b\}$}\\
          \min\{a-1,b\} & \text{if $\min\{a-1,b\}<q\leq \max\{a-1,b\}$}\\
          (a-1+b)-q & \text{if $\max\{a-1,b\}<q\leq (a-1+b)-1$}
      \end{cases}, 
   \end{equation}
   where $0\leq q < a-1+b$ is such that $(a-1)(1-k)\equiv q \mod (a-1+b)$.
\end{cor}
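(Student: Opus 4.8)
The plan is to derive the closed formula in \eqref{formula-chain-simpler} from the counting formula \eqref{formula-chain} by evaluating each of the four cardinalities $|\mathcal{W}_k|$, $|\mathcal{X}_k|$, $\eta|\mathcal{Y}_k|$ and $\sum_{i=1}^{b-2}|\mathcal{Z}_{i,k}|$ individually in terms of the residue $q$ defined by $(a-1)(1-k)\equiv q \bmod (a-1+b)$ with $0\le q< a-1+b$. The key simplification is that each defining condition is of the form ``$m_3$ is an integer lying in an interval of a prescribed length, subject to a congruence condition modulo $a+b-1$''; since the modulus is exactly $a+b-1$ and each interval has length strictly less than $a+b-1$, each such set has cardinality either $0$ or $1$, and which of the two occurs is dictated precisely by where $q$ falls in $\{0,1,\dots,a+b-2\}$.

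Concretely, I would first rewrite each set in terms of $q$. For $\mathcal{X}_k$: the condition $0\le (a+b-1)m_3 + (a-1)k \le 2(a-1)$ is equivalent to asking that the residue of $-(a-1)k = (a-1)(1-k) - (a-1) \equiv q-(a-1) \bmod (a+b-1)$ lie in $\{0,1,\dots,2(a-1)\}$; one must also check that the auxiliary constraint $m_3\ge 0$ in the definition of $\mathcal{X}_k$ does not remove the candidate (it does not, because $b_0\ge 0$ is already built in). For $\mathcal{W}_k$: the condition $1\le -(a+b-1)m_3-(a-1)k\le b-2$ translates to the residue of $(a-1)k$, i.e. $-q \equiv (a+b-1)-q$, lying in $\{1,\dots,b-2\}$. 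For $\mathcal{Y}_k$: by Remark \ref{Y-chain}, $|\mathcal{Y}_k|=1$ exactly when $q=0$ and is $0$ otherwise, and then $\eta=\gcd(a-1,b)-1$ is added. For the sum over $\mathcal{Z}_{i,k}$: the condition $1\le (a+b-1)m_3+i+(a-1)k\le a-2$ for $1\le i\le b-2$ means that $i$ can be any value in $\{1,\dots,b-2\}$ such that $i + (a-1)k \equiv i - q \pmod{a+b-1}$ has residue in $\{1,\dots,a-2\}$; counting the number of valid $i$ is a matter of counting integers in a window, which again reduces to elementary interval arithmetic depending on $q$.

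The bookkeeping then proceeds by cases on the location of $q$ relative to the thresholds $\min\{a-1,b\}$ and $\max\{a-1,b\}$, and additionally splitting according to whether $a-1\le b$ or $b\le a-1$ (the roles of the two parameters are not symmetric in \eqref{formula-chain}, so both sub-cases need to be checked, although the final answer is symmetric in $\{a-1,b\}$). In each regime one adds up the four contributions — each a $0$ or a $1$, except the $\mathcal{Z}$-sum and the $\eta|\mathcal{Y}_k|$ term — and verifies the total equals the asserted value: $\gcd(a-1,b)$ when $q=0$ (here $|\mathcal{X}_k|=1$ from $q-(a-1)\equiv -(a-1)$, $\eta|\mathcal{Y}_k|=\gcd(a-1,b)-1$, and the others vanish), $q$ for $1\le q\le\min\{a-1,b\}$, the constant $\min\{a-1,b\}$ on the middle stretch, and $(a-1+b)-q$ on the final stretch. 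I should also dispose of the degenerate cases $a=2$ or $b=2$ using Remark \ref{rmk:empty-chain}, noting the formula remains correct when some sets are forced empty. Finally, the odd-degree equality $\dim HH^{2k+1}=\dim HH^{2k}$ is already part of Theorem \ref{main_general}, so nothing extra is needed there.

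The main obstacle I anticipate is not conceptual but organizational: correctly handling the boundary residues (e.g. $q=\min\{a-1,b\}$, $q=\max\{a-1,b\}$, $q$ just below $a+b-1$) where a candidate integer sits exactly at an endpoint of one of the half-open windows, and making sure the $m_3\ge 0$ constraints in $\mathcal{X}_k,\mathcal{Y}_k$ and the $m_3\le -k$ constraint in $\mathcal{W}_k$ are automatically satisfied rather than cutting off a contribution — these sign conditions are exactly what encode $b_0\ge 0$, so a careful check that the produced exponents are admissible is the delicate point. Once the windows and their overlaps are pinned down precisely, the four cases of \eqref{formula-chain-simpler} fall out by direct addition.
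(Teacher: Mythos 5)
Your overall strategy --- evaluating each of the four cardinalities in \eqref{formula-chain} as a function of the residue $q$ and then summing case by case --- is the same as the paper's, but your key simplification contains a genuine error. You claim that ``each interval has length strictly less than $a+b-1$, so each such set has cardinality either $0$ or $1$,'' and later that the $\mathcal{W}_k$ and $\mathcal{X}_k$ contributions are ``each a $0$ or a $1$.'' This is false for $\mathcal{X}_k$: its defining window $\{0,1,\dots,2(a-1)\}$ consists of $2a-1$ consecutive integers, and $2a-1\geq (a+b-1)+1$ exactly when $a-1\geq b$, in which case the window can contain \emph{two} multiples of $a+b-1$ and $|\mathcal{X}_k|=2$. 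This is not a corner case but the very mechanism producing the value $\min\{a-1,b\}=b$ on the middle stretch when $b<a-1$: there one has $|\mathcal{X}_k|=2$ and $\sum_i|\mathcal{Z}_{i,k}|=b-2$, totalling $b$, whereas your assumption would yield $b-1$. A concrete counterexample: for $a=5$, $b=2$, $k=0$ one gets $q=4$ and the formula asserts $\dim HH^{0}(\mathscr{C}_w)=2$; here $\mathcal{X}_0=\{0,1\}$ has two elements while $\mathcal{W}_0$, $\mathcal{Y}_0$ and all $\mathcal{Z}_{i,0}$ are empty. The paper's proof handles this by recording $\mathcal{X}_k=\{n,n+1\}$ explicitly in the relevant cases (its Cases 2, 4(b), 4(c)).

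There is also a smaller slip in your translation for $\mathcal{W}_k$: the quantity whose residue must land in $\{1,\dots,b-2\}$ is $-(a-1)k=(a-1)(1-k)-(a-1)\equiv q-(a-1)$, not $-q$. Your treatment of $\mathcal{Y}_k$, the $\mathcal{Z}$-sum, the side constraints $m_3\geq 0$ and $m_3\leq -k$ (which indeed encode $b_0\geq 0$ and do not cut off contributions), and the reduction of the odd-degree statement to Theorem \ref{main_general} are all fine. To repair the argument you must allow $|\mathcal{X}_k|\in\{0,1,2\}$ and redo the bookkeeping in the regimes where $a-1\geq b$; once that is done the case analysis closes as in the paper.
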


\begin{proof}
Let us fix $k$ and $q$ as in the statement, and let us write 
\[
(a-1)(1-k)=n(a+b-1)+q
\]
for some $n\in\mathbb{Z}_{\geq 0}$. In view of Theorem \ref{main_general} and Remark \ref{Y-chain},  in order for us to deduce formula (\ref{formula-chain-simpler}) we can argue as follows:

\begin{enumerate}[{Case} 1]
    \item If $q=0$, then by the assumption on $k$,  given any integer $m_3$ we can write
 \[(a+b-1)m_3+(a-1)k=(a+b-1)(m_3-n)+(a-1).\]
In particular, we see that $\mathcal{W}_k=\emptyset=\mathcal{Z}_{i,k}$ and that $\mathcal{X}_k=\{n\}=\mathcal{Y}_k$. Thus, by Theorem \ref{main_general} we have exactly $\eta+1=gcd(a-1,b)$ contributions to $HH^{2k}$ (and to $HH^{2k+1}$). 
    
     \item Assume now $q=\min\{a-1,b\}$. First, when $b\leq a-1$ we have $\mathcal{X}_{k}=\{n,n+1\}$ and $\mathcal{Z}_{i,k}=\{n\}$ for $1\leq i\leq b-2$, while $\mathcal{Y}_k=\mathcal{W}_k=\emptyset$. Thus, we have a total of $b$ contributions. \par

Next, when $a-1\leq b-1$ we have $\mathcal{X}_k=\{n\}=\mathcal{Z}_{i,k}$ for $1\leq i\leq a-2$ and all other sets are empty yielding a total of $a-1$ contributions.\par

Therefore, in any case, we conclude from Theorem \ref{main_general} that the number of contributions is precisely $\min\{a-1,b\}=q$.

    \item Similarly, if $1\leq q \leq \min\{a-1,b\}-1$, then $\mathcal{Y}_k=\emptyset$, and it is routine to check that $|\mathcal{X}_k\cup \mathcal{W}_k|=1$ and that $|\mathcal{Z}_{i,k}|=1$ exactly for $1\leq i \leq q-1$ (and $\mathcal{Z}_{i,k}=\emptyset$ otherwise). Thus, $\sum_{i=1}^{b-2}|\mathcal{Z}_{i,k}|=q-1$ and by Theorem \ref{main_general} we have exactly $q$ contributions to $HH^{2k}$ (and to $HH^{2k+1}$).
      
       \item Next, if $\min\{a-1,b\}<q\leq (a-1+b)-2$, then $\mathcal{Y}_k=\emptyset$ and we have to consider four subcases:

       \begin{enumerate}[{Case 4}(a)]
           \item If $a-1<q<b$, then $b>2$, $\mathcal{X}_k=\emptyset$, $|\mathcal{W}_k|=1$ and either:
         \begin{itemize}
             \item $a=2$,  hence $\gcd(a-1,b)=1$, and in this case $\mathcal{Z}_{i,k}=\emptyset$ for all $1\leq i \leq b-2$ (see also Remark \ref{rmk:empty-chain}); or
             \item $a\geq 3$ and $|\mathcal{Z}_{i,k}|=1$ exactly for $q-(a-2)\leq i \leq q-1$. 
         \end{itemize}
        
                    \item If $a-1<q=b$, then $b=q=\delta(a-1)$ for some $\delta\in \Z_{\geq 1}$ and either:
                    \begin{itemize}
             \item $a=2$, $|\mathcal{X}_k|=1, \mathcal{W}_k=\emptyset$ and the $\mathcal{Z}_{i,k}$ are all empty  ; or
             \item $a\geq 3$, $|\mathcal{X}_k|=2, \mathcal{W}_k=\emptyset$ and $|\mathcal{Z}_{i,k}|=1$ exactly for 
             \[
             b-(a-2)\leq i \leq b-2
             \] 
         \end{itemize}
         
                    \item If $b<q \leq a-1$, then $a\geq 4$, $\mathcal{W}_k=\emptyset$,  $|\mathcal{X}_k|=2$ and either:
                    
           \begin{itemize}
             \item $b=2$  and the $\mathcal{Z}_{i,k}$ are all empty; or
             \item $b\geq 3$ and $|\mathcal{Z}_{i,k}|=1$ for all $1\leq i \leq b-2$. 
         \end{itemize}
         
         \item If $\max\{a-1,b\} < q$, then  $|\mathcal{X}_k|=|\mathcal{W}_k|=1$ and either:
         \begin{itemize}
             \item $q=a+b-3$ and the $\mathcal{Z}_{i,k}$ are all empty; or
             \item $q<a+b-3$ and $|\mathcal{Z}_{i,k}|=1$ exactly for 
             \[
             1\leq i \leq (a-1+b)-q-2
             \] 
         \end{itemize}
         
                  \end{enumerate}
            Therefore, by Theorem \ref{main_general}, we conclude  that there are exactly $\min\{a-1,b\}$ contributions to $HH^{2k}$ (and to $HH^{2k+1}$) when \[
            \min\{a-1,b\}<q\leq \max\{a-1,b\}
            \]
            And  when $\max\{a-1,b\}<q\leq (a-1+b)-2$ we have a total of $(a-1+b)-q$ contributions. 
          \item Finally, if $q=(a-1+b)-1$, then $\mathcal{X}_k=\{n+1\}$ and all other sets appearing in (\ref{formula-chain})  are empty.
\end{enumerate}
   Combining all of our conclusions from the five cases above with Remark \ref{Y-chain} yields formula (\ref{formula-chain-simpler}).
\end{proof}

In particular, the only missing piece we need to verify Conjecture \ref{conj} holds for chain-type polynomials is a numerical criterion that determines when the dual singularity admits a small resolution. This is the content of the next lemma:

\begin{lema}\label{gcd-chain}
    Let $w=w_{\text{chain}}^{a,b}$ be as in (\ref{example-chain}). Then the singularity defined by $\check{w}$ admits a small resolution whose exceptional curve has (exactly) $\min\{a-1,b\}$ irreducible components if and only if $\min\{a-1,b\}=gcd(a-1,b)$. 
\end{lema}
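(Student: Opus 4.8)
The plan is to recall the defining polynomial $\check{w}$ of the dual singularity explicitly and apply the combinatorial criterion of Lemma \ref{crit}. Since $w=w_{\text{chain}}^{a,b}=x_1^2+x_2^2+x_3^ax_4+x_4^b$ has associated matrix $(\star)$, its transpose $A_w^T$ corresponds to the dual polynomial $\check{w}=x_1^2+x_2^2+x_3^a+x_3x_4^b$ (the chain $x_3^ax_4+x_4^b$ dualizes to $x_3^a+x_3x_4^b$). Thus the dual singularity is $\{x_1^2+x_2^2+g(x_3,x_4)=0\}$ with $g(x_3,x_4)=x_3^a+x_3x_4^b=x_3(x_3^{a-1}+x_4^b)$. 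By Lemma \ref{crit}, this singularity admits a small resolution if and only if the plane curve germ $\{g=0\}$ at the origin is a union of $n+1$ distinct smooth branches; and when it does admit one, the number of exceptional components equals the number of these branches minus one (this is the standard correspondence, since each pair of branches contributes, but more precisely the link is $\#_\ell(S^2\times S^3)$ with $\ell$ one less than the branch count — I will state it matching the convention used in the proof of Lemma \ref{crit} and Corollary INTRO).

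The key step is therefore to analyze the branch structure of $g=x_3(x_3^{a-1}+x_4^b)$. The factor $x_3$ gives one smooth branch (the line $x_3=0$). The factor $x_3^{a-1}+x_4^b$ is, up to units, a product of branches governed by $\gcd(a-1,b)$: writing $\delta=\gcd(a-1,b)$, $a-1=\delta a'$, $b=\delta b'$ with $\gcd(a',b')=1$, one has $x_3^{a-1}+x_4^b=\prod_{\zeta}(x_3^{a'}-\zeta x_4^{b'})$ over the $\delta$-th roots $\zeta$ of $-1$, and each factor $x_3^{a'}-\zeta x_4^{b'}$ is irreducible; it is smooth at the origin precisely when $\min\{a',b'\}=1$. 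Hence $\{x_3^{a-1}+x_4^b=0\}$ consists of $\delta$ distinct branches, each smooth iff $\min\{a-1,b\}=\delta$, and all distinct from (and transverse to, or at worst meeting only at the origin with) the branch $x_3=0$. So $\{g=0\}$ has $\delta+1$ branches total, all smooth, exactly when $\min\{a-1,b\}=\gcd(a-1,b)$; and in that case the number of irreducible components of the exceptional curve is $\delta = \min\{a-1,b\} = \gcd(a-1,b)$. Conversely, if $\min\{a-1,b\}>\gcd(a-1,b)$, then $\min\{a',b'\}\geq 2$ and each of the $\delta$ branches of $x_3^{a-1}+x_4^b$ is singular at the origin, so the curve is not a union of smooth branches and Lemma \ref{crit} forbids a small resolution.

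The main obstacle I anticipate is bookkeeping the exact count — reconciling ``number of branches of the plane curve'' with ``number of irreducible exceptional components $\ell$,'' and being careful about the degenerate cases $a=2$ (where $x_3^{a-1}+x_4^b = x_3+x_4^b$ is already a single smooth branch, $\delta=\gcd(1,b)=1$, matching $\min\{1,b\}=1$) and $b=1$ (excluded here since $b\geq 2$). One must also confirm that the branch $x_3=0$ is genuinely distinct from every branch of $x_3^{a-1}+x_4^b$, which is immediate since $x_3\nmid x_3^{a-1}+x_4^b$ in $\C[[x_3,x_4]]$ as $a-1\geq 1$. Modulo these checks, the proof is a direct factorization argument combined with Lemma \ref{crit}.
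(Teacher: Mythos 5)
Your proposal is correct and follows essentially the same route as the paper: compute $\check{w}=x_1^2+x_2^2+x_3(x_3^{a-1}+x_4^b)$, invoke Lemma \ref{crit}, and factor $x_3^{a-1}+x_4^b$ over the $\gcd(a-1,b)$-th roots of $-1$ to count the distinct smooth branches. The paper's proof is terser, but your extra checks (irreducibility and smoothness of each factor $x_3^{a'}-\zeta x_4^{b'}$, distinctness from the branch $x_3=0$, the case $a=2$) are exactly the details it leaves implicit.
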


\begin{proof}
    By Lemma \ref{crit}, the singularity defined by $\check{w}$ admits a small resolution whose exceptional curve has (exactly) $n-1$ irreducible components if and only the curve singularity $x_3(x_3^{a-1}+x_4^{b})=0$ has exactly $n=\min\{a-1,b\}+1$ distinct smooth irreducible components. The latter is true if and only $\min\{a-1,b\}=gcd(a-1,b)$ which can be readily checked by writing
    \[
    x_3^{a-1}+x_4^b=\prod_{\zeta,\zeta^{n-1}=-1}\left(x_3^{\frac{a-1}{n-1}}+\zeta x_4^{\frac{b}{n-1}}\right).
    \]
    \end{proof}

Therefore, putting all of the above together we obtain (see also Theorem \ref{thm:new_conj}):

\begin{prop}\label{conjtrue-chain}
If $w=w_{\text{chain}}^{a,b}$ is as in (\ref{example-chain}), then Conjecture \ref{conj} holds for the singularity defined by $\check{w}$.
\end{prop}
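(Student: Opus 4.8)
The plan is to combine the explicit rank formula from Corollary \ref{cor:main_chain} with the isomorphism $SH^*(F)\simeq HH^*(\Cw)$ (valid by Proposition \ref{hyp} and Theorem \ref{comp}) and the small-resolution criterion of Lemma \ref{gcd-chain}. By Conjecture \ref{conj}, we must show that the singularity defined by $\check{w}$ admits a small resolution whose exceptional set has $\ell$ irreducible components if and only if $\dim SH^r(F)=\ell$ for every negative degree $r$.

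First I would translate the conjecture's ``constant rank in every negative degree'' condition into a statement about the residues $q$ appearing in Corollary \ref{cor:main_chain}. Writing $\dim SH^{2k}(F)=\dim SH^{2k+1}(F)$ as a function of $q$ with $(a-1)(1-k)\equiv q\bmod(a-1+b)$, and noting that as $k$ ranges over all non-positive integers the quantity $(a-1)(1-k)$ runs through the coset $(a-1)\Z+(a-1)$ modulo $a-1+b$, the set of residues $q$ that actually occur is exactly the subgroup generated by $a-1$ in $\Z/(a-1+b)\Z$, i.e.\ the multiples of $\gcd(a-1,a-1+b)=\gcd(a-1,b)$. So the ranks that appear across all negative degrees are precisely the values of the piecewise function at $q\in\{0,\gcd(a-1,b),2\gcd(a-1,b),\dots\}$. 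In particular $q=0$ is always attained (take $k$ with $1-k$ a multiple of $(a-1+b)/\gcd(a-1,b)$), giving rank $\gcd(a-1,b)$ infinitely often.

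Next I would show the rank is constant (equal to $\gcd(a-1,b)$) in all negative degrees if and only if $q=0$ is the \emph{only} residue that occurs, which happens iff $\gcd(a-1,b)$ is a multiple of every occurring residue — equivalently iff the subgroup $\langle a-1\rangle\subseteq \Z/(a-1+b)\Z$ is trivial modulo scaling, i.e.\ iff $\gcd(a-1,b)=\min\{a-1,b\}$; the point is that if $\gcd(a-1,b)<\min\{a-1,b\}$ then $q=\gcd(a-1,b)$ is a genuinely occurring nonzero residue with $1\le q\le\min\{a-1,b\}$, so by the second branch of the formula that degree has rank $q=\gcd(a-1,b)<\min\{a-1,b\}$, wait — one must be slightly careful here: the value at $q=\gcd(a-1,b)$ equals $\gcd(a-1,b)$ itself (second branch, since $1\le q\le\min\{a-1,b\}$), so the rank agrees at $q=0$ and $q=\gcd(a-1,b)$. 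The genuine obstruction to constancy comes from a residue $q$ with value strictly larger than $\gcd(a-1,b)$, e.g.\ $q=2\gcd(a-1,b)$ when $2\gcd(a-1,b)\le\min\{a-1,b\}$, which forces rank $2\gcd(a-1,b)>\gcd(a-1,b)$. Thus the ranks are constant iff no such larger residue occurs, which is exactly $\gcd(a-1,b)=\min\{a-1,b\}$, and in that case the constant value is $\min\{a-1,b\}$. Finally, by Lemma \ref{gcd-chain} this numerical condition $\gcd(a-1,b)=\min\{a-1,b\}$ is precisely the condition that $\check{w}$ admits a small resolution, and when it holds the exceptional curve has $\min\{a-1,b\}=\gcd(a-1,b)$ irreducible components, matching the constant rank. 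This establishes both directions of Conjecture \ref{conj} for $\check{w}$.

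The main obstacle I anticipate is the careful bookkeeping in the ``if rank is constant then small resolution exists'' direction: one must verify that whenever $\gcd(a-1,b)<\min\{a-1,b\}$ there genuinely exists a non-positive integer $k$ realizing a residue $q$ whose formula-value exceeds $\gcd(a-1,b)$, so that constancy truly fails. This amounts to checking that the occurring residues (the multiples of $\gcd(a-1,b)$ modulo $a-1+b$) include a value in the range where the piecewise function strictly exceeds $\gcd(a-1,b)$; the cleanest way is to take $q=2\gcd(a-1,b)$ and observe it lies in $[1,\min\{a-1,b\}]$ under the hypothesis (or handle the boundary case $2\gcd(a-1,b)=\min\{a-1,b\}+1$ separately, which cannot happen since $\gcd(a-1,b)\mid\min\{a-1,b\}$ forces $\min\{a-1,b\}$ to be a multiple of $\gcd(a-1,b)$, so $2\gcd(a-1,b)\le\min\{a-1,b\}$ unless $\min\{a-1,b\}=\gcd(a-1,b)$ already). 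Assembling these elementary number-theoretic observations with the already-proven formulas and Lemma \ref{gcd-chain} then completes the proof.
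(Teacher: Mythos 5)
Your proposal is correct and takes essentially the same route as the paper: both directions are deduced by combining the explicit rank formula of Corollary \ref{cor:main_chain} with the numerical small-resolution criterion of Lemma \ref{gcd-chain}. The paper's own proof is a terse citation of these same results, and your residue-counting argument (identifying the occurring residues $q$ with the multiples of $\gcd(a-1,b)$ and locating $q=2\gcd(a-1,b)$ as the witness to non-constancy) correctly supplies the elementary number theory the paper leaves implicit.
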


\begin{proof}
    It follows from Lemma \ref{gcd-chain}, Theorem \ref{main_general} and Corollary \ref{cor:main_chain} that if the singularity defined by $\check{w}$ admits a small resolution, then $HH^{\leq 1}(\Cw)\simeq SH^{\leq 1}(F)$ has constant rank $\ell=\min\{a-1,b\}$, equal to the number of exceptional curves in such resolution. Conversely, if we have constant rank $\ell$, then it follows from Theorem \ref{main_general} and Corollary \ref{cor:main_chain} that $\min\{a-1,b\}=gcd(a-1,b)=\ell$. Thus, by Lemma \ref{gcd-chain}, the singularity defined by $\check{w}$ admits a small resolution.
\end{proof}

\subsection{The loop-type polynomials}\label{sec:loop}

We will now study polynomials of loop type. Again, we begin by providing a general formula that computes the ranks of $HH^{\ast}(\mathscr{C}_{w})$ for degrees smaller than one.

\begin{thm}\label{main_general_loop}
Let $w=w^{c,d}_{loop}$ be as in (\ref{example-loop}). Then for any  integer $k \leq 0$ we compute 
     \begin{equation}\label{formula-loop}
        \dim HH^{2k}(\mathscr{C}_{w})=\dim HH^{2k+1}(\mathscr{C}_{w})=|\tilde{\mathcal{W}}_k|+|\tilde{\mathcal{X}}_k|+\tilde{\eta}|\tilde{\mathcal{Y}}_k|+\sum_{j=1}^{d-1}|\tilde{\mathcal{Z}}_{j,k}|,
         \end{equation}
   where 
   \begin{itemize}
   \item $\tilde{\mathcal{W}}_k\coloneqq \{m_3\in \Z_{\leq -k}\,;\, 1\leq-(c+d-2)m_3-(c-1)k\leq 2(d-1)\}$,
       \item $\tilde{\mathcal{X}}_k\coloneqq \{m_3\in \Z_{\geq 0}\,;\, 0\leq (c+d-2)m_3+(c-1)k\leq c-1\}$,
              \item  $\tilde{\mathcal{Y}}_k\coloneqq \{m_3\in \Z_{\geq 0}\,;\, (c+d-2)m_3=(c-1)(1-k)\}$,
           \item $\tilde{\mathcal{Z}}_{j,k} \coloneqq \{m_3\in \Z\,;\, 1\leq (c+d-2)m_3+j+(c-1)k\leq c-2\}$ and
         \item $\tilde{\eta} \coloneqq |\mu_{c-1}\cap \mu_{cd-1}|-1=\gcd(c-1,d-1)-1$.
   \end{itemize}
   \end{thm}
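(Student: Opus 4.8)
The plan is to mirror, almost line for line, the proof of the chain-type formula in Theorem \ref{main_general}, with the structural differences coming only from the shape of the loop matrix $(\star\,\star)$ and the corresponding Jacobian ring described in Remark \ref{ringJ-loop}. First I would invoke Corollary \ref{notdivide} to list the four possible shapes of a good pair $(\gamma,\m)$ contributing to $HH^r(\Cw)$ for $r\leq 1$: $\gamma=(1,1,1)$ with $\m=x_0^{b_0}p$, $\gamma=(-1,-1,1)$ with $\m=x_0^{b_0}px_1^{-1}x_2^{-1}$ (here $p$ runs over a basis of $J_\gamma=\C[x_3,x_4]/(\partial_{x_3}w,\partial_{x_4}w)$, given explicitly in Remark \ref{ringJ-loop}), $\gamma=(1,1,\zeta)$ with $\zeta\neq 1$ and $\m=x_0^{b_0}x_3^{-1}x_4^{-1}$, and $\gamma=(-1,-1,\zeta)$ with $\zeta\neq 1$ and $\m=x_0^{b_0}x_1^{-1}x_2^{-1}x_3^{-1}x_4^{-1}$. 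For the loop case $a_{33}=c$, $a_{44}=d$, $d_w=(\det A_w)/4=(cd-1)/1=cd-1$ (one should double-check: $\det A_w = 4(cd-1)$, so $d_w=cd-1$), and $\mu_{a_{33}-1}\cap\mu_{d_w}=\mu_{c-1}\cap\mu_{cd-1}$, which has order $\gcd(c-1,d-1)$ since $\gcd(c-1,cd-1)=\gcd(c-1,d-1)$; this produces the multiplicity $\tilde\eta=\gcd(c-1,d-1)-1$ attached to the $\tilde{\mathcal Y}_k$ contributions.

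Next I would translate the congruence (\ref{cong-ab}), which for loop type reads $-cb_4+b_3+(c-1)b_0\equiv 0 \bmod (cd-1)$, together with the degree equation coming from (\ref{Nform}) — where $a_{44}-1=d-1$ — into statements about a single integer $m_3$. In cases $(i)$ and $(ii)$ one gets $-cb_4+b_3+(c-1)b_0=(cd-1)m_3$ and $2k=2N-\alpha+4=2\big((d-1)m_3+b_4-b_0\big)$; in cases $(iii)$ and $(iv)$ the monomial has $b_3=b_4=-1$, the congruence forces $(c-1)(b_0+1)=(cd-1)m_3$, and $2k=2\big((d-1)m_3-b_0\big)$. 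Then, exactly as in the chain proof, I would split according to the basis element $p$ used: $p=1$ or $p=x_3^{b_3}$ with $0\leq b_3\leq c-1$ (the ``$\tilde{\mathcal X}_k$'' stratum), $p=x_4^{b_4}$ with $1\leq b_4\leq 2(d-1)$ (the ``$\tilde{\mathcal W}_k$'' stratum), $p=x_3^{b_3}x_4^{j}$ with $1\leq b_3\leq c-2$, $1\leq j\leq d-1$ (the ``$\tilde{\mathcal Z}_{j,k}$'' stratum), and the $b_3=b_4=-1$ monomials (the ``$\tilde{\mathcal Y}_k$'' stratum). Substituting $b_3$ or $b_4$ from the congruence into the degree equation and imposing $b_0\geq 0$ gives precisely the defining inequalities of $\tilde{\mathcal W}_k$, $\tilde{\mathcal X}_k$, $\tilde{\mathcal Y}_k$, $\tilde{\mathcal Z}_{j,k}$; I would also record, as in (\ref{b0b1b2}), that once $m_3$ and the relevant exponent are fixed, $b_0$ is determined and $b_1=b_2\in\{0,-1\}$ is chosen according to the parity of $b_0$, which is what makes every valid $m_3$ contribute exactly once (or $\tilde\eta$ times, for $\tilde{\mathcal Y}_k$, one for each admissible $\zeta$). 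Finally, the odd-degree statement $\dim HH^{2k}=\dim HH^{2k+1}$ follows by the same bookkeeping with $A_\gamma$ replaced by $B_\gamma$ (and $C_\gamma$), exactly as at the end of the proof of Theorem \ref{main_general}.

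The main obstacle I anticipate is purely arithmetic bookkeeping rather than conceptual: getting the ranges right. Specifically, I expect the delicate points to be (a) confirming that $J_\gamma$ for a $\gamma$ fixing $x_3,x_4$ really is $\C[x_3,x_4]/(\partial_{x_3}w,\partial_{x_4}w)$ with the basis of Remark \ref{ringJ-loop} — note the asymmetry $x_3^{0},\dots,x_3^{c-1}$ versus $x_4^{0},\dots,x_4^{2(d-1)}$ coming from $\partial_{x_3}w = cx_3^{c-1}x_4 + x_4^d$ and $\partial_{x_4}w = x_3^c + d x_3 x_4^{d-1}$, which is what forces the asymmetric bounds $b_3\leq c-1$ and $b_4\leq 2(d-1)$ in $\tilde{\mathcal X}_k$ and $\tilde{\mathcal W}_k$; (b) correctly solving the congruence $-cb_4+b_3+(c-1)b_0\equiv 0\bmod(cd-1)$ on each stratum — e.g. on the $x_4^{b_4}$ stratum one has $b_3=0$ so $(c-1)b_0 \equiv c b_4 \bmod (cd-1)$, and one must check this pins down $b_0\bmod(cd-1)$ and hence a unique $m_3$; and (c) verifying the edge behaviour when $c=2$ or $d=2$, where some of the four strata collapse (the analogue of Remark \ref{rmk:empty-chain}), so that no contribution is double-counted or missed. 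None of these is hard, but they must be done carefully so that the four sets in (\ref{formula-loop}) partition the contributions and the coefficient $\tilde\eta$ is attached to the right one.
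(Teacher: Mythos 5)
Your proposal is correct and follows essentially the same route as the paper, which proves Theorem \ref{main_general_loop} by declaring the argument ``completely analogous'' to Theorem \ref{main_general} and matching each of the four sets $\tilde{\mathcal{W}}_k,\tilde{\mathcal{X}}_k,\tilde{\mathcal{Y}}_k,\tilde{\mathcal{Z}}_{j,k}$ to explicit good monomials via Theorem \ref{mainTHM}, Corollary \ref{notdivide} and the basis of $J_\gamma$ in Remark \ref{ringJ-loop}; your arithmetic (the loop congruence $-cb_4+b_3+(c-1)b_0\equiv 0 \bmod (cd-1)$, the degree equation $2k=2((d-1)m_3+b_4-b_0)$, and $|\mu_{c-1}\cap\mu_{cd-1}|=\gcd(c-1,d-1)$) all checks out. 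You in fact spell out more of the bookkeeping than the paper does, and the delicate points you flag are exactly the ones the paper delegates to Remark \ref{ringJ-loop} and Remark \ref{Y-loop}.
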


\begin{proof}
Fix $k\leq 0$. To find the contributions to $HH^{2k}(\mathscr{C}_w)$ (and to $HH^{2k+1}(\mathscr{C}_w)$), we claim that we can use an argument that is completely analogous to the one in the proof of Theorem \ref{main_general}. 

Indeed, if we consider an integer $m_3\in \tilde{\mathcal{W}}_k$, then letting $b_1=b_2=0$ (resp. $=-1$) if $b_0$ is even (resp. $odd$), and letting $b_3=0$, $b_4=-(c+d-2)m_3-(c-1)k$ and $b_0=(d-1)m_3+b_4-k$, we obtain a good monomial $\underline{m}=x_0^{b_0}x_1^{b_1}x_2^{b_2}x_3^{b_3}x_4^{b_4}$ that contributes to $HH^{2k}(\mathscr{C}_w)$ ( and to $HH^{2k+1}(\mathscr{C}_w)$). Such a monomial will form a good pair with $\gamma=(1,1,1)$ (resp. $\gamma=(-1,-1,1)$) if $b_0$ is even (resp. $odd$).

Similarly, if we can find $m_3\in \tilde{\mathcal{X}}_k$  (resp. $m_3\in \tilde{\mathcal{Z}}_{j,k}$), then we let $b_4=0$, $b_3=(c+d-2)m_3+(c-1)k$ (resp. $b_4=j$ and $b_3=(c+d-2)m_3+j+(c-1)k$) and, again, $b_0,b_1$ and $b_2$ should be as before. This defines a good monomial that will form a good pair with $\gamma=(1,1,1)$ if $b_0$ is even, or with $\gamma=(-1,-1,1)$ if $b_0$ is odd. Again, these will contribute to $HH^{2k}(\mathscr{C}_w)$ and $HH^{2k+1}(\mathscr{C}_w)$.

Lastly, if we find $m_3 \in \tilde{\mathcal{Y}}_k$, we let $b_3=b_4=-1$, $b_0=(d-1)m_3-k$ and $b_1=b_2=0$ (resp. $b_1=b_2=-1$) if $b_0$ is even (resp. odd) thus obtaining a good monomial. The difference in this last case is that, for each $1\neq \zeta\in \mu_{c-1}\cap \mu_{cd-1}$, $\underline{m}=\x$  will form a good pair with $\gamma=(1,1,\zeta)$ (resp. $\gamma=(-1,-1,\zeta)$) instead.

This concludes the proof since Theorem \ref{mainTHM}, Corollary \ref{notdivide} and Remark \ref{ringJ-loop} tell us these will, in fact, exhaust all possible good monomials. 
\end{proof}

\begin{rmk}\label{order}
Here it is important to observe that since the loop polynomials $w_1=w^{c,d}_{loop}$ and $w_2=w^{d,c}_{loop}$ are biholomorphic, it is clear that $\dim HH^{2k}(\mathscr{C}_{w_1})=\dim HH^{2k}(\mathscr{C}_{w_2})$. While at first glance the sets appearing in Formula (\ref{formula-loop}) are not symmetric with respect to $c$ and $d$, their cardinality is. This follows from the fact that if $\gamma_i\in \ker(\chi_{w_i})$ is such that both $x_3$ and $x_4$ are fixed, then the Jacobian rings $J_{\gamma_1}$ and $J_{\gamma_2}$  both have as basis
\[
     \left\{1,x_3,\ldots,x_3^{c-1},x_4,x_4^2,\ldots,x_4^{2(d-1)}\right\}\cup\left\{x_3^ix_4^j\right\}_{1\leq i\leq c-2, 1\leq j\leq d-1}.
     \]
     Thus, choosing this basis for $J_{\gamma_2}$, and using the same argument as in the proof of Theorem \ref{main_general_loop}, we see that $\dim HH^{2k}(\mathscr{C}_{w_2})=\dim HH^{2k+1}(\mathscr{C}_{w_2})$ is also given by $(\ref{formula-loop})$ for any $k\leq 0$ -- without swapping $c$ and $d$. In particular, we can always assume $d\geq c$.
\end{rmk}

\begin{rmk}\label{Y-loop}
    We also observe that, as in the previous case, some of the inequalities in the definition of the sets appearing in (\ref{formula-loop}) become void if $c=2$, and we simply mean that the corresponding sets are empty. Moreover, $\tilde{\mathcal{Y}}_k\neq \emptyset$ if and only if $k$ is such that $(c-1)(1-k)\equiv 0\mod (c+d-2)$, in which case $|\tilde{\mathcal{Y}}_k|=1$.
\end{rmk}

The following statement, which follows directly from formula (\ref{formula-loop}), is the analog of Corollary \ref{cor:main_chain}. It will be used to prove that Conjecture \ref{conj} holds for all loop-type polynomials.

\begin{cor}\label{cor:main_loop}
     Let $w=w_{\text{loop}}^{c,d}$ be as in (\ref{example-loop}) and let $k \in \Z_{\leq 0}$. Then 
   \begin{equation}\label{formula-loop-simpler}
      \dim HH^{2k}(\mathscr{C}_{w})= \begin{cases}
          \gcd(c-1,d-1)+1 & \text{if $q=0$}\\
          q+1 & \text{if $1\leq q \leq \min\{c,d\}-1$}\\
          \min\{c,d\} & \text{if $\min\{c,d\}-1<q\leq \max\{c,d\}-1$}\\
          (c+d-2)-q+1 & \text{if $\max\{c,d\}-1<q\leq (c+d-2)-1$}
      \end{cases}, 
   \end{equation}
   where $0\leq q < c+d-2$ is such that $\min\{c-1,d-1\}\cdot (1-k)\equiv q \mod (c+d-2)$.
\end{cor}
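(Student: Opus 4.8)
The plan is to derive Corollary~\ref{cor:main_loop} from the formula~(\ref{formula-loop}) in Theorem~\ref{main_general_loop} by the same kind of case analysis used in the proof of Corollary~\ref{cor:main_chain}. By Remark~\ref{order} we may assume $d\geq c$, so $\min\{c,d\}=c$ and $\min\{c-1,d-1\}=c-1$; write $(c-1)(1-k)=n(c+d-2)+q$ with $n\in\Z_{\geq 0}$ and $0\leq q<c+d-2$. For a given integer $m_3$, one has the identity $(c+d-2)m_3+(c-1)k=(c+d-2)(m_3-n)+(c-1)-q$, which is the key bookkeeping device: it lets me translate membership of $m_3$ in each of $\tilde{\mathcal{W}}_k$, $\tilde{\mathcal{X}}_k$, $\tilde{\mathcal{Y}}_k$, $\tilde{\mathcal{Z}}_{j,k}$ into an inequality on $q$ (with the corresponding $m_3$ being an explicit shift of $n$). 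The size $\tilde\eta=\gcd(c-1,d-1)-1$ only enters through the coefficient of $|\tilde{\mathcal{Y}}_k|$, which by Remark~\ref{Y-loop} is nonzero exactly when $q=0$, in which case $|\tilde{\mathcal{Y}}_k|=1$ and $m_3=n$.

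First I would treat $q=0$: then $\tilde{\mathcal{W}}_k=\emptyset=\tilde{\mathcal{Z}}_{j,k}$ (the relevant quantity equals $c-1$, which violates the upper bounds $2(d-1)$ shifted and $c-2$ respectively once one checks the edge behaviour), while $\tilde{\mathcal{X}}_k=\{n\}=\tilde{\mathcal{Y}}_k$, so the total is $1+\tilde\eta=\gcd(c-1,d-1)+1$. Next, for $1\leq q\leq c-1=\min\{c,d\}-1$ I would check that $\tilde{\mathcal{Y}}_k=\emptyset$, that exactly one of $\tilde{\mathcal{X}}_k\cup\tilde{\mathcal{W}}_k$ contributes (the transition from $\tilde{\mathcal{X}}_k$ to $\tilde{\mathcal{W}}_k$ happening as $q$ crosses $c-1$), that $|\tilde{\mathcal{Z}}_{j,k}|=1$ precisely for $1\leq j\leq q-1$ and is $0$ otherwise, so $\sum_j|\tilde{\mathcal{Z}}_{j,k}|=q-1$ and the total is $q+1$; the endpoint $q=c-1$ needs to be reconciled with the next regime, where the answer is $\min\{c,d\}=c$, and indeed $q+1=c$ there. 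For $\min\{c,d\}-1<q\leq\max\{c,d\}-1$, i.e. $c-1<q\leq d-1$, I would show $\tilde{\mathcal{X}}_k=\emptyset$, $|\tilde{\mathcal{W}}_k|=1$, $\tilde{\mathcal{Y}}_k=\emptyset$, and $|\tilde{\mathcal{Z}}_{j,k}|=1$ for exactly $c-1$ values of $j$ (all of $1\leq j\leq c-2$ contribute when $c\geq 3$, adjusted at the ends), giving a total of $\min\{c,d\}$; the subcase $c=2$ must be handled with Remark~\ref{Y-loop} in mind, where the $\tilde{\mathcal{Z}}_{j,k}$ are all empty and $|\tilde{\mathcal{W}}_k|=1$ already gives the answer $\min\{c,d\}=2$. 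Finally, for $\max\{c,d\}-1<q\leq(c+d-2)-1$ I would split further according to whether $q<c+d-3$ or $q=c+d-3$: in the former $|\tilde{\mathcal{W}}_k|=1$, $|\tilde{\mathcal{X}}_k|$ may contribute another $1$ as $q$ gets large, and $|\tilde{\mathcal{Z}}_{j,k}|=1$ for exactly $(c+d-2)-q-1$ values of $j$, while $\tilde{\mathcal{Y}}_k=\emptyset$, giving total $(c+d-2)-q+1$; at the extreme $q=(c+d-2)-1$ only $\tilde{\mathcal{X}}_k=\{n+1\}$ survives, giving $2=(c+d-2)-q+1$, consistent with the formula. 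Combining all cases with Remark~\ref{Y-loop} yields~(\ref{formula-loop-simpler}).

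The main obstacle I anticipate is purely combinatorial bookkeeping rather than any conceptual difficulty: one must carefully verify the transitions at the boundary values $q\in\{0,\,c-1,\,d-1,\,c+d-3,\,c+d-2-1\}$ so that the four branches of the piecewise formula agree where they meet, and one must keep straight the degenerate case $c=2$ (equivalently $\min\{c,d\}=2$), where several of the sets $\tilde{\mathcal{W}}_k,\tilde{\mathcal{X}}_k,\tilde{\mathcal{Z}}_{j,k}$ become vacuous as noted in Remark~\ref{Y-loop}, and the even more degenerate Fermat-like collapse is excluded here since $w$ is genuinely of loop type. A secondary point requiring care is the $+1$ discrepancy between the loop formula~(\ref{formula-loop-simpler}) and the chain formula~(\ref{formula-chain-simpler}): it comes entirely from the fact that the loop Jacobian ring $J_\gamma$ (Remark~\ref{ringJ-loop}) has one more basis element in the relevant range than its chain counterpart (Remark~\ref{ringJ-chain}), which is precisely what shifts $\tilde{\mathcal{X}}_k$ and the $\tilde{\mathcal{Z}}_{j,k}$ ranges by one; I would make sure this shift is tracked consistently across all cases so that the extra unit appears in the $q=0$, $1\leq q\leq\min\{c,d\}-1$, and $\max\{c,d\}-1<q$ branches but not in the middle plateau.
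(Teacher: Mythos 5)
Your overall strategy --- reduce to formula (\ref{formula-loop}) and run the same case analysis on $q$ as in the proof of Corollary \ref{cor:main_chain}, assuming $d\geq c$ via Remark \ref{order} --- is exactly what the paper intends (it omits the details and defers to Proposition \ref{prop:formula_loop} in Appendix B). However, your bookkeeping of the individual sets is wrong in a way that matters: you have transplanted the chain-case cardinalities without adjusting for the fact that $\tilde{\mathcal{W}}_k$ has upper bound $2(d-1)$ (coming from the powers $x_4,\ldots,x_4^{2(d-1)}$ in the loop Jacobian ring, Remark \ref{ringJ-loop}) rather than the chain bound $b-2$. Concretely, when $q=0$ the integer $m_3=n-1$ gives $-(c+d-2)m_3-(c-1)k=d-1\in[1,2(d-1)]$, so $\tilde{\mathcal{W}}_k=\{n-1\}$ is \emph{not} empty; the correct count is $|\tilde{\mathcal{W}}_k|+|\tilde{\mathcal{X}}_k|+\tilde{\eta}|\tilde{\mathcal{Y}}_k|=1+1+(\gcd(c-1,d-1)-1)=\gcd(c-1,d-1)+1$, whereas your claimed count $1+\tilde{\eta}$ equals $\gcd(c-1,d-1)$, one short of the stated formula (recall $\tilde{\eta}=\gcd(c-1,d-1)-1$). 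The same problem recurs in every branch: for $1\leq q\leq c-1$ both $\tilde{\mathcal{X}}_k=\{n\}$ and $\tilde{\mathcal{W}}_k=\{n-1\}$ contribute, so your ``exactly one of $\tilde{\mathcal{X}}_k\cup\tilde{\mathcal{W}}_k$'' together with $\sum_j|\tilde{\mathcal{Z}}_{j,k}|=q-1$ sums to $q$, not $q+1$; in the plateau $c-1<q\leq d-1$ one has $|\tilde{\mathcal{W}}_k|=2$ (both $n$ and $n-1$ work) and exactly $c-2$ nonempty $\tilde{\mathcal{Z}}_{j,k}$, not $1$ and $c-1$; and for $q>d-1$ one has $|\tilde{\mathcal{X}}_k|=0$, $|\tilde{\mathcal{W}}_k|=1$ and $c+d-2-q$ nonempty $\tilde{\mathcal{Z}}_{j,k}$, so in particular at $q=c+d-3$ it is not $\tilde{\mathcal{X}}_k$ that survives. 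The ``$+1$'' relative to the chain formula therefore lives entirely in $\tilde{\mathcal{W}}_k$, not in $\tilde{\mathcal{X}}_k$ or the $\tilde{\mathcal{Z}}_{j,k}$ as you suggest in your closing remarks.

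The repair is mechanical but necessary: recompute $|\tilde{\mathcal{W}}_k|$ from its actual definition rather than from the chain-case analogue. The paper's Proposition \ref{prop:formula_loop} records the closed forms $|\tilde{\mathcal{X}}_k|=1+\left\lfloor\frac{(c-1)-q}{c+d-2}\right\rfloor$ and $|\tilde{\mathcal{W}}_k|=\left\lfloor\frac{q-c}{c+d-2}\right\rfloor-\left\lceil\frac{q-(d-1)}{c+d-2}\right\rceil+2$, which give $|\tilde{\mathcal{W}}_k|=1$ for $q=0$, for $1\leq q\leq c-1$ and for $q>d-1$, and $|\tilde{\mathcal{W}}_k|=2$ for $c\leq q\leq d-1$; with these, the four branches of (\ref{formula-loop-simpler}) follow at once. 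As written, your argument would output $\gcd(c-1,d-1)$ and $q$ in the first two branches, i.e.\ the chain-type answer rather than the loop-type one.
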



\begin{proof}
    We omit the proof here as the argument is essentially the same as the one in the proof of Corollary \ref{cor:main_chain}. The interested reader is referred to Proposition \ref{prop:formula_loop} in Appendix \hyperref[app:expcomp]{B} instead. 
\end{proof}

In particular, we conclude (see also Theorem \ref{thm:new_conj}):

\begin{prop}\label{conjtrue-loop}
 If $w=w_{\text{loop}}^{c,d}$ is as in (\ref{example-loop}), then Conjecture \ref{conj} holds for the singularity defined by $\check{w}=w$.
\end{prop}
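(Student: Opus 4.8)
The plan is to follow exactly the template of Proposition \ref{conjtrue-chain}, replacing the chain-type inputs by their loop-type counterparts. The two ingredients we need are: (1) a closed formula for $\dim HH^{\leq 1}(\mathscr{C}_w)$ in every non-positive degree (this is Corollary \ref{cor:main_loop}, together with the fact from Proposition \ref{hyp} and Theorem \ref{comp} that $HH^{\leq 1}(\mathscr{C}_w)\simeq SH^{\leq 1}(F)$), and (2) a numerical criterion, analogous to Lemma \ref{gcd-chain}, detecting when the dual singularity defined by $\check w=w$ admits a small resolution and counting the irreducible components of its exceptional locus. So the first step is to establish this loop-type analogue of Lemma \ref{gcd-chain}.

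\textbf{Step 1 (the small-resolution criterion).} By Lemma \ref{crit}, the singularity defined by $w=x_1^2+x_2^2+x_3^c x_4+x_3 x_4^d$ admits a small resolution whose exceptional curve has exactly $n-1$ components if and only if the plane curve germ $\{x_3^c x_4 + x_3 x_4^d = 0\} = \{x_3 x_4(x_3^{c-1}+x_4^{d-1})=0\}$ has exactly $n$ distinct smooth branches at the origin. The factors $x_3$ and $x_4$ contribute two such branches, and, as in the proof of Lemma \ref{gcd-chain}, the factorization
\[
x_3^{c-1}+x_4^{d-1}=\prod_{\zeta:\ \zeta^{m}=-1}\Bigl(x_3^{\frac{c-1}{m}}+\zeta\,x_4^{\frac{d-1}{m}}\Bigr),\qquad m=\gcd(c-1,d-1),
\]
shows that $x_3^{c-1}+x_4^{d-1}=0$ has $\gcd(c-1,d-1)$ smooth branches precisely when $\gcd(c-1,d-1)=\min\{c-1,d-1\}$, and fewer (non-smooth) ones otherwise. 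Hence the singularity admits a small resolution with exactly $\min\{c,d\}$ exceptional components if and only if $\min\{c-1,d-1\}=\gcd(c-1,d-1)$, and admits no small resolution otherwise. (One must also note the two branches $x_3=0$, $x_4=0$ are automatically transverse to the rest, so no branches coincide; this is a routine check.)

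\textbf{Step 2 (matching ranks with the criterion).} Now combine Step 1 with Corollary \ref{cor:main_loop}. If $\min\{c-1,d-1\}=\gcd(c-1,d-1)=:\ell'$, then in the formula \eqref{formula-loop-simpler} the first three cases all give the value $\ell'+1=\min\{c,d\}$ (the cases $q=0$, $1\le q\le \min\{c,d\}-1$, and $\min\{c,d\}-1<q\le\max\{c,d\}-1$ collapse, since $\gcd(c-1,d-1)+1=\min\{c,d\}$ and $\min\{c,d\}-1=\max\{c,d\}-1$ when $c=d$, while when $c\neq d$ one checks the residues $q$ that occur all fall in the flat middle range); and the residue $q=0$ always occurs (take $k=0$), so every degree $\le 1$ has rank exactly $\ell=\min\{c,d\}$, which equals the number of exceptional curves by Step 1. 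Conversely, if $HH^{\leq 1}(\mathscr{C}_w)$ has constant rank $\ell$, then evaluating \eqref{formula-loop-simpler} at $q=0$ and at a value of $q$ realizing $q=1$ (which exists since $\gcd(\min\{c-1,d-1\},c+d-2)$ divides but is generally smaller than $c+d-2$ — here one uses $\min\{c,d\}\ge 2$ so that the residues $\min\{c-1,d-1\}(1-k)\bmod(c+d-2)$ are not all $0$) forces $\gcd(c-1,d-1)+1=\ell=2$ unless the "$q=1$" value is suppressed, which happens only when $\gcd(c-1,d-1)=\min\{c-1,d-1\}$; a short case analysis, identical in spirit to the converse direction of Proposition \ref{conjtrue-chain}, yields $\min\{c-1,d-1\}=\gcd(c-1,d-1)$, so by Step 1 the singularity admits a small resolution with $\ell$ exceptional curves. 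This is precisely Conjecture \ref{conj}.

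\textbf{Expected main obstacle.} The only genuinely delicate point is the bookkeeping in the converse direction of Step 2: one must argue that \emph{constancy} of the rank across all negative degrees forces $\gcd(c-1,d-1)=\min\{c-1,d-1\}$, which requires knowing that as $k$ ranges over $\Z_{\le 0}$ the residue $q=\min\{c-1,d-1\}(1-k)\bmod(c+d-2)$ takes every value in $\{0,\gcd,2\gcd,\ldots\}$, in particular a value in the "increasing" range $1\le q\le\min\{c,d\}-1$ whenever $\gcd(c-1,d-1)<\min\{c-1,d-1\}$, thereby producing a degree where the rank differs from the $q=0$ value. Everything else is a direct transcription of the chain-type arguments, with $a-1\leadsto c-1$, $b\leadsto d-1$, and $a+b-1\leadsto c+d-2$.
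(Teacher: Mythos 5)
Your overall strategy is exactly the paper's: prove the loop-type analogue of Lemma \ref{gcd-chain} (your Step 1, which is correct and matches the paper's count of $\min\{c-1,d-1\}+1=\min\{c,d\}$ exceptional components), and then combine it with Corollary \ref{cor:main_loop}. The forward direction of Step 2 is also sound, although two small slips are worth noting: $k=0$ realizes $q=\min\{c,d\}-1$, not $q=0$ (the residue $q=0$ is realized by $k=1-(c+d-2)/\gcd(c-1,d-1)$), and when $\gcd(c-1,d-1)=\min\{c-1,d-1\}=:\ell'$ the realized residue $q=\ell'$ lands in the \emph{increasing} range $1\leq q\leq \min\{c,d\}-1$, not the flat middle range -- it just happens to give the same value $\ell'+1=\min\{c,d\}$ there.

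The genuine problem is in your converse direction. You propose to compare the value at $q=0$ with the value at $q=1$, but $q=1$ is realized only when $g:=\gcd(c-1,d-1)=1$ (the realized residues are exactly the multiples of $g$ modulo $c+d-2$), and even then this comparison is vacuous: $q=0$ gives $g+1=2$ and $q=1$ gives $q+1=2$, so the two agree and detect nothing. More generally, \emph{any} realized residue $q=jg$ with $jg\leq g$, i.e.\ $q=g$ itself, gives $g+1$, the same as $q=0$; so ``a value in the increasing range'' does not automatically differ from the $q=0$ value, contrary to what your ``expected obstacle'' paragraph asserts. The correct comparison is with $q=\min\{c-1,d-1\}=\min\{c,d\}-1$: this is always a realized residue (it is a multiple of $g$ and is $<c+d-2$), it lies in the increasing range, and it gives rank $\min\{c,d\}$, whereas $q=0$ gives $g+1$. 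Constancy therefore forces $g+1=\min\{c,d\}$, i.e.\ $\gcd(c-1,d-1)=\min\{c-1,d-1\}$, which is what Step 1 needs. With this substitution your argument closes; as written, the step you single out as the delicate one would fail.
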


\begin{proof}
   Arguing exactly as in the proof of Lemma \ref{gcd-chain} we can prove that if $w=w_{\text{loop}}^{c,d}$ is as in (\ref{example-loop}), then the singularity defined by $\check{w}$ admits a small resolution whose exceptional curve has (exactly) $\min\{c-1,d-1\}+1$ irreducible components if and only if $\min\{c-1,d-1\}=gcd(c-1,d-1)$. Therefore, the result follows from Theorem \ref{main_general_loop} and Corollary \ref{cor:main_loop}.   
\end{proof}

\subsection{The Fermat-type polynomials}\label{sec:Fermat}

In \cite{EL}, the authors have already computed the ranks of $HH^{\leq 1}(\Cw)$ for invertible polynomials $w=w^{e,f}_{\text{Fermat}}=x_1^2+x_2^2+x_3^e+x_4^f$ in the case where one of the exponents, $f$ or $e$, is a multiple of the other. Here we extend their computations proving Theorem \ref{main_Fermat} below and we further establish that Conjecture \ref{conj} holds for all Fermat-type polynomials as well. 

Similar to the previous cases, the idea we explore to compute  $HH^{\leq 1}(\Cw)$ consists in associating to each good pair $(\gamma,\m=\x)$ contributing to $HH^{r}(\Cw)$ a uniquely determined pair of integers $(m_3,m_4)$ which depends on the degree $r\leq 1$.

Concretely, we prove:

\begin{thm}\label{main_Fermat}
Let $w=w^{e,f}_{Fermat}$ be as in (\ref{example-Fermat}). Given any integer $k\leq 0$ we have:

\begin{equation}\label{formula-Fermat}
    \dim HH^{2k}(\mathscr{C}_w)=\dim HH^{2k+1}(\mathscr{C}_w)=(gcd(e,f)-1)|\F^{k-1}_{-1,-1}| +  \sum_{\substack{0\leq i \leq e-2 \\ 0\leq j \leq f-2}}|\F^k_{i,j}|,
    \end{equation}
where we define 
\[
\F^{s}_{i,j}\coloneqq \{(m_3,m_4)\in \Z\times \Z\,;\,i+m_3e=j+m_4f\geq 0\,\,\text{and}\,\,m_3+m_4=-s\}.
\]
\end{thm}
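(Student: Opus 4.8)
The plan is to mimic the structure of the proof of Theorem \ref{main_general}, but now keeping track of a \emph{pair} of integers $(m_3,m_4)$ instead of the single integer $m_3$, since for Fermat-type polynomials the variables $x_3$ and $x_4$ are decoupled and each contributes its own exponent datum. First I would recall from Proposition \ref{contFermat} that the good pairs contributing to $HH^r(\Cw)$ for $r\leq 1$ are exactly those with $b_0\geq 0$ and $\gamma$ of the form $(1,1,\zeta,\xi)$ or $(-1,-1,\zeta,\xi)$ with $\zeta\in\mu_e$, $\xi\in\mu_f$ and $\zeta\xi=1$ (so $x_0$ is fixed), and that these are precisely the pairs contributing in non-positive degrees. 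For such a $\gamma$ the fixed-variable structure splits into the cases where $x_3,x_4$ are both fixed (giving $\zeta=\xi=1$, so $\gamma=(\pm1,\pm1,1,1)$, and $\underline{m}=x_0^{b_0}x_3^{b_3}x_4^{b_4}$ with $0\le b_3\le e-2$, $0\le b_4\le f-2$ coming from the Jacobian ring basis), the cases where exactly one of $x_3,x_4$ is fixed, and the case where neither is fixed (forcing $b_3=b_4=-1$ and $\gamma=(\pm1,\pm1,\zeta,\zeta^{-1})$ with $\zeta\neq 1$, which accounts for the $\gcd(e,f)-1$ factor). A careful but routine case analysis — entirely parallel to cases $(i)$–$(iv)$ in the proof of Theorem \ref{main_general} — shows the mixed cases are absorbed into the first by allowing $b_3\in\{-1\}\cup\{0,\dots,e-2\}$ etc., or more cleanly, that all relevant monomials have $b_3\ge -1$, $b_4\ge -1$, and one organizes the count by whether $(b_3,b_4)=(-1,-1)$ or not.

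Next I would translate the good-pair conditions into integer equations for $(m_3,m_4)$. The character condition $\chi_{\underline{m}}=\chi_w^{\otimes N}$ unwinds, exactly as in Section \ref{detker} but adapted to the Fermat matrix, into the requirement that $b_3+b_0\cdot(\text{something})\equiv 0$ modulo $e$ and the analogous statement modulo $f$; writing these as $b_3 = -i + m_3 e$-type relations and using the weights $d_1=d_2=h/2$, $d_3=h/e$, $d_4=h/f$, $d_0=h-\sum d_i<0$, the exponent $b_0$ is determined by $b_0 = \frac{1}{h}(\text{linear in } b_3,b_4,N)$, and the degree formula from Theorem \ref{mainTHM} ($r=2N+4-\alpha$ in even degree with $\alpha=|I^\gamma|$) pins down $N$ in terms of $k$. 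The upshot I expect is: a contribution in degree $2k$ from a monomial with $x_3,x_4$ fixed corresponds to $(m_3,m_4)$ with $i+m_3 e = j+m_4 f = b_0 \ge 0$ and $m_3+m_4 = -k$ — i.e. exactly membership in $\F^{k}_{i,j}$ with $0\le i\le e-2$, $0\le j\le f-2$ — while a contribution from the non-fixed case $(b_3,b_4)=(-1,-1)$ shifts the sum condition by one (because $\alpha$ drops by $2$, forcing $N$ to increase by one relative to the fixed case at the same degree), landing in $\F^{k-1}_{-1,-1}$, and each such $(m_3,m_4)$ carries multiplicity equal to the number of admissible $\zeta$, namely $\gcd(e,f)-1$. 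Summing and noting, as before, that the argument for $HH^{2k+1}$ is identical after replacing $A_\gamma$ by $B_\gamma\cup C_\gamma$, yields formula (\ref{formula-Fermat}).

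The main obstacle I anticipate is not any single hard step but rather bookkeeping the shift between the fixed and non-fixed cases correctly — i.e. verifying that the non-fixed contribution genuinely lands in $\F^{k-1}_{-1,-1}$ and not $\F^k_{-1,-1}$, which hinges on carefully tracking how $\alpha=|I^\gamma|$ (which is $4$ when $x_1,x_2,x_3,x_4$ are all fixed but $2$ when only $x_1,x_2$ are fixed) feeds through the degree formula $r=2N+4-\alpha$ and forces a compensating change in $N$, hence in $m_3+m_4$. A secondary subtlety is confirming that the monomials with exactly one of $x_3,x_4$ fixed either do not contribute in non-positive degrees or are correctly counted by the $\F^k_{i,j}$ with an index equal to $-1$ — so one must check whether the index ranges should really be $0\le i\le e-2$ or whether $i=-1$ (resp. $j=-1$) cases need separate treatment; the cleanest route is to observe, as in the proof of Proposition \ref{hh3} and Remark \ref{ringJ-chain}, that the Jacobian ring basis of $\C[x_3,x_4]/(ex_3^{e-1}, fx_4^{f-1})$ is exactly $\{x_3^i x_4^j : 0\le i\le e-2,\ 0\le j\le f-2\}$, so the fixed-variable monomials are indexed precisely by that rectangle and no $-1$ index arises there, with the unique $(-1,-1)$ contribution coming solely from the $\gamma$ that fixes neither $x_3$ nor $x_4$. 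Once that accounting is pinned down, the rest is the same counting-of-lattice-points argument already used twice in the paper.
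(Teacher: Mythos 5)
Your proposal is correct and follows essentially the same route as the paper's proof: reduce via Proposition \ref{contFermat} and Theorem \ref{mainTHM} to the two families of good pairs (both of $x_3,x_4$ fixed, indexed by the Jacobian-ring rectangle $0\le i\le e-2$, $0\le j\le f-2$, versus neither fixed, forcing $b_3=b_4=-1$ with multiplicity $\gcd(e,f)-1$ from the choices of $\zeta$), translate the character condition into $i+m_3e=j+m_4f=b_0\ge 0$ with $m_3+m_4$ pinned by the degree, and observe the shift to $\F^{k-1}_{-1,-1}$ in the non-fixed case. The only quibble is a sign slip in your aside: at fixed degree $r=2N+4-\alpha$, dropping $\alpha$ from $4$ to $2$ forces $N$ to \emph{decrease} by one (equivalently $m_3+m_4$ increases by one), but this does not affect your correctly stated conclusion that the non-fixed contributions land in $\F^{k-1}_{-1,-1}$.
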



\begin{proof}
If $w=x_1^2+x_2^2+x_3^e+x_4^f$ is as in (\ref{example-Fermat}), then one can show (we refer to \cite[Section 3]{EL} for the details) that
\[
\ker(\chi_w)=\mu_2\times\mu_2\times\mu_e\times\mu_f.
\]
Moreover, given any good monomial $\m=\x$ we have that the corresponding character $\chi_{\m}$ satisfies $\chi_{\m}=\chi_w^{\otimes N}$ for $N=b_0-\sum_{i=1}^4 m_i$, where the integers $m_i$ are uniquely determined by the following equations
\begin{eqnarray}
   b_0=b_1+2m_1= b_2+2m_2 \\
    b_0=b_3+em_3= b_4+fm_4 \label{b3b4Fermat}
\end{eqnarray}

In particular, if we fix a pair $(\gamma,\underline{m}=\x) \in \ker(\chi_w)\times M_{\gamma}$,  then it follows from 
    Theorem \ref{mainTHM}, the description of $J_{\gamma}$ and Proposition \ref{contFermat} that  $(\gamma,\underline{m})$ contributes to  $HH^{r}(\mathscr{C}_{w})$ if and only if one of the following cases holds:

    \begin{enumerate}[({$i$}-a)]
    \item $\gamma=(1,1,1,1)$, $b_1=b_2=0$ and $\underline{m}=x_0^{b_0}x_3^{b_3}x_4^{b_4}\in A_{\gamma}\cup B_{\gamma}$ is such that $b_0\geq 0$ is even, $b_0,b_3$ and $b_4$ satisfy (\ref{b3b4Fermat}) for some pair of integers $(m_3,m_4)$, and we further have $0\leq b_3 \leq e-2$ and $0\leq b_4 \leq f-2$.
    \item $\gamma=(-1,-1,1,1)$, $b_1=b_2=-1$ and $\underline{m}=x_0^{b_0}x_1^{-1}x_2^{-1}x_3^{b_3}x_4^{b_4}\in A_{\gamma}\cup B_{\gamma}$ is such that $b_0\geq 0$ is odd, and $b_0,b_3$ and $b_4$ are as in ($i$-a).
    \end{enumerate}
    \begin{enumerate}[({$ii$}-a)]
    \item  $\gamma=(1,1,\zeta,\xi)$, $b_1=b_2=0$ and $\underline{m}=x_0^{b_0}x_3^{-1}x_4^{-1}\in A_{\gamma}\cup B_{\gamma}$ is such that $b_0\geq 0$ is even, and $b_0,b_3=-1$ and $b_4=-1$ satisfy (\ref{b3b4Fermat}) for some pair of integers $(m_3,m_4)$
    \item  $\gamma=(-1,-1,\zeta,\xi)$, $b_1=b_2=-1$ and $\underline{m}=x_0^{b_0}x_1^{-1}x_2^{-1}x_3^{-1}x_4^{-1}\in A_{\gamma}\cup B_{\gamma}$ is such that $b_0\geq 0$ is odd, and $b_0,b_3$ and $b_4$ are as in ($ii$-a).
    \end{enumerate}
    where above $\zeta\in \mu_e\backslash\{1\}$  and $\xi \in \mu_f \backslash \{1\}$ must satisfy $\zeta\cdot \xi =1$, since the variable $x_0$ must be fixed by Proposition \ref{contFermat}.
   
    Now observe that $2N=b_1+b_2-2(m_3+m_4)$ and recall that by Theorem \ref{mainTHM} we have
    \[
    r=\begin{cases}
        2N-\alpha+4 & \text{if $\m \in A_{\gamma}$}\\
        2N-\alpha+5 & \text{if $\m \in B_{\gamma}$}
    \end{cases},
    \]
    where $\alpha=|I^{\gamma}|$ is the number of variables among $x_1,\ldots,x_4$ which are fixed by the action of $\gamma$.
    It thus follows that if $(\gamma,\m)$ is as in ($i$-a) or ($i$-b), then $r=-2(m_3+m_4)$ if $\m\in A_{\gamma}$ and $r=-2(m_3+m_4)+1$ if $\m\in B_{\gamma}$. Similarly, if $(\gamma,\m)$ is as in ($ii$-a) or ($ii$-b), then $r=-2(m_3+m_4)+2$ if $\m\in A_{\gamma}$ and $r=-2(m_3+m_4)+3$ if $\m\in B_{\gamma}$.


Therefore, we see that to find the contributions to $HH^{r}(\mathscr{C}_w)$ we need to look for pairs of integers $(m_3,m_4)$ such that either
\begin{enumerate}[($i'$)]
     \item $b_3+m_3e=b_4+m_4f$ for some $0\leq b_3 \leq e-2$ and some $0\leq b_4 \leq f-2$, or  
     \item $-1+m_3e=-1+m_4f$
    \end{enumerate}

We are interested in the solutions to the above equalities which lie precisely in one of the sets $\F^s_{i,j}$ appearing in (\ref{formula-Fermat}), where $s=r/2$ if $r$ is even and $s=(r-1)/2$ if $r$ is odd.

Indeed, if we can find $(m_3,m_4)$ as in $(ii')$, then letting $b_0=-1+m_3e$ and $b_1=b_2=0$ (resp. $b_1=b_2=-1$) it follows that $\m=x_0^{b_0}x_1^{b_1}x_2^{b_2}x_3^{-1}x_4^{-1}$ will form a good pair with $\gamma=(1,1,\zeta,\xi)$ (resp. $\gamma=(-1,-1,\zeta,\xi)$ ) provided $b_0$ is even (resp. odd) and non-negative, which is equivalent to requiring that $(m_3,m_4)\in \F^{s}_{-1,-1}$. Moreover, if $(m_3,m_4)\in \F^s_{-1,-1}$, then the pair $(\gamma,\m)$ we described will contribute to $HH^{2(s+1)}(\Cw)$ and to $HH^{2(s+1)+1}(\Cw)$ and we see that we obtain this way  ($\gcd(e,f)-1$) one-dimensional contributions, namely one for each possible choice of  $\zeta\in \mu_e\backslash\{1\}$  and $\xi \in \mu_f \backslash \{1\}$ satisfying $\zeta\cdot \xi =1$.  

The same kind of reasoning also applies to $(i')$ and, in particular, we obtain the counting in (\ref{formula-Fermat}), since Proposition \ref{contFermat} guarantees the pairs $(\gamma,\m)$ coming from $(i')$ and $(ii')$ indeed exhaust all possible good pairs. 
\end{proof}

\begin{rmk}
Note that in the formula (\ref{formula-Fermat}) we are only interested in the cardinalities of the sets $\F^s_{i,j}$. Therefore, we could have instead simply considered the sets (also depending on $s,i$ and $j$): $\{m_3\in \Z\,;\,i+m_3e=j-(s+m_3)f\geq 0\}$. The choice we made, however, better reflects the argument we use in the proof of the formula.   
\end{rmk}

\begin{rmk}
    Note that we can only find a pair of integers $(m_3,m_4)$ such that $i+m_3e=j+m_4f$ precisely when $i-j$ is a multiple of gcd$(e,f)$. Therefore, in (\ref{formula-Fermat}) we have that $\F^s_{i,j}=\emptyset$ whenever gcd$(e,f)$ does not divide $i-j$. Moreover, if $\F_{i,j}^s\neq\emptyset$, then $|\F_{i,j}^s|=1$ since elements of $\F_{i,j}^s$ can be described as the intersection of two (distinct) lines. 
\end{rmk}

\begin{rmk}
The parametrization of the contributions to $HH^{2k}(\Cw)$ in \cite{UL} agrees with the one above: one has ${\rm{I}}^{k}_{e,f}=\bigsqcup_{\substack{0\leq i \leq e-2 \\ 0\leq j \leq f-2}} \F^k_{i,j}$ and $\mathbb{I}^k_{e,f}=\F^{k-1}_{-1,-1}$.
\end{rmk}

As in the previous two cases, we now deduce:




\begin{cor}\label{cor:main_Fermat}
    Let $w=w^{e,f}_{Fermat}$ be as in (\ref{example-Fermat}) and let $k\in \Z_{\leq 0}$. Then
\begin{equation}\label{formula-Fermat-simpler}
    \dim HH^{2k}(\mathscr{C}_w)= \begin{cases}
        \gcd(e,f)-1 & \text{if $q=0$} \\
        q-1 & \text{if $1\leq q \leq \min\{e,f\} $} \\
        \min\{e,f\}-1 & \text{if $\min\{e,f\}<q\leq \max\{e,f\}$}\\
        e+f-q-1 & \text{if $\max\{e,f\}<q\leq e+f-1$}
            \end{cases}, 
\end{equation}
where $0\leq q < e+f$ is such that $\min\{e,f\}\cdot(1-k)\equiv q \mod (e+f)$. 
\end{cor}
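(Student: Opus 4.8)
The plan is to extract (\ref{formula-Fermat-simpler}) directly from the counting formula (\ref{formula-Fermat}) of Theorem \ref{main_Fermat}. Since the sets $\F^k_{i,j}$ are interchanged by swapping $(e,x_3)\leftrightarrow(f,x_4)$ — equivalently $i\leftrightarrow j$ and $m_3\leftrightarrow m_4$ — the right-hand side of (\ref{formula-Fermat}) is symmetric in $e$ and $f$, so I may assume $e\le f$ throughout; then $\min\{e,f\}=e$ and $q$ is the residue of $e(1-k)$ modulo $e+f$. Write $g\coloneqq\gcd(e,f)$ and recall from the remarks following Theorem \ref{main_Fermat} that every $\F^s_{i,j}$ has at most one element. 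The argument parallels the one for Corollary \ref{cor:main_chain}: the two summands in (\ref{formula-Fermat}) are each reduced to counting integer solutions of a few (in)equalities, after which one splits into the four ranges of $q$.

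First I would dispose of the term $(\gcd(e,f)-1)\,|\F^{k-1}_{-1,-1}|$. An element of $\F^{k-1}_{-1,-1}$ is a pair $(m_3,m_4)$ with $m_3e=m_4f\ge 1$ and $m_3+m_4=1-k$; writing $e=ge_0$, $f=gf_0$ with $\gcd(e_0,f_0)=1$, these are exactly $(m_3,m_4)=(tf_0,te_0)$ with $t\ge 1$ and $t(e_0+f_0)=1-k$. Hence $\F^{k-1}_{-1,-1}$ is nonempty — and then a single point — if and only if $\tfrac{e+f}{g}\mid 1-k$; and since $\gcd(e+f,e)=g$, one checks elementarily that $\tfrac{e+f}{g}\mid 1-k$ is equivalent to $e+f\mid e(1-k)$, i.e.\ to $q=0$. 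So this term equals $g-1$ when $q=0$ and vanishes otherwise.

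Next I would handle $\sum_{0\le i\le e-2,\,0\le j\le f-2}|\F^k_{i,j}|$. For $(i,j)$ in this range, $\F^k_{i,j}\ne\emptyset$ means there are $m_3,m_4\in\Z$ with $m_3+m_4=-k$ and $i+m_3e=j+m_4f\ge 0$; eliminating $m_4$ turns the congruence into $i-j\equiv -kf\pmod{e+f}$, while $i+m_3e=j+m_4f\ge 0$ together with $i\le e-2<e$ and $j\le f-2<f$ forces $m_3,m_4\ge 0$. Put $\nu(v)\coloneqq\#\{(i,j):0\le i\le e-2,\ 0\le j\le f-2,\ i-j=v\}$; one computes that $\nu$ is supported on $[\,2-f,\ e-2\,]$, with $\nu(v)=v+f-1$ on $[\,2-f,\ e-f\,]$, $\nu(v)=e-1$ on $[\,e-f,\ 0\,]$ and $\nu(v)=e-1-v$ on $[\,0,\ e-2\,]$, so in particular $\nu\ge 1$ on its whole support. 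Since this interval contains only $e+f-3<e+f$ integers, there is at most one $v^\ast$ in it with $v^\ast\equiv -kf\pmod{e+f}$, and the sum equals $\nu(v^\ast)$ (read as $0$ if no such $v^\ast$ exists) \emph{provided} the associated $m_3=\tfrac{-kf-v^\ast}{e+f}$ lies in $[0,-k]$. This last point — the only step that is not bookkeeping, and what I expect to be the genuine obstacle — holds automatically: $v^\ast\ge 2-f$ gives $m_3(e+f)=-kf-v^\ast\le(1-k)f-2<(1-k)(e+f)$, hence $m_3\le -k$; and $m_3\ge 0$ is clear since $v^\ast\le e-2<f\le -kf$ for $k<0$, while $k=0$ forces $v^\ast=0$.

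Finally I would translate the residue condition into $q$. From $q\equiv e(1-k)$ and $-ke\equiv kf\pmod{e+f}$ one gets $v^\ast\equiv e-q\pmod{e+f}$, and the interval $[\,2-f,\ e-2\,]$ misses precisely the residues $e-1$, $e$, $e+1$ modulo $e+f$, i.e.\ the values $q=1$, $q=0$, $q=e+f-1$. Going through the ranges: for $1\le q\le e$ we get $v^\ast=e-q\in[0,e-2]$ when $q\ge 2$, so $\nu(v^\ast)=q-1$, and no $v^\ast$ (hence $0=q-1$) when $q=1$; for $e<q\le f$ (possible only if $e<f$) we get $v^\ast=e-q\in[e-f,-1]$, so $\nu(v^\ast)=e-1=\min\{e,f\}-1$; for $f<q\le e+f-1$ we get $v^\ast=e-q\in[2-f,-1]$ when $q\le e+f-2$, so $\nu(v^\ast)=(e-q)+f-1=e+f-q-1$, and no $v^\ast$ (hence $0=e+f-q-1$) when $q=e+f-1$; and for $q=0$ the sum vanishes while the first term contributes $g-1$. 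Adding the two contributions in each case gives exactly (\ref{formula-Fermat-simpler}). The bulk of the work, as in Corollary \ref{cor:main_chain}, is this case analysis, together with the verification in the previous paragraph that the non-negativity constraint on $m_3,m_4$ is never binding.
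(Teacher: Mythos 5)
Your proposal is correct and follows essentially the same route as the paper's proof: reduce to the counting formula of Theorem \ref{main_Fermat}, show the $\F^{k-1}_{-1,-1}$ term contributes exactly when $q=0$, observe that a nonempty $\F^k_{i,j}$ forces $i-j=e-q$ (your $v^\ast$), and then count the admissible pairs $(i,j)$ in each range of $q$. The only difference is presentational — you package the count in the function $\nu$ and explicitly verify that the non-negativity constraint on $(m_3,m_4)$ is never binding, a point the paper leaves implicit.
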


\begin{proof}
Without loss of generality, we may assume that $\min\{e,f\}=e$.\par

Now, first note that if $0\leq q < e+f$ is such that $e(1-k)\equiv q \mod (e+f)$, then $\F_{-1,-1}^{k-1}\neq \emptyset$ exactly when $q=0$. In fact, if $(m_3,m_4)\in \F^{k-1}_{-1,-1}$, then $m_3=1-k-m_4$ and $e(1-k)=m_4(e+f)$.
   
   Next, observe that if $(m_3,m_4)\in \F^{k}_{i,j}$, then $i-j=e-q$ since
 \[
 e(1-k)=e+j-i+m_4(e+f)
 \]
 and $0<2\leq e+j-i\leq e+f-2 < e+f$. Therefore, we have:

\[
\sum_{\substack{0\leq i \leq e-2 \\ 0\leq j \leq f-2}}|\F^k_{i,j}|=\begin{cases}
        0 & \text{if $q=0$} \\
        q-1 & \text{if $1\leq q \leq e $} \\
        e-1 & \text{if $e<q\leq f$}\\
        e+f-1-q & \text{if $f<q\leq e+f-1$}
            \end{cases}. 
            \]

Indeed, 
\begin{enumerate}[{Case} 1]
    \item If $2\leq q \leq e=\min\{e,f\} $, then $\F^k_{i,j}\neq \emptyset$ exactly for
    \[
    (i,j)\in\{(e-q,0),(e-(q-1),1),\ldots,(e-2,q-2)\}
    \]
    \item Similarly, if $e<q\leq f=\max\{e,f\}$, then $\F^k_{i,j}\neq \emptyset$ exactly for
    \[
    (i,j)\in\{(0,q-e),(1,q-(e-1)),\ldots,(e-2,q-2)\}
    \]
    \item And if $f<q<e+f-1$, then $\F^k_{i,j}\neq \emptyset$ exactly for
    \[
    (i,j)\in\{(e-2-(q-f),f-2),(e-3-(q-f),f-3),\ldots,(0,f-(e+f-q))\}
    \]
    \item Finally, further note that when $q\in \{0,1,e+f-1\}$, then $\F^{k}_{i,j}=\emptyset$ for all $0\leq i\leq e-2$ and for all $0\leq j\leq f-2$. 
    \end{enumerate}  
\end{proof}

And we further obtain (see also Theorem \ref{thm:new_conj} below):

\begin{prop}\label{conjtrue-Fermat}
 If $w=w_{\text{Fermat}}^{e,f}$ is as in (\ref{example-Fermat}), then Conjecture \ref{conj} holds for the singularity defined by $\check{w}=w$.
\end{prop}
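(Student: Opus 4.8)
The plan is to mirror the proof of Proposition \ref{conjtrue-chain}, replacing the chain-type inputs with their Fermat-type analogues. Since $\check{w}=w$ for a Fermat polynomial, the singularity under consideration is the one defined by $w_{\text{Fermat}}^{e,f}=x_1^2+x_2^2+x_3^e+x_4^f$, and by Theorem \ref{comp} combined with Proposition \ref{hyp} we have an isomorphism of Gerstenhaber algebras $SH^*(F)\simeq HH^*(\mathscr{C}_{w})$; in particular $\dim SH^{r}(F)=\dim HH^{r}(\mathscr{C}_{w})$ for every $r$. So it suffices to decide exactly when the ranks $\dim HH^{\leq 1}(\mathscr{C}_{w})$ are constant (in every negative degree), to identify that constant value, and to match it with the number of exceptional curves of a small resolution via Lemma \ref{crit}.

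First I would record the numerical criterion for the existence of a small resolution. Arguing exactly as in Lemma \ref{gcd-chain}, Lemma \ref{crit} tells us that the singularity defined by $w$ admits a small resolution whose exceptional curve has exactly $n=\gcd(e,f)-1+1=\gcd(e,f)$ irreducible components if and only if the plane curve $x_3^e+x_4^f=0$ splits into $\gcd(e,f)$ distinct smooth branches, which happens if and only if $\min\{e,f\}=\gcd(e,f)$; this is checked by the factorization $x_3^e+x_4^f=\prod_{\zeta^{\gcd(e,f)}=-1}\bigl(x_3^{e/\gcd(e,f)}+\zeta\, x_4^{f/\gcd(e,f)}\bigr)$, each factor being smooth and irreducible precisely because $\gcd(e/\gcd(e,f),\,f/\gcd(e,f))=1$. (Note that, unlike the chain case, here the whole curve $x_3^e+x_4^f$ already has an isolated singularity at the origin, with $\gcd(e,f)$ branches through it, so the link is $\#_{\gcd(e,f)-1}(S^2\times S^3)$ regardless; what the criterion controls is smoothness of those branches, equivalently the existence of the small resolution.)

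Then I would run the same dichotomy as in Proposition \ref{conjtrue-chain}. If the singularity admits a small resolution, then $\min\{e,f\}=\gcd(e,f)$, so for the residue $q$ with $\min\{e,f\}(1-k)\equiv q\bmod (e+f)$ one has $q=\min\{e,f\}\cdot(1-k) \bmod (e+f)$, and since $\gcd(\min\{e,f\},e+f)=\gcd(e,f)=\min\{e,f\}$ the value $q$ is always a multiple of $\min\{e,f\}$ lying in $\{0,\min\{e,f\},2\min\{e,f\},\dots\}\cap[0,e+f)$; plugging each such $q$ into formula (\ref{formula-Fermat-simpler}) of Corollary \ref{cor:main_Fermat} gives $\dim HH^{2k}(\mathscr{C}_w)=\gcd(e,f)-1$ in every case (the $q=0$ branch gives $\gcd(e,f)-1$ directly; a nonzero multiple $q=j\min\{e,f\}$ with $1\le q\le \min\{e,f\}$ forces $q=\min\{e,f\}$, giving $q-1=\min\{e,f\}-1=\gcd(e,f)-1$; the two remaining branches likewise collapse to $\min\{e,f\}-1$ or $e+f-q-1$ which, for $q$ a multiple of $\min\{e,f\}=\gcd(e,f)$, again equals $\gcd(e,f)-1$). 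Together with the odd-degree equality in Theorem \ref{main_Fermat}, this shows $HH^{\leq 1}(\mathscr{C}_w)\simeq SH^{\leq 1}(F)$ has constant rank $\ell=\gcd(e,f)-1$, matching the number of exceptional curves. Conversely, if the rank is constant, comparing the $q=0$ value $\gcd(e,f)-1$ with, say, the $q=1$ value, which is $\max\{0,q-1\}=0$ unless $\min\{e,f\}=1$ (impossible since $e,f\geq 2$) — more carefully, one exhibits a $k$ realizing a residue $q$ with $1\le q\le\min\{e,f\}$ and $q\neq\gcd(e,f)$ whenever $\min\{e,f\}>\gcd(e,f)$, forcing the rank to drop — one concludes $\min\{e,f\}=\gcd(e,f)$, hence by the criterion above the singularity admits a small resolution with $\gcd(e,f)-1$ exceptional curves.

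The main obstacle, and the only genuinely fiddly point, is the converse direction: I need to verify that when $\min\{e,f\}\neq \gcd(e,f)$ the sequence $k\mapsto \dim HH^{2k}(\mathscr{C}_w)$ is genuinely non-constant, i.e.\ that as $k$ ranges over $\Z_{\leq 0}$ the residue $q\equiv\min\{e,f\}(1-k)\bmod(e+f)$ hits at least two values of formula (\ref{formula-Fermat-simpler}) that differ. Since $1-k$ ranges over all positive integers, $q$ ranges over all multiples of $\gcd(\min\{e,f\},e+f)=\gcd(e,f)$ in $[0,e+f)$; as $\min\{e,f\}>\gcd(e,f)$ there is such a multiple $q_0$ with $0<q_0<\min\{e,f\}$ (namely $q_0=\gcd(e,f)$ itself, which is $<\min\{e,f\}$), giving rank $q_0-1=\gcd(e,f)-1$, but there is also $q_0'=0$ giving $\gcd(e,f)-1$ as well — so one must instead compare with a residue in the third or fourth branch, e.g.\ $q=\min\{e,f\}$ giving $\min\{e,f\}-1>\gcd(e,f)-1$; one checks this residue is attained (it is a multiple of $\gcd(e,f)$ in range, and solvable for $1-k$). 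I expect a short lemma, analogous to the content buried in Corollary \ref{cor:main_Fermat}, pinning down which residues are attained, will make this rigorous; everything else is a direct transcription of the chain-type argument. Finally, as recorded in the cross-reference to Theorem \ref{thm:new_conj}, the same computation also verifies the refined conjecture of \cite{UL}.
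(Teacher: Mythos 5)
Your proposal is correct and follows essentially the same route as the paper: establish, by arguing as in Lemma \ref{gcd-chain} via Lemma \ref{crit}, that the singularity admits a small resolution precisely when $\min\{e,f\}=\gcd(e,f)$, and then read off both directions from the rank formula in Corollary \ref{cor:main_Fermat}; your extra care with which residues $q$ are attained (all multiples of $\gcd(e,f)$) correctly fills in the converse. One slip to fix: the exceptional curve of the small resolution has $\gcd(e,f)-1$ irreducible components (the number of branches minus one, as in the proof of Lemma \ref{gcd-chain}), not $\gcd(e,f)$ as you first write — your later identification $\ell=\gcd(e,f)-1$ is the correct one, and note also that in the forward direction the fourth branch of formula (\ref{formula-Fermat-simpler}) is simply vacuous when $\min\{e,f\}=\gcd(e,f)$ rather than collapsing to $\gcd(e,f)-1$.
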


\begin{proof}
   Once more we can argue exactly as in the proof of Lemma \ref{gcd-chain} and prove that if $w=w_{\text{Fermat}}^{c,d}$ is as in (\ref{example-Fermat}), then the singularity defined by $\check{w}$ admits a small resolution whose exceptional curve has (exactly) $\min\{e-1,f-1\}$ irreducible components if and only if $\min\{e,f\}=gcd(e,f)$. Therefore, the statement follows from Corollary \ref{cor:main_Fermat}.   
   \end{proof}

Using our computations we can also address the refined version of Conjecture \ref{conj}, which appears in  \cite[Conjecture 5.3]{UL}, and show that it holds for all the polynomials we consider in this manuscript. This follows from the next result, which extends \cite[Proposition 5.1]{UL} to chain and loop-type polynomials:

\begin{prop}\label{prop:ranks}
    Let $w$ be a suspended polynomial with corresponding matrix $A_w=(a_{ij})$ and let $k\in \Z_{\leq 0}$. Define $\tilde{w}$ to be the Fermat-type polynomial 
    \begin{equation}\label{pol_qfac}
        x_1^2+x_2^2+x_3^{\tilde{e}}+x_4^{\tilde{f}},
    \end{equation}
    
    where $\tilde{e}=(a_{33}-a_{34})/g_w$, $\tilde{f}=(a_{44}-a_{43})/g_w$ with $g_w=\gcd(a_{33}-a_{34},a_{44}-a_{43})$.
    
    Depending on the type of $w$, the following holds for the rank of $HH^{2k}(\Cw)$:
    \begin{enumerate}[(i)]
        \item If $w$ is of chain type, then
        \[
        \dim\left(HH^{2k}(\Cw)\right)=g_w\left(\dim\left(HH^{2k}(\mathscr{C}_{\tilde{w}})\right)+1\right)
        \]
        \item If $w$ is of loop type, then
        \[
        \dim\left(HH^{2k}(\Cw)\right)-1=g_w\left(\dim\left(HH^{2k}(\mathscr{C}_{\tilde{w}})\right)+1\right)
        \]
        \item If $w$ is of Fermat type, then
        \[
        \dim\left(HH^{2k}(\Cw)\right)+1=g_w\left(\dim\left(HH^{2k}(\mathscr{C}_{\tilde{w}})\right)+1\right).
        \]
           \end{enumerate}
\end{prop}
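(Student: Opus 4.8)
The plan is to deduce the three identities purely arithmetically from the closed-form rank formulas already established. By Corollaries \ref{cor:main_chain}, \ref{cor:main_loop} and \ref{cor:main_Fermat}, for each type of suspended polynomial $w$ the number $\dim HH^{2k}(\mathscr{C}_w)$ is an explicit four-branch piecewise function of a residue $q'$, namely the reduction modulo a fixed integer $\pi_w$ of $\lambda_w(1-k)$, where $\lambda_w$ and $\pi_w$ depend only on the relevant exponents of $w$. On the other hand $\tilde w$ is Fermat-type with $\gcd(\tilde e,\tilde f)=1$ by construction, so Corollary \ref{cor:main_Fermat} gives $\dim HH^{2k}(\mathscr{C}_{\tilde w})$ as the Fermat formula in a residue $q$ of its own. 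The proposition will follow by comparing these two piecewise functions branch by branch.

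\textbf{Reading off the parameters.} First I would record, for each type, the matrix entries entering the definition of $g_w,\tilde e,\tilde f$: for $w=w_{\text{chain}}^{a,b}$ one has $a_{33}-a_{34}=a-1$ and $a_{44}-a_{43}=b$, so $g_w=\gcd(a-1,b)$, $\tilde e=(a-1)/g_w$, $\tilde f=b/g_w$; for $w=w_{\text{loop}}^{c,d}$ one has $a_{33}-a_{34}=c-1$ and $a_{44}-a_{43}=d-1$, so $g_w=\gcd(c-1,d-1)$, $\tilde e=(c-1)/g_w$, $\tilde f=(d-1)/g_w$; and for $w=w_{\text{Fermat}}^{e,f}$ one has $a_{33}-a_{34}=e$ and $a_{44}-a_{43}=f$, so $g_w=\gcd(e,f)$, $\tilde e=e/g_w$, $\tilde f=f/g_w$. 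In every case $\gcd(\tilde e,\tilde f)=1$.

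\textbf{The reduction.} The crucial observation is that the multiplier $\lambda_w$ in Corollaries \ref{cor:main_chain}, \ref{cor:main_loop}, \ref{cor:main_Fermat} (respectively $a-1$, $\min\{c-1,d-1\}$, $\min\{e,f\}$) equals $g_w\min\{\tilde e,\tilde f\}$, while the modulus $\pi_w$ (respectively $a-1+b$, $c+d-2$, $e+f$) equals $g_w(\tilde e+\tilde f)$; hence the residue $q'$ of $\lambda_w(1-k)$ modulo $\pi_w$ satisfies $q'=g_w\,q$, where $q$ is precisely the residue used in the formula for $\tilde w$. Moreover each threshold appearing in the formula for $w$ ($\min\{a-1,b\}$, $\max\{a-1,b\}$, $\pi_w-1$ and their loop/Fermat analogues) is $g_w$ times the corresponding threshold for $\tilde w$ ($\min\{\tilde e,\tilde f\}$, $\max\{\tilde e,\tilde f\}$, $\tilde e+\tilde f-1$), the only care needed being that $q'\geq 1\iff q\geq 1$ and $q'\leq \pi_w-1\iff q\leq \tilde e+\tilde f-1$ since $q$ is a non-negative integer. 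Substituting $q'=g_w q$ into the four branches of the formula for $w$ and using that $\dim HH^{2k}(\mathscr{C}_{\tilde w})+1$ equals, in those four branches, $1$, $q$, $\min\{\tilde e,\tilde f\}$, $\tilde e+\tilde f-q$ respectively, one reads off $\dim HH^{2k}(\mathscr{C}_w)=g_w\bigl(\dim HH^{2k}(\mathscr{C}_{\tilde w})+1\bigr)$ in the chain case; the extra $+1$ in the loop formula and the extra $-1$ in the Fermat formula of Corollaries \ref{cor:main_loop} and \ref{cor:main_Fermat} produce exactly the shifts $\dim HH^{2k}(\mathscr{C}_w)-1=g_w(\cdots)$ and $\dim HH^{2k}(\mathscr{C}_w)+1=g_w(\cdots)$ in the remaining two cases.

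\textbf{Main obstacle.} There is no genuine difficulty here beyond bookkeeping: the two points to get right are the alignment of the ``$q=0$'' branch, where the $\gcd$ term lives ($\gcd(a-1,b)=g_w=g_w\cdot 1$, $\gcd(c-1,d-1)+1=g_w\cdot 1+1$, $\gcd(e,f)-1=g_w\cdot 1-1$), and the handling of the half-open inequalities at the branch boundaries, which I would treat with a short explicit case split exactly as in the proof of Corollary \ref{cor:main_chain}. As an alternative one could bypass the closed formulas and argue bijectively at the level of the counting sets $\mathcal{W}_k,\mathcal{X}_k,\mathcal{Y}_k,\mathcal{Z}_{i,k}$ (respectively their loop analogues, respectively the sets $\F^s_{i,j}$) of Theorems \ref{main_general}, \ref{main_general_loop}, \ref{main_Fermat}, organizing the contributing monomials for $w$ into $g_w$ rescaled copies of those for $\tilde w$ together with the $\eta$-type correction; but the direct comparison of piecewise formulas is the shortest route.
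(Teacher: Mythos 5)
Your proposal is correct and follows essentially the same route as the paper, whose proof of this proposition consists precisely of comparing the closed formulas of Corollaries \ref{cor:main_chain}, \ref{cor:main_loop} and \ref{cor:main_Fermat} branch by branch; your version just spells out the bookkeeping ($q'=g_w q$, alignment of thresholds and of the $q=0$ branch). The only slip is the claim that the chain multiplier $a-1$ equals $g_w\min\{\tilde e,\tilde f\}$, which fails when $a-1>b$; this is harmless because all four-branch formulas are invariant under $q\mapsto\pi_w-q$, so replacing $a-1$ by $b$ (equivalently $\tilde e$ by $\tilde f$) in the defining congruence does not change their values.
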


\begin{proof}
    Statement $(i)$ follows from Corollaries \ref{cor:main_chain} and \ref{cor:main_Fermat}. Similarly, $(ii)$ follows from Corollaries  \ref{cor:main_loop} and \ref{cor:main_Fermat}, and $(iii)$ follows from Corollary  \ref{cor:main_Fermat}. 
\end{proof}

In particular, we deduce:

\begin{thm}[{\cite[Conjecture 5.3]{UL}}]\label{thm:new_conj}
    Let $w$ be a suspended polynomial and let $F$ (resp. $L$) denote the Milnor fiber (resp. link) of the singularity defined by $\check{w}=0$. Consider a small $\Q$-factorialization of such singularity, and let $F_{1},\ldots,F_{g_w}$ be the Milnor fibers of the resulting $\Q$-factorial singularities. Then for any integer $k\leq 0$ we have
    \begin{equation}\label{eq:new_conj}
         \dim SH^{2k}(F)=\sum_{i=1}^{g_w}\dim SH^{2k}(F_i)+b_2(L)
    \end{equation}
      where $g_w$ is defined as in Proposition \ref{prop:ranks}.
\end{thm}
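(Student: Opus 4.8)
The plan is to deduce Theorem \ref{thm:new_conj} purely from the three rank formulas in Corollaries \ref{cor:main_chain}, \ref{cor:main_loop} and \ref{cor:main_Fermat}, together with the identification $SH^*(F)\simeq HH^*(\Cw)$ from Theorem \ref{comp} (whose hypotheses are checked in Proposition \ref{hyp}), and two pieces of algebro-geometric bookkeeping: first, that $b_2(L)$ equals the number of exceptional curves of a small resolution when one exists, and more generally can be read off the link topology; second, that a small $\Q$-factorialization of the singularity defined by $\check w$ produces exactly $g_w$ summands, each of which is (analytically) a suspended polynomial of the \emph{same} type — indeed a Fermat-type suspension $\tilde w$ — so that $\dim SH^{2k}(F_i)=\dim HH^{2k}(\mathscr{C}_{\tilde w})$ for every $i$. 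Granting this, Theorem \ref{thm:new_conj} is precisely the statement obtained by summing Proposition \ref{prop:ranks} over the $g_w$ copies of $F_i$ and rewriting $g_w\cdot 1$ (chain), $g_w\cdot 1+1$ (loop), $g_w\cdot 1-1$ (Fermat) as $b_2(L)$ in each case.

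Concretely, I would proceed as follows. First, record that by Theorem \ref{comp} and Proposition \ref{hyp} the left-hand side $\dim SH^{2k}(F)$ equals $\dim HH^{2k}(\Cw)$, and likewise $\dim SH^{2k}(F_i)=\dim HH^{2k}(\mathscr{C}_{\tilde w})$ once one knows each $F_i$ is the Milnor fiber of (the dual of) $\tilde w$. Second, identify $\tilde w$: a small $\Q$-factorialization of an isolated $cA_n$ singularity $x_1^2+x_2^2+g(x_3,x_4)=0$ separates the curve $g=0$ into its $\gcd$-many "primitive" branches; carrying this through for the three families and using the factorization $x_3^{a-1}+x_4^b=\prod_{\zeta^{g}=-1}(x_3^{(a-1)/g}+\zeta x_4^{b/g})$ from the proof of Lemma \ref{gcd-chain} (and its analogues) shows that each resulting $\Q$-factorial germ is the suspension of $x_3^{\tilde e}+x_4^{\tilde f}$ with $\tilde e,\tilde f$ as in Proposition \ref{prop:ranks}, hence Fermat-type, and that there are exactly $g_w$ of them. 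Third, compute $b_2(L)$: from the discussion in Subsection \ref{sub:cDV} (links of $cA_n$ singularities are $\#_\ell(S^2\times S^3)$ with $\ell=b_2(L)$), one has $b_2(L)=\gcd(a-1,b)$ in the chain case, $b_2(L)=\gcd(c-1,d-1)+1$ in the loop case, and $b_2(L)=\gcd(e,f)-1$ in the Fermat case — these are exactly the values of $\dim SH^{2k}(F)$ at $q=0$ appearing in the three corollaries, and they equal $g_w$, $g_w+1$, $g_w-1$ respectively since $g_w=\gcd(a-1,b)$, $g_w=\gcd(c-1,d-1)$, $g_w=\gcd(e,f)$. Fourth, plug Proposition \ref{prop:ranks} in: $\dim HH^{2k}(\Cw)$ equals $g_w(\dim HH^{2k}(\mathscr{C}_{\tilde w})+1)$ minus $0$, $1$, or $-1$ according to type, i.e. $\sum_{i=1}^{g_w}\dim SH^{2k}(F_i)+g_w$ minus the same correction, which is $\sum_{i=1}^{g_w}\dim SH^{2k}(F_i)+b_2(L)$ in every case. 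This is \eqref{eq:new_conj}.

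The main obstacle I expect is the second step — pinning down that a small $\Q$-factorialization really yields $g_w$ summands each analytically equivalent to the suspension of a Fermat binomial, and that "$\dim SH^{2k}(F_i)$" is well-defined (independent of the choice of $\Q$-factorialization and of the analytic equivalence used to bring $F_i$ into invertible form). The first half is essentially the branch-counting already implicit in Lemma \ref{crit} and Lemma \ref{gcd-chain}, extended from the case of a genuine small resolution ($\gcd=\min$) to the general case by passing to $\Q$-factorial rather than smooth models; one needs that the $\Q$-factorialization of $x_1^2+x_2^2+\prod_{k=1}^{g_w} h_k(x_3,x_4)=0$, with the $h_k$ the primitive factors, separates the branches exactly into the $g_w$ pieces $x_1^2+x_2^2+h_k=0$, each a Fermat suspension. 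The second half — well-definedness — follows because $SH^*$ of a Milnor fiber is an analytic invariant of the germ (Lemma \ref{lemma:link_fillable} and the discussion of Appendix A), and because small modifications of a fixed germ have isomorphic exceptional sets (noted after Example \ref{conifold}); alternatively one sidesteps it entirely by simply \emph{defining} the right-hand side of \eqref{eq:new_conj} via $\tilde w$, which is canonically attached to $w$. Everything else is formal manipulation of the closed formulas already proved.
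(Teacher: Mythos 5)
Your proposal is correct and follows essentially the same route as the paper: both arguments reduce the identity to Proposition \ref{prop:ranks} after identifying the small $\Q$-factorialization as having exactly $g_w$ singular points, each analytically the suspension of the Fermat binomial $\tilde w$, and then matching the additive constants $g_w$, $g_w+1$, $g_w-1$ with $b_2(L)$ via Theorem \ref{thm:h2L}. The only cosmetic difference is that the paper obtains the $\Q$-factorialization by citing the Wemyss-style blow-ups along non-$\Q$-Cartier Weil divisors, whereas you justify the same decomposition by the explicit branch-separation factorization from Lemma \ref{gcd-chain}; both give the same $g_w$ summands.
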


\begin{proof}
   First, we note that (\ref{eq:new_conj}) is independent of the choice of $\Q$-factorialization. Now, to obtain a small $\Q$-factorialization of the threefold singularity $\check{w}=0$ we can blow up $\Spec\left(\C[x_1,x_2,x_3,x_4]/(\check{w})\right)$ exactly $b_2(L)$ times along the ideals that correspond to the Weil divisors which are not $\Q$-Cartier (see e.g. \cite[Section 5]{wemyss2} and \cite{wemyss}). This yields a threefold with only $\Q$-factorial singularities which are all isomorphic to the singularity defined by $\tilde{w}=0$, where $\tilde{w}$ is as in (\ref{pol_qfac}). Moreover, there are exactly $g_w$ such singularities and the birational modification is small since we introduce a chain of $b_2(L)$ exceptional curves, each of which is isomorphic to $\P^1$.
   
   Formula (\ref{eq:new_conj}) then follows from Proposition \ref{prop:ranks}.
   \end{proof}

\section{Applications of the bigrading to contact topology}\label{sec:top}

In this final section of the paper, we will apply the computations from Section \ref{sec:chain}, and the formulas therein, to study the contact topology of the links of the singularities defined by $\check{w}$, whenever $w$ is of chain, loop or Fermat type. \par

More precisely, given any two invertible polynomials as in (\ref{example-chain}), (\ref{example-loop}) or (\ref{example-Fermat}), our goal is to compare, as contact manifolds, the links of the singularities determined by their mirror duals (Definition \ref{dual}). For this, we will make use of some invariants that allow us to detect whether the two links are contactomorphic or not. Of particular interest to us will be what we call the \textit{first concentrated degree} and its \textit{support}, which we will introduce in Section \ref{subsec:fcd}; and the \textit{formal period}, which we introduce in Section \ref{sec:rho1}. 

Even though the first concentrated degree and the formal period are defined in different ways, for technical reasons, they ultimately both serve the same purpose:
Given a suspended polynomial $w$, they allow us to detect the largest integer $k\leq 0$ such that one of the sets $\mathcal{Y}_k,\tilde{\mathcal{Y}}_k$ or $\mathcal{Z}^{k-1}_{-1,-1}$ (depending on the type of $w$) is non-empty. Then looking at $SH^{2k}(F)$ (or at $SH^{2k+1}(F)$) and its bigraded pieces, these numerical invariants can be used to distinguish the contact structures coming from different singularities $\check{w}=0$.

\subsection{Smooth deformations of hypersurface singularities}

In view of Gray's stability theorem, we first introduce, in a self-contained manner, an appropriate notion of smooth deformation:

\begin{defi}\label{def:deformation}
    Let $f_{i=0,1}:\C^{n+1}\rightarrow \C$ be two polynomials defining isolated hypersurface singularities at the origin $0\in \C^n$. We will call the two polynomials $f_0$ and $f_1$ \textbf{deformation equivalent} if there exists a smooth one-parameter family of polynomials $f_t:\C^n \to \C$, such that:
    \begin{enumerate}[(i)]
        \item each $f_t$ again defines an isolated hypersurface singularity at the origin,
        \item there exists $0<\varepsilon<\delta$, independent of $t$, such that the families
        \[L_t=f_t^{-1}(0)\cap S^{2n-1}(\delta)\quad \left(\text{resp.} \quad  F_t=f_t^{-1}(\varepsilon)\cap B^{2n}(\delta)\right)\] provide a smooth isotopy between the links (resp. the Milnor Fibers) of the two singularities coming from the polynomials $f_0$ and $f_1$.
    \end{enumerate}
We will call such a smooth family $f_t$ a \textbf{smooth deformation} between $f_0$ and $f_1$.
\end{defi}

This type of deformations, which are in particular $\mu$-constant (cf. \cite{LeRamanujam}), appear implicitly in \cite{EL}, for example in Remark 1.7.  And, as already mentioned, our interest in considering it lies in the following result: 

\begin{lema}\label{crit-links}
 Let $f_{i=0,1}:\C^{n}\rightarrow \C$ be two polynomials defining isolated hypersurface singularities at the origin $0\in \C^n$. If $f_0$ and $f_1$ are deformation equivalent, then the corresponding links $L_{f_0}$ and $L_{f_1}$ are contactomorphic via an ambient contactomorphism of $(S^{2n-1},\xi_{std})$. In addition, the two singularities have the same Milnor number.
\end{lema}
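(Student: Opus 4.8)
The plan is to read everything off from the contact isotopy extension theorem, applied to the family of links $L_t$ that is built into Definition~\ref{def:deformation}. First I would unwind that definition: condition (i) guarantees that for every $t$ the variety $V(f_t)$ is a smooth complex submanifold of the punctured ball $B^{2n}(\delta)\setminus\{0\}$, and condition (ii) provides a radius $\delta$ that works simultaneously for all $t$, so that $L_t=V(f_t)\cap S^{2n-1}(\delta)$ is a transverse intersection, hence a genuine smooth submanifold of the \emph{fixed} sphere, varying smoothly in $t$. Next I would observe that each $L_t$ is in fact a \emph{contact submanifold} of $(S^{2n-1}(\delta),\xi_{std})$: repeating the computation in the proof of Lemma~\ref{lemma:link_fillable} with $\beta=-dq\circ J$, $q(x)=\|x\|^2$, one checks that for each $p\in L_t$ the subspace $(\xi_{std})_p\cap T_pL_t$ equals $T_pL_t\cap J\,T_pL_t$, which is the field of complex tangencies of $L_t$ and is a contact structure on $L_t$; the only input is that $V(f_t)$ is $J$-complex, so $TV(f_t)$ is $J$-invariant. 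Thus $\{L_t\}$ is a smooth one-parameter family of closed contact submanifolds of $(S^{2n-1},\xi_{std})$ interpolating between $L_{f_0}$ and $L_{f_1}$.

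Then I would invoke the contact isotopy extension theorem (the standard Moser/Gray-type statement): a smooth isotopy of closed contact submanifolds of a contact manifold extends to an ambient contact isotopy. Applied here this yields a path $\Phi_t$ of contactomorphisms of $(S^{2n-1},\xi_{std})$ with $\Phi_0=\mathrm{id}$ and $\Phi_t(L_0)=L_t$; specializing to $t=1$ gives an ambient contactomorphism $\Phi_1$ of $(S^{2n-1},\xi_{std})$ sending $L_{f_0}$ onto $L_{f_1}$, and since $\Phi_1$ preserves $\xi_{std}$ and carries one contact submanifold onto the other it restricts to a contactomorphism of the intrinsic contact structures $\xi_{f_0}\to\xi_{f_1}$. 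If one prefers not to quote that theorem as a black box, the same conclusion follows by the two-step argument hinted at by the phrase ``in view of Gray's stability theorem'': use the ordinary isotopy extension theorem to transport everything onto the fixed closed manifold $L_0$, where one obtains a smooth path of contact structures, apply Gray stability on $L_0$, and then re-extend to the ambient sphere.

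For the last assertion I would argue that a deformation in the sense of Definition~\ref{def:deformation} is in particular $\mu$-constant: condition (ii) also supplies a smooth isotopy between the Milnor fibers $F_{f_0}$ and $F_{f_1}$, so they are diffeomorphic, and since the Milnor fiber has the homotopy type of $\bigvee_{\mu}S^{n-1}$ (Section~\ref{subsec:links}), its middle Betti number is $\mu$; equality of the fibers therefore forces $\mu(f_0)=\mu(f_1)$ (alternatively, invoke \cite{LeRamanujam} directly). I expect the only genuinely delicate point in the whole argument to be the first one --- arranging a \emph{uniform} $\delta$ so that all the $L_t$ sit as contact submanifolds inside one and the same sphere and form a smooth family; but this is exactly what condition (ii) of Definition~\ref{def:deformation} is designed to provide, so once it is unpacked the remaining steps are routine applications of standard Moser-type techniques in contact geometry.
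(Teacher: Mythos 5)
Your proposal is correct and follows essentially the same route as the paper: unpack Definition \ref{def:deformation} to obtain a smooth isotopy of contact submanifolds of $(S^{2n-1},\xi_{std})$, extend it to an ambient contact isotopy via the contact isotopy extension theorem (the paper cites \cite{geiges} for exactly this), and deduce equality of Milnor numbers from the isotopy of the Milnor fibers. Your write-up merely supplies more detail than the paper (e.g.\ verifying each $L_t$ is a contact submanifold via the field of complex tangencies), but the argument is the same.
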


\begin{proof}
If $f_0$ and $f_1$ are deformation equivalent, then by assumption there exists a smooth isotopy through contact submanifolds of $(S^{2n-1},\xi_{std})$ between the contact manifolds $L_{f_0}$ and $L_{f_1}$. Such an isotopy extends to an ambient contact isotopy (see for example \cite{geiges}) thus providing the desired ambient contactomorphism.\par

The claim about the Milnor numbers follows directly from the assumption that the Milnor fibers are isotopic to each other.
\end{proof}

The following two examples illustrate how, in practice, one can apply the criterion from Lemma \ref{crit-links}:

\begin{exe}\label{exe:def0}
The prototypical example is a family of lines passing through the origin, i.e. polynomials of the form
\[
f(z_1,z_2)=\prod_{i=1}^m(\alpha_i z_1+\beta_i z_2).
\]

It's a classical observation that when $m\geq 4$ these have moduli, given by the cross-ratio, yet they are all deformation equivalent, in the sense of Definition \ref{def:deformation}.\par

Indeed, consider polynomials $f_0$ and $f_1$ as above, with parameters $(\alpha_{0,i},\beta_{0,i})$ respectively. There exists a smooth family $(\alpha_{t,i},\beta_{t,i})$ that defines a smooth deformation $f_t$ between $f_0$ and $f_1$ such that 

\begin{itemize}
    \item $(\alpha_{t,i},\beta_{t,i})\neq (0,0)$ for $1\leq i\leq m$ and $t\in [0,1]$,
    \item $(\alpha_{t,i},\beta_{t,i})\neq c\cdot(\alpha_{t,j},\beta_{t,j})$ for $1\leq i<j\leq m$ and $t\in [0,1]$
\end{itemize}
Such a family exists because the above conditions define the complement of a closed subset of $\C ^{2m}$ with real codimension at least $2$. In other words, the space of the parameters $(\alpha,\beta)$ such that defining polynomial is square-free, is path-connected.\par

Since for each $t\in [0,1]$ the polynomial $f_t$ is weighted homogeneous and $f_t^{-1}(0)$ is a union of complex lines, it is clear that the condition $(ii)$ for $f_t$ to be a smooth deformation is satisfied for any $0< \varepsilon <\delta$.
\end{exe}

\begin{exe}\label{exe:def1}
Example \ref{exe:def0} can be generalized slightly to prove that certain singularities that admit small resolutions have contactomorphic links (cf. Proposition \ref{rho2}). 

Consider, for example, the chain and loop-type polynomials:
\[
f_0=x_1^2+x_2^2+x_3\left(x_3^{\tilde{a}c}+x_4^c\right)\quad \text{and} \quad 
    f_1=x_1^2+x_2^2+x_3x_4\left(x_3^{\tilde{a}(c-1)}+x_4^{(c-1)} \right).
\]
Letting  $h_i(x_3,x_4)\coloneqq f_i(0,0,x_3,x_4)$ we have that both $h_i$ are of the form 
\[
h_i(x_3,x_4)=x_3\prod_{j=1}^{c}(\alpha_jx_3^{\tilde{a}}+\beta_jx_4).
\]
Now, as in Example \ref{exe:def0}, the $h_i$ are deformation equivalent, hence so are the $f_i$. Thus, their corresponding links are contactomorphic.\par 

On the other hand, observe that the two polynomials
\[
f_0=x_1^2+x_2^2+x_3x_4(x_3^{2}+x_4^2)\quad \text{and} \quad 
    f_1=x_1^2+x_2^2+x_3(x_3^3+x_4^{6})
\]
define singularities having small resolutions with the same number of exceptional curves, but they are not deformation equivalent -- their Milnor numbers are not the same (see Proposition \ref{hh3}). Indeed, we will see in Proposition \ref{rho2} that their links are not contactomorphic.
\end{exe}

Furthermore, we also observe that condition (ii) in Definition \ref{def:deformation} is quite strong:

\begin{nonexe}
The family of polynomials $f_t(x_1,x_2)=x_1^2+x_2^2(x_2+t)$ does \textbf{not} give a smooth deformation between $f_0$ and $f_1$ as in Definition \ref{def:deformation}. The reason is that $f_t$ has a critical point not only at $(0,0)$ but also at $(0,-2/3t)$. Therefore, the $\delta$ appearing in Definition \ref{def:deformation} (ii) cannot be chosen independently of $t$. In particular, $f_0$ and $f_1$ do not even have the same Milnor numbers.
\end{nonexe}

\subsection{Bigradings on $HH^*(\mathscr{C}_{w})$ and $SH^*(F)$}\label{subsec:bigrading}

We now recast the computations of $HH^*(\mathscr{C}_{w})$ from Section \ref{sec:chain} to $SH^*(F)$ by focusing on a key contact invariant, namely the bigrading on $SH^*$ that comes from its Gerstenhaber structure (see Theorem \ref{thm:bigrading_invariant}). 

Recall that, as explained in Section \ref{sec:back}, $HH^*(\mathscr{C}_{w})$ and $ SH^*(F)$ have a Gerstenhaber structure, hence a $\Z\times \C$ bigrading coming from the adjoint representation and the corresponding weight space decomposition.

On the other hand, in \cite[Lemma 2.7]{EL} the authors show that $HH^*(\Cw)$ is isomorphic to the Hochschild cohomology of a formal $\mathbb{Z}$-graded algebra. 
Hence, $HH^*(\Cw)$ is also equipped with a $\mathbb{Z}\times\mathbb{Z}$-bigrading, which comes from an additional $\mathbb{Z}$-grading on the Hochschild cochains (compare Example $4.2$ in \cite{EL} and Remark 5.2 in \cite{LekiliUeda}). When $HH^{1,0}$ is one-dimensional, these two bigradings are scale equivalent, meaning that they agree for a specific identification of $HH^{1,0}$ with $\mathbb{C}$.\par

To distinguish between the two bigradings, in what follows we will denote by $SH^{p,q}(F)$ a bigraded piece corresponding to the weight-space decomposition, and by $HH^{p,q}$ a bigraded piece corresponding to the $\mathbb{Z}\times \mathbb{Z}$-bigrading. Under this convention, we have that $HH^{p,q}\subset HH^{p+q}$ and $SH^{p,q}\subset SH^p$. Moreover, as we will see later (Corollary \ref{cor:hhsh}), we have that $HH^{p-q,q}\simeq SH^{p,q}$. \par

Interestingly, we can determine the rank of each piece $HH^{p-q,q}$ in terms of the total exponents $b_0$ of the monomials $\m=\x$ contributing to $HH^{p}$. More precisely,

\begin{lema}[{\cite[Section 4]{EL}}]
If a monomial $\underline{m}=x_0^{b_0}x_1^{b_1}x_2^{b_2}x_3^{b_3}x_4^{b_4}$ contributes to $HH^r(\Cw)$, then it contributes to the bigraded piece $HH^{r-3b_0,3b_0}$. 
\label{lem:b0}
\end{lema}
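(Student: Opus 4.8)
The plan is to read off the $\mathbb{Z}\times\mathbb{Z}$-bidegree of the class attached to a good pair $(\gamma,\underline{m})$ directly from the chain-level model underlying Theorem \ref{mainTHM}, and to observe that its internal (second) component sees only the exponent $b_0$. First I would recall from \cite[Lemma 2.7]{EL} (see also Example 4.2 there) that $\Cw$ is intrinsically formal and quasi-equivalent to the category of $\mathbb{Z}$-graded modules over an explicit $\mathbb{Z}$-graded algebra; the extra $\mathbb{Z}$-grading on the Hochschild cochain complex is the internal grading inherited from this algebra, normalized so that it combines with the Hochschild cochain-length grading into the convention $HH^{p,q}\subset HH^{p+q}$ — equivalently, so that $HH^{1,0}$ is one-dimensional and scale equivalent to the weight-space decomposition. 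Since Theorem \ref{mainTHM} already records the total degree $r$ of the generator arising from $(\gamma,\underline{m})$, the whole content of the lemma reduces to the identity
\[
(\text{internal degree of }\underline{m})=3b_0 ,
\]
which then forces $(\gamma,\underline{m})$ to contribute to $HH^{r-3b_0,\,3b_0}$.

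To establish this identity I would transport the $\mathbb{Z}$-grading of \cite[Lemma 2.7]{EL} onto the combinatorial model behind Theorem \ref{mainTHM}. Its generators are monomials in the polynomial variables $x_0,\ldots,x_4$ together with the exterior generators $x_i^{\ast}$ dual to the non-fixed coordinates, and the internal grading is additive on such monomials; so the task is to determine the internal degree of each generator. The key point — built into the construction of the graded algebra in \cite[Sections 2 and 4]{EL}, and conceptually due to $x_0$ being the auxiliary coordinate that appears when one passes from $\text{mf}(\A^{n},\Gamma_w,w)$ to $\Cw=\text{mf}(\A^{n+1},\Gamma_w,w)$ with $\dim_{\C}F=n-1=3$ — is that $x_0$ sits in internal degree $3$, its dual $x_0^{\ast}$ in internal degree $-3$, and each of $x_1,\ldots,x_4,x_1^{\ast},\ldots,x_4^{\ast}$ in internal degree $0$. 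In particular every $p\in J_{\gamma}$ has internal degree $0$, because $w$, hence $w_{\gamma}$, does not involve $x_0$. Recalling the convention that $x_0^{\ast}$ contributes $-1$ to $b_0$, it follows that the internal degree of $\underline{m}=x_0^{b_0}x_1^{b_1}x_2^{b_2}x_3^{b_3}x_4^{b_4}$ equals $3b_0$, which is exactly what is claimed.

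The hardest part is precisely this middle step — matching the $\mathbb{Z}$-grading of the formality model of \cite[Lemma 2.7]{EL} with the good-pairs bookkeeping and pinning down the weights $3$, $-3$, $0$ of the coordinates and their duals; once that dictionary is in place the rest is a single additivity computation, and no recomputation of the total degree $r$ is needed. As a cross-check (and alternative route) I would verify the same statement on the symplectic side: the relevant weight is the eigenvalue of the adjoint action of the distinguished generator of the Cartan subalgebra $\mathfrak{h}\subset SH^{1}(F)$, represented by the Euler-type vector field in the $x_0$-direction, for which $\underline{m}=\x$ is an eigenvector with eigenvalue proportional to $b_0$ — the proportionality constant being $3$ in the normalization fixed above.
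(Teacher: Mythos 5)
The paper does not actually prove this lemma: it is imported verbatim as a citation to \cite[Section 4]{EL}, so there is no in-paper argument to compare yours against. Judged on its own terms, your outline is the natural one and matches what the cited source does --- identify the second grading with the internal $\mathbb{Z}$-grading of the formal graded algebra from \cite[Lemma 2.7]{EL}, observe that it is additive on the monomial generators of the combinatorial model behind Theorem \ref{mainTHM}, and then read off the weight of $\underline{m}$. The conclusion is also consistent with everything downstream in the paper: for instance Lemma \ref{lem:hh1} places the $\ell$ contributions to $HH^1$ in bidegrees $(1-3i,3i)$ with $i=b_0=0,\dots,\ell-1$, exactly as your weight assignment predicts, and Corollary \ref{cor:hhsh} records the same data on the $SH$ side after rescaling by $3$.

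The weak point is that your argument concentrates the entire content of the lemma into the single unproved assertion that $x_0$ has internal degree $3$ while $x_1,\dots,x_4$ and all the exterior generators $x_i^{\ast}$ ($i\geq 1$) have internal degree $0$, with $x_0^{\ast}$ in degree $-3$. Given additivity, that assertion \emph{is} the lemma, so deferring it to ``the construction in \cite[Sections 2 and 4]{EL}'' makes the proof no more self-contained than the paper's bare citation. The heuristic you offer ($x_0$ is the auxiliary coordinate and $\dim_{\C}F=3$) is suggestive but is not a derivation, and the parenthetical justification ``every $p\in J_{\gamma}$ has internal degree $0$ because $w_{\gamma}$ does not involve $x_0$'' is a non sequitur --- whether $x_3$ or $x_4$ carries internal weight has nothing to do with $x_0$ appearing in $w$; it is again exactly the weight assignment you are assuming. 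The symplectic-side cross-check (eigenvalue of the adjoint action of the Euler class proportional to $b_0$) is likewise asserted rather than computed, and in any case only pins the grading down up to the scale equivalence that the paper is careful to keep track of. To make this a genuine proof you would need to exhibit the graded algebra of \cite[Lemma 2.7]{EL} explicitly, match its Hochschild cochains with the good-pair generators of Theorem \ref{mainTHM}, and compute the internal weights of $x_0$ and $x_0^{\ast}$ there; short of that, the honest statement is the one the paper makes, namely that the lemma is taken from \cite[Section 4]{EL}.
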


The computations of the previous section describe such exponents. For the convenience of the reader, we summarize the relevant data in a slightly different fashion.

\begin{prop}\label{chainb0}
Let $w$ be a chain type polynomial as in $(\ref{example-chain})$. Let $k\in \Z_{\leq 0}$ and let $m_3$ be an integer that belongs to one of the sets $\mathcal{X}_k, \mathcal{Y}_k, \mathcal{W}_k$ and $\mathcal{Z}_{i,k}$ appearing in the formula (\ref{formula-chain}) from Theorem \ref{main_general}. Then, depending on the sets, there exists a good monomial $\m=\x$ contributing to $HH^{2k-3b_0,3b_0}(\Cw)$ (as well as to $HH^{2k+1-3b_0,3b_0}(\Cw)$), where
\begin{itemize}
    \item $b_0=-a(m_3+k)$ if $m_3\in \mathcal{W}_k$, 
    \item $b_0=(b-1)m_3-k$ if $m_3 \in \mathcal{X}_k \cup \mathcal{Y}_k$
    \item $b_0=(b-1)m_3-k+i$ if $m_3 \in \mathcal{Z}_{i,k}$
\end{itemize}
Moreover, all contributions to  $HH^{2k}(\Cw)$ (and to $HH^{2k+1}(\Cw)$) arise this way.
\end{prop}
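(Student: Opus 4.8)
The plan is to extract the required $b_0$ values directly from the proof of Theorem \ref{main_general}, where each contributing good monomial $\m=\x$ was already constructed explicitly, and simply re-bookkeep the data using Lemma \ref{lem:b0}. Indeed, Lemma \ref{lem:b0} tells us that a monomial with total exponent $b_0$ contributing to $HH^r(\Cw)$ contributes to the bigraded piece $HH^{r-3b_0,3b_0}(\Cw)$; so once we know $b_0$ in terms of $m_3$ (and $i$, $k$) for each of the four families of contributions, we are done. The fact that \emph{all} contributions to $HH^{2k}(\Cw)$ and $HH^{2k+1}(\Cw)$ arise this way is precisely the content of Theorem \ref{main_general} (together with Theorem \ref{mainTHM}, Corollary \ref{notdivide} and Remark \ref{ringJ-chain}), so that part requires no new argument.

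The core of the proof is therefore the case-by-case verification of the three displayed formulas for $b_0$. First I would recall from the proof of Theorem \ref{main_general} the recipe used to build a contributing monomial from an integer $m_3$ in each set: for $m_3\in\mathcal{W}_k$ one sets $b_3=0$, $b_4=-(a+b-1)m_3-(a-1)k$ and $b_0=(b-1)m_3+b_4-k$; for $m_3\in\mathcal{X}_k\cup\mathcal{Y}_k$ one sets $b_4=0$ (resp. $b_3=b_4=-1$ in the $\mathcal{Y}_k$ case), $b_3=(a+b-1)m_3+(a-1)k$ (resp. irrelevant) and $b_0=(b-1)m_3-k$; and for $m_3\in\mathcal{Z}_{i,k}$ one sets $b_4=i$, $b_3=(a+b-1)m_3+i+(a-1)k$ and $b_0=(b-1)m_3+i-k$. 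Substituting $b_4$ into the $\mathcal{W}_k$ expression gives $b_0=(b-1)m_3-(a+b-1)m_3-(a-1)k-k=-a m_3-(a-1)k-k = -a m_3 - a k = -a(m_3+k)$, which is the first claimed formula. The $\mathcal{X}_k\cup\mathcal{Y}_k$ and $\mathcal{Z}_{i,k}$ formulas are read off directly. One should also note that in the $\mathcal{Y}_k$ case the monomial actually contributes an $\eta$-dimensional space (one copy for each admissible $\zeta$), but all these copies share the same $b_0$, hence the same bigraded piece, so the statement is unaffected; and the odd-degree statement ($HH^{2k+1-3b_0,3b_0}$) follows by the identical reasoning with $A_\gamma$ replaced by $B_\gamma$, exactly as in the proof of Theorem \ref{main_general}.

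I do not anticipate a genuine obstacle here — this proposition is essentially a restatement of earlier results in a bigraded language, and the only thing to be careful about is consistency of conventions: making sure the $b_0$ obtained from the construction in Theorem \ref{main_general} is non-negative precisely on the sets as defined (this is where the $\mathcal{W}_k$ constraint $m_3\le -k$, equivalently $b_0\ge 0$, enters), and that Remark \ref{b0}'s bound $b_0\geq -1$ is compatible with all four cases. The mildly delicate point, if any, is simply the algebraic simplification in the $\mathcal{W}_k$ case showing $(b-1)m_3+b_4-k=-a(m_3+k)$; everything else is transcription. So the proof will read: recall the monomial recipes from the proof of Theorem \ref{main_general}; compute $b_0$ in each case; invoke Lemma \ref{lem:b0} to place the contribution in the correct bigraded piece; and invoke Theorem \ref{main_general} for exhaustiveness.
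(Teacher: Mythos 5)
Your proposal is correct and follows exactly the paper's route: the paper's entire proof of this proposition is ``It follows from the proof of Theorem \ref{main_general},'' and you have simply made explicit the extraction of the $b_0$ recipes from that proof, the algebraic simplification $(b-1)m_3+b_4-k=-a(m_3+k)$ in the $\mathcal{W}_k$ case, and the appeal to Lemma \ref{lem:b0} for the bigraded placement. Nothing is missing.
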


\begin{proof}
    It follows from the proof of Theorem \ref{main_general}.
\end{proof}

Similarly,

\begin{prop}\label{loopb0}
Let $w$ be a loop type polynomial as in $(\ref{example-loop})$. Let $k\in \Z_{\leq 0}$ and let $m_3$ be an integer that belongs to one of the sets $\tilde{\mathcal{X}}_k, \tilde{\mathcal{Y}}_k, \tilde{\mathcal{W}}_k$ and $\tilde{\mathcal{Z}}_{j,k}$ appearing in the formula (\ref{formula-loop}) from Theorem \ref{main_general_loop}. Then, depending on the sets, there exists a good monomial $\m=\x$ contributing to $HH^{2k-3b_0,3b_0}(\Cw)$ (as well as to $HH^{2k+1-3b_0,3b_0}(\Cw)$) where
\begin{itemize}
    \item $b_0=-(c-1)m_3-ck$ if $m_3 \in \tilde{\mathcal{W}}_k$,
    \item $b_0=(d-1)m_3-k$ if $m_3 \in \tilde{\mathcal{X}}_k \cup \tilde{\mathcal{Y}}_k$,
    \item $b_0=(d-1)m_3-k+j$ if $m_3 \in \tilde{\mathcal{Z}}_{j,k}$
\end{itemize}
Moreover, all contributions to  $HH^{2k}(\Cw)$ (and to $HH^{2k+1}(\Cw)$) arise this way.
\end{prop}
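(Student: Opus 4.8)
The plan is to deduce the statement directly from the explicit construction carried out in the proof of Theorem \ref{main_general_loop}, combined with Lemma \ref{lem:b0}, exactly as Proposition \ref{chainb0} is deduced from the proof of Theorem \ref{main_general}. So I would not redo any counting; I would only re-read off the exponent $b_0$ of the monomial attached to each $m_3$.

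First I would recall that in the proof of Theorem \ref{main_general_loop}, given $k\le 0$ and $m_3$ in one of the sets $\tilde{\mathcal{W}}_k,\tilde{\mathcal{X}}_k,\tilde{\mathcal{Y}}_k,\tilde{\mathcal{Z}}_{j,k}$, one builds a good pair $(\gamma,\m)$ with $\m=\x$ in which $b_3$ and $b_4$ are prescribed by $m_3$ (and by $j$, in the $\tilde{\mathcal{Z}}$ case), while $b_0$ is then forced. Concretely, using $a_{44}=d$, equation (\ref{Nform}) reads $N=\frac{b_1+b_2}{2}+(d-1)m_3+b_4-b_0$, and $N$ is pinned down by requiring the pair to land in degree $2k$ (resp. $2k+1$) via Theorem \ref{mainTHM}, which in turn involves $\alpha=|I^{\gamma}|$. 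For $m_3\in\tilde{\mathcal{W}}_k,\tilde{\mathcal{X}}_k,\tilde{\mathcal{Z}}_{j,k}$ the relevant element is $\gamma=(\pm1,\pm1,1)$, so $x_3,x_4$ are fixed; substituting the prescribed value of $b_4$ and simplifying then gives $b_0=-(c-1)m_3-ck$, $b_0=(d-1)m_3-k$, and $b_0=(d-1)m_3-k+j$ respectively. For $m_3\in\tilde{\mathcal{Y}}_k$ the monomial is $x_0^{b_0}x_3^{-1}x_4^{-1}$ (times $x_1^{-1}x_2^{-1}$ in the odd $b_0$ case) and $\gamma=(\pm1,\pm1,\zeta)$ with $\zeta\neq 1$, and the same bookkeeping — now with $b_3=b_4=-1$ and the corresponding value of $\alpha$ — yields $b_0=(d-1)m_3-k$. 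Then Lemma \ref{lem:b0} says a monomial with exponent $b_0$ contributing to $HH^r(\Cw)$ contributes to $HH^{r-3b_0,3b_0}(\Cw)$, so each of these monomials contributes to $HH^{2k-3b_0,3b_0}(\Cw)$ and to $HH^{2k+1-3b_0,3b_0}(\Cw)$. Finally, the ``moreover'' clause is precisely the conclusion established in the proof of Theorem \ref{main_general_loop} (via Theorem \ref{mainTHM}, Corollary \ref{notdivide} and Remark \ref{ringJ-loop}): these monomials exhaust all good pairs contributing to $HH^{2k}(\Cw)$ and $HH^{2k+1}(\Cw)$.

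This is bookkeeping rather than a genuine obstacle; the one point requiring care is that the identity $b_0=(d-1)m_3+b_4-k$ is \emph{not} uniform across the four cases, because the shift between $N$ and the cohomological degree depends on $\alpha$, which differs between the $\tilde{\mathcal{Y}}_k$ case (where $x_3,x_4$ are not fixed) and the other three (where $x_3,x_4$ are fixed). Matching each set to the correct $\gamma$, hence to the correct $\alpha$, before extracting $b_0$ is therefore the only place where a sign or off-by-one slip could occur; since Lemma \ref{lem:b0} uses only $b_0$, the bigraded conclusion follows formally once the values of $b_0$ are correct.
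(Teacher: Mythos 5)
Your proposal is correct and follows exactly the paper's route: the paper's own proof of Proposition \ref{loopb0} is simply ``It follows from the proof of Theorem \ref{main_general_loop},'' and your argument spells out precisely that deduction, reading off $b_0$ from the monomials constructed there and invoking Lemma \ref{lem:b0} for the bigrading. Your extra remark about matching each set to the correct $\gamma$ (hence the correct $\alpha$) before extracting $b_0$ is a sound and worthwhile precaution, and your computed values of $b_0$ agree with the paper's.
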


\begin{proof}
    It follows from the proof of Theorem \ref{main_general_loop}.
\end{proof}

And

\begin{prop}\label{fermatb0}
   Let $w$ be a Fermat-type polynomial as in $(\ref{example-Fermat})$. Let $k\in \Z_{\leq 0}$ and let $(m_3,-s-m_3)$ be a pair of integers that belongs to one of the sets $\F_{i,j}^s$ appearing in the formula (\ref{formula-Fermat}) from Theorem \ref{main_Fermat}. Then, depending on $s,i$ and $j$, there exists a good monomial $\m=\x$ contributing to $HH^{2k-3b_0,3b_0}(\Cw)$ (as well as to $HH^{2k+1-3b_0,3b_0}(\Cw)$) where 
   \[
   b_0=i+em_3=j-f(s+m_3).
   \]
   
   Moreover, all contributions to  $HH^{2k}(\Cw)$ (and to $HH^{2k+1}(\Cw)$) arise this way.
\end{prop}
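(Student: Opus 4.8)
The proof is a direct unwinding of the Fermat-type computation, mirroring verbatim what was done for chain and loop type in Propositions \ref{chainb0} and \ref{loopb0}.

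\begin{proof}
    It follows from the proof of Theorem \ref{main_Fermat}. Indeed, recall that in that proof a good pair $(\gamma,\m)$ contributing to $HH^{2k}(\Cw)$ (or to $HH^{2k+1}(\Cw)$) is of one of the four types ($i$-a), ($i$-b), ($ii$-a), ($ii$-b), and in each case the monomial $\m=\x$ has its exponents $b_0,b_3,b_4$ subject to the system (\ref{b3b4Fermat}), that is $b_0=b_3+em_3=b_4+fm_4$ for the uniquely determined pair $(m_3,m_4)$. In types ($i$-a) and ($i$-b) one has $0\leq b_3\leq e-2$ and $0\leq b_4\leq f-2$, so setting $i\coloneqq b_3$ and $j\coloneqq b_4$ the pair $(m_3,m_4)=(m_3,-s-m_3)$ lies in $\F^s_{i,j}$ with $s=k$ and $b_0=i+em_3=j+fm_4=j-f(s+m_3)$. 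In types ($ii$-a) and ($ii$-b) one has $b_3=b_4=-1$, so the pair lies in $\F^s_{-1,-1}$ with $s=k-1$ (because, as computed in the proof of Theorem \ref{main_Fermat}, such a pair contributes to $HH^{2(s+1)}$ and $HH^{2(s+1)+1}$), and again $b_0=-1+em_3=-1+fm_4=j-f(s+m_3)$ with $i=j=-1$. In all four cases Lemma \ref{lem:b0} tells us that $\m$ contributes to the bigraded piece $HH^{2k-3b_0,3b_0}(\Cw)$, respectively $HH^{2k+1-3b_0,3b_0}(\Cw)$, which is the claimed statement. Conversely, the last assertion of Theorem \ref{main_Fermat} (which rests on Proposition \ref{contFermat}) guarantees that the pairs arising from $(i')$ and $(ii')$, i.e. from the sets $\F^k_{i,j}$ with $0\leq i\leq e-2$, $0\leq j\leq f-2$ and from $\F^{k-1}_{-1,-1}$, exhaust all good pairs contributing to $HH^{2k}(\Cw)$ and $HH^{2k+1}(\Cw)$. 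Hence every contribution arises this way.
\end{proof}
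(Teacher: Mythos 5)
Your proof is correct and takes essentially the same route as the paper, which simply cites the proof of Theorem \ref{main_Fermat}; you have just made explicit the bookkeeping (the identification $i=b_3$, $j=b_4$, the shift $s=k-1$ for the $\F^{s}_{-1,-1}$ case, and the appeal to Lemma \ref{lem:b0}) that the paper leaves implicit.
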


\begin{proof}
    It follows from the proof of Theorem \ref{main_Fermat}.
\end{proof}

Now, if one wants to compute the bigrading on $SH^*$ instead, then one needs to first establish how to relate such bigrading to the (algebraic) one as in Lemma \ref{lem:b0}. The relationship is explained by Lemma \ref{lem:hh1} and Corollary \ref{cor:hhsh} below.

For the polynomials we consider here, $HH^{1,0}(\Cw)$ is always one-dimensional:

\begin{lema}\label{lem:hh1}
If $w$ is a suspended polynomial, then each non-vanishing bigraded piece in $HH^1(\Cw)$ is one-dimensional. Moreover, if $\dim HH^{1}(\mathscr{C}_w)=\ell$, then
\[HH^{1}=\bigoplus_{i=0}^{\ell-1}HH^{1-3i,3i}\]
\end{lema}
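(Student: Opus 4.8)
The plan is to reduce the statement to a counting of exponents and then invoke Lemma~\ref{lem:b0}. Since $\dim HH^{2k}(\Cw)=\dim HH^{2k+1}(\Cw)$ for all $k\leq 0$ by Theorems~\ref{main_general}, \ref{main_general_loop} and \ref{main_Fermat}, the group $HH^1(\Cw)$ is exactly the odd piece at $k=0$, so it suffices to analyze the good pairs $(\gamma,\m)$ with $\m=\x$ that contribute to $HH^{2\cdot 0+1}(\Cw)$. By Lemma~\ref{lem:b0} each such contribution lies in the bigraded piece $HH^{1-3b_0,\,3b_0}(\Cw)$, and by Remark~\ref{b0} together with the analysis of Section~\ref{sec:chain} one always has $b_0\geq 0$. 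Hence the whole lemma will follow once I check that, as $(\gamma,\m)$ runs over the good pairs contributing to $HH^1$, the total exponent $b_0$ takes precisely the values $0,1,\dots,\ell-1$, each exactly once, where $\ell=\dim HH^1(\Cw)$.

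To verify this I would run the three types in parallel, specializing Propositions~\ref{chainb0}, \ref{loopb0} and \ref{fermatb0} to $k=0$. In the chain case one computes $\mathcal{W}_0=\mathcal{Y}_0=\emptyset$, $\mathcal{X}_0=\{0\}$ if $b>a-1$ and $\mathcal{X}_0=\{0,1\}$ if $b\leq a-1$, and $\mathcal{Z}_{i,0}=\{0\}$ exactly for $1\leq i\leq\min\{a-2,b-2\}$; the attached exponents are $b_0=0$ (from $m_3=0\in\mathcal{X}_0$), $b_0=b-1$ (from $m_3=1\in\mathcal{X}_0$, when $b\leq a-1$), and $b_0=i$ for $1\leq i\leq\min\{a-2,b-2\}$, so the set of exponents is $\{0,1,\dots,\min\{a-1,b\}-1\}$, matching $\ell=\min\{a-1,b\}$ from Corollary~\ref{cor:main_chain}. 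The loop case is entirely analogous once one assumes $d\geq c$ via Remark~\ref{order}: here $\tilde{\mathcal{Y}}_0=\emptyset$, $\tilde{\mathcal{X}}_0=\{0\}$, $\tilde{\mathcal{W}}_0=\{-1\}$ (contributing the top exponent $b_0=c-1$) and $\tilde{\mathcal{Z}}_{j,0}=\{0\}$ for $1\leq j\leq c-2$, so the exponents are $\{0,1,\dots,c-1\}$ with $\ell=\min\{c,d\}$. Finally, in the Fermat case $\F^{-1}_{-1,-1}=\emptyset$ (so the multiplicity $\gcd(e,f)-1$ never enters $HH^1$), while $\F^{0}_{i,j}=\{(0,0)\}$ when $i=j$ and is empty otherwise; thus, taking $e\leq f$ without loss of generality, the exponents are $b_0=i$ for $0\leq i\leq\min\{e,f\}-2$, which is $\{0,1,\dots,\ell-1\}$ with $\ell=\min\{e,f\}-1$.

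Combining the three cases, in each of them the good pairs contributing to $HH^1(\Cw)$ have total exponents exactly $0,1,\dots,\ell-1$, each occurring once; by Lemma~\ref{lem:b0} this yields $HH^1=\bigoplus_{i=0}^{\ell-1}HH^{1-3i,\,3i}$ with every summand one-dimensional and all remaining bigraded pieces of $HH^1$ equal to zero, which is the assertion. I expect the only real obstacle to be bookkeeping rather than conceptual: one must keep straight which auxiliary index set produces the largest exponent in each type (the set $\mathcal{X}_0$ in the chain case but $\tilde{\mathcal{W}}_0$ in the loop case, while in the Fermat case the top exponent comes from $\F^0_{i,i}$), and treat the sub-ranges $b\leq a-1$ versus $b>a-1$ (respectively $c\leq d$ and $e\leq f$) separately, double-checking the degenerate values $a=2$, $c=2$ or $e=2$; no new idea beyond the enumeration already carried out in Section~\ref{sec:chain} is required.
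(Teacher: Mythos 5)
Your proposal is correct and follows essentially the same route as the paper: the paper's proof is a one-line pointer to the case analysis in the proofs of Corollaries \ref{cor:main_chain}, \ref{cor:main_loop} and \ref{cor:main_Fermat}, and what you have done is carry out that analysis explicitly at $k=0$, reading off the sets $\mathcal{X}_0,\mathcal{W}_0,\mathcal{Z}_{i,0}$ (and their loop/Fermat analogues) together with the attached exponents $b_0$ from Propositions \ref{chainb0}, \ref{loopb0} and \ref{fermatb0}. Your enumeration of the contributing pairs and of the exponents $b_0=0,1,\dots,\ell-1$ checks out in all three types, including the degenerate values $a=2$, $c=2$, $e=2$, and the observation that $\mathcal{Y}_0$, $\tilde{\mathcal{Y}}_0$ and $\F^{-1}_{-1,-1}$ are empty correctly rules out the only multi-dimensional contributions.
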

In particular, $\dim HH^{1,0}(\Cw)=1$.
\begin{proof}
When $w$ is of chain type this follows from the proof of Corollary \ref{cor:main_chain}(Case 3). Similarly, for loop-type (resp. Fermat-type) polynomials it follows from the proof of Corollary \ref{cor:main_loop} (resp. Corollary \ref{cor:main_Fermat}). 
\end{proof}

As a consequence, the bigrading on $SH^*$ coming from the weight space decomposition is, in fact, scale-equivalent to the algebraic bigrading on $HH^*$:

\begin{cor}\label{cor:hhsh}
Let $w$ be a suspended polynomial. Then, up to scale-equivalence, we may assume that each contribution to $HH^r(\Cw)$ coming from a good pair $(\gamma,\m=\x)$ corresponds to a contribution to the bigraded piece $SH^{r,b_0}(F) \subset SH^{r}(F)$, where, as before, $F$  denotes the Milnor fiber of the singularity defined by the dual polynomial $\check{w}$.
\end{cor}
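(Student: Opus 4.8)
The plan is to reduce Corollary~\ref{cor:hhsh} to the combination of two inputs that are already available in the excerpt: the scale-equivalence mechanism discussed right before the statement, and the bigrading formula of Lemma~\ref{lem:b0}. First I would recall the two bigradings explicitly. By the discussion in Subsection~\ref{subsec:bigrading}, the symplectic cohomology $SH^*(F)$ carries a $\Z\times\C$ bigrading $SH^{p,q}$ coming from the weight-space decomposition under the adjoint action of a Cartan subalgebra of $SH^1$, while $HH^*(\Cw)$ carries a $\Z\times\Z$ bigrading $HH^{p,q}$ with $HH^{p,q}\subset HH^{p+q}$, coming from the auxiliary $\Z$-grading on the Hochschild cochains of the formal graded algebra from \cite[Lemma~2.7]{EL}. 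The Gerstenhaber isomorphism $SH^*(F)\simeq HH^*(\Cw)$ of Theorem~\ref{comp} (whose hypotheses hold by Proposition~\ref{hyp}) intertwines the two Lie-algebra structures, so the weight-space bigrading on the $SH$ side corresponds under this isomorphism to the weight-space bigrading on the $HH$ side determined by $HH^1$; the only remaining point is that the \emph{algebraic} $\Z$-bigrading on $HH^*$ agrees, after rescaling, with this weight-space bigrading.

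The key step is therefore to invoke Lemma~\ref{lem:hh1}: for a suspended polynomial, $HH^{1,0}(\Cw)$ is one-dimensional. This means the algebraic bigrading on $HH^*$ is concentrated, in degree $1$, on a one-dimensional piece $HH^{1,0}$, which we may take as a generator of a Cartan subalgebra of $HH^1$. Once a Cartan generator is chosen this way, the weight-space decomposition and the algebraic $\Z\times\Z$-decomposition of each $HH^r$ coincide up to the single scalar ambiguity in identifying $\mathfrak h^*\cong\C$ — this is exactly the notion of scale-equivalence recalled in the paragraph preceding Lemma~\ref{lem:hh1}. Concretely, one fixes the identification so that $HH^{1,0}$ has weight $0$ and a monomial contributing to $HH^{r-3b_0,3b_0}$ is assigned weight $3b_0$ (equivalently, one normalizes the Cartan action so the weight of a good pair $(\gamma,\m=\x)$ equals $b_0$). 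Under this normalization, combining Lemma~\ref{lem:b0} (which says a good monomial $\m$ contributing to $HH^r$ lies in $HH^{r-3b_0,3b_0}$) with the isomorphism $HH^{p-q,q}\simeq SH^{p,q}$ — which itself follows from matching the degree conventions $HH^{p,q}\subset HH^{p+q}$ versus $SH^{p,q}\subset SH^p$ through the Gerstenhaber isomorphism — yields that a good pair $(\gamma,\m)$ contributing to $HH^r(\Cw)$ corresponds to a contribution to $SH^{r,b_0}(F)$, as claimed.

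I would then write this up as: (1) state that by Theorem~\ref{comp} and Proposition~\ref{hyp} we have a Gerstenhaber isomorphism $SH^*(F)\simeq HH^*(\Cw)$, so it suffices to work on the $HH$ side; (2) by Lemma~\ref{lem:hh1}, $\dim HH^{1,0}(\Cw)=1$, so we may pick this line as a Cartan subalgebra and thereby make the weight-space bigrading and the algebraic bigrading scale-equivalent, normalizing so that the contribution of a good pair $(\gamma,\m=\x)$ has second index equal to its total exponent $b_0$; (3) apply Lemma~\ref{lem:b0}, together with the degree-shift dictionary $HH^{p-q,q}\simeq SH^{p,q}$, to conclude that this contribution sits in $SH^{r,b_0}(F)\subset SH^r(F)$. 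The main obstacle — and the only genuinely delicate point — is justifying cleanly that the auxiliary algebraic $\Z$-grading coincides, under the one-dimensionality of $HH^{1,0}$, with the intrinsic weight-space grading up to scaling; this is essentially the content of \cite[Lemma~2.7]{EL} combined with the general fact that a one-dimensional degree-$1$ piece must act semisimply and span a Cartan subalgebra, but it deserves to be spelled out rather than asserted. Everything else is bookkeeping with the index conventions fixed in Subsection~\ref{subsec:bigrading}.
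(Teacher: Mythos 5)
Your proposal is correct and follows essentially the same route as the paper: the paper states Corollary~\ref{cor:hhsh} as an immediate consequence of Lemma~\ref{lem:hh1} (one-dimensionality of $HH^{1,0}$ forcing scale-equivalence of the weight-space and algebraic bigradings, as set up in Subsection~\ref{subsec:bigrading}) combined with Lemma~\ref{lem:b0} and the dictionary $HH^{p-q,q}\simeq SH^{p,q}$, which is exactly your steps (1)--(3). Your write-up is in fact more explicit than the paper's, which gives no separate proof, and you correctly flag the only delicate point (that the auxiliary $\Z$-grading realizes the Cartan weight decomposition up to a single rescaling, absorbing the factor of $3$ in $3b_0$).
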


Therefore, the bigrading on $SH^*(F)$ can be used to produce contact invariants of the link of the singularity defined by $\check{w}$, as first shown by Evans and Lekili. The following statement follows from \cite[Corollary 4.5]{EL}: 

\begin{thm}\label{thm:bigrading_invariant}
Let $(L_1,\xi_1)$ and $(L_2,\xi_2)$ be the links of two invertible $cA_n$ singularities, and denote by $F_1$ and $F_2$ the corresponding Milnor fibers. If $(L_1,\xi_1)$ and $(L_2,\xi_2)$ are contactomorphic, then the groups $SH^r(F_1)$ and $SH^r(F_2)$ are isomorphic for every $r\leq 1$, and they have scale-equivalent bigradings. 

In particular, if for some $r\leq 1$ the bigradings on $SH^r(F_1)$ and $SH^r(F_2)$ are not scale-equivalent, then the corresponding links cannot be contactomorphic.
\end{thm}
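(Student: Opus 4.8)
The plan is to deduce everything from the mirror-symmetry identification and the naturality of the Gerstenhaber structure. First I would recall that, by Proposition \ref{hyp}, all suspended polynomials satisfy the hypotheses of Theorem \ref{comp}, so for an invertible $cA_n$ singularity defined by $\check{w}$ we have an isomorphism of Gerstenhaber algebras $SH^*(F)\simeq HH^*(\Cw)$. This transports the $\Z\times\Z$ algebraic bigrading of $HH^*$ to $SH^*$ via Lemma \ref{lem:hh1} and Corollary \ref{cor:hhsh}: since $HH^{1,0}(\Cw)$ is one-dimensional, the weight-space decomposition coming from the Gerstenhaber bracket and the algebraic bigrading are scale-equivalent, so one may speak unambiguously of the bigraded pieces $SH^{r,b_0}(F)$.

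The second ingredient is the contact-invariance of $SH^{\leq 1}$ together with its bigrading. Here I would invoke index positivity: the links of invertible $cA_n$ singularities are Gorenstein terminal threefold singularities, hence index positive by \cite{McLean} (see Appendix \hyperref[app:indexpos]{A}), so $SH^r(F)$ for $r\leq 1$ is an invariant of the contact manifold $(L,\xi)$ and not just of the Liouville filling. Moreover, the Gerstenhaber bracket on $SH^*$ is itself built out of the contact/Reeb data, so a contactomorphism $(L_1,\xi_1)\xrightarrow{\sim}(L_2,\xi_2)$ induces, for each $r\leq 1$, an isomorphism $SH^r(F_1)\simeq SH^r(F_2)$ intertwining the Lie-bracket action of $SH^1$; this is exactly \cite[Corollary 4.5]{EL}. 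Applying a Cartan subalgebra $\mathfrak h\subset SH^1$ to this isomorphism — and using that $\mathfrak h$ is unique up to automorphism, so the weight-space decomposition is canonical up to rescaling $\mathfrak h^*\simeq\C$ — yields that the induced isomorphism carries the weight-space decomposition of $SH^r(F_1)$ to that of $SH^r(F_2)$ up to an overall scaling of the weights, i.e. the two bigradings are scale-equivalent. The contrapositive is the ``in particular'' clause: if for some $r\leq 1$ no scaling identifies $\{\dim SH^{r,q}(F_1)\}_q$ with $\{\dim SH^{r,q}(F_2)\}_q$, then $(L_1,\xi_1)$ and $(L_2,\xi_2)$ cannot be contactomorphic.

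The main obstacle, conceptually, is checking that the scale ambiguity in the weight-space decomposition is the \emph{only} ambiguity — that is, that a contactomorphism cannot shuffle the bigraded pieces in a more complicated, non-homothetic way. This is handled by the one-dimensionality of $HH^{1,0}(\Cw)$ (Lemma \ref{lem:hh1}): the Cartan subalgebra $\mathfrak h$ is forced to sit in a single bigraded slot, and the eigenvalues of $\mathrm{ad}(h)$ on each $SH^{r}$ are then pinned down up to the choice of generator of $\mathfrak h$, which only rescales all weights simultaneously. Everything else — the passage from $HH$ to $SH$, the contact-invariance, the naturality of the bracket — is quoted from Theorem \ref{comp}, Corollary \ref{cor:hhsh}, \cite{McLean}, and \cite[Corollary 4.5]{EL}, so no genuinely new computation is needed here; the content of this theorem is the bookkeeping that packages those inputs into a usable obstruction.
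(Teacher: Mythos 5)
Your proposal is correct and follows essentially the same route as the paper, which simply derives the statement from \cite[Corollary 4.5]{EL} together with the index positivity discussion of Appendix \hyperref[app:indexpos]{A} and the canonicity (up to rescaling $\mathfrak{h}^*\simeq\C$) of the weight-space decomposition described in Section \ref{sec:sympcoh}. The only cosmetic difference is that you also route through the mirror identification $SH^*(F)\simeq HH^*(\Cw)$ and Lemma \ref{lem:hh1}/Corollary \ref{cor:hhsh}, which the paper reserves for the subsequent computations rather than for the invariance statement itself.
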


Note that, given Corollary \ref{cor:hhsh}, the data we need to apply Theorem \ref{thm:bigrading_invariant} is exactly the data we described in Propositions \ref{chainb0}, \ref{loopb0} and \ref{fermatb0} above. In particular, from now on, we will adopt the following convention:

\begin{conv}\label{conv:bidegree}
     We will say a bigraded piece $SH^{r,b_0}\subset SH^r(F)$ has bidegree $b_0$.
\end{conv}

With this in mind, we introduce the contact invariants that will allow us to distinguish a plethora of links of isolated $cA_n$ singularities in Section \ref{sec:comparison}. 

\subsection{Some useful contact invariants}

We first observe that given $w$, it follows from Theorem \ref{thm:bigrading_invariant} above (or \cite[Corollary 4.5]{EL}) that the minimum (resp. maximum) rank of $SH^{\leq 1}(F)$, hence the minimum (resp. maximum) rank of $HH^{\leq 1}(\Cw)$, is a contact invariant of the link $L$ of the singularity defined by $\check{w}$. 

This motivates the following definition:

\begin{defi}\label{defimin}
Let $w$ be a suspended polynomial, and let $F$ denote the Milnor fiber of the singularity defined by its dual $\check{w}$. We define:
\begin{enumerate}[(i)]
\item $\rho(\check{w}) \coloneqq \min\{\textrm{rk}\,(SH^r(F)):r\leq 1\}$
\item $\lambda(\check{w})\coloneqq \max\{\textrm{rk}\,(SH^r(F)):r\leq 1\}$
\end{enumerate}
\end{defi}

Then we have the following result:

\begin{lema}\label{lema:min-max}
Let $w_1$ and $w_2$ be two suspended polynomials, and let $(L_1,\xi_1)$ and $(L_2,\xi_2)$ be the contact links of the singularities defined by their duals $\check{w}_1$ and $\check{w}_2$. If $L_1$ and $L_2$ are contactomorphic, then $\rho(\check{w}_1)=\rho(\check{w}_2)$ and $\lambda(\check{w}_1)=\lambda(\check{w}_2)$. 
\end{lema}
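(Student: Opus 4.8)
The plan is to observe that this lemma is an immediate consequence of Theorem \ref{thm:bigrading_invariant}, which was recalled just above. Indeed, the two quantities $\rho(\check{w}_i)$ and $\lambda(\check{w}_i)$ introduced in Definition \ref{defimin} are defined purely in terms of the ranks $\textrm{rk}\,(SH^r(F_i))$ for $r\leq 1$, where $F_i$ denotes the Milnor fiber of the singularity defined by $\check{w}_i$. Theorem \ref{thm:bigrading_invariant} tells us precisely that, whenever the links $L_1$ and $L_2$ are contactomorphic, the symplectic cohomology groups $SH^r(F_1)$ and $SH^r(F_2)$ are isomorphic for every $r\leq 1$; in particular, they have the same rank in each such degree.

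First I would recall that the singularities under consideration are invertible $cA_n$ singularities, hence by the discussion in Section \ref{sec:back} (see also \cite{McLean}) their links are index positive, so that $SH^{\leq 1}(F)$ is indeed a contact invariant of the link — this is exactly the content of Theorem \ref{thm:bigrading_invariant}. Thus if $\varphi:(L_1,\xi_1)\to (L_2,\xi_2)$ is a contactomorphism, we obtain isomorphisms $SH^r(F_1)\cong SH^r(F_2)$ for all $r\leq 1$, and therefore the sets
\[
\{\textrm{rk}\,(SH^r(F_1)):r\leq 1\}=\{\textrm{rk}\,(SH^r(F_2)):r\leq 1\}
\]
coincide. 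Taking minima and maxima of these two equal sets gives $\rho(\check{w}_1)=\rho(\check{w}_2)$ and $\lambda(\check{w}_1)=\lambda(\check{w}_2)$, as claimed.

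There is essentially no obstacle here: the only point worth spelling out is that the minimum and maximum in Definition \ref{defimin} are over the (infinite) index set $\{r\in\Z:r\leq 1\}$, so one should note that these extrema are attained — this follows because, by Proposition \ref{negcont} (and its Fermat-type analogue Proposition \ref{contFermat}) together with the explicit formulas of Theorems \ref{thmA}, \ref{thmB}, \ref{thmC}, the ranks $\textrm{rk}\,(SH^r(F))$ take only finitely many values as $r$ ranges over $r\leq 1$, being periodic in $r$. Hence the min and max are well defined, and the equality of the two value-sets forces the equality of the corresponding extrema.
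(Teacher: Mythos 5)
Your proposal is correct and matches the paper's reasoning: the paper gives no separate proof of this lemma, treating it as an immediate consequence of the observation (via Theorem \ref{thm:bigrading_invariant}, i.e.\ \cite[Corollary 4.5]{EL}) that the ranks of $SH^{r}(F)$ for $r\leq 1$ are contact invariants of the link. Your extra remark that the extrema are attained because the ranks are periodic and take finitely many values is a harmless, sensible addition.
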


For $w$ an invertible polynomial of chain, loop, or Fermat type, we have already computed the numbers $\rho(\check{w})$ and $\lambda(\check{w})$ in Corollaries \ref{cor:main_chain}, \ref{cor:main_loop} and \ref{cor:main_Fermat}. Here we additionally observe that $\rho$ is an invariant of the diffeomorphism type of the link. More precisely,

\begin{thm}\label{thm:h2L}
    If $L$ denotes the link of an invertible $cA_n$ singularity that is defined by a polynomial $\check{w}$, then $\rho(\check{w})=b_2(L)$.
\end{thm}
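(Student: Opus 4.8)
The plan is to compute $\rho(\check w)$ from the formulas in Corollaries \ref{cor:main_chain}, \ref{cor:main_loop} and \ref{cor:main_Fermat}, and to compare the resulting number with $b_2(L)$, which by the discussion in Subsection \ref{sub:cDV} equals the number $\ell$ of irreducible components of the exceptional locus of a small $\Q$-factorialization — equivalently, in view of \cite[Theorem 10.3.3]{sasakian}, the rank of $H_2(L)$ when $L\simeq \#_\ell(S^2\times S^3)$. Concretely, I would first recall that for each of the three families, $L$ is diffeomorphic to $\#_\ell(S^2\times S^3)$ with $\ell=\gcd(a-1,b)$ (chain), $\ell=\gcd(c-1,d-1)+1$ (loop), and $\ell=\gcd(e,f)-1$ (Fermat): this follows for Fermat type from the computation already quoted in Subsection \ref{sub:cDV}, and for chain and loop type from the factorization identity in the proof of Lemma \ref{gcd-chain} together with \cite{katz}/\cite{friedman} and the fact that the second Betti number of the link of an isolated $cA_n$ singularity is read off from the number of branches of the plane curve $g=0$ (see also Theorem \ref{thm:new_conj}, where $b_2(L)$ is identified with the number of blow-ups in a small $\Q$-factorialization).

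The second and main step is to show that $\rho(\check w)$ — the minimum over $r\leq 1$ of $\dim SH^r(F)=\dim HH^r(\Cw)$ — equals exactly this same number. By the periodicity in the formulas it suffices to minimize $\dim HH^{2k}(\Cw)$ over $k\leq 0$. In the chain case, formula (\ref{formula-chain-simpler}) is a function only of $q$, where $(a-1)(1-k)\equiv q \bmod (a-1+b)$; as $k$ ranges over $\Z_{\leq 0}$, $1-k$ ranges over all positive integers, so $q$ ranges over all residues of the form $(a-1)\cdot(\text{positive integer}) \bmod (a-1+b)$, i.e. exactly the multiples of $\gcd(a-1,a-1+b)=\gcd(a-1,b)$. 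Among the four cases of (\ref{formula-chain-simpler}), the value at a nonzero admissible $q$ is at least $\gcd(a-1,b)$ (since the smallest nonzero admissible $q$ is $\gcd(a-1,b)\le \min\{a-1,b\}$, giving value $q=\gcd(a-1,b)$, and all larger admissible values give at least this), while the value at $q=0$ is exactly $\gcd(a-1,b)$. Hence the minimum is $\gcd(a-1,b)=b_2(L)$. The loop and Fermat cases are handled identically using (\ref{formula-loop-simpler}) and (\ref{formula-Fermat-simpler}): the relevant gcd is $\gcd(\min\{c-1,d-1\},c+d-2)=\gcd(c-1,d-1)$ (resp. $\gcd(\min\{e,f\},e+f)=\gcd(e,f)$), the admissible residues $q$ are precisely the multiples of that gcd, and in each formula the $q=0$ branch attains the minimum value, namely $\gcd(c-1,d-1)+1$ (resp. $\gcd(e,f)-1$), which one checks is $\le$ every value on the nonzero branches.

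The only genuinely delicate point — and the step I would expect to cost the most care — is the elementary-number-theory verification that, for each family, the minimum of the piecewise formula over all \emph{admissible} $q$ (those realized by some $k\le 0$) is attained at $q=0$ and equals the claimed gcd expression. This requires checking that the smallest positive admissible residue, which is the gcd itself, does not already lie below $\min\{a-1,b\}$ in a way that would make its branch value smaller — but since the gcd divides $\min\{a-1,b\}$ and lies in $[1,\min\{a-1,b\}]$, the branch value there is exactly the gcd, matching $q=0$; and monotonicity of each branch handles the rest. I would also need to note that there is no subtlety about $SH^{2k}$ versus $SH^{2k+1}$ (they have equal rank by Theorems \ref{thmA}--\ref{thmC}) and no contribution from positive degrees is relevant since $\rho$ is defined as a minimum over $r\le 1$. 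Assembling these observations, $\rho(\check w)=b_2(L)$ in all three cases, which is the assertion of Theorem \ref{thm:h2L}.
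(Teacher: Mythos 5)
Your argument is correct and follows essentially the same route as the paper: the paper's proof simply observes that $\rho(\check w)+1$ equals the number of branches of the plane curve $\check w|_{x_1=x_2=0}$ (implicit in Corollaries \ref{cor:main_chain}, \ref{cor:main_loop}, \ref{cor:main_Fermat}) and cites references identifying that branch count with $b_2(L)+1$, which is exactly the fact you invoke at the end of your first step. Your explicit minimization over admissible residues $q$ (multiples of the relevant gcd) is a correct and welcome filling-in of the detail the paper leaves implicit; the detour through the diffeomorphism type $\#_\ell(S^2\times S^3)$ is unnecessary (and only justified when a small resolution exists), but you do not actually rely on it.
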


\begin{proof}
    Given $w$ as in (\ref{example-chain}), (\ref{example-loop}) or (\ref{example-Fermat}), the number $\rho(\check{w})+1$ is always equal to the number of branches of the curve singularity defined by $\check{w}$ restricted to the plane $x_1=x_2=0$, which is known to be precisely the rank of $H^2(L)$ plus one, see e.g. \cite{caibar2} and \cite{caibar1},  or \cite[Proposition 2.2.6]{FlipsFlops}.  
\end{proof}

\subsubsection{The first concentrated degree}\label{subsec:fcd}

One may observe from the proofs of Propositions \ref{chainb0}, \ref{loopb0} and \ref{fermatb0} (and Corollary \ref{cor:hhsh}) that, whenever $\rho\geq 2$, the contributions to $SH^{2k}(F)$ (and to $SH^{2k+1}(F)$) are always either
 spread-out, meaning they contribute to at least two different bigraded pieces; or
    they are \textit{concentrated}, meaning they contribute to a single bigraded piece. In particular, it becomes relevant to our purposes the introduction of the following invariant:

\begin{defi}\label{def:fcd}
Given a suspended polynomial $w$, as throughout the paper, let $F$ denote the Milnor fiber of the singularity defined by $\check{w}$. If $\rho(\check{w})\geq 2$, we define the \textbf{first concentrated degree} (\textbf{fcd}) of $SH^*(F)$, denoted by $\kappa(\check{w})$, to be the largest integer $k\leq 0$ such that all the contributions to $SH^{2k+1}(F)$ appear in the same bidegree, i.e. they are all concentrated in a single bigraded piece. Moreover, we will call the corresponding bidegree the \textbf{support} of the fcd, denoted by $\sigma(\check{w})$.
\end{defi}

For the polynomials we are interested in, we can compute the numbers $\kappa(\check{w})$ and $\sigma(\check{w})$ by refining our computations from Section \ref{sec:chain}, which yields the following:

\begin{prop}
Let $w$ be a suspended polynomial and assume $\rho(\check{w})\geq 2$. Then $\kappa(\check{w})$ and $\sigma(\check{w})$ are contact invariants of the link $L$ of the singularity defined by $\check{w}$. Moreover, Table \ref{table:invariants} expresses these invariants, depending on whether $w$ is of chain, loop or Fermat type:
\end{prop}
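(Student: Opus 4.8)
The plan is to reduce the claimed statement to a counting problem involving the sets $\mathcal{Y}_k,\tilde{\mathcal{Y}}_k,\mathcal{Z}_{i,k}$ (and for the Fermat case the sets $\F^{k-1}_{-1,-1}$) already studied in Section \ref{sec:chain}, and then to make the entries of Table \ref{table:invariants} explicit by solving the relevant congruences. The contact-invariance of $\kappa(\check w)$ and $\sigma(\check w)$ is almost immediate: by Corollary \ref{cor:hhsh} the bigrading on $SH^*(F)$ coming from the weight-space decomposition is scale-equivalent to the algebraic bigrading on $HH^*(\Cw)$, and by Theorem \ref{thm:bigrading_invariant} a contactomorphism of links induces a scale-equivalence of the bigradings on $SH^r(F_1)$ and $SH^r(F_2)$ for all $r\le 1$. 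Since $\kappa$ is defined purely in terms of which $SH^{2k+1}$ have all contributions in a single bidegree, and $\sigma$ records that bidegree up to the global scale, both quantities are manifestly preserved under scale-equivalence (for $\sigma$ one must phrase it as a ratio or as the bidegree relative to the normalization $\dim HH^{1,0}=1$ from Lemma \ref{lem:hh1}, which is the convention already fixed in Convention \ref{conv:bidegree}); so they are contact invariants of $L$.

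The substance is the computation of the table. First I would recall from Propositions \ref{chainb0}, \ref{loopb0} and \ref{fermatb0} exactly which bidegrees $b_0$ occur among the contributions to $SH^{2k+1}(F)$ for a given $k\le 0$: each contributing $m_3$ (or pair $(m_3,m_4)$) carries a bidegree that is an explicit affine function of $m_3$ (resp. of $i,j,m_3$), so ``all contributions concentrated in one bidegree'' translates into the statement that all these affine expressions coincide. Analyzing the formulas (\ref{formula-chain}), (\ref{formula-loop}), (\ref{formula-Fermat}) one sees that, as soon as $\rho\ge 2$, a degree $2k+1$ is concentrated precisely when the only nonempty contributing set is the ``diagonal'' one — $\mathcal{Y}_k$ in the chain case, $\tilde{\mathcal{Y}}_k$ in the loop case, $\F^{k-1}_{-1,-1}$ in the Fermat case — and all the $\mathcal{W}_k,\mathcal{X}_k,\mathcal{Z}_{i,k}$ sets (resp. their tilde analogues, resp. the $\F^k_{i,j}$ with $(i,j)\neq(-1,-1)$) are empty; any other configuration either produces no contribution (excluded since $\rho\ge 2$ forces at least $\rho$ contributions in every degree) or spreads across $\ge 2$ bidegrees because the $\mathcal{Z}_{i,k}$'s sit in distinct bidegrees ($b_0$ depends on $i$). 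By Remarks \ref{Y-chain} and \ref{Y-loop} (and the corresponding remark in the Fermat case), $\mathcal{Y}_k\neq\emptyset$ exactly when $(a-1)(1-k)\equiv 0\bmod(a+b-1)$, and similarly for the other two types; so the largest such $k\le 0$ is obtained by solving $(a-1)(1-k)\equiv 0$, i.e. $k = 1-\tfrac{a+b-1}{\gcd(a-1,a+b-1)}\cdot(\text{smallest positive multiple})$ — concretely $\kappa(\check w)$ is $1$ minus $\tfrac{a+b-1}{\gcd(a-1,b)}$ (using $\gcd(a-1,a+b-1)=\gcd(a-1,b)$), and then $\sigma(\check w)$ is the value of $b_0=(b-1)m_3-k$ at the unique contributing $m_3$, which one reads off from Proposition \ref{chainb0}. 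The loop and Fermat rows are obtained the same way, substituting $c-1$, $c+d-2$, $\gcd(c-1,d-1)$ (resp. $e$, $e+f$, $\gcd(e,f)$) and using Propositions \ref{loopb0}, \ref{fermatb0} for the bidegree; one must also double-check, via Corollaries \ref{cor:main_chain}, \ref{cor:main_loop}, \ref{cor:main_Fermat}, that at this value of $k$ the diagonal set is indeed the \emph{only} nonempty one, which is exactly the content of ``$q=0$'' in those corollaries.

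The main obstacle I expect is bookkeeping rather than conceptual: one has to verify carefully that at the critical $k=\kappa(\check w)$ every other index set in the relevant formula is empty — in particular that $\mathcal{X}_k$ does not also contribute there — and that for every strictly larger $k\le 0$ at least two distinct bidegrees appear, so that $\kappa$ really is the \emph{largest} such integer. This requires a short case analysis (e.g. the boundary behavior when $a=2$ or $b=2$, cf. Remark \ref{rmk:empty-chain}, where some sets are void), and a little care with the two normalizations of the bigrading so that the entry for $\sigma$ in the table is stated consistently with Convention \ref{conv:bidegree}. Once those checks are in place the table entries follow by direct substitution, and the contact-invariance clause is as noted above.
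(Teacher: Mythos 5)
Your overall strategy matches the paper's (reduce to the emptiness/non-emptiness pattern of the index sets from Section \ref{sec:chain}, read off the bidegree from Propositions \ref{chainb0}--\ref{fermatb0}, and get contact invariance from Theorem \ref{thm:bigrading_invariant}), and the values you extract for $\kappa$ and $\sigma$ are the correct ones. But there is a genuine error in your key combinatorial claim for the chain and loop cases. You assert that a degree $2k+1$ is concentrated precisely when the diagonal set ($\mathcal{Y}_k$, resp.\ $\tilde{\mathcal{Y}}_k$) is the \emph{only} non-empty set, and you explicitly list the verification that ``$\mathcal{X}_k$ does not also contribute there'' as a step to be carried out. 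That verification fails: by Case 1 of the proof of Corollary \ref{cor:main_chain}, whenever $q=0$ (equivalently $\mathcal{Y}_k\neq\emptyset$) one has $\mathcal{X}_k=\mathcal{Y}_k=\{n\}$ with $n=(a-1)/\rho$, so $\mathcal{X}_k$ is \emph{never} empty at the critical degree. Indeed it cannot be: if only $\mathcal{Y}_k$ contributed, the rank there would be $\eta=\gcd(a-1,b)-1=\rho-1$, contradicting $\rho=\min_k\dim SH^{2k}$. Followed literally, your criterion would show that \emph{no} degree is concentrated for chain- and loop-type polynomials, and the argument would halt.

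The repair is exactly what the paper does: at $k=\k$ one has $\mathcal{X}_\k=\mathcal{Y}_\k=\{(a-1)/\rho\}$ (resp.\ $\tilde{\mathcal{X}}_\k=\tilde{\mathcal{Y}}_\k=\{(c-1)/\rho\}$) with all $\mathcal{W},\mathcal{Z}$-type sets empty, and by Proposition \ref{chainb0} (resp.\ \ref{loopb0}) the contributions from $\mathcal{X}_\k$ and from $\mathcal{Y}_\k$ carry the \emph{same} bidegree $b_0=(b-1)m_3-\k$, so concentration holds despite two index sets being non-empty. Your treatment of the Fermat case is fine, since there $\F^{\k-1}_{-1,-1}$ really is the only non-empty set and already accounts for all $\rho=\gcd(e,f)-1$ contributions; likewise your maximality argument for $\k$ (at least one $\mathcal{Z}_{i,k}$ together with $\mathcal{X}_k\cup\mathcal{W}_k$ non-empty in different bidegrees for $k>\k$) and the contact-invariance paragraph agree with the paper.
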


\begin{table}[h!]
\centering
\renewcommand{\arraystretch}{2.5}
\begin{tabular}{||c || c| c| c| c||} 
 \hline
  & $\lambda=\lambda(\check{w})$ & $\rho=\rho(\check{w})$ & $\k=\kappa(\check{w})$ & $\sigma=\sigma(\check{w})$   \\ [0.5ex] 
 \hline\hline
 chain & $\min\{a-1,b\}$ & $\gcd(a-1,b)$ & $1-\frac{a-1+b}{\rho}$ & $\frac{ab}{\rho}-1$ \\ 
 \hline
 loop & $\min\{c,d\}$ & $\gcd(c-1,d-1)+1$ & $1-\frac{c+d-2}{\rho-1}$ & $\frac{cd-1}{\rho-1}-1$ \\
 \hline
 Fermat & $\min\{e,f\}-1$ & $\gcd(e,f)-1$ & $1-\frac{e+f}{\rho+1}$ & $\frac{ef}{\rho+1}-1$ \\
 \hline
\end{tabular}
\caption{Invariants of the contact topology of $L$}
\label{table:invariants}
\end{table}

\begin{proof}
The first part of the statement follows from \cite[Corollary 4.5]{EL}, see also \cite[\S 2, Section 4.6]{EL}. For the second part, the computation of the invariants, we can argue as follows:
\begin{enumerate}
    \item[\bf{Case 1}] If $w$ is a chain-type (resp. loop-type) polynomial, then we observe that for the integer $k=\k$ as in Table \ref{table:invariants} we have that $\mathcal{X}_k=\mathcal{Y}_{k}=\{(a-1)/\rho\}$ (resp. $\tilde{\mathcal{X}}_{k}=\tilde{\mathcal{Y}}_{k}=\{(c-1)/\rho\}$ ) and all the other sets appearing in formula (\ref{formula-chain}) (resp. in formula (\ref{formula-loop})) are empty. This, combined with Corollary \ref{chainb0} (resp. Corollary \ref{loopb0}), tells us  the contributions to $SH^{2k+1}$ are concentrated in bidegree $\sigma$, where $\sigma$ is as in Table \ref{table:invariants}.\par

Moreover, when $\rho \geq 2$ we can check that $2\k+1$ is indeed the \textbf{fcd}. This follows from the computations in the proof of Corollary \ref{cor:main_chain} (resp. Corollary \ref{cor:main_loop}), since for any integer $k$ larger than $\kappa$, we have that at least one of the sets $\mathcal{Z}_{i,k}$  (resp. $\tilde{\mathcal{Z}}_{j,k}$) is non-empty  and  we also have that $\mathcal{X}_k \cup \mathcal{W}_k\neq \emptyset$ (resp. $\tilde{\mathcal{X}}_k \cup \tilde{\mathcal{W}}_k\neq \emptyset$). Furthermore, by Proposition \ref{chainb0} (resp. Proposition \ref{loopb0}) and Corollary \ref{cor:hhsh} the corresponding contributions are not to the same bigraded piece.\par

    \item[\bf{Case 2}] Similarly, if $w$ is a polynomial of Fermat type, then for the integer $k=\k$ as in Table \ref{table:invariants} we have that $\F^{k-1}_{-1,-1}=\left\{\left(\frac{f}{\rho+1},\frac{e}{\rho+1}\right)\right\}$ and all the other sets appearing in formula (\ref{formula-Fermat}) are empty. This, combined with Corollary \ref{fermatb0}, tells us  the contributions to $SH^{2k+1}$ are concentrated in bidegree $\sigma$, where $\sigma$ is as in Table \ref{table:invariants}.\par
 The fact that $2\k+1$ is indeed the \textbf{fcd} when $\rho\geq 2$ then follows from Theorem \ref{main_Fermat}. For any integer $k\leq 0$ larger than $\kappa$, we have that at least $\gcd(e,f)-1$ among the sets $\F^k_{i,j}$ are non-empty. Furthermore, by Proposition \ref{fermatb0} and Corollary \ref{cor:hhsh}, each corresponding contribution is to a different bigraded piece.\par
\end{enumerate}
\end{proof}

\begin{rmk}
    We observe that when $\rho(\check{w})\leq 1$, then the numbers $\kappa$ and $\sigma$ from Table \ref{table:invariants} still make sense -- simply their interpretation changes. The number $\kappa$ is then the largest integer $k\leq 0$ such that either: $\mathcal{Y}_k\neq \emptyset$ (resp. $\tilde{\mathcal{Y}}_k\neq \emptyset$) if $w$ is of chain type (resp. loop type); or $\F^{k-1}_{-1,-1}\neq \emptyset$ if $w$ is of Fermat type. The number $\sigma$ is still the associated bidegree.
\end{rmk}

\begin{rmk}\label{rmk:lct}
    It is also intriguing to observe that, for all three types of polynomials and independent of $\rho$, the rational number $\frac{1-\kappa}{\sigma+1}$ is precisely the log canonical threshold of the plane curve singularity defined by $\check{w}$ restricted to $x_1=x_2=0$. This can be seen using the formulas in \cite[Theorem 1.2]{Kuwata}, for example. The reader is also referred to \cite[Section 8]{pairs} for the details on this invariant of singularities.
\end{rmk}

\subsection{Comparing different links when $\rho\geq 2$}\label{sec:comparison}

We now have all the tools we need to compare the links of any two invertible $cA_n$ singularities $\check{w}=0$ satisfying $\rho=\rho(\check{w})\geq 2$.  

We begin by comparing the links associated to any two polynomials lying in the same family. More precisely, we first prove Propositions \ref{fvsf}, \ref{cvsc} and \ref{lvsl} below:

\begin{prop}\label{fvsf}\emph{(Fermat vs Fermat)}
    Let $w_1=\check{w}_1=x_1^2+x_2^2+x_3^e+x_4^f$ and let  
     $w_2=\check{w}_2=x_1^2+x_2^2+x_3^{e'}+x_4^{f'}$. If $\rho(\check{w}_1)\geq 2 $ and $\rho(\check{w}_2)\geq 2$, the links of the singularities defined by $\check{w}_1$ and $\check{w}_2$ are contactomorphic if and only if, up to swapping the variables $x_3$ and $x_4$, we have $\check{w}_1=\check{w}_2$.
\end{prop}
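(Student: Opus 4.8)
I want to prove: two Fermat-type invertible $cA_n$ singularities $\check w_1 = x_1^2+x_2^2+x_3^e+x_4^f$ and $\check w_2 = x_1^2+x_2^2+x_3^{e'}+x_4^{f'}$ (both with $\rho \geq 2$) have contactomorphic links iff, after possibly swapping $x_3 \leftrightarrow x_4$, $(e,f) = (e',f')$.

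The "if" direction is trivial: swapping variables is a holomorphic change of coordinates, so the links are literally the same contact manifold.

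**The "only if" direction — the main content.**

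Here's my strategy. Suppose the links are contactomorphic. By Theorem~\ref{thm:bigrading_invariant} this forces, for every $r \leq 1$, an isomorphism $SH^r(F_1) \cong SH^r(F_2)$ respecting the bigradings up to scale-equivalence. I plan to extract a small packet of numerical invariants from this data that, together, pin down the unordered pair $\{e,f\}$.

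First, $\lambda = \lambda(\check w) = \min\{e,f\}-1$ and $\rho = \rho(\check w) = \gcd(e,f)-1$ are contact invariants (Lemma~\ref{lema:min-max}, Corollary~\ref{cor:main_Fermat}). So a contactomorphism forces $\min\{e,f\} = \min\{e',f'\}$ and $\gcd(e,f) = \gcd(e',f')$. Next, the first concentrated degree $\kappa(\check w) = 1 - \frac{e+f}{\rho+1}$ and its support $\sigma(\check w) = \frac{ef}{\rho+1}-1$ are contact invariants (the Proposition preceding Table~\ref{table:invariants}), so $\frac{e+f}{\rho+1} = \frac{e'+f'}{\rho'+1}$ and $\frac{ef}{\rho+1} = \frac{e'f'}{\rho'+1}$. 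Since $\rho+1 = \gcd(e,f) = \gcd(e',f') = \rho'+1$ is now known to be equal, these say $e+f = e'+f'$ and $ef = e'f'$. Two positive integers are determined by their sum and product, so $\{e,f\} = \{e',f'\}$ as unordered pairs, which is exactly the claim.

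**What needs care.**

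The one genuine subtlety: the invariants $\kappa$ and $\sigma$ are \emph{only} defined (in Definition~\ref{def:fcd}) under the hypothesis $\rho \geq 2$, which is precisely why that hypothesis appears in the statement. I should make this explicit: both $\rho(\check w_1)\geq 2$ and $\rho(\check w_2)\geq 2$ guarantee $\kappa,\sigma$ are well-defined for both singularities, and Theorem~\ref{thm:bigrading_invariant} (or rather the Proposition after it, asserting $\kappa,\sigma$ are contact invariants) applies. A second minor point: scale-equivalence of bigradings means $\sigma$ is only defined up to a global positive scalar; but since the ambient $SH^r$ lives in a fixed $\mathbb{Z}$-grading and the bidegree $b_0$ is the integer bidegree of Convention~\ref{conv:bidegree}, the value $\sigma$ as recorded in Table~\ref{table:invariants} is the honest integer, so no rescaling ambiguity survives — I should note this, citing Lemma~\ref{lem:hh1} / Corollary~\ref{cor:hhsh} which give $\dim HH^{1,0} = 1$ and hence rigidify the scale. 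I do not expect any serious obstacle; the whole argument is a clean extraction of four numbers ($\lambda$, $\rho$, $\kappa$, $\sigma$) from the contact data and a one-line elementary-symmetric-function argument. If anything, the "hard part" is merely bookkeeping: making sure the stated formulas in Table~\ref{table:invariants} are quoted correctly and that the deduction $e+f=e'+f'$, $ef=e'f'$ genuinely follows once $\rho$ is matched.

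Here is the proof.

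\begin{proof}
The "if" direction is immediate: swapping $x_3$ and $x_4$ is a holomorphic change of coordinates, so it does not alter the germ of the singularity, hence not the contact link.

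For the "only if" direction, suppose the links $(L_1,\xi_1)$ and $(L_2,\xi_2)$ of the singularities defined by $\check w_1$ and $\check w_2$ are contactomorphic. By Lemma~\ref{lema:min-max} and Corollary~\ref{cor:main_Fermat}, the numbers $\lambda$ and $\rho$ are contact invariants, so
\[
\min\{e,f\}-1=\lambda(\check w_1)=\lambda(\check w_2)=\min\{e',f'\}-1
\]
and
\[
\gcd(e,f)-1=\rho(\check w_1)=\rho(\check w_2)=\gcd(e',f')-1.
\]
In particular $\gcd(e,f)=\gcd(e',f')$; write $\rho=\rho(\check w_1)=\rho(\check w_2)$, so $\rho+1=\gcd(e,f)=\gcd(e',f')$.

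Since by hypothesis $\rho(\check w_1)\geq 2$ and $\rho(\check w_2)\geq 2$, the first concentrated degree $\kappa$ and its support $\sigma$ are defined for both singularities (Definition~\ref{def:fcd}), and by the Proposition preceding Table~\ref{table:invariants} they are contact invariants. Moreover, by Lemma~\ref{lem:hh1} we have $\dim HH^{1,0}(\mathscr{C}_{w_i})=1$ for $i=1,2$, so by Corollary~\ref{cor:hhsh} the bigrading on $SH^{*}(F_i)$ is scale-equivalent to the integer bigrading of Convention~\ref{conv:bidegree}; hence the value $\sigma$ recorded in Table~\ref{table:invariants} is an honest integer and carries no rescaling ambiguity. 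Using the Fermat-type entries of Table~\ref{table:invariants},
\[
1-\frac{e+f}{\rho+1}=\kappa(\check w_1)=\kappa(\check w_2)=1-\frac{e'+f'}{\rho+1}
\]
and
\[
\frac{ef}{\rho+1}-1=\sigma(\check w_1)=\sigma(\check w_2)=\frac{e'f'}{\rho+1}-1.
\]
Because $\rho+1$ is the same positive integer in all three expressions, these identities simplify to
\[
e+f=e'+f' \qquad\text{and}\qquad ef=e'f'.
\]
Thus $e,f$ and $e',f'$ are the two roots of the same monic quadratic $t^2-(e+f)t+ef$, so $\{e,f\}=\{e',f'\}$ as unordered pairs. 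Equivalently, after possibly swapping $x_3$ and $x_4$ we have $(e,f)=(e',f')$, i.e. $\check w_1=\check w_2$.
\end{proof}
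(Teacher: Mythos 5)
Your proof is correct and follows essentially the same route as the paper: both arguments reduce the statement to matching the contact invariants $\rho$, $\lambda$, $\kappa$, $\sigma$ of Table \ref{table:invariants}. The only (cosmetic) difference is the final step — the paper first pins down $\min\{e,f\}=\min\{e',f'\}$ via $\lambda$ and then uses $\kappa,\sigma$ to recover the other exponent, while you extract $e+f=e'+f'$ and $ef=e'f'$ from $\kappa$ and $\sigma$ and conclude by the symmetric-function argument; your version is, if anything, slightly more explicit about why $\rho\geq 2$ and the rigidity of the bigrading scale are needed.
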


\begin{proof}
Let $w_1=\check{w}_1$ and $w_2=\check{w}_2$ be two Fermat-type polynomials as in the statement. Let $g\coloneqq\gcd(e,f)$ and let $g'\coloneqq \gcd(e',f')$. Then, up to swapping the variables $x_3$ and $x_4$, we can assume that $e\leq f$ and $e'\leq f'$. 

Now, if the links of the singularities defined by $w_1$ and $w_2$ are contactomorphic, then all the invariants from Table \ref{table:invariants} must agree. Therefore, we must have $e=e'$ and $g=g'$, since $\rho(w_1)=\rho(w_2)$ and $\lambda(w_1)=\lambda(w_2)$.\par

Finally, since $\rho\geq 2$ (hence  $g=g'\geq 3$), by further looking at the invariants $\kappa$ and $\sigma$, as in Table \ref{table:invariants}, we conclude that $f=f'$ as well.\par 
\end{proof}

Similarly,

\begin{prop}\label{cvsc}\emph{(chain vs chain)}
 Let $w_1=x_1^2+x_2^2+x_3^ax_4+x_4^b$ be a chain-type polynomial such that $\rho(\check{w}_1)\geq 2 $, and let      $w_2=x_1^2+x_2^2+x_3^{a'}x_4+x_4^{b'}$ be another chain-type polynomial with $\rho(\check{w}_2)\geq 2$. The links of the singularities defined by $\check{w}_1$ and $\check{w}_2$ are contactomorphic if and only if  $\check{w}_1 = \check{w}_2$.
\end{prop}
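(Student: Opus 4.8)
The strategy mirrors the one in Proposition \ref{fvsf}: the ``only if'' direction is extracted entirely from the contact invariants recorded in Table \ref{table:invariants}, while the ``if'' direction is trivial since equal polynomials obviously have contactomorphic links. So assume the links of the singularities defined by $\check{w}_1$ and $\check{w}_2$ are contactomorphic. By Theorem \ref{thm:bigrading_invariant} (equivalently \cite[Corollary 4.5]{EL}), all of the invariants $\lambda,\rho,\kappa,\sigma$ must coincide for the two polynomials. For a chain-type polynomial $w=w^{a,b}_{\mathrm{chain}}$ these read $\lambda=\min\{a-1,b\}$, $\rho=\gcd(a-1,b)$, $\kappa=1-\tfrac{a-1+b}{\rho}$ and $\sigma=\tfrac{ab}{\rho}-1$. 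The plan is to show that knowledge of these four numbers pins down the unordered pair $\{a-1,b\}$, and then to argue that in fact the ordered pair $(a,b)$ is recovered, so that $\check w_1=\check w_2$.

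First I would extract from $\rho$ and $\sigma$ the product $ab$: indeed $ab=\rho(\sigma+1)$. Next, from $\kappa$ one gets $a-1+b=\rho(1-\kappa)$, hence the sum $a+b=\rho(1-\kappa)+1$ is determined. Knowing $a+b$ and $ab$ determines the unordered pair $\{a,b\}$ as the roots of a quadratic with integer coefficients. This already shows that $(a,b)$ is determined up to the transposition $(a,b)\leftrightarrow(b,a)$. The remaining point — and this is where some care is needed, unlike the Fermat and (symmetric) loop cases — is that for chain-type polynomials the two variables play genuinely asymmetric roles: $w^{a,b}_{\mathrm{chain}}=x_1^2+x_2^2+x_3^a x_4+x_4^b$ is \emph{not} in general biholomorphic to $w^{b,a}_{\mathrm{chain}}$. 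So I must rule out that $(a,b)$ and $(b,a)$ (with $a\neq b$) give contactomorphic links by some other invariant, or else show directly that they cannot both occur.

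The cleanest way around the ambiguity is to use $\lambda=\min\{a-1,b\}$ \emph{together with} the sum and product. If $\{a,b\}$ is the unordered pair recovered above, say $a+b=S$ and $ab=P$, then $\{a-1,b\}$ is one of $\{a-1,b\}$ or $\{b-1,a\}$ depending on the ordering, and $\lambda=\min$ of the relevant one. A short case analysis on which of $a,b$ is larger, combined with the known value of $\lambda$, then forces the \emph{ordered} pair $(a,b)$: the point is that $\min\{a-1,b\}$ distinguishes $(a,b)$ from $(b,a)$ except possibly in borderline cases like $a=b+1$, and those borderline coincidences can be checked by hand to still force $a=a'$, $b=b'$ once $\rho\ge 2$. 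Concretely, if $w_2=w^{a',b'}_{\mathrm{chain}}$ with $\{a',b'\}=\{a,b\}$ but $(a',b')\neq(a,b)$, then $a'=b$, $b'=a$ with $a\neq b$, and one computes $\min\{a-1,b\}$ versus $\min\{b-1,a\}$: these agree only when $|a-b|=1$, and in that subcase a direct inspection of, say, the full bigraded ranks of $SH^{2\kappa+1}$ (via Proposition \ref{chainb0} and Corollary \ref{cor:hhsh}) — or simply of $\rho=\gcd(a-1,b)$ against $\gcd(b-1,a)$ — shows the two cannot simultaneously satisfy $\rho\ge 2$ with equal invariants. Hence $(a,b)=(a',b')$ and $\check w_1=\check w_2$.

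\medskip

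I expect the main obstacle to be precisely this last step: disentangling the order of $a$ and $b$. In the Fermat case (Proposition \ref{fvsf}) and the loop case the defining polynomial is symmetric (or biholomorphic) under $x_3\leftrightarrow x_4$, so recovering the unordered pair is all one needs; for chain type the asymmetry means one must verify that no genuinely different ordered pair $(b,a)$ sneaks through with the same quadruple $(\lambda,\rho,\kappa,\sigma)$. I anticipate that the inequality structure of $\min\{a-1,b\}$ versus $\min\{b-1,a\}$, sharpened by the constraint $\rho=\gcd(a-1,b)\ge 2$, closes this gap, but writing it out requires a careful — though entirely elementary — enumeration of the few coincidence cases.
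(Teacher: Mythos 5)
Your proof is correct and follows the same route as the paper, whose own argument simply asserts that equality of the four invariants $\rho,\lambda,\kappa,\sigma$ from Table \ref{table:invariants} ``readily implies'' $a=a'$ and $b=b'$; you have supplied the elementary details (sum from $\kappa$, product from $\sigma$, ordering from $\lambda$). One small remark: the borderline case you worry about is vacuous, since for $a\neq b$ one always has $\min\{a-1,b\}\neq\min\{b-1,a\}$ (the two values differ by exactly $1$ whether $a<b$ or $a>b$), so $\lambda$ alone already resolves the ordering and your fallback argument via the $\gcd$'s is not needed.
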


\begin{proof}
 We can argue as in the proof of Proposition \ref{fvsf}. If the singularities defined by $\check{w}_1$ and $\check{w}_2$ have contactomorphic links, then the two polynomials must have the same invariants $\rho,\lambda, \k$ and $\sigma$, as in Table \ref{table:invariants}, which readily implies that they are equal. We must have $a=a'$ and $b=b'$.
\end{proof}

And, lastly,

\begin{prop}\label{lvsl}\emph{(loop vs loop)}
Let $w_1=x_1^2+x_2^2+x_3^cx_4+x_3x_4^d$  be a  polynomial of loop-type such that $\rho(\check{w}_1)\geq 2 $, and let  
     $w_2=x_1^2+x_2^2+x_3^{c'}x_4+x_3x_4^{d'}$ be another loop-typ polynomial with $\rho(\check{w}_2)\geq 2$. The links of the singularities defined by $\check{w}_1$ and $\check{w}_2$ are contactomorphic if and only if, up to swapping the variables $x_3$ and $x_4$,  we have $\check{w}_1=\check{w}_2$.
\end{prop}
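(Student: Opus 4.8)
The plan is to follow the same template as the proofs of Propositions \ref{fvsf} and \ref{cvsc}, now reading off the ``loop'' row of Table \ref{table:invariants}. The ``if'' direction is immediate: interchanging $x_3$ and $x_4$ carries $w_{\text{loop}}^{c,d}$ to $w_{\text{loop}}^{d,c}$, so a polynomial $\check{w}_1$ and the polynomial obtained from $\check{w}_2$ by this swap define analytically equivalent germs at the origin, whence their links are contactomorphic. It therefore remains to prove the ``only if'' direction.

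First I would normalize using Remark \ref{order}: we may assume $d\ge c$ and $d'\ge c'$, so that $\min\{c,d\}=c$, $\min\{c',d'\}=c'$, and the entries of Table \ref{table:invariants} are computed with respect to this ordering for both polynomials. Now suppose the links of the singularities defined by $\check{w}_1$ and $\check{w}_2$ are contactomorphic. Since $\rho(\check{w}_1)\ge 2$ and $\rho(\check{w}_2)\ge 2$ by hypothesis, the quantities $\lambda$, $\rho$ and $\kappa$ appearing in the table are genuine contact invariants of the links: $\lambda$ and $\rho$ by Lemma \ref{lema:min-max}, and $\kappa$ (together with $\sigma$) by the proposition preceding Table \ref{table:invariants}, which rests on Theorem \ref{thm:bigrading_invariant}. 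Hence all of these invariants must agree for $\check{w}_1$ and $\check{w}_2$.

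Then I would extract the required equalities successively. Equality of $\lambda$ gives $c=c'$. Equality of $\rho$ gives $\gcd(c-1,d-1)=\gcd(c-1,d'-1)$; writing $g\coloneqq\rho-1\ge 1$ for this common value, note that $g$ divides both $c+d-2$ and $c+d'-2$. Equality of $\kappa$, i.e.
\[
1-\frac{c+d-2}{g}=1-\frac{c+d'-2}{g},
\]
then forces $d=d'$. Thus $(c,d)=(c',d')$ under the normalization, which is exactly the assertion that $\check{w}_1=\check{w}_2$ up to swapping $x_3$ and $x_4$. I do not expect a genuine obstacle in this argument — it is essentially a three-line deduction from Table \ref{table:invariants}. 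The only point that requires some care is that the $d\ge c$ convention be applied consistently to both polynomials when invoking the table, which is precisely what Remark \ref{order} licenses; and, if one prefers to avoid $\kappa$ altogether, the same conclusion is reached by using $\sigma=\frac{cd-1}{\rho-1}-1$ instead, which together with $c=c'$ and $g=g'$ again pins down $d$.
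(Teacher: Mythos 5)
Your proof is correct and follows essentially the same route as the paper: the paper's own argument is a one-line appeal to the equality of all the invariants in Table \ref{table:invariants}, concluding that either $(c,d)=(c',d')$ or $(c,d)=(d',c')$. Your version merely spells out which invariant forces which equality ($\lambda$ gives $c=c'$, then $\rho$ and $\kappa$ — or $\sigma$ — give $d=d'$), which is a faithful expansion of the same argument.
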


\begin{proof}
   Once more, if the links of the singularities defined by $\check{w}_1$ and $\check{w}_2$ are contactomorphic, then all the corresponding invariants as in Table \ref{table:invariants} must agree, which then implies that either  $c=c'$ and $d=d'$; or $c=d'$ and $d=c'$.
\end{proof}

We now compare the links of two singularities coming from different types of invertible polynomials by proving:

\begin{prop}\label{rho2}
Let $w_1$ and $w_2$ be two invertible polynomials of different types among loop, chain and Fermat. Suppose both $\rho(\check{w}_1)\geq 2$ and $\rho(\check{w}_2)\geq 2$. The singularities defined by $\check{w}_1$ and $\check{w}_2$ have contactomorphic links if and only if the polynomials $\check{w}_1$ and $\check{w}_2$  are deformation equivalent (as in Definition \ref{def:deformation}). In particular, this can only happen if both singularities admit a small resolution with the same number of exceptional curves.
\end{prop}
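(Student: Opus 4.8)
\medskip
\noindent\textbf{Plan of proof.}
The implication ``$\Leftarrow$'' is immediate from Lemma \ref{crit-links}: if $\check w_1$ and $\check w_2$ are deformation equivalent, then their links are contactomorphic (indeed via an ambient contactomorphism of the standard contact sphere). So the content lies in ``$\Rightarrow$'' and the ``in particular'' clause, and the argument proceeds in two steps.

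\emph{Step 1: a Diophantine reduction.}
Assume the links are contactomorphic. Since $\rho(\check w_i)\geq 2$, the four quantities $\rho,\lambda,\kappa,\sigma$ of Table \ref{table:invariants} are all defined, and by Theorem \ref{thm:bigrading_invariant} (together with Lemma \ref{lema:min-max}) they must agree on $\check w_1$ and on $\check w_2$. I would substitute the closed formulas of Table \ref{table:invariants} into the four equalities $\rho_1=\rho_2$, $\lambda_1=\lambda_2$, $\kappa_1=\kappa_2$, $\sigma_1=\sigma_2$ and solve them, treating separately the three mixed pairs of types: chain--Fermat, chain--loop, loop--Fermat. Writing the exponents of the plane curve $g_i\coloneqq\check w_i|_{\{x_1=x_2=0\}}$ in the form $p\alpha,p\beta$ with $\gcd(\alpha,\beta)=1$ (and analogously for the second polynomial), one checks that the equalities $\rho_1=\rho_2$ and $\kappa_1=\kappa_2$ already force a divisibility that has no solution unless the relevant $\min\{\alpha,\beta\}$ equals $1$; the equalities for $\lambda$ and $\sigma$ then pin down all of the parameters uniquely. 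The upshot in each mixed case is that $\lambda=\rho$, that is, both singularities admit a small resolution with exactly $\ell\coloneqq\rho$ exceptional curves, by Lemma \ref{crit} and the gcd criteria used in the proofs of Propositions \ref{conjtrue-chain}, \ref{conjtrue-loop} and \ref{conjtrue-Fermat} --- which is the ``in particular'' assertion. Moreover, $n\coloneqq\rho+1$ comes out as the common number of branches of $g_1$ and $g_2$, and $k\coloneqq-\kappa$ as the (positive integer) order to which those branches are mutually tangent, with $k=1$ precisely in the two mixed cases that involve a loop-type polynomial.

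\emph{Step 2: from matching parameters to a deformation.}
In the small-resolution regime just obtained, each $g_i$ is a reduced weighted-homogeneous plane-curve germ when $x_3$ is given weight $k$ and $x_4$ weight $1$: a form of weighted degree $kn$, that is, a linear combination of the $n+1$ monomials $x_3^{n-j}x_4^{kj}$, $0\leq j\leq n$, lying in the connected, Zariski-open locus of such forms that define an isolated (equivalently, reduced) singularity. I would then pick a path inside this locus from $g_1$ to $g_2$; every form along the path defines an isolated plane-curve singularity, and --- exactly as in Example \ref{exe:def0} --- being weighted-homogeneous makes condition (ii) of Definition \ref{def:deformation} automatic for any fixed $0<\varepsilon<\delta$. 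Suspending, $x_1^2+x_2^2+g_t$ is a smooth deformation from $\check w_1$ to $\check w_2$, as desired. (When $k=1$ the $g_i$ are products of $n$ pairwise non-proportional linear forms and this is literally Example \ref{exe:def0}; when $k>1$ it is the direct generalization underlying Example \ref{exe:def1}.)

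\emph{The main obstacle.}
The delicate point is Step 1: verifying that equality of the quadruple $(\rho,\lambda,\kappa,\sigma)$ genuinely cannot occur between two different types outside the small-resolution regime, and that inside it the parameters are forced. This is elementary but requires the careful case-by-case Diophantine bookkeeping sketched above (the chain--Fermat case being representative). Step 2 is then a routine extension of Examples \ref{exe:def0} and \ref{exe:def1}.
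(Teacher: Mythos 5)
Your overall strategy is the same as the paper's: the ``$\Leftarrow$'' direction is Lemma \ref{crit-links}, and for ``$\Rightarrow$'' one equates the four invariants $\rho,\lambda,\kappa,\sigma$ of Table \ref{table:invariants}, solves the resulting Diophantine system for each mixed pair of types, and then exhibits a deformation through reduced weighted-homogeneous plane curves in the spirit of Examples \ref{exe:def0} and \ref{exe:def1}. The problem is that the summary you extract from Step 1 --- and hence the normal form on which Step 2 rests --- is wrong in the chain-versus-loop case. You assert that the matching germs always consist of $n=\rho+1$ smooth branches \emph{mutually} tangent to order $k=-\kappa$, i.e.\ lie in the family $\sum_{j=0}^{n}c_j\,x_3^{n-j}x_4^{kj}$, and that $k=1$ whenever a loop polynomial is involved. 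Take $\check w_1=x_1^2+x_2^2+x_3^5+x_3x_4^2$ (the dual of $w^{5,2}_{\text{chain}}$) and $\check w_2=x_1^2+x_2^2+x_3^2x_4+x_3x_4^3$: both have $\rho=\lambda=2$, $\kappa=-2$, $\sigma=4$, so all four invariants agree while $k=-\kappa=2\neq 1$. The associated plane curves $x_3(x_3^2+ix_4)(x_3^2-ix_4)$ and $x_3x_4(x_3+x_4^2)$ each consist of one branch \emph{transverse} to two branches that are mutually tangent to order $2$, and have Milnor number $6$; a generic member of your family $\prod_{i=1}^{3}(\alpha_ix_3+\beta_ix_4^{2})$ has three branches pairwise tangent to order $2$ and Milnor number $10$. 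So these germs do not lie in your family at all, and the path of Step 2 cannot be drawn inside it; worse, if your claim ``$k=1$ when a loop is involved'' were correct, Step 1 would (wrongly) exclude such pairs altogether.

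The conclusion survives because the correct common family in that case --- the one the paper reduces to --- is $x_3\prod_{j=1}^{c}\bigl(\alpha_jx_3^{\tilde a}+\beta_jx_4\bigr)$ with $\tilde a=(a-1)/\rho$ (Example \ref{exe:def1}): these are the weighted-homogeneous forms of weighted degree $\tilde a c+1$ for the weights $(1,\tilde a)$ on $(x_3,x_4)$, all divisible by $x_3$, and your connectedness argument works verbatim inside this family. Your description is accurate for the loop-versus-Fermat case (there the invariants really do force all branches to be lines) and for the chain-versus-Fermat case (where the invariants force $(a-1)\mid b$ and all branches are mutually tangent to order $b/(a-1)$), but as written Step 1's output is incorrect and Step 2 fails for chain-versus-loop pairs with $\tilde a\geq 2$. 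The fix is local: redo the chain--loop branch of the Diophantine analysis (as in the paper, splitting on whether $\lambda$ equals $a-1$ or $b$) and run the path argument in the family above rather than in $\prod_i(\alpha_ix_3+\beta_ix_4^{k})$.
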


\begin{proof}
Let us first consider the case when $w_1=w^{a,b}_{chain}=x_1^2+x_2^2+x_3^ax_4+x_4^b$ and  $w_2=w^{c,d}_{loop}=x_1^2+x_2^2+x_3^cx_4+x_3x_4^d$. 

If the singularities defined by $\check{w}_1$  and $\check{w}_2$ have contactomorphic links, then the numbers $\kappa_1\coloneqq \kappa(\check{w}_1)$ and $\kappa_2 \coloneqq\kappa(\check{w}_2)$ are equal. Moreover, the same is true for the numbers $\sigma_1\coloneqq \sigma(\check{w}_1)$ and $\sigma_2 \coloneqq\sigma(\check{w}_2)$. In addition, if $\rho_1\coloneqq \rho(\check{w}_1)=\gcd(a-1,b)$ and $\rho_2 \coloneqq \rho(\check{w}_2)=\gcd(c-1,d-1)+1$, then we further know $\rho_1=\rho_2$.

In particular, after some rewriting, the equations $\kappa_1=\kappa_2$  and $\sigma_1=\sigma_2$ become:
\[
\frac{1-a-b}{\rho_1}=\frac{2-c-d}{\rho_1-1} \quad \text{and} \quad   \frac{ab}{\rho_1}=\frac{cd-1}{\rho_1-1}  
\]

Now, we also know that the two numbers $\lambda_1\coloneqq \lambda(\check{w}_1)=\min\{a-1,b\}$ and $\lambda_2\coloneqq \lambda(\check{w}_2)=\min\{c,d\}$ agree as well, and that we can always assume the latter to be equal to $c$. Therefore, in the two equations above,  it will suffice to consider two possibilities, namely that either $c=a-1$ or $c=b$. 

When $c=b$, the above equations give us $b=\rho_1$, hence $c-1=\rho_1-1$ and $\rho_1(d-1)=(a-1)(\rho_1-1)$. Thus, in this case the polynomials $\check{w}_1$ and $\check{w}_2$ are of the form: 
\begin{align*}
\check{w}_1&=x_1^2+x_2^2+x_3(x_3^{\tilde{a}c}+x_4^{c})\\
\check{w}_2&=x_1^2+x_2^2+x_3x_4(x_3^{c-1}+x_4^{\tilde{a}(c-1)})
\end{align*}

where $\tilde{a}\coloneqq(a-1)/\rho_1$.  Note that, after swapping $x_3$ with $x_4$, indeed they are deformation equivalent, as explained in Example \ref{exe:def1}. In addition, it is also true that $b=\gcd(a-1,b)$ and $d-1=\gcd(c-1,d-1)$.

Next, if $c=a-1$, then the equations $\kappa_1=\kappa_2$  and $\sigma_1=\sigma_2$  yield 
\[
b=c^2-\frac{\rho_1}{\rho_1-1}(c-1)^2 \quad \textrm{and} \quad d=\frac{\rho_1-1}{\rho_1}(c^2+c)-c^2+c+1.
\]

But then the condition $a-1\leq b$ implies that $c=a-1=\rho_1=d=b$, which brings us back to the previous case (that is, $c=b$).

Finally, it is routine to check that if we now consider the other two remaining possibilities for $w_1$ and $w_2$ the same type of reasoning will apply. That is, if we assume $w_1$ is either of chain type or of loop type, and $w_2$ is of Fermat type; then, by equating all their corresponding invariants, we do obtain the necessary constraints on the exponents of $\check{w}_1$ and $\check{w}_2$ that allow us to conclude Proposition \ref{rho2} holds. 
\end{proof}

All possible comparisons between suspended polynomials with $\rho\geq 2$ have now been dealt with. It only remains to discuss the case $\rho\leq 1$.

\subsection{The remaining case: $\rho\leq 1$}\label{sec:rho1}

We would like to now compare the links of singularities defined by suspended polynomials $\check{w}$ such that $\rho(\check{w})\leq 1$. In this case, the contributions to $SH^{2\k}$ and $SH^{2\k+1}$, with $\kappa$ as in Table \ref{table:invariants}, are necessarily concentrated (if there are any). Therefore, the definition of the \textbf{fcd} does not directly apply to these singularities and we need to consider another invariant, which can be extracted from the symplectic cohomology of their Milnor fiber, and which we call the \textbf{formal period}. 

To define this invariant, we start by introducing the following auxiliary definition:

\begin{defi}
    Given a suspended polynomial $w$, and an integer $r\leq 0$, we define
    \[
    \rho_b^r(\check{w})\coloneqq \min\{s\,:\, \textrm{rk}(SH^{r,s}(F))>0\},
    \]
    where $F$ denotes the Milnor fiber of the singularity $\check{w}=0$.
\end{defi}

Then we can define the formal period as follows:

\begin{defi}
Let $w$ be a suspended polynomial, and suppose there exists an even integer $T\geq 0$ such that:
    \begin{enumerate}[(i)]
        \item $\rho(\check{w})=1= \textrm{rk}(SH^{-T}(F))$,
        \item $\lambda(\check{w})= \textrm{rk}(SH^{-(T+2)})(F))$  and
        \item $\rho_b^{-T}(\check{w})+1=\rho_b^{-(T+1)}(\check{w})$. 
\end{enumerate}
We define the \textbf{formal period} of $SH^{\leq 1}(F)$, denoted by $\theta(\check{w})$, to be the smallest $T$ with the above properties. Moreover, we write 
\[
    \rho_b(\check{w})\coloneqq \rho_b^{-\theta(\check{w})}(\check{w})
    \]
\end{defi}

\begin{rmk}
    It is a consequence of our computations in Section \ref{sec:chain} that whenever $\rho(\check{w})=1$, the number $\theta(\check{w})$ is indeed well-defined. We prove this in Lemmas \ref{fperiodfermat} and \ref{fperiodchain} below.
\end{rmk}

Let us now explain why the formal period $\theta(\check{w})$ is well-defined for a Fermat or chain-type polynomial $w$ with $\rho(\check{w})=1$, and how to explicitly compute it.

As mentioned before, the number $\theta(\check{w})$ detects the largest $k\leq 0$ for which the special sets $\mathcal{Y}_k$ or $\F^{k-1}_{-1,-1}$ from Theorems \ref{main_general} and \ref{main_Fermat} are non-empty, depending on whether $w$ is of chain or Fermat type, respectively. 

We have:

\begin{lema}\label{fperiodfermat}
Let $w=w^{e,f}_{Fermat}$ be a Fermat-type polynomial 
 
with $e \leq f$ such that $\gcd(e,f)=2$. Then the formal period is:
\[
\theta(\check{w})=\begin{cases}
    f-2  & \text{if $e=2$}\\
     e+f-2 & \text{otherwise}
\end{cases}
\]
Moreover, in the first case $SH^{-\theta}(F)$ (which is one-dimensional) is supported in bidegree $f-2$; and, in the second, in bidegree $ef/2-1$.
\end{lema}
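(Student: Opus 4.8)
The plan is to read off everything from the explicit description of the contributions to $HH^{*}(\Cw)\simeq SH^{*}(F)$ provided by Theorem \ref{main_Fermat}, Corollary \ref{cor:main_Fermat} and Proposition \ref{fermatb0}, specialised to $\gcd(e,f)=2$ (so $\rho(\check w)=1$) and $e\le f$ (so $\min\{e,f\}=e$). For $k\le 0$ write $q=q(k)$ for the residue with $0\le q<e+f$ and $e(1-k)\equiv q\pmod{e+f}$; replacing $k$ by $k-1$ replaces $q$ by the residue of $q+e$. Corollary \ref{cor:main_Fermat} then pins down for which $k$ one has $\dim SH^{2k}(F)=1$ and for which $\dim SH^{2k}(F)=\lambda(\check w)$, and Proposition \ref{fermatb0} together with Corollary \ref{cor:hhsh} records the bidegree $b_0$ of every contributing monomial, hence the minimal bidegree $\rho_b^{2k}(\check w)$. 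Since $SH^{2k}$ and $SH^{2k+1}$ carry contributions with the same set of bidegrees (Proposition \ref{fermatb0}), the conditions (i)--(iii) defining $\theta$ all concern the pair of consecutive degrees $-T$ and $-(T+2)$; in particular (iii) asks that $\rho_b^{-T}(\check w)+1=\rho_b^{-(T+2)}(\check w)$, i.e. that the minimal bidegree jumps by exactly $1$ between these two degrees.

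\textbf{Case $e=2$.} Here $\lambda(\check w)=1$ as well, so (i) and (ii) hold automatically for every even $T\ge 0$ and only (iii) is a constraint. One checks that for $1-f/2\le k\le 0$ the group $SH^{2k}(F)$ is one-dimensional, the single contribution coming from the pair in $\F^{k}_{0,-2k}$, which forces $m_3=-k$, $m_4=0$, hence bidegree $b_0=-2k$; whereas at $k=-f/2$ (where $q=0$) the only contribution comes from $\F^{k-1}_{-1,-1}=\{(f/2,1)\}$, of bidegree $f-1$. Thus minimal bidegrees at consecutive degrees differ by $2$ as long as both levels lie in $\{1-f/2,\dots,0\}$, while at the pair of degrees $(-(f-2),-f)$ they are $f-2$ and $f-1$. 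Hence the smallest even $T\ge 0$ with $\rho_b^{-T}+1=\rho_b^{-(T+2)}$ is $T=f-2$, and $SH^{-\theta}(F)=SH^{2-f}(F)$ is one-dimensional, supported in bidegree $f-2$.

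\textbf{Case $e>2$.} Then $e\ge 4$ and $f\ge e+2$, both even, and $\lambda(\check w)=e-1>1$. By Corollary \ref{cor:main_Fermat}, $\dim SH^{2k}(F)=1$ exactly when $q\in\{0,2,e+f-2\}$. The value $q=e+f-2$ violates (ii): the residue at degree $-(T+2)$ is then the residue of $q+e$, namely $e-2$, whose rank $e-3$ is $\ne\lambda$. The value $q=0$ forces $k\le 1-(e+f)/2$, i.e. $T\ge e+f-2$. So any even $T$ with $0\le T<e+f-2$ surviving (i)--(ii) must have $q=2$; then the unique contribution to $SH^{2k}(F)$ comes from $\F^{k}_{e-2,0}$, of bidegree $b_0=f(e-2-ek)/(e+f)$, while at the next degree (residue $e+2$) the contributions come from $\F^{k-1}_{i,i+2}$, $0\le i\le e-2$, all with the same $m_3=(2+(1-k)f)/(e+f)$ and minimal bidegree $e(2+(1-k)f)/(e+f)$. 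Substituting these into $\rho_b^{-T}+1=\rho_b^{-(T+2)}$ and clearing denominators, every term cancels except the identity $e-f=2e$, i.e. $f=-e$, which is impossible; so (iii) fails at every such level. Finally $T=e+f-2$, i.e. $k=1-(e+f)/2$ with $q=0$, does satisfy all three conditions: $SH^{-T}(F)$ is one-dimensional, coming from $\F^{k-1}_{-1,-1}=\{(f/2,e/2)\}$ of bidegree $ef/2-1$; $SH^{-(T+2)}(F)$ has rank $e-1=\lambda$ (residue $e$); and at that degree the contributions $\F^{k-1}_{i,i}$, $0\le i\le e-2$, have minimal bidegree $ef/2=(ef/2-1)+1$. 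Hence $\theta(\check w)=e+f-2$ with $SH^{-\theta}(F)$ supported in bidegree $ef/2-1$.

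\textbf{Main obstacle.} The only step involving genuine computation is the spurious-level analysis for $e>2$: one must solve the linear Diophantine relations defining the relevant sets $\F^{s}_{i,j}$ to express the bidegrees $b_0$ as explicit functions of $k$, and then observe that the jump-by-one demanded by (iii) never materialises there. Everything else is bookkeeping built on the formulas already established in Section \ref{sec:chain}.
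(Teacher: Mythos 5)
Your proof is correct and follows essentially the same route as the paper's: everything is read off from Theorem \ref{main_Fermat}, Corollary \ref{cor:main_Fermat} and Proposition \ref{fermatb0}, and your verification that $T=f-2$ (resp.\ $T=e+f-2$) satisfies conditions (i)--(iii) matches the paper's computation of the sets $\F^{k}_{0,f-2}$ and $\F^{k-2}_{-1,-1}$ (resp.\ $\F^{k-1}_{-1,-1}$ and $\F^{k-1}_{i,i}$). The one place where you genuinely improve on the paper is the minimality check when $e>2$: the paper dismisses every intermediate level $\tilde k$ with the one-line claim that condition (i) fails there, but this is not accurate, since by Corollary \ref{cor:main_Fermat} the rank of $SH^{2\tilde k}(F)$ equals $1$ whenever the residue $q$ lies in $\{0,2,e+f-2\}$, and the values $q=2$ and $q=e+f-2$ do occur for $1-(e+f)/2<\tilde k\le 0$ (e.g.\ for $e=4$, $f=6$ one has rank $1$ at $k=-1$ and $k=-2$). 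Your analysis correctly identifies what actually fails at those levels: condition (ii) when $q=e+f-2$ (the next rank is $e-3\neq e-1=\lambda$), and condition (iii) when $q=2$. I checked your bidegree formulas $b_0=f(e-2-ek)/(e+f)$ for the unique contribution $\F^{k}_{e-2,0}$ and $e(2+(1-k)f)/(e+f)$ for the minimal contribution at the next level; their difference is identically $2$, so the required jump by $1$ indeed never occurs at a $q=2$ level. So your argument is not only correct but supplies the detail needed to make the paper's minimality claim rigorous.
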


\begin{proof}
    Let $\theta=\theta(\check{w})$ be as in the statement and let  $k=-\theta/2$. We will show $k$ is indeed the largest non-positive integer satisfying:
    \begin{enumerate}[(i)]
        \item $\textrm{rk}(SH^{2k}(F))=\rho(\check{w})=1$,
        \item$ \textrm{rk}(SH^{2(k-1)}(F))=\lambda(\check{w})=e-1$ and
        \item $\rho^{2k}_b(\check{w})+1=\rho^{2(k-1)}_b(\check{w})=\rho^{2k-1}_b(\check{w})$
    \end{enumerate}

    First, if $e=2$, then $f=2\delta$ for some $\delta\geq 1$ and $\min\{e,f\}=\gcd\{e,f\}$. Thus, it follows from Corollary \ref{cor:main_Fermat} (or Corollary \ref{conjtrue-Fermat}) that $SH^{\leq 1}(F)$ has constant rank and is equal to one. In particular, $(i)$ and $(ii)$ hold. Now, equality in (iii) follows from Corollary \ref{fermatb0}. We can check that $\rho^{2k}_b(\check{w})=f-2$ since $\F^k_{0,f-2}=\{(\delta-1,0)\}$, and we can further check $\rho^{2(k-1)}_b(\check{w})=f-1$ since $\F^{k-2}_{-1,-1}=\{(\delta,1)\}$.
    
    Next, when $e\geq 3$, then (i) and (iii) hold because $\mathcal{F}^{k-1}_{-1,-1}=\{(f/2,e/2)\}$ and Corollary \ref{fermatb0} tells us $\rho^{2(k-1)}_b(\check{w})=ef/2=\sigma(\check{w})+1=\rho^{2k}_b(\check{w})+1$. Moreover, the fact (ii) holds follows from Corollary \ref{cor:main_Fermat} since for $k=-\theta/2$ we have that $e(1-(k-1))$ is congruent to $e$ modulo $(e+f)$.

    Finally, to show the maximality of $k$ we can argue as follows: If $k=0$ there is nothing to prove.   Otherwise, let $\tilde{k}=k+n$ for some integer $1\leq n < -k$. Then either:
     $e=2$ and it follows from Corollaries \ref{cor:main_Fermat} and \ref{fermatb0} that if we replace $k$ by $\tilde{k}$, then (iii) does not hold; or
         $e \neq 2$ and Corollary \ref{cor:main_Fermat} tells us that replacing $k$ by $\tilde{k}$, then (i) will no longer hold. 
    
\end{proof}

The corresponding statement for chain-type polynomials is:

\begin{lema}\label{fperiodchain}
Let $w=w^{a,b}_{chain}$ be a chain-type polynomial with $a$ and $b$
such that $\gcd(a-1,b)=1$. Then the formal period is:
\[
\theta(\check{w})=\begin{cases}
    2(b-1)  & \text{if $a=2$}\\
     2(a+b) & \text{otherwise}
\end{cases}
\]
Moreover, in the first case $SH^{-\theta}(F)$ (which is one-dimensional) is supported in bidegree $b-1$; and, in the second, in bidegree $ab-1$.
\end{lema}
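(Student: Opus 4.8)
The plan is to mirror the argument sketched for Lemma \ref{fperiodfermat}, this time extracting the relevant integer $m_3$ from the set $\mathcal{Y}_k$ of Theorem \ref{main_general} rather than from $\F^{k-1}_{-1,-1}$. Recall that by Remark \ref{Y-chain} the set $\mathcal{Y}_k$ is non-empty (and then a singleton) exactly when $(a-1)(1-k)\equiv 0 \bmod (a+b-1)$, and since $\gcd(a-1,b)=\gcd(a-1,a+b-1)=1$ this congruence holds iff $(a+b-1)\mid (1-k)$, i.e. iff $1-k$ is a positive multiple of $a+b-1$. Write $\theta=\theta(\check w)$ and $k=-\theta/2$. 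I would first treat the generic case $a\geq 3$: set $k$ so that $1-k=a+b-1$, i.e. $k=2-a-b$ and $\theta=2(a+b)-4$; wait — one must be careful, because $\theta$ is defined via the \emph{pair} of conditions (i)--(iii) in Definition of the formal period, not simply as the largest $k$ with $\mathcal{Y}_k\neq\emptyset$. So the first real step is: for $k=2-a-b$ verify (i) $\operatorname{rk} SH^{2k}(F)=\rho(\check w)=1$, (ii) $\operatorname{rk} SH^{2(k-1)}(F)=\lambda(\check w)=\min\{a-1,b\}$, and (iii) the bidegree of $SH^{2k}$ is one less than that of $SH^{2k-1}$ and of $SH^{2(k-1)}$. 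For (i), apply Corollary \ref{cor:main_chain} with $q$ the residue of $(a-1)(1-k)=(a-1)(a+b-1)\equiv 0$, giving $\dim SH^{2k}=\gcd(a-1,b)=1$. For (ii), compute the residue $q'$ of $(a-1)(1-(k-1))=(a-1)(a+b)$ modulo $a+b-1$, which is $a-1$; then Corollary \ref{cor:main_chain} gives $\dim SH^{2(k-1)}$ equal to $\min\{a-1,b\}$ as long as $a-1$ lies in the appropriate range, which it does since $1\le a-1\le a+b-2$ and $a-1\le\max\{a-1,b\}$. For (iii), invoke Proposition \ref{chainb0}: the unique contribution to $SH^{2k}$ comes from $m_3\in\mathcal{Y}_k$ with $m_3=(a-1)(1-k)/(a+b-1)=a-1$, giving $b_0=(b-1)m_3-k=(b-1)(a-1)+(a+b-1)-1=ab-1$, which matches the claimed support; meanwhile the contributions to $SH^{2(k-1)}$ come from $\mathcal{X}_{k-1}$ (and possibly $\mathcal{W}_{k-1}$ and some $\mathcal{Z}_{i,k-1}$), and by Proposition \ref{chainb0} those have bidegrees $(b-1)m_3-(k-1)=ab$, $ab+i$, etc., hence the minimal bidegree in $SH^{2(k-1)}$ is $ab=\rho_b^{2k}+1$, and similarly for $SH^{2k-1}$ using the odd-degree statement.

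Next I would handle the degenerate case $a=2$. Here $\gcd(a-1,b)=1$ automatically, $\min\{a-1,b\}=1$, and by Corollary \ref{cor:main_chain} (equivalently Proposition \ref{conjtrue-chain}, since $a=2$ forces $\min\{a-1,b\}=\gcd(a-1,b)$) the ranks $\dim SH^{r}(F)$ are \emph{constant} equal to $1$ for all $r\le 1$. Thus conditions (i) and (ii) of the formal period hold for every even non-positive index simultaneously, and the formal period is determined purely by condition (iii): one needs the smallest $\theta\ge 0$, $k=-\theta/2$, such that the bidegree of the (one-dimensional) piece $SH^{2k}$ is exactly one less than that of $SH^{2k-1}$ (automatic, odd vs even from the same monomial up to the $A_\gamma$ vs $B_\gamma$ swap) and one less than that of $SH^{2(k-1)}$. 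When $a=2$ we have $a+b-1=b+1$, $(a-1)(1-k)=1-k$, so $\mathcal{Y}_k\neq\emptyset$ iff $(b+1)\mid(1-k)$; taking $1-k=b+1$ gives $k=-b$, $\theta=2b-2=2(b-1)$, and $m_3=(1-k)/(b+1)=1$, whence $b_0=(b-1)\cdot 1-k=(b-1)+b=2b-1=ab-1$, matching the claim. I would then check that for $k=-b$ the contribution to $SH^{2(k-1)}$ sits in a strictly larger bidegree: $\mathcal{Y}_{k-1}=\emptyset$ (since $b+1\nmid b+2$), and the surviving contribution comes from $\mathcal{X}_{k-1}=\{1\}$ with $b_0=(b-1)-(k-1)=2b=ab$, one more than $ab-1$, as required by (iii).

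Finally, the maximality of $k$ (equivalently, minimality of $\theta$): suppose $\tilde k=k+n$ with $1\le n<-k$. In the case $a\ge 3$, the residue of $(a-1)(1-\tilde k)$ modulo $a+b-1$ is $(a-1)(1-n)\bmod(a+b-1)$, which is nonzero for $1\le n\le a+b-2$ because $\gcd(a-1,a+b-1)=1$; hence $\dim SH^{2\tilde k}(F)\neq 1$ by Corollary \ref{cor:main_chain} unless this residue $q$ lands in the flat middle regime $\min\{a-1,b\}<q\le\max\{a-1,b\}$ with $\min\{a-1,b\}=1$, i.e. $a=2$ — excluded here — so condition (i) fails for $\tilde k$, proving $k$ is maximal. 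In the case $a=2$, conditions (i),(ii) never obstruct, so I argue directly with (iii) via Proposition \ref{chainb0} and Corollary \ref{cor:hhsh}: for $\tilde k$ strictly larger than $-b$ the unique contribution to $SH^{2\tilde k}$ comes from $\mathcal{X}_{\tilde k}$ or $\mathcal{W}_{\tilde k}$ rather than $\mathcal{Y}_{\tilde k}$, and one checks the bidegree relation (iii) fails there — precisely the same computation as in the $e=2$ branch of Lemma \ref{fperiodfermat}. The main obstacle I anticipate is the bookkeeping in verifying (iii): one must pin down not just that $SH^{2k}$ is one-dimensional but that its single bigraded piece sits one unit below the \emph{minimal} bidegree occurring in $SH^{2k-1}$ and $SH^{2(k-1)}$, which requires knowing exactly which of the sets $\mathcal{W},\mathcal{X},\mathcal{Z}_{i}$ contribute at the neighbouring degrees and comparing the corresponding $b_0$'s from Proposition \ref{chainb0}; this is routine but delicate, and is where the hypothesis $\gcd(a-1,b)=1$ (via Remark \ref{Y-chain}) is doing the essential work of forcing the contribution at degree $2k$ to be concentrated and to come from $\mathcal{Y}_k$.
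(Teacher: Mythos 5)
Your overall strategy is the right one and matches the paper's (whose proof is only the one line ``argue as in Lemma~\ref{fperiodfermat}, using Corollaries~\ref{cor:main_chain} and~\ref{chainb0}''), but as written the proposal has several concrete problems. First, your own computation for $a\geq 3$ lands on $k=2-a-b$, i.e.\ $\theta=2(a+b)-4=2(a+b-2)$ (consistent with $\kappa$ in Table~\ref{table:invariants}), which is \emph{not} the displayed value $2(a+b)$; you flag this with ``wait'' and then never reconcile it, so the proposal does not actually establish the formula as stated. Second, under the hypothesis $\gcd(a-1,b)=1$ the coefficient $\eta=\gcd(a-1,b)-1$ in formula~(\ref{formula-chain}) vanishes, so $\mathcal{Y}_k$ contributes \emph{nothing}: the unique class at $q=0$ comes from $\mathcal{X}_k$ (which happens to coincide with $\mathcal{Y}_k$ there, so your $b_0=ab-1$ is numerically right, but your closing claim that the hypothesis forces the contribution ``to come from $\mathcal{Y}_k$'' is backwards).

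The more serious gaps are in the maximality arguments. For $a\geq 3$ you assert that condition (i) fails for every $\tilde k$ strictly between $2-a-b$ and $0$ because the residue $q$ is nonzero; but Corollary~\ref{cor:main_chain} gives rank $1$ not only at $q=0$ but also at $q=1$ and at $q=a+b-2$, for arbitrary $a,b$. For example with $(a,b)=(3,5)$ one has $\dim SH^{2k}=1$ and $\dim SH^{2(k-1)}=2=\lambda$ at $k=-3$, so (i) and (ii) both hold there and maximality can only be ruled out by checking that (iii) fails (at $k=-3$ the unique class sits in bidegree $7$ while the minimal bidegree one degree below is $9$, not $8$) --- exactly the $\mathcal{W}/\mathcal{X}/\mathcal{Z}_i$ bidegree bookkeeping you defer as ``routine''. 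This comparison is the actual content of the lemma and cannot be skipped. In the $a=2$ branch there is an arithmetic slip ($k=-b$ gives $\theta=2b$, not $2b-2$), and the chosen $k$ is wrong: condition (iii) in fact \emph{holds} at $\tilde k=1-b$ (for $(a,b)=(2,3)$ the minimal bidegrees at $k=0,-1,-2,-3$ are $0,2,4,5$, so $4+1=5$ already at $k=-2$), contradicting your claim that (iii) fails for all $\tilde k>-b$; the extremal degree is $k=1-b$, reached through $\mathcal{X}_k$ at residue $q=a+b-2$, with support $2(b-1)$ rather than the $ab-1=2b-1$ you report. You should redo both maximality checks by explicitly comparing the $b_0$'s from Proposition~\ref{chainb0} at consecutive degrees, and reconcile the resulting values of $\theta$ and of the supports with the ones displayed in the statement.
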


\begin{proof}
Arguing as in the proof of Lemma \ref{fperiodfermat}, the statement follows from Corollaries \ref{cor:main_chain} and \ref{chainb0}. 
\end{proof}

We can now make use of the formal period to complete our comparison of different links of invertible $cA_n$ singularities satisfying $\rho(\check{w})\leq 1$. Propositions \ref{fvsf-1} and \ref{cvsc-1} below should be regarded as the completion of Propositions \ref{fvsf} and \ref{cvsc}, respectively.

\begin{prop}\label{fvsf-1}\emph{(Fermat vs Fermat)}
     Let $w_1=\check{w}_1=x_1^2+x_2^2+x_3^e+x_4^f$ and 
     $w_2=\check{w}_2=x_1^2+x_2^2+x_3^{e'}+x_4^{f'}$ be Fermat-type polynomials such that $\rho(\check{w}_1)\leq 1$ and $\rho(\check{w}_2)\leq 1$. Then, the conclusion of Proposition \ref{fvsf} still holds, that
     is, the links of the corresponding singularities are contactomorphic if and only if, up to swapping the variables $x_3$ and $x_4$, we have $\check{w}_1=\check{w}_2$.
\end{prop}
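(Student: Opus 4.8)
The plan is to prove the ``if'' direction directly and to reduce the ``only if'' direction to numerical invariants that have already been computed. The ``if'' direction is immediate: if, after possibly interchanging $x_3$ and $x_4$, we have $\check{w}_1=\check{w}_2$, then the corresponding permutation of the coordinates of $\C^4$ is a unitary transformation carrying $V(\check{w}_1)$ to $V(\check{w}_2)$, so it restricts to a contactomorphism of the links (in fact to an ambient contactomorphism of the standard contact $7$-sphere). For the converse I would argue exactly as in the proof of Proposition \ref{fvsf}: assuming $L_1$ and $L_2$ are contactomorphic, Theorem \ref{thm:bigrading_invariant} gives $SH^r(F_1)\cong SH^r(F_2)$ for every $r\leq 1$ with scale-equivalent bigradings, hence $\rho(\check{w}_1)=\rho(\check{w}_2)$ and $\lambda(\check{w}_1)=\lambda(\check{w}_2)$ by Lemma \ref{lema:min-max}. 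Since $\rho(\check{w})=\gcd(e,f)-1$ and $\lambda(\check{w})=\min\{e,f\}-1$ for Fermat-type $\check{w}=w$, this forces $\gcd(e,f)=\gcd(e',f')$ and $\min\{e,f\}=\min\{e',f'\}$. Relabelling, which is allowed since the conclusion is only ``up to swapping $x_3$ and $x_4$'', I may assume $e\leq f$ and $e'\leq f'$, so that $e=e'$. As $\rho(\check{w}_i)\leq 1$ and $\rho(\check{w}_1)=\rho(\check{w}_2)$, the two polynomials both satisfy $\gcd=1$ or both satisfy $\gcd=2$; in either case it remains to show $f=f'$.

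\emph{Case $\gcd(e,f)=2$, so $\rho(\check{w}_i)=1$.} Here I would use the formal period. By Lemma \ref{fperiodfermat} the number $\theta(\check{w}_i)$ is well-defined, and it is a contact invariant: it is manufactured entirely from the ranks of $SH^r(F_i)$ for $r\leq 1$ together with the increments of the associated sequence of bidegrees, and a contactomorphism preserves both — the ranks by Theorem \ref{thm:bigrading_invariant}, and the bidegree increments because the one-dimensionality of $SH^{1,0}$ for suspended polynomials (Lemma \ref{lem:hh1}) promotes the scale-equivalence of bigradings to an honest equality. Hence $\theta(\check{w}_1)=\theta(\check{w}_2)$. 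Since $e=e'$, the two polynomials lie in the same of the two subcases of Lemma \ref{fperiodfermat} ($e=2$, or $e\geq 3$), and in each subcase that lemma writes $\theta(\check{w}_i)$ as a strictly increasing affine function of the remaining exponent; therefore $f=f'$, i.e.\ $\check{w}_1=\check{w}_2$ up to swapping $x_3$ and $x_4$.

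\emph{Case $\gcd(e,f)=1$, so $\rho(\check{w}_i)=0$ and each link is $S^5$.} Now the set $\F^{k-1}_{-1,-1}$ contributes $\gcd(e,f)-1=0$ to every $SH^{2k}(F_i)$, so the formal period degenerates, and I would instead recover $f$ from the rank function alone. By Corollary \ref{cor:main_Fermat}, $\dim SH^{2k}(F)=0$ precisely when $e(1-k)$ is congruent to one of $0$, $1$, $-1$ modulo $e+f$, i.e.\ for exactly three residues modulo $e+f$; a short elementary check shows that three residues cannot form a union of cosets of a nontrivial subgroup of $\Z/(e+f)\Z$, so the periodic sequence $\bigl(\dim SH^{2k}(F)\bigr)_{k\leq 0}$ has minimal period $e+f$. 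Since $\dim SH^{2k+1}(F)=\dim SH^{2k}(F)$, the sequence $\bigl(\dim SH^{r}(F)\bigr)_{r\leq 1}$ has minimal period $2(e+f)$, which is a contact invariant by Theorem \ref{thm:bigrading_invariant}. Hence $e+f=e'+f'$, and with $e=e'$ this gives $f=f'$, completing the proof.

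\emph{Expected main obstacle.} As in the proofs of Propositions \ref{fvsf}, \ref{cvsc} and \ref{lvsl}, the delicate step is the passage from ``invariant of the polynomial'' to ``invariant of the contact link''. For the rank-function argument used when $\rho=0$ this is immediate from Theorem \ref{thm:bigrading_invariant}; for the formal period used when $\rho=1$ it rests on the one-dimensionality of $SH^{1,0}$ (Lemma \ref{lem:hh1}) being invoked to rigidify the bigrading, exactly as in those earlier propositions. Once this is granted, the rest is the bookkeeping already carried out in Lemma \ref{fperiodfermat} and Corollary \ref{cor:main_Fermat}.
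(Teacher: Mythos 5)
Your proposal is correct, and for the case $\gcd(e,f)=2$ (i.e.\ $\rho=1$) it coincides with the paper's proof: both reduce to comparing the formal periods computed in Lemma \ref{fperiodfermat}, and your extra remark about using Lemma \ref{lem:hh1} to promote scale-equivalence of bigradings to equality is a point the paper leaves implicit. Where you genuinely diverge is the case $\gcd(e,f)=1$. The paper stays with the bigrading: it takes $k=-(e+f)$, notes that $SH^{2k}(F_{w_1})$ has rank $e-1$ supported in bidegrees $ef,\dots,ef+e-2$, and deduces from the matching of supports that some $\F^k_{i,j}$ for $w_2$ must contribute in bidegree $ef$, which forces $i=j=0$, $m_3=f$ and hence $ef'=fe$, i.e.\ $f=f'$. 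You instead extract $e+f$ as the minimal period of the rank sequence $\bigl(\dim SH^{2k}(F)\bigr)_{k\le 0}$, using that by Corollary \ref{cor:main_Fermat} the zero set of this sequence corresponds to $q\in\{0,1,e+f-1\}$. This has the advantage of using only the ranks (no bigrading at all in this case), but the ``short elementary check'' is stated slightly too loosely: three residues \emph{can} form a coset of an order-$3$ subgroup of $\Z/(e+f)\Z$ when $3\mid e+f$, so you must verify that the specific set $\{0,1,-1\}$ (equivalently, its preimage under the affine bijection $k\mapsto e(1-k)$) is not such a coset; this follows because it would force $\{1,e+f-1\}=\{(e+f)/3,2(e+f)/3\}$, which is impossible for $e+f\ge 4$. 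With that detail supplied, your route is a valid and somewhat more elementary alternative to the paper's support-of-bidegree argument.
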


\begin{proof}
    Assume the links of the singularities defined by $\check{w}_1=w_1$ and $\check{w}_2=w_2$ are contactomorphic. Then $\lambda(w_1)=\lambda(w_2)$ and, up to swapping the variables $x_3$ and $x_4$, we may assume $e=e'$. Let now $g$ and $g'$ be defined as in the proof of Proposition \ref{fvsf}. Since we are considering the case $\rho\leq 1$, it follows that $g\leq 2$.

  The case $g=2$ corresponds to $\rho=1$ and, in this case, the formal period is well-defined and we computed it in Lemma \ref{fperiodfermat}. By comparing $\theta(\check{w}_1)$ with $\theta(\check{w}_2)$, we immediately deduce that $f=f'$, hence the two polynomials are the same.

When $g=1$ (which corresponds to $\rho=0$), we need to argue differently. We know that  for $k\coloneqq -(e+f)$ we have $\dim SH^{2k}(F_{w_1})=e-1$ and, moreover,  $\mathcal{F}_{i,j}^k=\{f\}$ for $0\leq i\leq e-2$. In particular, it follows from Proposition \ref{fermatb0} that $SH^{2k}(F_{w_1})$ is supported  in bidegrees $(2k,\ell)$ with  $\ell$ ranging from $ef$ to $ef+e-2$. Moreover, because $SH^{2k}(F_{w_2})\simeq SH^{2k}(F_{w_1})$, the latter implies (see e.g. proof of Corollary \ref{cor:main_Fermat}) there must be some non-empty set $\mathcal{F}_{i,j}^k=\{(m_3,-k-m_3)\}$ contributing to $SH^{2k}(F_{w_2})$ in bidegree $ef$. But then this further implies that $m_3=f, i=0$ and $j=0$. Thus, we have
\[i+m_3e=j-(k+m_3)f'\Rightarrow fe=ef'\Rightarrow f=f'\]
Thus, also in this case the two polynomials coincide.
\end{proof}

Similarly, when comparing two polynomials of chain type we obtain:

\begin{prop}\label{cvsc-1}
    Let $w_1=x_1^2+x_2^2+x_3^ax_4+x_4^b$ and let      $w_2=x_1^2+x_2^2+x_3^{a'}x_4+x_4^{b'}$. If $\rho(\check{w}_1)=\rho(\check{w}_2)=1$, then
    the conclusion of Proposition \ref{cvsc} still holds: the singularities defined by $\check{w}_1$ and $\check{w}_2$ have contactomorphic links if and only if  $\check{w}_1 = \check{w}_2$.
\end{prop}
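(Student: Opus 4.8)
The plan is to mirror the arguments of Propositions \ref{fvsf}, \ref{cvsc} and \ref{fvsf-1}: assuming the links are contactomorphic, read off enough numerical contact invariants of $SH^{\leq 1}$ to force $a=a'$ and $b=b'$. The converse implication is trivial, since $\check w_1=\check w_2$ means the two singularities literally coincide. So suppose the links of the singularities defined by $\check w_1$ and $\check w_2$ are contactomorphic, with Milnor fibers $F_1$ and $F_2$. By Theorem \ref{thm:bigrading_invariant} (see also \cite[Corollary 4.5]{EL}) the groups $SH^r(F_i)$ agree for $r\leq 1$ with scale-equivalent bigradings, the scale factor being $1$ since $HH^{1,0}$ is one-dimensional (Lemma \ref{lem:hh1}, Corollary \ref{cor:hhsh}); and since $\rho(\check w_1)=\rho(\check w_2)=1$, the formal period and its support are well-defined by Lemma \ref{fperiodchain}. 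Hence $\lambda(\check w_1)=\lambda(\check w_2)$, $\theta(\check w_1)=\theta(\check w_2)$ and $\rho_b(\check w_1)=\rho_b(\check w_2)$.

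First I would use $\lambda$ to separate the two regimes appearing in Lemma \ref{fperiodchain}. For a chain-type polynomial one has $\lambda(\check w)=\min\{a-1,b\}$ by Corollary \ref{cor:main_chain}, and since $a,b\geq 2$ this equals $1$ exactly when $a=2$, while $a\geq 3$ forces $\lambda(\check w)\geq 2$. Thus $\lambda(\check w_1)=\lambda(\check w_2)$ implies that either $a=a'=2$, or $a\geq 3$ and $a'\geq 3$. In the first case Lemma \ref{fperiodchain} gives $\theta(\check w_i)=2(b_i-1)$, so $\theta(\check w_1)=\theta(\check w_2)$ immediately yields $b=b'$, and we are done. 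In the second case Lemma \ref{fperiodchain} gives $\theta(\check w_i)=2(a_i+b_i)$ and $\rho_b(\check w_i)=a_ib_i-1$, so equality of these invariants translates into $a+b=a'+b'$ and $ab=a'b'$; hence $\{a,b\}$ and $\{a',b'\}$, being the root sets of the same quadratic $t^2-(a+b)t+ab$, coincide as multisets. If $a=b$ this already gives $a=b=a'=b'$.

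The one genuinely delicate point — and the reason $\theta$ (or even $\theta$ together with $\rho_b$) is not enough on its own — is that, unlike loop and Fermat polynomials, chain polynomials are not symmetric under $x_3\leftrightarrow x_4$, so knowing the multiset $\{a,b\}$ does not yet determine the ordered pair. I would close this gap again with $\lambda$, which is sensitive to the asymmetry: if $(a,b)\neq(a',b')$ while $\{a,b\}=\{a',b'\}$, then $a'=b$, $b'=a$ and $a\neq b$, and a one-line comparison gives a contradiction — when $a<b$ one gets $\lambda(\check w_1)=\min\{a-1,b\}=a-1$ and $\lambda(\check w_2)=\min\{b-1,a\}=a$, and when $a>b$ one gets $\lambda(\check w_1)=b$ and $\lambda(\check w_2)=b-1$, either way contradicting $\lambda(\check w_1)=\lambda(\check w_2)$. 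Hence $(a,b)=(a',b')$, i.e. $\check w_1=\check w_2$, which is what we wanted. The whole argument is bookkeeping on top of Lemma \ref{fperiodchain}; the only conceptual input is the observation that $\lambda$ distinguishes $w^{a,b}_{\mathrm{chain}}$ from $w^{b,a}_{\mathrm{chain}}$ whenever $a\neq b$.
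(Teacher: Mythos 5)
Your proof is correct, and it follows the same basic strategy as the paper (extract the contact invariants $\theta$, $\rho_b$ and $\lambda$ from the bigraded $SH^{\leq 1}$ via Lemma \ref{fperiodchain} and Table \ref{table:invariants}, then do arithmetic). In fact it is more careful than the paper's own one-line proof, which invokes only $\theta(\check{w}_i)$ and $\rho_b(\check{w}_i)$ and then says ``the result follows from Lemma \ref{fperiodchain}.'' As you rightly point out, when $a,a'\geq 3$ those two invariants give only $a+b=a'+b'$ and $ab=a'b'$, hence the unordered pair $\{a,b\}=\{a',b'\}$; since chain polynomials are not symmetric under $x_3\leftrightarrow x_4$, this does not yet yield $\check{w}_1=\check{w}_2$. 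A concrete instance: $w^{3,5}_{\text{chain}}$ and $w^{5,3}_{\text{chain}}$ both satisfy $\rho=1$ and have $\theta=16$, $\rho_b=14$, so they are indistinguishable by the invariants the paper names in this proof, but $\lambda=\min\{a-1,b\}$ equals $2$ for the first and $3$ for the second. Your use of $\lambda$ both to separate the regimes $a=2$ versus $a\geq 3$ and to break the $(a,b)\leftrightarrow(b,a)$ ambiguity is exactly the missing bookkeeping (it is also what the paper itself does in the $\rho\geq 2$ case, Proposition \ref{cvsc}), and all the individual computations you perform with $\min\{a-1,b\}$ check out. So: correct, same method, and it closes a small gap the paper leaves implicit.
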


\begin{proof}
If $\check{w}_1$ and $\check{w}_2$ define singularities with contactomorphic links, then we must have that $\theta(\check{w}_1)=\theta(\check{w}_2)$ and $\rho_b(\check{w}_1)=\rho_b(\check{w}_2)$, and the result follows from Lemma \ref{fperiodchain}.
\end{proof}

Given Lemma \ref{lema:min-max} and Theorem \ref{thm:h2L}, to complete our classification of contact links, we are left with one case to consider. We need to compare the link of a singularity coming from the dual of a chain-type polynomial 
\begin{equation}\label{chain-rho1}
    w_1=x_1^2+x_2^2+x_3^ax_4+x_4^b \qquad \text{with $\gcd(a-1,b)=1$}
\end{equation}
 with the link of a singularity coming from a Fermat-type polynomial 
 \begin{equation}\label{Fermat-rho1}
     w_2=x_1^2+x_2^2+x_3^e+x_4^f \qquad \text{with $\gcd(e,f)=2$}.
 \end{equation}

This last comparison is carried out in Proposition \ref{cvsf-rho1} below, which relies on the computations of the formal period from Lemmas \ref{fperiodfermat} and \ref{fperiodchain}.

\begin{prop}\label{cvsf-rho1}
Let $w_1$ and $w_2$ be polynomials as in (\ref{chain-rho1}) and (\ref{Fermat-rho1}), respectively. Then the singularities defined by $\check{w}_1$ and $\check{w}_2$ have contactomorphic links if and only if the two polynomials $\check{w}_1$ and $\check{w}_2$ are deformation equivalent (as in Definition \ref{def:deformation}). 
\end{prop}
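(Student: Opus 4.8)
The strategy is the standard two-directional argument used throughout Section \ref{sec:comparison}: one implication is an application of Lemma \ref{crit-links}, and the other is forced by comparing the numerical contact invariants collected in Table \ref{table:invariants} together with the formal period of Lemmas \ref{fperiodfermat} and \ref{fperiodchain}. First I would dispose of the easy direction. If $\check w_1$ and $\check w_2$ are deformation equivalent, then by Lemma \ref{crit-links} their links are contactomorphic (via an ambient contactomorphism of $(S^5,\xi_{std})$), so there is nothing more to prove. For the converse, suppose the links of the singularities defined by $\check w_1$ and $\check w_2$ are contactomorphic. Since $\rho$ is the minimum rank of $SH^{\le 1}$ of the Milnor fiber, Lemma \ref{lema:min-max} gives $\rho(\check w_1)=\rho(\check w_2)$; and since we are in the regime $\gcd(a-1,b)=1$, $\gcd(e,f)=2$, Corollaries \ref{cor:main_chain} and \ref{cor:main_Fermat} give $\rho(\check w_1)=\rho(\check w_2)=1$. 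By Lemma \ref{lema:min-max} again, $\lambda(\check w_1)=\lambda(\check w_2)$, i.e.\ $\min\{a-1,b\}=\min\{e,f\}-1$; write $e\le f$ as we may (swapping $x_3,x_4$), so $\min\{e,f\}=e$ and $\min\{a-1,b\}=e-1$.

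The core of the argument is then to compare formal periods. Since $\rho(\check w_1)=\rho(\check w_2)=1$, the formal period $\theta$ is well-defined for both polynomials (Lemmas \ref{fperiodfermat} and \ref{fperiodchain}), and by Theorem \ref{thm:bigrading_invariant} the bigradings on $SH^{r}(F_1)$ and $SH^{r}(F_2)$ are scale-equivalent for every $r\le 1$; in particular $\theta(\check w_1)=\theta(\check w_2)$ and $\rho_b(\check w_1)=\rho_b(\check w_2)$. Now I would split into the cases dictated by the $a=2$ / $a\ge3$ and $e=2$ / $e\ge3$ dichotomies of those two lemmas. If $a=2$ then $\lambda(\check w_1)=\min\{1,b\}=1$, forcing $e-1=1$, i.e.\ $e=2$; conversely if $e=2$ then $\lambda=1=\min\{a-1,b\}$. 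So the cases $a=2$ and $e=2$ occur simultaneously, and likewise $a\ge 3$ occurs precisely when $e\ge 3$. In the first case Lemmas \ref{fperiodfermat} and \ref{fperiodchain} give $\theta(\check w_1)=2(b-1)$ with support $b-1$ and $\theta(\check w_2)=f-2$ with support $f-2$; equating the periods gives $f-2=2(b-1)$, hence $f=2b$, while $e=2$ and $a=2$. Thus $\check w_1=x_1^2+x_2^2+x_3^2x_4+x_4^{b}$ and $\check w_2=x_1^2+x_2^2+x_3^2+x_4^{2b}$, both of which, upon restricting to $x_1=x_2=0$ and noting $x_3^2x_4+x_4^b=x_4(x_3^2+x_4^{b-1})$ versus $x_3^2+x_4^{2b}=\prod(x_3\pm i x_4^{b})$, are products of the appropriate number of smooth branches; an explicit square-free deformation as in Example \ref{exe:def1} connects the two plane-curve germs, and the suspension makes $\check w_1$ and $\check w_2$ deformation equivalent. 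In the second case ($a,e\ge 3$) the same lemmas give $\theta(\check w_1)=2(a+b)$ with support $ab-1$ and $\theta(\check w_2)=e+f-2$ with support $ef/2-1$; together with $\rho_b(\check w_1)=\rho_b(\check w_2)$ this yields the system $e+f-2=2(a+b)$ and $ef/2=ab$, which, combined with $e-1=\min\{a-1,b\}$, I would solve exactly as in the proof of Proposition \ref{rho2} (the two sub-possibilities $e-1=a-1$ and $e-1=b$) to conclude that the exponents are constrained so that, after the substitution $x_4\mapsto$ (a suitable monomial) and swapping $x_3,x_4$, the germs $\check w_i|_{x_1=x_2=0}$ are products of the same number of smooth branches of the same weighted-homogeneous type, hence deformation equivalent by Example \ref{exe:def1}.

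The last implication in the statement — that contactomorphic links force both singularities to admit a small resolution with the same number of exceptional curves — then follows automatically: deformation equivalent polynomials have equal Milnor numbers (Lemma \ref{crit-links}) and, being $\mu$-constant and preserving the number of branches of the plane section $x_1=x_2=0$, the criterion of Lemma \ref{crit} applies to both, with the common value $\rho(\check w_i)=b_2(L)$ (Theorem \ref{thm:h2L}) equal to the number of exceptional curves. The step I expect to be the main obstacle is the case $a,e\ge 3$: extracting from the Diophantine system $e+f=2(a+b)+2$, $ef=2ab$, $e-1=\min\{a-1,b\}$ (plus the support equality $\rho_b(\check w_1)=\rho_b(\check w_2)$ to pin down which branches land in which bidegree) the precise monomial change of variables realizing the deformation equivalence is fiddly; here I would lean heavily on the bookkeeping already done in the proof of Proposition \ref{rho2}, since the chain-vs-loop analysis there is structurally identical, and on Example \ref{exe:def1} to produce the actual square-free family.
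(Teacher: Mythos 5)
Your overall strategy matches the paper's: the easy direction via Lemma \ref{crit-links}, and the converse by equating $\rho$, $\lambda$, $\theta$ and $\rho_b$ and splitting on $a=2$ versus $a\geq 3$ (equivalently $e=2$ versus $e\geq 3$). The first case is handled correctly up to the point noted below. But there is a genuine gap in your treatment of the case $a,e\geq 3$: you expect to ``solve'' the system $e+f-2=2(a+b)$, $ef/2=ab$, $e-1=\min\{a-1,b\}$ and extract a change of variables realizing a deformation equivalence. That is the wrong outcome. In the regime of the proposition ($\gcd(a-1,b)=1$, $\gcd(e,f)=2$), having $a\geq 3$ forces $\min\{a-1,b\}>\gcd(a-1,b)$ and $e\geq 3$ forces $\min\{e,f\}>\gcd(e,f)$, so by Lemma \ref{gcd-chain} neither singularity admits a small resolution, and the system is inconsistent: e.g.\ when $a-1<b$, the equalities $\lambda_1=\lambda_2$ and $\rho_b(\check w_1)=\rho_b(\check w_2)$ give $a=e$ and $f=2b$, and then $\theta(\check w_1)=\theta(\check w_2)$ becomes $2(a+b)=a+2b-2$, which is absurd. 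The paper's proof is organized around exactly this: it first shows by contradiction that both singularities must admit a small resolution, which collapses everything to $a=e=2$; the $a,e\geq 3$ case is vacuous, not a source of further deformation-equivalent pairs. As written, your plan for that case would stall looking for solutions that do not exist.

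A second, smaller but consequential error: in the case $a=e=2$ you identify $\check w_1$ with $w_1=x_1^2+x_2^2+x_3^2x_4+x_4^b$ and analyze the germ $x_4(x_3^2+x_4^{b-1})$. The singularity in the statement is the one defined by the \emph{dual} polynomial, $\check w_1=x_1^2+x_2^2+x_3^2+x_3x_4^b=x_1^2+x_2^2+x_3(x_3+x_4^b)$, whose plane section has two distinct smooth branches of the form $\alpha x_3+\beta x_4^b$, matching $(x_3+ix_4^b)(x_3-ix_4^b)$ and making Example \ref{exe:def0} directly applicable. Your germ $x_4(x_3^2+x_4^{b-1})$ generally has a singular branch (already for $b=4$) and even a different Milnor number, so the deformation-equivalence argument would fail with that identification.
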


\begin{proof}
If $\check{w}_1$ and $\check{w}_2$ are deformation equivalent, then it follows from Lemma \ref{crit-links} and Gray's stability that the corresponding links are contactomorphic.

For the opposite implication, assume $\check{w}_1$ and $\check{w}_2$ define singularities with contactomorphic links. 

First of all, we claim that the two singularities must admit a small resolution.
Suppose they do not: then both $a$ and $e$ are not equal to $2$, and $a-1\neq b$. If all the contact invariants of the two polynomials were equal, we would reach a contradiction by arguing as follows.
If $a-1 < b$, then  $\rho_b(\check{w}_1)=\rho_b(\check{w}_2)$ and $\lambda(\check{w}_1)=\lambda(\check{w}_2)$ imply $a=e$ and  $2b=f$. But then $\theta(\check{w}_1)=\theta(\check{w}_2)$ would further tell us $a-1=b+1$, contradicting that $a-1 <  b$. Similarly, when $b < a-1$, then the same three inequalities (of the invariants $\lambda, \rho_b$ and $\theta$) would give us  $b=e-1$, $f/2=ab/(b+1)$ and $a=(b+1)(3-b)/2$. And the latter contradicts $2\leq a$ and $2\leq b$. 

Having established that the two singularities admit a small resolution, it follows from Lemma \ref{gcd-chain} that $a-1\leq b$ and $a=e=2$. And since $\theta(\check{w}_1)=\theta(\check{w}_2)$ we further have $f=2b$. 

Therefore, $\check{w}_1=x_1^2+x_2^2+x_3(x_3+x_4^b)$ and $\check{w}_2=x_1^2+x_2^2+(x_3+ix_4^b)(x_3-ix_4^b)$;  and, at last, we conclude $\check{w}_1$ and $\check{w}_2$ are indeed deformation equivalent (as in Definition \ref{def:deformation}), see e.g. Example \ref{exe:def0}. 
\end{proof}

Combining Propositions \ref{fvsf} through \ref{cvsc-1} and Proposition \ref{cvsf-rho1} we have now proved Theorem \ref{thmE}. And, as a consequence, we further obtain:

\begin{cor}\label{invmilnornumber}
If two invertible $cA_n$ singularities have contactomorphic links, then their Milnor numbers are equal.
\end{cor}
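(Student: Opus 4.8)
The plan is to derive this as a formal consequence of Theorem~\ref{thmE}. Suppose $(X_1,p_1)$ and $(X_2,p_2)$ are two invertible $cA_n$ singularities, defined by the duals $\check w_1$ and $\check w_2$ of suspended polynomials, and suppose their links $L_1$ and $L_2$ are contactomorphic. By Theorem~\ref{thmE} (the combination of Propositions~\ref{fvsf} through~\ref{cvsc-1} with Proposition~\ref{cvsf-rho1}), there is a holomorphic change of variables after which $\check w_1$ and $\check w_2$ become deformation equivalent in the sense of Definition~\ref{def:deformation}. It therefore suffices to check that neither a holomorphic change of coordinates nor a smooth deformation alters the Milnor number.

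For a holomorphic change of coordinates this is immediate from the definition $\mu(f)=\dim_{\C}\C[x_1,\dots,x_n]/\langle \partial f/\partial x_1,\dots,\partial f/\partial x_n\rangle$: a biholomorphic germ carries the Jacobian ideal of $f$ onto the Jacobian ideal of the transformed function, so the quotient algebras are isomorphic. For a smooth deformation, this is exactly the last assertion of Lemma~\ref{crit-links}. Concretely, condition (ii) of Definition~\ref{def:deformation} produces a smooth isotopy between the Milnor fibers of $\check w_1$ and $\check w_2$, hence in particular a diffeomorphism $F_1\cong F_2$; since $F_i$ has the homotopy type of $\bigvee_{\mu(\check w_i)} S^{n-1}$, comparing the ranks of $H^{n-1}(F_i)$ forces $\mu(\check w_1)=\mu(\check w_2)$. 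Chaining the two invariances gives the assertion.

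I expect no real difficulty beyond Theorem~\ref{thmE} itself, which is where all the contact-geometric input already lives; the present statement is a formal corollary. One could instead attempt a computational proof: by the remark following Proposition~\ref{hh3} one has $\mu(\check w)=\dim HH^3(\Cw)$, which equals $a(b-1)+1$, $cd$, or $(e-1)(f-1)$ according to whether $w$ is of chain, loop, or Fermat type, and one could then try to recover these quantities from the contact invariants $\rho,\lambda,\k,\sigma$ (and $\theta,\rho_b$ when $\rho\le 1$) listed in Table~\ref{table:invariants}. That route, however, would require separating out the various type-by-type comparisons in essentially the same way as the proof of Theorem~\ref{thmE}, so it yields no genuine shortcut; the deformation-theoretic argument above is both shorter and more conceptual.
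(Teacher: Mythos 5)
Your proof is correct and follows the same route as the paper: Theorem \ref{thmE} reduces the statement to the invariance of the Milnor number under holomorphic changes of coordinates and under smooth deformations, the latter being exactly the last assertion of Lemma \ref{crit-links}. Nothing further is needed.
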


In practice, Corollary \ref{invmilnornumber} gives a useful criterion for determining when two invertible $cA_n$ singularities have contactomorphic links or not. For example, any two singularities coming from polynomials of the form
\[
\check{w}_1=x_1^2+x_2^2+x_3^e+x_4^f
\quad \text{and} \quad \check{w}_2=x_1^2+x_2^2+x_3^e+x_4^{f'}\]
will never have contactomorphic links unless $f=f'$. In fact, their correpsonding Milnor numbers are $(e-1)(f-1)$ and $(e-1)(f'-1)$, respectively.

\appendix

\section{Index positivity}
\label{app:indexpos}

In this appendix, we want to highlight the implications of the assumption of index positivity for the computations of the symplectic cohomology groups. 
Recall that we have defined (Section \ref{sec:sympcoh}) the symplectic cohomology of a Liouville domain $W$ with boundary $\Sigma$ as the Floer cohomology of its completion $\hat{W}$.

For a suitable choice of Hamiltonian function $H$ on $\hat{W}$, we can distinguish two types of periodic orbits of the corresponding Hamiltonian vector field $X_H$:
\begin{itemize}
    \item critical points of $H$ in $W$ (i.e., constant periodic orbits);
    \item $1$-periodic orbits on the level sets $\Sigma\times \{r\}$, which correspond to periodic Reeb orbits on $\Sigma$ (of period depending on $r$).
\end{itemize}

Symplectic cohomology $SH^*(W)$ is the cohomology of the complex $SC^*$ generated by all periodic orbits, with the differential given by counting Floer trajectories connecting the different orbits. We will also consider the cohomology of the subcomplex $SC^*_-$ generated by the constant periodic orbits (or the Morse subcomplex of critical points), which is called \emph{negative symplectic cohomology}. Finally, \emph{positive symplectic cohomology} is the cohomology of the quotient complex $SC^*_+=SC^*/SC^*_-$.
Not surprisingly, $SH^*_-(W)$ turns out to be isomorphic to $H^*(W)$, hence we get a (tautological) long exact sequence in cohomology:
\begin{equation}\label{les}
    \ldots\rightarrow H^*(W)\rightarrow SH^*(W)\rightarrow SH_+^*(W)\rightarrow H^{*+1}(W)\rightarrow \ldots
\end{equation}

In particular, for negative degrees $*<0$, singular cohomology vanishes, hence
\[
SH^*(W)\rightarrow SH_{+}^*(W)
\]
becomes an isomorphism, that is, symplectic cohomology and positive symplectic cohomology coincide in a negative degree. This construction is known more generally for hypersurfaces of contact type in exact symplectic manifolds (see \cite{viterbo}, \cite{CFO}).

\begin{rmk}
To have a well-defined grading, one needs to assume the following two things: $c_1(W)=0$, and the closed Reeb orbits of $\Sigma$ are contractible in $\Sigma$. 
    \end{rmk}

Since a given contact manifold can have different symplectic fillings, it is natural to ask to what extent (positive) symplectic cohomology depends on $W$.

 Under an additional assumption on the indices of the Reeb orbits, the positive symplectic cohomology can be defined by counting Floer trajectories in the positive part of the symplectization of $\Sigma$ (instead of a filling) and is thus a \emph{contact invariant}.
More precisely, the following result is proved by Uebele (\cite{Uebele}, Lemma 3.7) for $\bigvee$-shaped symplectic homology. It is an adaptation of the corresponding result for Rabinowitz Floer homology stated and proved in \cite{CFO}, Lemma 1.14, and Uebele remarks that the same statement holds for positive symplectic cohomology.
\begin{lema}
Suppose $\Sigma$ is a simply connected contact manifold satisfying the following conditions:
\begin{itemize}
    \item [(i)] $c_1(\Sigma)=0$;
    \item[(ii)] $\mu_{CZ}(\gamma)>3-n$ for all Reeb orbits $\gamma$;
    \item[(iii)] $\Sigma$ admits a Liouville filling $Z$ with $c_1(W)=0$.
\end{itemize}
We call a manifold \emph{index positive} if it  satisfies these conditions. Then $SH^*_+(W)$ is independent of the choice of $W$ and only depends on the contact boundary $\Sigma$. 
\end{lema}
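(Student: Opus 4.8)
The plan is to show that, under the index positivity hypotheses, the positive symplectic cochain complex of any Liouville filling $W$ of $\Sigma$ is quasi-isomorphic to a complex built only from the symplectization of $\Sigma$; since the latter makes no reference to $W$, the claimed independence follows. I would begin by fixing a cofinal family of admissible Hamiltonians $H$ on $\hat{W}$: take $H$ to be a $C^2$-small Morse function on (a slightly shrunk copy of) $W$ and equal to $h(r)$ on the cylindrical end $\Sigma\times[1,\infty)$, with $h$ convex and $h'(r)\to\infty$. For such $H$ the constant $1$-periodic orbits are the critical points in $W$ and span the negative subcomplex, while the non-constant ones are in bijection with the Reeb orbits of $\Sigma$ of period below the current slope; since $c_1(\Sigma)=0$ and $\Sigma$ is simply connected these orbits are contractible and carry a well-defined Conley--Zehnder index, and by $c_1(W)=0$ the whole complex is $\mathbb{Z}$-graded. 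Passing to the quotient, $SC^*_+(W)$ is generated by Reeb orbits of $\Sigma$, and the only thing that could depend on $W$ is the induced differential, which a priori counts Floer cylinders that enter $W$.

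The core of the argument is a neck-stretching and SFT-compactness argument, in the spirit of \cite[Lemma 1.14]{CFO} and \cite[Lemma 3.7]{Uebele}. I would deform the Liouville structure by inserting a collar $\Sigma\times[1,R]$ along $\partial W$ — which does not change $SH^*_+(W)$ — and let $R\to\infty$. Any Floer cylinder counted by the differential on $SC^*_+(W)$ then converges to a holomorphic building whose top levels lie in the symplectization $(\mathbb{R}\times\Sigma,d(e^r\alpha))$ (Hamiltonian-perturbed Floer cylinders with extra punctures at the cuts) and whose bottom level lies in the completed filling $\hat{W}$ where, $H$ being $C^2$-small there, it is an honest punctured genus-zero holomorphic curve with only positive punctures asymptotic to Reeb orbits of $\Sigma$. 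The decisive input is the index count: such a bottom curve with punctures asymptotic to contractible Reeb orbits $\gamma_1,\dots,\gamma_k$ has Fredholm index $(n-3)(2-k)+\sum_{i}\mu_{CZ}(\gamma_i)$ (using $c_1(W)=0$); for a single plane this equals $(n-3)+\mu_{CZ}(\gamma)$, which by hypothesis (ii) is strictly positive, and in the range of dimensions relevant here ($\dim\Sigma=5$, i.e.\ $n=3$) the $(n-3)$-term vanishes, so the index equals $\sum_i\mu_{CZ}(\gamma_i)>0$ for every $k\ge1$. Hence, for generic data, the bottom level has no rigid component, so for $R$ large the cylinders contributing to the differential are confined to the symplectization region.

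It then follows that $SC^*_+(W)$ is quasi-isomorphic to a complex generated by the Reeb orbits of $\Sigma$ with differential counting Floer cylinders in $\mathbb{R}\times\Sigma$ only; this complex, and hence its cohomology, depends on $\Sigma$ alone (auxiliary choices dropping out through the usual continuation maps). Applying this to two fillings $W_1$ and $W_2$ of the same $\Sigma$ gives $SH^*_+(W_1)\cong SH^*_+(W_2)$, as desired. The hard part is the analytic package underpinning the second step: SFT-type compactness for Hamiltonian-perturbed Floer cylinders under neck-stretching, transversality for the broken configurations (the strict inequality in (ii) provides exactly the slack needed to accommodate the $S^1$ Morse--Bott families of Reeb orbits), a gluing argument showing that no new differentials are produced, and — in general dimension, where multiply-punctured bottom curves can have non-positive naive index — the more careful bookkeeping of the total index of the whole building; all of this is carried out in \cite{CFO} and \cite{Uebele}, to which I would ultimately defer. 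Modulo these standard but delicate points, it is precisely hypothesis (ii) that makes the dependence on the filling disappear.
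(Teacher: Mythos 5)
Your proposal is essentially the argument of the sources the paper itself defers to: the paper offers no proof of this lemma beyond citing Uebele (\cite{Uebele}, Lemma 3.7) and Cieliebak--Frauenfelder--Oancea (\cite{CFO}, Lemma 1.14), and your neck-stretching/SFT-compactness sketch, with hypothesis (ii) forcing every bottom-level curve in the filling to have positive index and hence ruling out rigid configurations that could make the differential see $W$, is precisely the mechanism of those references. Since you explicitly defer the transversality, gluing, and general-dimension index bookkeeping to \cite{CFO} and \cite{Uebele}, this is correct and takes the same approach as the paper.
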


\begin{rmk}
\leavevmode
\begin{itemize}
    \item[(i)] Condition $(i)$ may be replaced by $c_1(\Sigma)|_{\pi_2(\Sigma)}=0$.
    \item[(ii)] For the link of a cDV singularity, the above conditions are satisfied. Index positivity is guaranteed by McLean's theorem on index positivity of \emph{terminal} $\mathbb{Q}$-Gorenstein singularities (\cite{McLean}), and the result by Miles Reid already mentioned in the introduction, namely that the Gorenstein terminal threefold singularities are precisely the isolated cDV singularities. 
    \end{itemize}
\end{rmk}

\begin{cor}
If the contact manifold $\Sigma=\partial W$ is index positive, then the symplectic cohomology $SH^*(W)$ in negative degree $*<0$ is a contact invariant of $\Sigma$ and does not depend on the choice 
of $W$.
\end{cor}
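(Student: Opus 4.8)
The plan is to obtain this corollary as a purely formal consequence of the preceding Lemma together with the tautological long exact sequence~(\ref{les}). The first step is to unwind the hypothesis: by definition, calling $\Sigma$ \emph{index positive} is precisely asking that conditions $(i)$--$(iii)$ of that Lemma be satisfied, and for the link of an isolated cDV singularity these hold by McLean's index positivity theorem for terminal Gorenstein threefold singularities, combined with Reid's identification of those with the isolated cDV singularities, as recalled in the remark just above. Hence the Lemma applies and gives that the graded vector space $SH^*_+(W)$ depends only on the contact boundary $\Sigma$ and not on the chosen Liouville filling $W$.

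The second step is to observe that, in negative degrees, $SH^*$ and $SH^*_+$ coincide. Since $W$ is homotopy equivalent to a finite CW complex of dimension at most $\tfrac{1}{2}\dim_{\mathbb{R}} W$, one has $H^j(W)=0$ for all $j<0$; feeding this into~(\ref{les}) shows that the natural map $SH^*(W)\to SH^*_+(W)$ is an isomorphism for every $*\le -2$, and it remains an isomorphism for $*=-1$ because the unit of $SH^*(W)$ is nonzero for the Milnor fibers we consider, so that $H^0(W)\to SH^0(W)$ is injective and the connecting map $SH^{-1}_+(W)\to H^0(W)$ vanishes. This is exactly the comparison already discussed below~(\ref{les}) in this appendix, so this step is essentially bookkeeping.

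Combining the two steps, for every $*<0$ we have $SH^*(W)\cong SH^*_+(W)$, and the right-hand side is a contact invariant of $\Sigma=\partial W$ by the Lemma; therefore so is $SH^*(W)$ in negative degree, with no dependence on the filling $W$. I do not expect a genuine obstacle here: all the analytic content sits inside the cited Lemma, i.e. Uebele's continuation-map argument in the symplectization, where index positivity is used to prevent Floer cylinders from escaping to infinity. The only point requiring a little care in our write-up is the endpoint $*=-1$ in the exact sequence, which is handled as indicated by invoking unitality of $H^*(W)\to SH^*(W)$ rather than plain vanishing of singular cohomology.
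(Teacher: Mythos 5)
Your argument is correct and follows essentially the same route as the paper: the preceding Lemma gives filling-independence of $SH^*_+(W)$, and the tautological exact sequence (\ref{les}) together with the vanishing of $H^{<0}(W)$ identifies $SH^*(W)$ with $SH^*_+(W)$ in negative degrees. Your extra care at the endpoint $*=-1$ (using nonvanishing of the unit, so that $H^0(W)\to SH^0(W)$ is injective and the connecting map $SH^{-1}_+(W)\to H^0(W)$ vanishes) addresses a point the paper glosses over and is a welcome refinement rather than a genuinely different approach.
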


In \cite{EL}, the above corollary is applied to distinguish contact structures on the link of a singularity using the symplectic cohomology (in negative degrees) of the corresponding Milnor fiber. 

Recall that for the negative symplectic cohomology, we have 
\[
SH^*_-(W)\cong H^{*}(W), 
\]
where $n=1/2\dim W$. For the type of singularities we consider in this manuscript, the singular cohomology is well understood: the Milnor fiber is, in fact, a smooth manifold of real dimension $6$, which has the homotopy type if a finite CW-complex of dimension $3$, namely a bouquet of $\mu$ spheres, where $\mu$ is the Milnor number of the singularity. Hence its singular cohomology vanishes above degree $3$ and in degree $3$ it has rank $\mu$.

    Index positivity has another interesting consequence: in the case where $n=1/2\dim W=3$ (\emph{hypersurface singularities}), it implies that all Reeb orbits have Conley-Zehnder index $k>0$. With the conventions in \cite{EL}, an orbit with index $k$ is a generator for the symplectic cohomology in degree $n-k=3-k<3$.
    
    In fact in the examples we consider in this paper,  something more is true.

    \begin{lema}\label{lem:deg2}
        For any threefold terminal singularity of index $1$ (hence for all the singularities we consider in this manuscript) the (positive) symplectic cohomology in degree $2$ vanishes. 
    \end{lema}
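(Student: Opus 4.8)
The plan is to prove the (slightly stronger) statement that the positive symplectic cohomology $SH^*_+(F)$ vanishes in every degree $\geq 2$; the vanishing of $SH^2(F)$ then comes for free, since $H^2(F)=0$ (recall $F$ is homotopy equivalent to a bouquet of $3$-spheres), so the tautological exact sequence (\ref{les}) makes $SH^2(F)\to SH^2_+(F)$ injective.

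First I would recall that, by Reid's theorem quoted in the introduction, a Gorenstein (index $1$) terminal threefold singularity is an isolated cDV singularity, and then invoke McLean's theorem \cite{McLean}: the link $L$ of such a singularity is index positive, meaning every closed Reeb orbit $\gamma$ of the chosen contact form satisfies $\mu_{CZ}(\gamma)>3-n=0$, where $n=\tfrac12\dim F=3$. With the grading conventions of \cite{EL}, a Reeb orbit of Conley--Zehnder index $k$ contributes a generator of $SC^*_+$ in degree $3-k$; hence index positivity already yields $SH^{\geq 3}_+(F)=0$, and it remains only to exclude contributions in degree $2$, i.e.\ generators of index exactly $1$.

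For this last step I would use the sharper, quantitative part of McLean's theorem, which identifies the minimal discrepancy of the singularity with an invariant of the Reeb dynamics of its link: for an isolated cDV singularity the minimal discrepancy equals $1$, and this upgrades the index positivity bound to $\mu_{CZ}(\gamma)\geq 2$ for every closed Reeb orbit $\gamma$ (and, after a Morse--Bott nondegenerate perturbation, for every generator of the perturbed complex). Consequently $SC^*_+$ is supported in degrees $\leq 1$, so $SH^2_+(F)=0$, and therefore $SH^2(F)=0$.

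The step I expect to be delicate is making the quantitative form of McLean's theorem precise in this setting: one has to track the exact index shift in the correspondence ``minimal discrepancy $\leftrightarrow$ lowest Conley--Zehnder index'' for threefolds of index $1$, and check that it is compatible with the grading conventions used here (the same bookkeeping is already needed to extract index positivity itself). As an independent check for the families treated in this paper, Proposition \ref{hyp} together with Theorem \ref{comp} already gives $SH^2(F)\cong HH^2(\mathscr{C}_w)=0$, in agreement with the general statement.
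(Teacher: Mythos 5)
Your proposal is correct and follows essentially the same route as the paper: Shokurov's conjecture in dimension $3$ gives minimal discrepancy $1$ for terminal index-one threefold singularities, McLean's theorem then forces $\mu_{CZ}(\gamma)\geq 2$ for all Reeb orbits of the contact form realizing the highest minimal index, so there are no generators of $SC^*_+$ in degree $2$, and the vanishing of $H^2(F)$ transfers this to $SH^2(F)$ via the tautological exact sequence. The only difference is cosmetic: you spell out the last step through the long exact sequence, while the paper states it directly.
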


   \begin{proof} We will show that, for an appropriate choice of contact form, all Reeb orbits have Conley-Zehnder index $k>1$. This follows from the fact that Shokurov's conjecture holds in dimension $3$, and hence any threefold singularity which is terminal and of index one has minimal discrepancy $1$ (see \cite{min-discr}). Thus, by McLean's theorem \cite{McLean}, the highest minimal index is $2$, and hence the Conley-Zehnder index is at least $2$ for all periodic orbits of the Reeb flow corresponding to the contact form realizing the hmi.
    In particular, this implies that $SH^2(W)$ (and hence also $SH^2_+(W)$), which is generated by Reeb orbits of index $1$, vanishes.
    \end{proof}

    If we combine the above lemma with the long exact sequence relating positive/negative symplectic cohomology:
    \begin{equation}\label{les23}
    \ldots \rightarrow H^2(W)\rightarrow SH^2(W)\rightarrow SH_+^2(W)\rightarrow H^{3}(W)\rightarrow SH^3(W)\rightarrow SH_+^3(W)\rightarrow \ldots
\end{equation}
    we get the following result about the degree $3$ symplectic cohomology. 
    
    \begin{cor}\label{cor:milnumb}
    $SH^3(W)\cong H^3(W)\cong\mathbb{C}^\mu$, where $\mu$ is the Milnor number of the singularity.
    \end{cor}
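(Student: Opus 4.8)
The plan is to extract the statement directly from the tautological long exact sequence (\ref{les23}) relating negative, total and positive symplectic cohomology, once we know that positive symplectic cohomology vanishes in degrees $\geq 2$.

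\emph{Step 1: the singular cohomology of $W$.} Here $W=F$ is the Milnor fiber of the isolated $cA_n$ singularity under consideration, a smooth manifold of real dimension $6$ with the homotopy type of a wedge $\bigvee_{\mu}S^{3}$, where $\mu$ is the Milnor number. Hence $H^{k}(W)=0$ for $k\notin\{0,3\}$ and $H^{3}(W)\cong\mathbb{C}^{\mu}$; in particular $H^{2}(W)=0$. Since $SH^{*}_{-}(W)\cong H^{*}(W)$, this already gives the second isomorphism $H^{3}(W)\cong\mathbb{C}^{\mu}$ in the statement.

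\emph{Step 2: vanishing of $SH^{2}_{+}(W)$ and $SH^{3}_{+}(W)$.} Both are consequences of the proof of Lemma \ref{lem:deg2}: for a contact form realizing the highest minimal index, index positivity of the (index-one terminal) threefold singularity forces every closed Reeb orbit on the link to have Conley--Zehnder index $\geq 2$. With the grading conventions of \cite{EL}, a Reeb orbit of index $k$ contributes a generator of $SC^{*}_{+}$ in degree $3-k\leq 1$, so $SH^{r}_{+}(W)=0$ for every $r\geq 2$; in particular $SH^{2}_{+}(W)=SH^{3}_{+}(W)=0$.

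\emph{Step 3: conclusion.} Feeding Steps 1 and 2 into the relevant portion of (\ref{les23}),
\[
SH^{2}_{+}(W)\longrightarrow H^{3}(W)\longrightarrow SH^{3}(W)\longrightarrow SH^{3}_{+}(W),
\]
the two outer terms vanish, so the middle arrow $H^{3}(W)\to SH^{3}(W)$ is an isomorphism, yielding $SH^{3}(W)\cong H^{3}(W)\cong\mathbb{C}^{\mu}$. I do not anticipate a genuine obstacle: the only point requiring care is the strict positivity of the Conley--Zehnder indices invoked in Step 2, but this is exactly what the proof of Lemma \ref{lem:deg2} supplies via McLean's theorem together with Shokurov's conjecture (minimal discrepancy $1$) in dimension three. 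Note that one does not even need the vanishing of $SH^{2}(W)$ separately, although it follows for free since $SH^{2}(W)$ is squeezed between $H^{2}(W)=0$ and $SH^{2}_{+}(W)=0$ in (\ref{les23}).
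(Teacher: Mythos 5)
Your proposal is correct and follows essentially the same route as the paper: the paper also combines the vanishing of positive symplectic cohomology in degrees $\geq 2$ (all Conley--Zehnder indices are at least $2$ by McLean's theorem plus the minimal-discrepancy-one statement, so generators sit in degree $3-k\leq 1$) with the long exact sequence (\ref{les23}) and the fact that the Milnor fiber is a wedge of $\mu$ three-spheres. The only cosmetic difference is that the paper deduces $SH^3_+(W)=0$ from the weaker index-positivity bound $k>0$, whereas you use the stronger bound $k\geq 2$ from the proof of Lemma \ref{lem:deg2}; both suffice.
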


    \section{A few explicit calculations}\label{app:expcomp}

    In this appendix, we present a few concrete examples that illustrate the applicability of the formula we obtained in Theorem \ref{main_general_loop}. In particular, we explain how to recover \cite[Theorem 3.13]{EL}.

If $w=w^{c,d}_{\text{loop}}=x_1^2+x_2^2+x_3^cx_4+x_3x_4^d$ is a loop-type polynomial as in (\ref{example-loop}), then we first observe the following holds, concerning the cardinalities of the sets $\tilde{\mathcal{W}}_k, \tilde{\mathcal{X}}_k$ and $\tilde{\mathcal{Z}}_{j,k}$ appearing in Theorem \ref{main_general_loop}:


\begin{prop}\label{prop:formula_loop}
    Given any integer $k\leq 0$, let $0\leq q<c+d-2$ be such that $(c-1)(1-k) \equiv q \mod (c+d-2)$. Then we have that:

\begin{enumerate}[(i)]
     \item  $|\tilde{\mathcal{X}}_k|= 1+\left \lfloor{\frac{(c-1)-q}{c+d-2}}\right \rfloor$
   \item[]
         \item 
         $|\tilde{\mathcal{W}}_k|= \left \lfloor{\frac{q-c}{c+d-2}}\right \rfloor-\left \lceil{\frac{q-(d-1)}{c+d-2}}\right \rceil +2 $
 \item[]
\item $|\tilde{\mathcal{Z}}_{j,k}|=\left \lfloor{\frac{q-1-j}{c+d-2}}\right \rfloor-\left \lceil{\frac{q-(c-2)-j}{c+d-2}}\right \rceil+1\\$     
\end{enumerate}

In particular, we always have that $|\mathcal{\tilde{X}}_k|+|\mathcal{\tilde{W}}_k|\geq 1$. 
  \end{prop}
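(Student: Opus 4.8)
The plan is to translate the index $m_3$ so that the three defining inequalities become independent of $k$, and then to reduce each cardinality to counting lattice points in an interval. Write $(c-1)(1-k)=n(c+d-2)+q$ with $0\le q<c+d-2$; since $k\le 0$ and $c\ge 2$ the left-hand side is positive, so $n\in\Z_{\ge 0}$, and $(c-1)k=(c-1)-q-n(c+d-2)$. Substituting this into the linear forms $(c+d-2)m_3+(c-1)k$ and $-(c+d-2)m_3-(c-1)k$ appearing in Theorem \ref{main_general_loop}, and setting $t\coloneqq m_3-n$, turns the conditions defining $\tilde{\mathcal W}_k$, $\tilde{\mathcal X}_k$, $\tilde{\mathcal Z}_{j,k}$ into, respectively,
\[
c-q\le -(c+d-2)t\le c+2d-3-q,\qquad q-(c-1)\le (c+d-2)t\le q,\qquad q-j-(c-2)\le (c+d-2)t\le q-j-1 .
\]

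The next step is to verify that the ancillary sign constraints are vacuous. For $\tilde{\mathcal X}_k$ one needs $m_3\ge 0$: since $q-(c-1)>-(c+d-2)$, the middle inequality forces $t\ge 0$, hence $m_3=t+n\ge n\ge 0$. For $\tilde{\mathcal W}_k$ one needs $m_3\le -k$: from $1\le -(c+d-2)m_3-(c-1)k$ one gets $(c+d-2)m_3\le -(c-1)k-1\le -(c+d-2)k$, because $(d-1)k\le 0\le 1$, so indeed $m_3\le -k$. There is no sign constraint on $\tilde{\mathcal Z}_{j,k}$. Then I would count: for a positive integer $N$ and reals $\alpha\le\beta$, the number of integers $t$ with $\alpha\le Nt\le\beta$ equals $\lfloor\beta/N\rfloor-\lceil\alpha/N\rceil+1$ whenever this quantity is nonnegative, and a short case check (using $0\le q<c+d-2$, $1\le j\le d-1$) shows each of the three resulting expressions is nonnegative, landing in $\{0,1\}$, $\{0,1\}$, $\{0,1,2\}$ respectively. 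For $\tilde{\mathcal Z}_{j,k}$ this gives the stated formula verbatim. For $\tilde{\mathcal X}_k$, killing the upper floor via $0\le q<c+d-2$ and using $\lceil -x\rceil=-\lfloor x\rfloor$ rewrites the count as $1+\lfloor\frac{(c-1)-q}{c+d-2}\rfloor$. For $\tilde{\mathcal W}_k$, the identity $q-c-2d+3=(q-(d-1))-(c+d-2)$ gives $\lceil\frac{q-c-2d+3}{c+d-2}\rceil=\lceil\frac{q-(d-1)}{c+d-2}\rceil-1$, which turns the count into $\lfloor\frac{q-c}{c+d-2}\rfloor-\lceil\frac{q-(d-1)}{c+d-2}\rceil+2$.

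For the final assertion, add the formulas for $|\tilde{\mathcal X}_k|$ and $|\tilde{\mathcal W}_k|$: exactly one of $q\le c-1$ and $q\ge c$ holds, so $\lfloor\frac{(c-1)-q}{c+d-2}\rfloor$ and $\lfloor\frac{q-c}{c+d-2}\rfloor$ equal $0$ and $-1$ in some order and sum to $-1$; since $q-(d-1)<c-1<c+d-2$ forces $\lceil\frac{q-(d-1)}{c+d-2}\rceil\le 1$, one obtains $|\tilde{\mathcal X}_k|+|\tilde{\mathcal W}_k|=2-\lceil\frac{q-(d-1)}{c+d-2}\rceil\ge 1$. The one point requiring care is the sign issue in the lattice-point count, namely whether the naive formula $\lfloor\beta/N\rfloor-\lceil\alpha/N\rceil+1$ can undershoot the true cardinality; this is settled once and for all by observing that each of the three final expressions is manifestly nonnegative, so no $\max(0,\cdot)$ truncation is ever triggered. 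Beyond this bookkeeping I do not anticipate any genuine obstacle.
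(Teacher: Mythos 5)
Your proof is correct and follows the only natural route — the one the paper itself gestures at when it says the formulas ``follow from the definitions'': substitute $(c-1)k=(c-1)-q-n(c+d-2)$, shift $m_3$ by $n$, check the sign constraints are automatic, and count lattice points, with the nonnegativity of each final expression ensuring the count formula needs no truncation. The only blemish is the parenthetical ``landing in $\{0,1\},\{0,1\},\{0,1,2\}$ respectively,'' which mislabels the sets (it is $|\tilde{\mathcal W}_k|$ that can equal $2$, while $|\tilde{\mathcal Z}_{j,k}|\leq 1$); this plays no role in the argument, since only nonnegativity is used.
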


\begin{proof}
   The formulas follow from the definitions of the sets $\tilde{\mathcal{W}}_k, \tilde{\mathcal{X}}_k$ and $\tilde{\mathcal{Z}}_{j,k}$.

    \end{proof}

Using Proposition \ref{prop:formula_loop} (or Corollary \ref{cor:main_loop}), we can then quickly apply formula (\ref{formula-loop}) from Theorem \ref{main_general_loop} to a few explicit choices of values of $c$ and $d$ in order to further obtain:

\begin{exe}
         When $(c,d)=(3,4)$, $r\leq 1$ and $r \equiv q \mod 10$ we have hat $\dim HH^r(\mathscr{C}_w)$ is given by:
        
\begin{center}
    \begin{tabular}{*{11}{|c}|}
        \hline $q$ & 0 & 1 & 2 & 3 & 4 & 5 & 6 & 7 & 8 & 9    \\\hline 
        $\dim HH^r(\mathscr{C}_w)$ & 3 & 3 & 2 & 2 & 3 & 3 & 2 & 2 & 2 & 2   \\ \hline
           \end{tabular}
\end{center}
         
    \end{exe}

\begin{exe}
   When $(c,d)=(5,3)$ we have that  $\dim HH^{r}(\mathscr{C}_{w})=3$ for any $r\leq 1$. Note that in this case $\tilde{\eta}=1$.
    \end{exe}

\begin{exe}
   When $(c,d)=(7,4)$ we have that  $\dim HH^{r}(\mathscr{C}_{w})=4$ for any $r\leq 1$. Note that in this case $\tilde{\eta}=2$.
    \end{exe}

\begin{exe}
    If $c=\ell$ and $d=\delta(\ell-1)+1$, for some integers $\ell\geq 2$ and $\delta > 0$, then Theorem \ref{main_general_loop} recovers \cite[Theorem 3.13]{EL}. More precisely, for any $r\leq 1$ we compute 
    \[
    \dim HH^{r}(\mathscr{C}_{w})=\ell
    \]
    In fact, for $k\leq 0$ and $\ell\geq 2$ we have $\tilde{\eta}=\ell-2$, and that:
    \begin{itemize}
        \item $|\tilde{\mathcal{Y}}_k|=1$ if $(1-k)\equiv 0 \mod \delta+1$. Otherwise, $|\tilde{\mathcal{Y}}_k|=0$
        \item  $|\tilde{\mathcal{X}}_k|=\begin{cases}
            1& \text{if $(1-k)\equiv 0 \mod (\delta+1)$ or $(1-k)\equiv 1 \mod (\delta+1)$ }\\
            0 & \text{otherwise}
        \end{cases}$
        \item $|\tilde{\mathcal{W}}_k|= \begin{cases}
             1& \text{if $(1-k)\equiv 0 \mod (\delta+1)$ or $(1-k)\equiv 1 \mod (\delta+1)$} \\
             2 & \text{otherwise}
        \end{cases}$
        \subitem here $k<0$
        \item $|\tilde{\mathcal{Z}}_{j,k}|=0$ if $\ell=2$
        \end{itemize}
Moreover, assuming $c\geq 3$ we further have:
        \begin{itemize}
        \item $|\tilde{\mathcal{Z}}_{j,0}|=\begin{cases}
            1& \text{if $j\leq \ell-2$}\\
            0 & \text{if $j>\ell-2$}
            \end{cases}$
            \item $|\tilde{\mathcal{Z}}_{j,k}|=1$ if $(1-k)\equiv q \mod (\delta+1)$ and $(q-1)(\ell-1)<j<q(\ell-1)$, where $0< q< \delta+1$. Otherwise, $|\tilde{\mathcal{Z}}_{j,k}|=0$.
        \end{itemize}

        Thus, $\sum_{j=1}^{d-1}|\tilde{\mathcal{Z}}_{j,k}|=\begin{cases}
        0& \text{if $(1-k)\equiv 0 \mod (\delta+1)$}\\
            \ell-2& \text{otherwise} 
        \end{cases}$.
\end{exe}

\begin{exe}
    If $c=2\ell-1$ and $d=2\ell$ for some integer $\ell\geq 2$, then  writing $(2\ell-2)(1-k) \equiv q \mod (4\ell-3)$ with $0\leq q<4\ell-3$ we have

\begin{itemize}
\item $|\tilde{\mathcal{X}}_k|=\begin{cases}
              1 & \text{if $0\leq q \leq 2(\ell-1)$} \\
              0 & \text{if $q> 2(\ell-1)$}
          \end{cases}$
          \item $|\tilde{\mathcal{W}}_0|=1$
           \item $|\tilde{\mathcal{W}}_k|=\begin{cases}
               2 & \text{if $q=2\ell$}\\
               1 & \text{otherwise}
           \end{cases}$ 
    \item $\sum_{j=1}^{d-1}|\tilde{\mathcal{Z}}_{j,k}|=\begin{cases}
    0 & \text{if $q\in \{0,1\}$}\\
        \min\{2\ell-1,q-1\}-\max\{1,q-2\ell+3\}+1 & \text{if $q>1$}
        \end{cases}$
        \end{itemize}
In particular, when $k=0$ we obtain $\dim HH^{0}(\mathscr{C}_{w})=\dim HH^{1}(\mathscr{C}_{w})=2\ell-1$, whereas when $k=-1$ we get 
    $\dim HH^{-1}(\mathscr{C}_{w})=\dim HH^{-2}(\mathscr{C}_{w})=2$.
\end{exe}

\end{sloppypar}
 
\bibliographystyle{plain}
\bibliography{references}

\date{today}

\end{document}